\documentclass[11pt, oneside, reqno]{amsart}
\usepackage[shortlabels, inline]{enumitem}
\usepackage{mathtools, mathrsfs, xargs, amssymb}
\usepackage[T1]{fontenc}
\usepackage{libertine}
\usepackage[top=1.2in, bottom=1.2in, left=1.2in, right=1.2in]{geometry}
\usepackage{stmaryrd}

\usepackage[dvipsnames]{xcolor}
\usepackage[
  colorlinks=true,
  linkcolor=Maroon,
  citecolor=Maroon,
  urlcolor=Maroon
]{hyperref}

\newcommand{\fin}{
  \bibliographystyle{amsalpha} 
  \bibliography{Zit}
  \end{document}
}

\makeatletter
\renewcommand\subsubsection{\@startsection{subsubsection}{3}{\z@}%
  {.2\linespacing\@plus.2\linespacing}{-.5em}{\normalfont\itshape}}
\makeatother

\numberwithin{equation}{section}
\theoremstyle{plain}
\newtheorem{theorem}{Theorem}[section]
\newtheorem{proposition}[theorem]{Proposition}
\newtheorem{lemma}[theorem]{Lemma}
\newtheorem{corollary}[theorem]{Corollary}
\newtheorem{assumption}[theorem]{Assumption}
\theoremstyle{definition}
\newtheorem{definition}[theorem]{Definition}
\newtheorem{example}[theorem]{Example}
\theoremstyle{remark}
\newtheorem{remark}[theorem]{Remark}

\setlist[enumerate,1]{label=(\arabic*), ref=\arabic*}
\setlist[enumerate]{leftmargin=*}

\usepackage{xargs} 
\usepackage{mathtools}

\usepackage{dashrule}

\providecommand{\alias}{}
\renewcommand{\alias}[1]{\providecommand{#1}{}\renewcommand{#1}}

\DeclarePairedDelimiter\ab{\langle}{\rangle} 
\DeclarePairedDelimiter\abs{\lvert}{\rvert}   
\DeclarePairedDelimiter\norm{\lVert}{\rVert}  
\DeclarePairedDelimiter\bkt{[}{]}             
\DeclarePairedDelimiter\brc{\{}{\}}           
\DeclarePairedDelimiter\prn{(}{)}             

\providecommand\giv{}

\alias\given\giv 

\DeclarePairedDelimiterXPP\pp[1]{\mathbb{P}}[]{}{
   \renewcommand\giv{\nonscript\:\delimsize\vert\nonscript\:\mathopen{}}
   #1}
\DeclarePairedDelimiterXPP\ppup[2]{\mathbb{P}^{#1}}[]{}{
   \renewcommand\giv{\nonscript\:\delimsize\vert\nonscript\:\mathopen{}}
   #2}
\DeclarePairedDelimiterXPP\ppdown[2]{\mathbb{P}_{#1}}[]{}{
   \renewcommand\giv{\nonscript\:\delimsize\vert\nonscript\:\mathopen{}}
   #2}
   \DeclarePairedDelimiterXPP\ppupdown[3]{\mathbb{P}^{#1}_{#2}}[]{}{
   \renewcommand\giv{\nonscript\:\delimsize\vert\nonscript\:\mathopen{}}
   #3}
   
\DeclarePairedDelimiterXPP\ee[1]{\mathbb{E}}[]{}{
   \renewcommand\giv{\nonscript\:\delimsize\vert\nonscript\:\mathopen{}}
   #1}

\DeclarePairedDelimiterXPP\var[1]{\mathrm{Var}}[]{}{
   \renewcommand\giv{\nonscript\:\delimsize\vert\nonscript\:\mathopen{}}
   #1}

 \DeclarePairedDelimiterXPP\cov[1]{\mathrm{Cov}}[]{}{
   \renewcommand\giv{\nonscript\:\delimsize\vert\nonscript\:\mathopen{}}
   #1}

\DeclarePairedDelimiterXPP\eeup[2]{\mathbb{E}^{#1}}[]{}{
   \renewcommand\giv{\nonscript\:\delimsize\vert\nonscript\:\mathopen{}}
   #2}

\DeclarePairedDelimiterXPP\eedown[2]{\mathbb{E}_{#1}}[]{}{
   \renewcommand\giv{\nonscript\:\delimsize\vert\nonscript\:\mathopen{}}
   #2}

   \DeclarePairedDelimiterXPP\eeupdown[3]{\mathbb{E}^{#1}_{#2}}[]{}{
   \renewcommand\giv{\nonscript\:\delimsize\vert\nonscript\:\mathopen{}}
   #3}
\DeclarePairedDelimiterXPP\eeud[3]{\mathbb{E}^{#1}_{#2}}[]{}{
   \renewcommand\giv{\nonscript\:\delimsize\vert\nonscript\:\mathopen{}}
   #3}

\let\preexp\exp
\let\exp\relax

\DeclarePairedDelimiterXPP\exp[1]{\preexp}(){}{#1}

\newcommand{\Exp}[1]{e^{#1}}

\newcommandx{\prf}[2][2=t\in\bkt{0,T}]{ \set{#1}_{#2}}
\newcommandx{\prfi}[1]{\set{#1}_{t \in \sp}}

\newcommandx{\seq}[2][2=n\in\N]{\brc*{#1}_{#2}}
\newcommandx{\sseq}[2][2=n\in\N]{\{#1\}_{#2}}

\newcommand{\seqz}[1]{\seq{#1}[n\in\N_0]}

\newcommandx{\seqk}[1]{\seq{#1}[k\in\N]}
\newcommandx{\seqkz}[1]{\seq{#1}[k\in\N_0]}

\newcommandx{\seqj}[1]{\seq{#1}[j\in\N]}
\newcommandx{\seqjz}[1]{\seq{#1}[j\in\N_0]}

\newcommandx{\seqi}[1]{\seq{#1}[i\in\N]}
\newcommandx{\seqiz}[1]{\seq{#1}[i\in\N_0]}

\newcommandx{\seqm}[1]{\seq{#1}[m\in\N]}
\newcommandx{\seqmz}[1]{\seq{#1}[m\in\N_0]}

\newcommandx{\sseqk}[1]{\sseq{#1}[k\in\N]}
\newcommandx{\sseqkz}[1]{\sseq{#1}[k\in\N_0]}

\newcommandx{\fml}[2]{\set{#1}_{#2}}
\newcommandx{\tol}[1]{\stackrel{#1}{\longrightarrow}}
\newcommandx{\eqd}[1][1=d]{\stackrel{(#1)}{=}}
\newcommandx{\neqd}[1][1=d]{\stackrel{(#1)}{\neq}}

\makeatletter
\let\oldabs\abs \def\abs{\@ifstar{\oldabs}{\oldabs*}}
\let\oldab\ab \def\ab{\@ifstar{\oldab}{\oldab*}}
\let\oldnorm\norm \def\norm{\@ifstar{\oldnorm}{\oldnorm*}}
\let\oldbkt\bkt \def\bkt{\@ifstar{\oldbkt}{\oldbkt*}}
\let\oldbrc\brc \def\brc{\@ifstar{\oldbrc}{\oldbrc*}}
\let\oldprn\prn \def\prn{\@ifstar{\oldprn}{\oldprn*}}
\let\oldpp\pp \def\pp{\@ifstar{\oldpp}{\oldpp*}}
\let\oldppup\ppup \def\ppup{\@ifstar{\oldppup}{\oldppup*}}
\let\oldppdown\ppdown \def\ppdown{\@ifstar{\oldppdown}{\oldppdown*}}
\let\oldee\ee \def\ee{\@ifstar{\oldee}{\oldee*}}
\let\oldvar\var \def\var{\@ifstar{\oldvar}{\oldvar*}}
\let\oldeeup\eeup \def\eeup{\@ifstar{\oldeeup}{\oldeeup*}}
\let\oldeedown\eedown \def\eedown{\@ifstar{\oldeedown}{\oldeedown*}}
\let\oldeeupdown\eeupdown \def\eeupdown{\@ifstar{\oldeeupdown}{\oldeeupdown*}}
\let\oldeeud\eeud \def\eeud{\@ifstar{\oldeeud}{\oldeeud*}}

\let\oldexp\exp \def\exp{\@ifstar{\oldexp}{\oldexp*}} \makeatother

\newcommand{\Bprn}[1]{ \prn*[\Big]{#1} }
\newcommand{\bprn}[1]{ \prn*[\big]{#1} }

\newcommand{\babs}[1]{\abs*[\big]{#1}}

\newcommand{\set}[1]{ \brc{#1} }
\newcommand{\sets}[2]{ \brc{#1\,:\,#2} }

\alias{\R}{{\mathbb R}}
\alias{\C}{{\mathbb C}}
\renewcommand{\Re}{\operatorname{Re}}
\renewcommand{\Im}{\operatorname{Im}}
\alias{\Z}{{\mathbb Z}}
\alias{\N}{{\mathbb N}}
\alias{\Nz}{{\mathbb N}_0}

\DeclareMathOperator\supp{supp}

\DeclareMathOperator\Leb{Leb}

\DeclareMathOperator\Var{Var}

\DeclareMathOperator\Cov{Cov}

\DeclareMathOperator*\esssup{esssup}

\newcommand{\oo}[1]{\frac{1}{#1}}

\newcommand{\too}[1]{\tfrac{1}{#1}}

\newcommand{\tot}{\too{2}} 

\newcommand{\ind}[1]{ 1_{{#1}}} 
\newcommand{\inds}[1]{ 1_{\set{#1}}} 

\newcommand{\downto}{\searrow}
\newcommand{\upto}{\nearrow}

\newcommand{\tsum}{\textstyle\sum}

\newcommand{\tint}{\textstyle\int}

\newcommand{\al}{\alpha}
\newcommand{\be}{\beta}

\newcommand{\eps}{\varepsilon}
\newcommand{\ld}{\lambda}
\newcommand{\Ld}{\Lambda}
\newcommand{\gm}{\gamma}
\newcommand{\vp}{\varphi}
\newcommand{\sg}{\sigma}

\newcommand{\sA}{\mathcal{A}} 
\newcommand{\sB}{\mathcal{B}} 
 \newcommand{\bC}{\mathbb{C}}
\newcommand{\sD}{\mathcal{D}} 
\newcommand{\sE}{\mathcal{E}} \newcommand{\bE}{\mathbb{E}}
\newcommand{\sF}{\mathcal{F}} 
\newcommand{\sG}{\mathcal{G}} 
\newcommand{\sH}{\mathcal{H}} 
\newcommand{\sI}{\mathcal{I}} 
 
\newcommand{\sK}{\mathcal{K}} 
\newcommand{\sL}{\mathcal{L}} \newcommand{\bL}{\mathbb{L}}
\newcommand{\sM}{\mathcal{M}} 
 \newcommand{\bN}{\mathbb{N}}
 
 \newcommand{\bP}{\mathbb{P}}
 
 \newcommand{\bR}{\mathbb{R}}

\newcommand{\sV}{\mathcal{V}}

\newcommand{\sZ}{\mathcal{Z}} 

\newcommand{\tA}{\tilde{A}}

\newcommand{\tN}{\tilde{N}}

 \newcommand{\hQ}{\hat{Q}}
 
\newcommand{\tS}{\tilde{S}} 
\newcommand{\tT}{\tilde{T}}

\newcommand{\tZ}{\tilde{Z}} 

\newcommandx{\pds}[2][1= ]{\frac{\partial #1}{\partial #2}}
\newcommandx{\tpds}[2][1= ]{\tfrac{\partial #1}{\partial #2}}
\newcommandx{\dd}[2][1= ]{\frac{d #1}{ d#2}}
\newcommandx{\tdd}[2][1= ]{\tfrac{d #1}{ d#2}}
\newcommandx{\pdt}[2][1= ]{\frac{\partial^2 #1}{\partial #2}}
\newcommandx{\tpdt}[2][1= ]{\tfrac{\partial^2 #1}{\partial #2}}

\newcommandx{\pdm}[3][2= ]{\frac{\partial^{#1} #2}{\partial #3}}
\newcommandx{\tpdm}[4][2= ]{\tfrac{\partial^{#1} #2}{\partial #3}}

\newcommandx{\up}[2]{{#1}^{#2}}
\newcommandx{\upp}[2]{{#1}^{(#2)}}

\newcommand{\eas}{\text{ a.s.}}

\newcommand{\eiff}{\text{ if and only if }}
\newcommand{\efor}{\text{ for }}
\newcommand{\eforall}{\text{ for all }}
\newcommand{\eforeach}{\text{ for each }}
\newcommand{\eand}{\text{ and }}

\newcommand{\eon}{\text{ on }}

\newcommand{\eie}{\text{ i.e. }}
\newcommand{\ewhere}{\text{ where }}
\newcommand{\ewith}{\text{ with }}

\newcommand{\cd}{c\` adl\` ag}

\newcommand{\eemu}[1]{\eeup{\mu}{#1}}
\newcommand{\seemu}[1]{\bE^{\mu}[#1]}
\newcommandx{\prfx}[1]{\set{#1}_{x\geq 0}}
\newcommandx{\sprfi}[1]{\{#1\}_{t\geq 0}}

\newcommand{\sfamtau}[1]{\{#1\}_{\tau\in\tm}}

\newcommand{\sfammu}[1]{\{#1\}_{\mu\in\Mlf(\sp)}}

\newcommand{\tm}{[0,\infty)}              
\providecommand{\sp}{}
\renewcommand{\sp}{\bR_+}                 

\newcommand{\IG}{\mathrm{IG}}

\newcommand{\B}{B}
\newcommand{\Bp}{B_{+}}
\newcommand{\Bc}{B_{c}}
\newcommand{\Bcp}{B^{+}_{c}}

\newcommand{\Cz}{C_{0}}
\newcommand{\Cc}{C_{c}}

\newcommand{\Conec}{C^{1}_{c}}

\newcommand{\Ltwo}{\bL^{2}}
\newcommand{\Lone}{\bL^{1}}
\newcommand{\Ltn}{\Ltwo(\sp, \ee*{\xi})}
\newcommand{\Linf}{\bL^{\infty}}
\newcommand{\Linfl}{\bL_{\mathrm{loc}}^{\infty}}
\newcommand{\Ltwol}{\bL_{\mathrm{loc}}^{2}}
\newcommand{\Lpee}{\bL^{p}}
\newcommand{\Lque}{\bL^{q}}
\newcommand{\Lpeel}{\bL_{\mathrm{loc}}^{p}}
\newcommand{\Lquel}{\bL_{\mathrm{loc}}^{q}}
\newcommand{\Lonel}{\bL_{\mathrm{loc}}^{1}}

\newcommand{\Mf}{\sM_{f}}

\newcommand{\Mlf}{\sM_{lf}}

\newcommand{\hrho}{\hat{\rho}}            
\newcommand{\define}[1]{\textbf{#1}}
\newcommand{\tv}{\tilde v}
\newcommand{\tw}{\tilde w}

\newcommand{\diffd}{\mathrm{d}}
\newcommand{\dt}{\diffd t}
\newcommand{\ds}{\diffd s}

\newcommand{\du}{\diffd u}
\newcommand{\dw}{\diffd w}

\newcommand{\da}{\diffd a}
\newcommand{\dtau}{\diffd \tau}
\newcommand{\dsg}{\diffd \sg}

\newcommand{\dxi}{\diffd \xi}
\newcommand{\dmu}{\diffd \mu}
\newcommand{\dbP}{\diffd \bP}
\newcommand{\dB}{\diffd B}
\newcommand{\dW}{\diffd W}
\newcommand{\dN}{\diffd N}
\newcommand{\dX}{\diffd X}
\newcommand{\dZ}{\diffd Z}
\newcommand{\dm}{\diffd m}

\newcommand{\oPi}{\overline{\Pi}}
\newcommandx{\prfis}[1]{\{#1\}_{t \in \sp}}
\newcommandx{\prfib}[1]{\big\{#1\big\}_{t \in \sp}}
\newcommandx{\prfiB}[1]{\Big\{#1\Big\}_{t \in \sp}}

\newcommand{\Cp}{\bC_{+}}
\newcommand{\Cpc}{\overline{\bC}_{+}}

\begin{document}

\title[Rough CIR Processes and Feller Random Measures]{A
Superprocess-based Approach to Rough CIR Processes and Feller Random Measures}

\author{Gordan {\v Z}itkovi{\' c}}
\address{Gordan {\v Z}itkovi{\' c}, Department of Mathematics, The
University of Texas at Austin}
\email{gordanz@math.utexas.edu}

\begin{abstract}
  Feller random measures generalize the Feller diffusion (the CIR process) by giving it memory. They arise as the scaling limits of
nearly unstable Hawkes processes, and include the rough CIR process
and its hyper-rough and discontinuous relatives. We show that every
Feller random measure is the occupation measure of a Dawson--Watanabe
superprocess whose spatial motion is a killed L\'evy subordinator,
integrated over the branching time. This is a continuum analogue of
the Hawkes--Oakes cluster representation. It yields existence, stability in the parameters, and stochastic
equations driven by an explicit noise. The structure of this noise
depends on whether the kernel has an atom at the origin.
Without an atom, the noise is a Brownian motion time-changed by the
distribution function of the measure. This leads to a martingale
characterization and to sharp results on densities and their H\"older
regularity. With an atom, the noise is a compensated inverse-Gaussian
process time-changed by the compensator of that distribution function,
and the measure is purely atomic.

\end{abstract}

\keywords{Feller random measures, Dawson--Watanabe superprocess,
  occupation measure, rough CIR process, Hawkes processes, subordinator,
potential measure, geometric infinite divisibility}

\thanks{
  During the preparation of this work the author was
  supported by the National Science Foundation
  under Grant DMS-2307729. Any opinions, findings and
  conclusions or recommendations expressed in this material are
  those of the author and do not necessarily reflect the views of the
  National Science Foundation (NSF).
}

\subjclass[2020]{Primary 60J68, 60G57; Secondary 60J80, 60G51, 60G55,
45D05, 60H15}

\maketitle

\section{Introduction}
\label{S:intro}

\subsection{Background and motivation}
\label{s:background}
Feller random
measures generalize the Feller diffusion (the CIR process of
mathematical finance) by giving it memory. Each one is indexed by
a pair $(\rho,\mu)$, where a probability measure $\rho$ on $\sp$ in
the positive geometrically infinitely divisible (PGID) class
(Definition \ref{d:pgid}) plays the role of a memory kernel and $\mu
\in \Mlf(\sp)$ describes ``immigration''. Its law is given by
$\ee{e^{-\ab{\xi,f}}} = e^{-\ab{\mu,v}}$ for bounded $f \geq 0$
with bounded support, where $v$ is the unique bounded nonnegative
solution of the Riccati-Volterra equation $v(t) = \int (f -
v^{2}/2)(t+s)\,\rho(\ds)$ (Definition \ref{d:frm} and
Proposition \ref{p:unique}). The family was introduced twice,
independently. In \cite{PacZit25} it arises as the class of
\emph{all possible} limits of nearly unstable, possibly
nonhomogeneous, Hawkes processes.
In \cite{Xu24a} these objects appear, within a larger class, under the name
\emph{$(K,\Phi)$-Hawkes Volterra processes}, as the solutions of
a family of stochastic Volterra equations whose kernel is a
measure rather than a function, and again as scaling limits of
asymptotically critical Hawkes processes (see also
\cite{SzyXu26}). 

The central observation of this paper is that Feller random measures
admit a continuum analogue of the Hawkes-Oakes cluster representation
(Theorem \ref{t:main}), which makes them a natural continuous
generalization of Hawkes processes. The family is also affine: its log-Laplace functional is linear in
$\mu$, with slope given by the solution of the
Riccati-Volterra equation above. Consequently, it retains the branching
property of the Feller diffusion. As we show in this paper, the
law of a Feller random measure depends continuously on $(\rho,\mu)$.
The family accommodates a wide range of self-exciting behavior, with
roughness and memory decoupled. Its members range from measures with
H\"older-continuous densities, through diffuse measures without a
square-integrable density, to purely atomic ones.

Special cases were studied long before the family was precisely
identified. 
An
exponential $\rho$ corresponds to the classical Feller (CIR) diffusion,
obtained in \cite{JaiRos15} as the nearly unstable limit of
light-tailed Hawkes processes. The Mittag-Leffler kernels of index $\al
\in (1/2,1)$ produce the \emph{rough CIR process} of
\cite{JaiRos16}, i.e., the solution of
\begin{align*}
  Z_{t} = Z_{0} + \frac{1}{\Gamma(\al)}\int_{0}^{t}(t-s)^{\al-1}\,
  \lambda\prn{\bar z - Z_{s}}\,\ds
  + \frac{\nu}{\Gamma(\al)}\int_{0}^{t}(t-s)^{\al-1}\sqrt{Z_{s}}\,\dB_{s},
\end{align*}
with constants $\lambda, \bar z, \nu > 0$ and a Brownian motion $B$.
It became the variance process of the rough Heston model
\cite{ElERos19} and a representative member of the affine Volterra
class \cite{AbiLarPul19}.
The fractional kernels of index $\al \in (0,1/2]$ constitute the
\emph{hyper-rough} regime of \cite{JusRos20, Abi21}, in which there is
no square-integrable density $Z$. The
critical case $\al = 1/2$ 
also arises from the local occupation time of a super-Brownian motion at
a point catalyst \cite{DawFle94, FleLeG95, MytSal15}.
In the parametrization $\al = H + 1/2$ by the Hurst index $H$,
empirical estimates of the roughness of volatility, initially around
$H \approx 0.1$ \cite{GatJaiRos18}, have recently moved toward $H = 0$
\cite{BayHarPig21, FukTakWes22, ForFukGerSmi22}, the boundary between
the rough and the hyper-rough regimes.

\subsection{The superprocess construction}
\label{s:constr}
Our main tool is the following construction (Theorem \ref{t:main}).
Let $K$ 
be a L\'evy subordinator 
killed at an independent
exponential time of rate one, chosen so that its potential measure is
$\rho$, and let $\sfamtau{X_{\tau}}$ be the Dawson-Watanabe
superprocess with spatial motion $K$ and the critical branching
mechanism $\ld \mapsto \ld^{2}/2$, started from $X_{0}=\mu$. Then the
occupation measure
\begin{align*}
  \xi(A) = \int_{\tm}X_{\tau}(A)\,\dtau \efor A \in \sB(\sp)
\end{align*}
is the $(\rho,\mu)$-Feller random measure, and every Feller random
measure arises in this way. Two time-like parameters
occur here. The variable $\tau$
is the time of the superprocess, which we call \emph{horizontal}, or
\emph{branching}, time. The variable ranging over $\sp$, the state
space of the superprocess, is the time of the Feller random measure,
and we call it \emph{vertical} time, or height. In this language,
Theorem \ref{t:main} says that a Feller random measure is what remains
of a superprocess once the branching time has been integrated away.

This construction is a continuum version of the cluster construction
of \cite{HawOak74}, which builds a Hawkes process generation by
generation rather than forward in time. Immigrants arrive according to
$\mu$, and each individual has a Poisson number of offspring, 
displaced from it by independent draws from a fixed law. Recording the $n$-th
generation as a measure on $\sp$ turns this into a branching random
walk, and superposing the generations recovers the Hawkes process. In
the nearly unstable regime, the generations become a continuum indexed
by $\tau$, the branching random walk becomes the superprocess $X$, and
the superposition becomes the integral above. The same picture
identifies the admissible $\rho$. The displacement of a randomly chosen
individual from its ancestor is a geometric sum of independent steps,
and its possible limits, as the branching ratio tends to one and
after rescaling, are the PGID laws of \cite{KleManMel84}. These are exactly the
potential measures of L\'evy subordinators killed at rate one. Equivalently,
$1/\hrho$, where $\hrho$ is the Laplace transform of $\rho$, is
the Laplace exponent
$\Psi(\ld) = 1 + b\ld + \int_{(0,\infty)}\prn*{1-e^{-\ld t}}\,\Pi(\dt)$
of $K$, where $b \geq 0$ is the drift and $\Pi$ the L\'evy measure of
$L$. 
The identity $\hrho = 1/\Psi$ relates the perspectives
and is used
throughout the paper. Remark \ref{r:cluster} describes the
correspondence in more detail.

Superprocesses have been used in this context before, albeit in a
significantly different manner: there the Volterra variable is the
time of the superprocess, here it is its space. The link with stochastic Volterra
equations goes back to \cite{MytSal15}, motivated by catalytic
superprocesses. In \cite{Abi21}, the rough square-root Volterra
equation for $H=0$ is obtained from a super-Brownian motion that
branches only at a single point catalyst \cite{DawFle94,
FleLeG95}.

Like the Markovian lifts of \cite{CarCou98}, \cite{AbiElE19},
\cite{CucTei20} or \cite{Ham25}, our construction uses a Markov
process to represent $\xi$, but in a different way. Instead of
enlarging the state space, it introduces an additional time
parameter, in which the superprocess is Markov, and integrates
this parameter away. The whole path of $\xi$ is thus obtained at
once, from one trajectory of a measure-valued Markov process. Unlike
the lifts, which require local square integrability of the kernel, our approach applies also to the
hyper-rough range.

Our construction also explains why the inverse-Gaussian law appears
repeatedly in this literature, e.g., in \cite{BonFor25} or
\cite{AbiAtt25}. If $\mu$ is finite, then $\xi(\sp) =
\int_{0}^{\infty}\ab{X_{\tau},1}\,\dtau$ is the integral of the total
mass $m_{\tau}$ of the superprocess, which follows the 
classical Feller diffusion, $\diffd m_{\tau} = -m_{\tau}\,\dtau +
\sqrt{m_{\tau}}\,\diffd W_{\tau}$ (Proposition
\ref{p:dw-props}(\ref{i:mass-feller})). Its law depends only on the
branching mechanism and the rate-one killing, and not on the drift or
the jumps of $K$. The integral of $m$ up to extinction is
inverse-Gaussian, so $\xi(\sp)$ has the same inverse-Gaussian law for
every $\rho$ (Proposition \ref{p:basic}(\ref{i:mass-ig})). A
conditional version holds for the jumps: when $\rho(\set{0})>0$, the
conditional law of a jump of $\xi[0,\cdot]$ at a predictable time,
given the past, is inverse-Gaussian (part \ref{i:pred-jump} of Theorem
\ref{t:ig-rep}).
 
\subsection{Main results}
\label{s:results}
Besides the superprocess representation itself, the main new results are
(i) existence for every PGID $\rho$, including
$\rho(\set{0})>0$, together with stability in $(\rho,\mu)$, which recovers,
for instance, the limit of the integrated variance in
\cite{AbiAttRos26} as $H \downto -1/2$; (ii) the structure of
the noise, which is determined by $\rho$ and is inverse-Gaussian when
$\rho(\set{0})>0$; (iii) the ``only if'' half of the density
criterion (see subsubsection \ref{s:densities} below);
and (iv) the exact H\"older exponent and the failure of the
semimartingale property of the density. The construction also
gives new proofs of several known results, often under weaker
assumptions.

What follows is a more
detailed list with attributions and comparisons with
existing results.

\subsubsection{Existence, uniqueness and stability} We show
that every PGID $\rho$ and every $\mu \in \Mlf(\sp)$ define a
Feller random measure, constructed directly from the superprocess
(Corollary \ref{c:exists}). The solution $v$ in
Definition \ref{d:frm} is unique for every PGID $\rho$, including the
discontinuous case (Proposition \ref{p:unique}), so the law of $\xi$ is
determined by $(\rho,\mu)$.
In \cite{PacZit25}, existence was
obtained only for $\rho(\set{0})=0$ and only as a limit of Hawkes
processes. \cite[Theorem 2.9]{Xu24a} also obtains these objects
as limits of Hawkes processes, including kernels with an atom at
the origin, with convergence in the $S$ topology. In the
univariate case, the kernels of \cite{Xu24a} are the measures with
Laplace transform $1/\Phi$ for an extended Bernstein function $\Phi$,
i.e., a Bernstein function plus a real constant. When $\Phi(0)>0$ they
are, up to a constant factor, exactly the PGID laws. The kernels with
$\Phi(0) \leq 0$ have infinite mass and lie outside the present
setting. Moreover, we
show that the law of $\xi$ depends continuously on $(\rho,\mu)$
(Proposition \ref{p:stability}), and also provide a vague version
which accounts for kernel mass escaping to infinity (Corollary
\ref{c:vague-stability}).

\subsubsection{Stochastic equations and their noise}
Several stochastic equations for $\xi$ are derived from the construction.
The martingale measure of the underlying superprocess is projected onto
a single \emph{vertical noise} $N$ on $\sp$, in the same way that $\xi$
is obtained from $X$. Integrating the associated SPDE over the branching
time then gives equations for $\xi$ driven by $N$, among them the
``divergence-form'' and Volterra equations
\begin{align}
  \label{grace}
  \prn{ k * \xi }(\dt) = \prn{ N_{t} + \mu[0,t] }\,\dt
  \eand
  \xi[0,t] = (\rho*\mu)[0,t] + \int_{0}^{t}\rho[0,t-s]\,\dN_{s},
\end{align}
where the measure $k$ satisfies the Sonin relation $k*\rho =
\Leb$ \cite{Son84}, extended here from functions to measures (see also
\cite{Koc11}), and
is computed directly from $\Psi$ (Section \ref{S:repr}). These
equations  hold for every PGID
$\rho$. In the continuous case, Volterra equations of this kind
have been studied by \cite{Abi21} and, in the context of Hawkes
processes, by \cite{Xu24a}.
In the
theory of affine stochastic Volterra equations \cite{AbiLarPul19,
AbiCucLarPul21, Abi21}, by contrast, the kernel is given and the
counterpart of $k$ (the resolvent of the first kind of the kernel)
enters mainly through hypotheses on its sign and monotonicity.
Moreover, \cite{Alf25} singles out admissible kernels through
the preservation of nonnegativity, and the density is the object of interest. Here the
primitive object is $\xi$, and a density is an optional feature of it.

In our approach, the noise is not given in advance but determined by
$\rho$. We show, nevertheless, that, on a
possibly enlarged probability space, we have 
\begin{align*}
  N_{t} = \tS_{\ab{N}_{t}},
\end{align*}
where $\ab{N}$ (the predictable quadratic variation of $N$) is the
compensator of $\xi[0,\cdot]$, and $\tS$ is a
centered L\'evy process with $\Var\prn*{\tS_{y}} = y$.
The process $\tS$ can only be of one of the following two types: it is a Brownian
motion when $\rho(\set{0})=0$ (Corollary \ref{c:dds}), as in
\cite[Theorem 2.14]{Xu24a}, or a compensated inverse-Gaussian process
when $\rho(\set{0})>0$ (Theorem \ref{t:ig-rep}). 

\subsubsection{A martingale characterization}
When $\rho(\set{0})=0$, we show that a random measure $\xi$ is a
$(\rho,\mu)$-Feller random measure if and only if there is a continuous
local martingale $\eta$ with $\bkt{\eta}_{t} = \xi[0,t]$ such that
the divergence-form equation \eqref{grace} above holds with $N$ replaced by
$\eta$ (Theorem \ref{t:mart-char}). This is a martingale characterization of the
distributional Definition \ref{d:frm}, in the spirit of L\'evy's
characterization of Brownian motion. It needs no density. Remark \ref{r:sing-noise}
compares it with the characterization in Volterra form of
\cite[Theorem 2.14]{Xu24a}. 
The convergence of the inverse-Gaussian simulation scheme (iVi) of
\cite[Corollary 2.3]{AbiAtt25} relies on uniqueness in law, which was
previously known, for example, for completely monotone kernels
\cite{AbiAtt25, Abi21} (see also \cite[Theorem 2.14]{Xu24a}).
Theorem \ref{t:mart-char} provides this uniqueness whenever the kernel
of \cite{AbiAtt25} corresponds to a PGID $\rho$.

\subsubsection{Densities}
\label{s:densities}
Next, we prove that the measure $\xi$ has a square-integrable
density $Z$, i.e., one that is square integrable on $[0,T] \times
\Omega$ for each $T$, if and only if $\rho$ has a locally
square-integrable density $r$ (Proposition \ref{p:density-crit}). The
``if'' half is close to \cite[Corollary 2.16]{Xu24a} (see Remark
\ref{r:density-conj}). For fractional kernels with $H \in
(-1/2,0]$, \cite[Theorem 3.1]{JusRos20} show that $\xi[0,\cdot]$ is not
continuously differentiable. The ``only if'' half shows, for general
$\rho$, that there is not even a square-integrable density. The density
$Z$ satisfies the stochastic equations of Theorem \ref{t:Z-eqs}, which
reduce to those of \cite{JaiRos16, ElERos19} in the rough CIR case. At
the critical index $H=0$, Example \ref{e:critical-log} shows that a
slowly varying correction to the jump measure restores a
square-integrable density, in line with the log-modulated models of
\cite{BayHarPig21}, so that the Blumenthal-Getoor index of $\Pi$
alone does not settle the question in general.

\subsubsection{Regularity}
The growth of $\Psi$ at infinity controls the H\"older regularity of
$Z$ (Corollary \ref{c:holder}), which recovers the known H\"older
continuity of the rough CIR process \cite{AbiLarPul19}. 
For $b=0$, jump densities
comparable to $t^{-1-\al}$ near the origin and bounded on $[1,\infty)$,
with $\al \in (1/2,1)$, and nonzero $\mu$ with a locally square-integrable
density, we prove that the H\"older exponent $\al-1/2$ is exact, and
that no version of $Z$ is a semimartingale (Propositions
\ref{p:max-holder} and \ref{p:not-semimart}, Corollary
\ref{c:exact-holder}). The failure of the semimartingale property is
often asserted, e.g., in \cite{AbiAttRos26, FriGerWie25}, and the
optimality of the H\"older exponent is implicit in the literature, but
we have been unable to locate proofs of either. The companion
statement, the failure of the Markov property, is proved in
\cite{FriGerWie25}.

\subsubsection{The discontinuous case}
When $\rho(\set{0})>0$, we prove that the noise $N$ is a compensated
inverse-Gaussian process time-changed by the compensator of
$\xi[0,\cdot]$ (Theorem \ref{t:ig-rep}), and that $\xi$ is purely atomic
(Corollary \ref{c:atomic}). For a given inverse-Gaussian subordinator,
$\xi[0,\cdot]$ is the unique solution of a Volterra equation based on
this description of its noise (Theorem \ref{t:disc-eqs}). In
\cite{Xu24a}, these objects appear as limits, together with a
Volterra equation, but without a detailed description of its
martingale part (see Remark \ref{r:xu}).

We note that kernels with an atom at the origin arise naturally in numerical work:
the discretizations used to simulate Volterra equations are of this
type, and the discretized processes have jumps \cite[Equation
(2.4)]{AbiAtt25}. They lie outside the solution concept of
\cite{Abi21}, whose solutions are continuous by definition. The
simplest case $\rho = \delta_{0}$ is the limit obtained in
\cite{AbiAttRos26} as $H \downto -1/2$ (Example
\ref{e:delta}).

\subsubsection{Examples}
We provide several new examples, and establish novel properties of
known ones. Every L\'evy subordinator gives an admissible kernel,
with the behavior of $\Psi$ at infinity setting the roughness and
around $0$ the memory. In the continuous case, the examples of subsection
\ref{s:examples} are the classical CIR process (Example \ref{e:cir}),
the rough CIR family (Example \ref{e:rough-cir}), the critical index
$\al=1/2$, where the spatial motion is, up to a constant time change, the
first-passage process of a Brownian motion, and which connects with
the catalytic construction (Example \ref{e:critical}),
log-modulated kernels at the critical index (Example
\ref{e:critical-log}), and tempered kernels, which decouple roughness
from memory and correspond to the gamma kernel of \cite{AbiLarPul19}
(Example \ref{e:temper}). In the discontinuous case, we treat
$\rho=\delta_{0}$, for which $\xi$ is independently scattered with
inverse-Gaussian marginals (Example \ref{e:delta}). For Lebesgue $\mu$,
its distribution function is the inverse-Gaussian L\'evy process
obtained in \cite{AbiAttRos26} as the limit of hyper-rough square-root
processes as $H \downto -1/2$. We recover this convergence of the
integrated variance, in the
Skorokhod $M_{1}$ topology, from Corollary
\ref{c:vague-stability}. The joint convergence with the martingale
part, in the $S$ topology, is not covered by our argument. We
also treat lattice kernels, which give exact inverse-Gaussian
autoregressions (Examples \ref{e:lattice} and \ref{e:lattice-gen}).
In the Poisson case, the superprocess reduces to an infinite system of
Feller-type stochastic differential equations, and $\xi$ is a
Ji\v{r}ina process \cite{Jir58} (Example \ref{e:lattice}). We also treat
exponential excitation, which leads to a continuous-state branching
process with immigration (Example \ref{e:exp-disc}).

\subsection{Organization of the paper}
Section \ref{S:defs} fixes the notation, defines Feller random
measures and PGID kernels, and proves uniqueness in law. Section
\ref{S:repr} proves the superprocess representation, derives from it
existence, basic properties and continuity in the parameters, and
obtains the stochastic equations driven by the vertical noise. Section
\ref{S:cont} treats the continuous case: the noise, the martingale
characterization, the existence of a density, its equations and
regularity, and the sharpness of the regularity results. Section
\ref{S:disc} treats the discontinuous case. Appendix \ref{S:spier} is a
short primer on Dawson-Watanabe superprocesses, and the remaining
appendices contain all but the shortest proofs, section by section.

\subsection{Use of generative AI}
During the preparation of this manuscript, the author used
Anthropic's Claude (Opus 4.8, 5 and 5.5) to assist with literature
searches, to draft and polish exposition, and to
check the manuscript for errors. 

Two mathematical ideas in the paper, neither of which is central, were suggested by the model. 
The first is to pass to complex
arguments and to Fourier (rather than Laplace) transforms
in the proof of Corollary
\ref{c:holder}. The second is a simplification of 
the proof of Proposition \ref{p:not-semimart}. Apart from these,
every definition, statement, proof and example in the paper,
while possibly edited and rewritten by the model, was conceived
by the author.

All AI-generated output, including the references identified
through literature sear\-ches, was independently reviewed and
verified by the author. The author takes full responsibility for
the correctness, originality, and content of the manuscript,
including any errors that may remain.

\section{Setup} 
\label{S:defs} 

This section is mostly preparatory. It consists of definitions
and of known facts recast in the form we shall need, but it also
contains one new result, Proposition \ref{p:unique}, which extends a
known uniqueness statement to the discontinuous case. 
The main results of this paper begin in Section \ref{S:repr}. All proofs, except
for very short ones, are postponed to the Appendix.

\subsection{Notation}
\label{s:notation}

$\sB(U)$ denotes the Borel $\sigma$-algebra of a topological
space $U$. For $I \subseteq \bR$, $\B(I)$ denotes the family of
all bounded Borel functions on $I$, $\Bp(I)$ its nonnegative
members, $\Bc(I)$ those supported by a bounded set, and $\Bcp(I)
= \Bp(I) \cap \Bc(I)$.

$\Cz(I)$ and $\Cc(I)$ denote the families of continuous
functions on $I$ which are, respectively, vanishing at infinity and
compactly supported, while
$\Conec(I)$ denotes the continuously differentiable members of
$\Cc(I)$. The Lebesgue spaces are denoted by $\bL^{p}$, with the
underlying measure space displayed when there is a possibility of confusion.
Subscript ``loc'' localizes properties to all compact subsets.
For a function $f$ on $\sp$ and $T>0$ we write
$\norm{f}_{\infty,T} := \sup_{[0,T]}\abs{f}$.
$f \sim g$ stands for $f/g \to 1$,  with the precise limit to be
specified for each use. 

\smallskip

$\Leb$ denotes the Lebesgue measure, which we also write as $\dt$
(or $\ds$, $\du$, \dots).
$\nu(\dt) = h(t)\,\dt$ means that $\nu$ is
absolutely continuous with density $h$.
For a measure
$\nu$ and a measurable set $I$, $\nu|_{I}$ denotes the measure on the
same domain, defined by $A \mapsto \nu(A \cap I)$. We write $\Mf(\sp)$
and $\Mlf(\sp)$ for the finite and the locally finite measures on
$\sB(\sp)$, respectively, and use the weak topology on the former and
the vague one on the latter. Both carry the compatible $\sg$-algebra
generated by the evaluations $\mu \mapsto \mu(A)$, $A \in \sB(\sp)$.
We also set $\Mlf^{*}(\sp) := \Mlf(\sp) \setminus \set{0}$. A
random measure on $\sp$ is always assumed to be locally finite,
i.e., a random element of $\Mlf(\sp)$. 

\smallskip

The pairing $\int f\,\dmu$ of a function and a measure is written
as $\ab{\mu,f}$. Convolution of functions or measures is denoted
by $\ast$. All functions and measures defined on $\sp$ are
implicitly extended trivially to $(-\infty, \infty)$: functions
take the value $0$ on $(-\infty,0)$ and measures do not charge it.
In particular, this applies to the paths of stochastic
processes: for a \cd{} process $F$ on $\sp$ we have $F_{0-}=0$
and $\Delta F_{0}=F_{0}$. If $F$ is, in addition, of finite
variation, $\diffd F$ denotes the random signed locally finite
measure on $\sp$ it induces, so that $\diffd F(\set{0})=F_{0}$.
Accordingly, $\int_{0}^{t}\vp\,\diffd F$ includes the term
$\vp(0)F_{0}$, while $\int_{0+}^{t}\vp\,\diffd F$ does not.

\smallskip

$\Cp = \set{z \in \bC: \Re z > 0}$ 
and $\Cpc = \set{z \in \bC: \Re z \geq 0}$.
The Laplace-Fourier transform of $\mu \in \Mlf(\sp)$ is
$\hat{\mu}(z) = \ab{ \mu, e^{-z \cdot}} = \int_{\sp}e^{-z
t}\,\mu(\dt)$, defined for those $z \in \Cpc$
for which the integral converges absolutely. If $z$ is
real, we use $\ld$ instead of $z$ and we allow the value
$+\infty$, so that the map $\ld \mapsto \hat\mu(\ld)$ is defined
for every $\ld \geq 0$, with values in $[0,\infty]$. 

\smallskip

$\{\theta_{t}\}_{t \in \sp}$ denotes the family of shifts,
acting on functions by $\theta_{s}f = f(s + \cdot)$ and, dually,
on measures by $\ab{\theta_{s}\mu, f} := \ab{\mu, \theta_{s}f}$,
so that $\theta_{s}\mu$ is the image (push-forward) of $\mu$
under $x \mapsto s + x$.

\smallskip

Square brackets are used for the quadratic variation, as in
$\bkt{M}_{\tau}$ or $\bkt{M,N}_{\tau}$, 
since $\ab{\cdot,\cdot}$ is already reserved 
for measure-function pairings. The exception is the predictable
quadratic variation $\ab{N}$ of a martingale $N$, for which we keep the
standard angle brackets. 

A point superscript, as
in $\eeup{t}{\cdot}$, refers to the spatial motion started at
$t$. A measure superscript, as in $\eemu{\cdot}$, refers to the
superprocess started from $\mu$ or the Feller random measure with
parameter $\mu$, and not to the spatial motion started from
the distribution $\mu$. A superscript pair, as in
$\eeup{\tau,t}{\cdot}$ or $\eeup{\tau,\mu}{\cdot}$, prepends the
branching time parameter at which the process is started, so that
$\eeup{t}{\cdot} = \eeup{0,t}{\cdot}$. For a random measure
$\xi$, we write $\ee{\xi}$ for the intensity measure of $\xi$,
i.e., $\ee{\xi}[A] = \ee{\xi[A]}$ for all $A$.

\smallskip

We keep the horizontal (branching) and the vertical time axes of
subsection \ref{s:constr} typographically separated. The horizontal
axis is denoted by $\tm$, with Greek variables such as $\tau$ and
$\sg$. The vertical axis is $\sp$, with Latin variables such as $t$
and $s$, whose values we also call heights. Since the
vertical axis is the space of the Dawson-Watanabe superprocess, the
usual space-time coordinates $(t,x)$ become $(\tau,t)$ in this paper.

\subsection{Feller random measures}
\label{s:Feller}
The central object of this paper is the \emph{Feller random
measure}, introduced in \cite{PacZit25}. Feller random measures 
form a family of
generalizations of the classical Feller diffusion, indexed by a probability
measure $\rho$ on $\sp$, which plays the role of a memory kernel, and
a measure $\mu$, which describes immigration. For an
exponential $\rho$ one recovers the classical picture: the
$(\rho,\mu)$-Feller random measure is $\xi(\dt) = Z_{t}\,\dt$, where
$Z$ is a Feller diffusion with time-dependent immigration given by $\mu$. 
For a general $\rho$ the
memory is no longer exponential, and $\xi$ may fail to have a density
altogether. Examples of concrete choices of $\rho$, and the random measures they
give rise to (the rough CIR process among them), are collected in
subsection \ref{s:examples} and in Section \ref{S:disc}, once enough theory is available.

We start with the class of kernels required by the
construction.
\begin{definition}[\cite{KleManMel84}]
\label{d:pgid}
A probability measure $\rho$ on $\sp$ is called \define{positive
geometrically infinitely divisible (PGID)} if, for every
$p \in (0,1)$, it is the distribution of a sum of $G_{p}$ nonnegative
i.i.d.\ random variables whose common law may depend on $p$,
where $G_{p}$ is independent of the summands and geometric with
parameter $p$, i.e., $\pp{G_{p}=k}=p(1-p)^{k-1}$ for $k \geq 1$.
\end{definition}

Feller random measures are defined by prescribing their Laplace
functionals, with $\rho$ entering as the kernel of a Riccati-Volterra
equation. Why $\rho$ is required to be PGID, rather than an
arbitrary probability measure on $\sp$, is explained in Remark
\ref{r:cont-disc}(\ref{i:pgid-needed}) below.

\begin{definition}\label{d:frm}
Let $\rho$ be PGID, and let
$\mu \in \Mlf(\sp)$. A random measure $\xi$ on
$\sp$ is a $(\rho,\mu)$-\define{Feller random measure} if for every
$f \in \Bcp(\sp)$ there is a bounded measurable solution $v \geq 0$ of
\begin{align} \label{riccati}
  v(t) = \int_{\sp} \Bprn{ f(t + s) - \tot v^{2}(t + s)
  }\,\rho(\ds)
\end{align}
such that
\begin{align}\label{laplace}
  \ee{ e^{-\ab{\xi,f}} } = e^{-\ab{\mu,v}} .
\end{align}
We say that a Feller random measure $\xi$ is \define{continuous}
if $\rho(\set{0})=0$, and \define{discontinuous} otherwise. 
\end{definition}

\begin{remark}\leavevmode
\label{r:cont-disc}
\begin{enumerate}
\item \label{i:naming}
The names refer to the distribution function $t \mapsto \xi[0,t]$.
It is continuous when $\rho(\set{0})=0$, by Proposition
\ref{p:basic}(\ref{i:diffuse}) below, and, for $\mu \in \Mlf^{*}(\sp)$, it
is a pure-jump function when $\rho(\set{0})>0$, by Corollary
\ref{c:atomic} in Section \ref{S:disc}. Proposition \ref{p:no-atom}
collects several conditions equivalent to $\rho(\set{0})=0$.
\item \label{i:pgid-needed}
Not
every probability measure $\rho$ on $\sp$ defines a random
measure via \eqref{riccati} and \eqref{laplace}. For example, \eqref{riccati} has
no bounded nonnegative solution if
$\rho$ is uniform on $[0,1]$, which is not PGID by Remark
\ref{r:pgid-ex}, and $f = \ind{[1,2)}$. On the other hand, by
Corollary \ref{c:exists}, every PGID $\rho$ defines a Feller random
measure. 
\item \label{i:scaling}
We normalize the coefficient of $v^{2}$ in \eqref{riccati} to
$1/2$ and take $\rho$ to be a probability measure only for notational
simplicity. A general coefficient $\sigma^{2}/2$ and a finite kernel
$c\rho$ reduce to this case by rescaling $\xi$ and $\mu$: the resulting
random measure has the law of $c^{2}\sigma^{2}\xi_{0}$, where $\xi_{0}$
is the $(\rho,\mu/(c\sigma^{2}))$-Feller random measure.
\end{enumerate}
\end{remark}

Uniqueness in law of the Feller random measure corresponding to a
given pair $(\rho,\mu)$ has been treated in
\cite[Theorem 2.2 and Proposition A.1]{PacZit25} in the
continuous case. Proposition \ref{p:unique} below extends this to
the remaining discontinuous case. Existence will be established
  later, in Corollary \ref{c:exists}.

\begin{proposition}
\label{p:unique}
For $f \in \Bcp(\sp)$
the equation
\eqref{riccati} has at most one bounded nonnegative solution.
Moreover, any such solution vanishes on $(\sup \supp f, \infty)$
and satisfies $0 \leq v \leq \norm{f}_{\infty}$.
\end{proposition}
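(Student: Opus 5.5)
The plan is to settle the two a priori statements directly from \eqref{riccati}, and then to prove uniqueness by a contraction argument. The contraction comes from rewriting \eqref{riccati} with a more strongly killed kernel; this rewriting is available exactly because $\rho$ is PGID.

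\emph{A priori bounds.} Throughout, write $(\nu\star h)(t) := \int h(t+s)\,\nu(\ds)$ for a measure $\nu$ on $\sp$ and a bounded function $h$. Let $v\ge 0$ be a bounded solution and $a := \sup\supp f$.
\begin{itemize}
\item For $t>a$ we have $f(t+s)=0$ for all $s\ge 0$. Hence $v(t) = -\tot(\rho\star v^2)(t)\le 0$, and so $v(t)=0$.
\item For every $t$, dropping the nonpositive term gives $v(t)\le(\rho\star f)(t)\le\norm{f}_\infty$, because $\rho$ is a probability measure.
\end{itemize}
Consequently every bounded nonnegative solution takes values in $[0,\norm{f}_\infty]$. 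It therefore suffices to prove uniqueness within that class.

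\emph{Resolvent rewriting.} By the PGID property (Section \ref{s:constr}), $\hrho = 1/\Psi$, where $\Psi$ is the Laplace exponent of a subordinator killed at rate one.
\begin{itemize}
\item For $\ld>1$, let $\rho_\ld$ be the potential measure of the same subordinator killed at rate $\ld$. Then $\hrho_\ld = 1/(\Psi+\ld-1)$, and $\rho_\ld$ has total mass $1/\ld$.
\item The algebraic identity $\frac1\Psi = \frac{1}{\Psi+\ld-1} + \frac{\ld-1}{\Psi+\ld-1}\cdot\frac1\Psi$, together with uniqueness of Laplace transforms, gives the measure identity $\rho = \rho_\ld + (\ld-1)\,\rho_\ld*\rho$.
\item The operation $\nu\star(\cdot)$ is adjoint to convolution and satisfies $\nu_1\star(\nu_2\star h) = (\nu_1*\nu_2)\star h$ by Fubini, since everything is bounded.
\end{itemize}
Set $g := f - v^2/2$, so that $v = \rho\star g$. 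Then
\[
(\ld-1)\,\rho_\ld\star v = (\ld-1)(\rho_\ld*\rho)\star g = \rho\star g - \rho_\ld\star g = v - \rho_\ld\star g,
\]
that is,
\[
v = \rho_\ld\star\bprn{f + (\ld-1)v - \tot v^2}.
\]

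\emph{Contraction.} Fix $\ld := 1+\norm{f}_\infty$, and let $v_1,v_2$ be two solutions with values in $[0,\norm{f}_\infty]$. Put $w := v_1-v_2$. The rewritten equation gives
\[
w = \rho_\ld\star\bprn{c\,w}, \qquad c := (\ld-1) - \tot(v_1+v_2).
\]
Since $v_1,v_2\in[0,\ld-1]$, the coefficient satisfies $0\le c\le \ld-1$. Because $\rho_\ld$ has mass $1/\ld$,
\[
\norm{w}_\infty \le \frac{\ld-1}{\ld}\,\norm{w}_\infty .
\]
As $w$ is bounded, this forces $w\equiv 0$.

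The case $\rho=\delta_0$, where $\Psi\equiv 1$, is included in this argument. I expect the main point needing care to be the justification of the kernel identity $\rho=\rho_\ld+(\ld-1)\rho_\ld*\rho$ from the PGID characterization $\hrho=1/\Psi$, rather than the estimates themselves. The naive estimate $\abs{w}\le\norm{f}_\infty\,\rho\star\abs{w}$ is not a contraction when $\norm{f}_\infty\ge 1$, and it is precisely the stronger killing that produces the factor $(\ld-1)/\ld<1$.
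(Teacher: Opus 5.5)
Your proof is correct, and it follows a genuinely different route from the paper's.

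\textbf{The paper's route.} The a priori bounds are the same as yours. For uniqueness, the paper proceeds in three steps.
\begin{enumerate}
\item It reflects the equation in time about $T=\sup\supp f$, which uses that solutions vanish above $T$.
\item It isolates the atom $p=\rho(\{0\})$ by writing the equation as $F(\tilde v)=\tilde f*\rho-\tfrac12\tilde v^{2}*\rho_0$. Here $\rho_0=\rho|_{(0,\infty)}$ and $F(x)=x+\tfrac12 px^{2}$, which is expanding on $[0,\infty)$.
\item It runs a Gronwall-type iteration of $m\le \norm{f}_\infty\, m*\rho_0$ for the running supremum $m$ of $\abs{\tilde v-\tilde w}$. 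The iteration uses $\rho_0^{*k}[0,T]\le e^{\lambda T}\hat\rho_0(\lambda)^k$ and $\hat\rho_0(\lambda)\to 0$ as $\lambda\to\infty$.
\end{enumerate}
That argument never uses the PGID property. It is valid for an arbitrary probability measure $\rho$ on $[0,\infty)$.

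\textbf{Your route.} Yours uses PGID essentially. The resolvent identity $\rho=\rho_\lambda+(\lambda-1)\rho_\lambda*\rho$ with a \emph{nonnegative} $\rho_\lambda$ of mass $1/\lambda$ is exactly what $\hat\rho=1/\Psi$, with $\Psi-1$ Bernstein, supplies. Without it the step fails. For example, for $\rho=\delta_a$ with $a>0$, the measure with transform $\hat\rho/(1+(\lambda-1)\hat\rho)$ is $\sum_{k\ge0}(1-\lambda)^k\delta_{(k+1)a}$. This is a signed measure, and your sup-norm estimate breaks down.

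In exchange you obtain a single global contraction on all of $[0,\infty)$. There is no reflection, no localization to $[0,T]$, and no iteration. The atom at $0$ needs no separate treatment, since the extra killing absorbs it automatically.

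\textbf{Comparison.} Since PGID is a standing assumption of Definition~\ref{d:frm}, both arguments prove the proposition. Yours is shorter and closer to the potential-theoretic spirit of the rest of the paper. The paper's is more general.
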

\begin{proof} In Appendix \ref{s:prf-unique}.
\end{proof}

Thanks to Proposition \ref{p:unique}, the uniqueness of the law of
$\xi$ follows from the fact that
$\Bcp(\sp)$ is a
measure-determining class \cite[Corollary 2.3]{Kal17}.
We write
$v[f]$ for the unique solution of \eqref{riccati} corresponding to
$f \in \Bcp(\sp)$ and observe that uniqueness also implies that
\begin{align}\label{shift}
v[f](s+t) = v[\theta_s f](t) = v[\theta_{s+t} f](0) \efor s,t\in
\sp,
\end{align}
where $\theta$ is the shift operator of subsection \ref{s:notation}. 

Throughout, $U$ denotes the adjoint of convolution by $\rho$,
\begin{align}\label{conv}
  U \vp (t)
  = \int_{\sp} \vp(t+s)\, \rho(\ds) =
  \ab{ \theta_{t}\rho, \vp } \efor \vp \in \B(\sp) ,
\end{align}
so that \eqref{riccati} reads $v = U\prn{ f - v^{2}/2 }$. The operator $U$ is
positive, with
$\norm{U\vp}_{\infty} \leq \norm{\vp}_{\infty}$
and
$U\vp(0) = \ab{\rho,\vp}$. 

\subsection{A L\'evy subordinator}
\label{s:subord}
To construct the Dawson-Watanabe superprocess needed for
Theorem \ref{t:main}, we first describe its spatial motion. 

\smallskip

Let $L$ be a conservative (not killed)
subordinator, i.e., a nondecreasing \cd{} L\'evy
process.
Let $K$ be $L$ killed at an independent exponential
time of rate $1$. We use the standard convention that functions on $\sp$ are
extended to the \emph{cemetery state}
$\dagger$ by setting
$f(\dagger)=0$ (in particular, $e^{-z\dagger}:=0$ for $z \in \Cpc$). The Laplace exponent $\Psi$ of $K$ is
defined by
\begin{align*}
  e^{- \tau\Psi\prn{ \ld  }} = \ee{ \exp{ - \ld K_{\tau}} }
  \efor \ld \geq 0 \eand \tau \in \tm.
\end{align*}
The function $\Psi$ admits the L\'evy-Khinchine
representation
\begin{align}\label{exponent}
  \Psi\prn{ \ld  } = 1 + b\ld  + \int_{(0,\infty)}\prn{ 1 - e^{- \ld t}
  }\,\Pi\prn{ \dt } \eforall \ld \geq 0
\end{align}
with the \emph{killing rate} $1$, 
a \emph{drift coefficient} $b \geq 0$ and a
measure $\Pi$ on $(0,\infty)$ which integrates $\min(1,t)$,
called the \emph{L\'evy measure}.
Equivalently, $b$ and $\Pi$ are the drift and the L\'evy measure
of $L$, and $\Psi-1$ is its Laplace exponent.
Since $\Psi-1$ is a Bernstein function, $\Psi$ extends
holomorphically to $\Cp$ and continuously to $\Cpc$,
where \eqref{exponent} continues to hold and $\Re \Psi \geq 1$ (see
\cite[Proposition 3.6 and its proof]{SchSonVon12}). Both sides of the
relation defining $\Psi$ are therefore holomorphic on $\Cp$ (the
right-hand one because $\abs*{e^{-zK_{\tau}}} \leq 1$ on $\Cpc$)
and 
continuous on $\Cpc$, and equal on $[0,\infty)$. Hence
$\ee{ e^{-z K_{\tau}} } = e^{-\tau\Psi(z)}$ for every $\tau \in \tm$
and $z \in \Cpc$. At $z = -iu$ with $u \in \bR$ this exhibits
$\Psi(-iu)$ as the characteristic exponent of $K$:
\begin{align*}
  \ee{ e^{iuK_{\tau}} } = e^{-\tau\Psi(-iu)}
  \eforall u \in \bR \eand \tau \in \tm.
\end{align*}

\smallskip

Let $\sprfi{\bP^{t}}$ be the Markov family associated to $K$ on the
canonical space $D_{\sp \cup \{ \dagger \}}$ of \cd{} paths absorbed at
the cemetery state $\dagger$, i.e.,
\begin{align*}
  \ppup{t}{B} = \pp{ t + K_{\cdot} \in B }\efor B \in
  \sB(D_{\sp \cup \{ \dagger \}}) \eand t \in \sp,
\end{align*}
with the convention $t + \dagger = \dagger$.
The associated Feller ($\Cz(\sp) \to \Cz(\sp)$) semigroup
$\sfamtau{P_{\tau}}$ is given by
\begin{align*}
  P_{\tau}f(t) = \eeup{t}{ f(K_{\tau}) } = e^{-\tau}\,\ee{ f(t +
  L_{\tau}) },
\end{align*}
and its generator $\sA$ by
\begin{align*}
  \sA f = - f + b\, f' + \int_{(0,\infty)}\Bprn{ f( \cdot + s) - f(
  \cdot ) }\,\Pi(\ds)
  \efor f \in \sD(\sA) \supseteq \Conec(\sp).
\end{align*}

\subsection{PGIDs are potential measures} \label{s:potential}
The connection between L\'evy subordinators and Feller random
measures starts to emerge in the following lemma, in which $E$ denotes a
rate-one exponentially distributed random variable, independent
of $L$.
\begin{lemma}
\label{l:pgid}
For a probability measure $\rho$ on $\sp$, the following are
equivalent:
\begin{enumerate*}[label=(\arabic*)]
\item $\rho$ is PGID,
\item $\rho$ is the law of $L_{E}$ for some L\'evy subordinator $L$,
\item $1/\hrho - 1$ is a Bernstein function.
\end{enumerate*}
In that case, $1/\hrho - 1 = \Psi - 1$ is the Laplace exponent of
$L$, and
\begin{align}\label{rho-psi}
  \hrho(z) = \oo{\Psi(z)} \eforall z \in \Cpc.
\end{align}
\end{lemma}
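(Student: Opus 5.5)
We plan to go through the Laplace transform throughout. The one computation everything rests on is: for $E$ rate-one exponential and independent of a subordinator $L$ with Laplace exponent $\phi=\Psi-1$,
\begin{align*}
  \ee{e^{-zL_E}} = \int_0^\infty e^{-\tau}e^{-\tau\phi(z)}\,\dtau = \frac{1}{1+\phi(z)} = \frac{1}{\Psi(z)} .
\end{align*}
This holds for $z\in\Cpc$, using $\ee{e^{-zL_\tau}}=e^{-\tau\phi(z)}$ (subsection \ref{s:subord}) and $\Re\Psi\ge 1$, which makes the integral converge absolutely. It gives (2)$\Rightarrow$(3) immediately: $1/\hrho-1=\phi$ is Bernstein. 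It also gives \eqref{rho-psi} on all of $\Cpc$ once (2) holds. Conversely, if (3) holds, then $\phi:=1/\hrho-1$ is a Bernstein function with $\phi(0)=1/\hrho(0)-1=0$, since $\rho$ is a probability measure. Hence $\phi$ is the Laplace exponent of a conservative subordinator $L$, and the display shows that $L_E$ has Laplace transform $\hrho$ on $[0,\infty)$. Uniqueness of Laplace transforms then gives $\rho=\mathrm{law}(L_E)$, which proves (3)$\Rightarrow$(2). The same uniqueness identifies $\Psi-1$ with $1/\hrho-1$ in the final assertion.

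For (2)$\Rightarrow$(1), fix $p\in(0,1)$ and let $E'$ be exponential with rate $p$, so that $pE'\eqd E$. We have $L_E\eqd L_{pE'}$, and $L_{p\,\cdot}$ is again a subordinator. The memoryless property lets us write $E'=\sum_{k=1}^{G_p}E_k$ with i.i.d.\ rate-one exponentials $E_k$ and an independent geometric $G_p$. The independent increments of $L_{p\cdot}$ along the renewal epochs $E_1+\dots+E_k$ then give $L_{pE'}=\sum_{k\le G_p}Y_k$. Here the $Y_k$ are i.i.d.\ copies of $L_{pE_1}$, independent of $G_p$. This is exactly the PGID representation. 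To keep the argument clean, it can also be checked on transforms: $\sum_k p(1-p)^{k-1}\hat\nu^k=p\hat\nu/(1-(1-p)\hat\nu)$ with $\hat\nu=1/(1+p\phi)$ equals $1/(1+\phi)$.

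The direction (1)$\Rightarrow$(3) is the main obstacle. For each $p$, let $\nu_p$ be the summand law. The geometric compound formula gives $\hrho=p\hat\nu_p/(1-(1-p)\hat\nu_p)$, and solving for $\hat\nu_p$ yields
\begin{align*}
  \hat\nu_p = \frac{\hrho}{p+(1-p)\hrho}=\frac{1}{1+p\,\phi}, \qquad \phi:=1/\hrho-1 .
\end{align*}
Then $(1-\hat\nu_p)/p=\phi/(1+p\phi)$ is the Laplace transform type expression $\int(1-e^{-\ld t})\,\nu_p(\dt)/p$. It is therefore a Bernstein function for each $p$. As $p\downto0$ it converges pointwise to $\phi$, which is finite because $\hrho>0$. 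Bernstein functions are closed under pointwise limits (see \cite{SchSonVon12}), so $\phi$ is Bernstein. The points needing care are that $\hrho(\ld)>0$ for all $\ld\ge 0$, which is automatic for a probability measure on $\sp$, and the citation of the closedness result. If a direct argument is preferred, one can instead show that $\phi'=-\hrho'/\hrho^2$ is completely monotone, as a limit of the completely monotone derivatives of $\phi/(1+p\phi)$.
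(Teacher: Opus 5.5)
Your proof is correct, but it reaches the equivalence of (1) and (2) differently from the paper.

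\textbf{What the paper does.} It cites \cite[Theorem 2]{KleManMel84} for (1)$\Leftrightarrow$(2). It proves (2)$\Leftrightarrow$(3) exactly as you do, via the computation $\hrho = 1/\Psi$ on $[0,\infty)$ and the correspondence between Bernstein functions vanishing at $0$ and conservative subordinators. It then extends \eqref{rho-psi} from $[0,\infty)$ to $\Cpc$ by the identity theorem, since $\hrho$ and $1/\Psi$ are both holomorphic on $\Cp$ and continuous on $\Cpc$.

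\textbf{What you do instead.} You close the cycle (1)$\Rightarrow$(3)$\Rightarrow$(2)$\Rightarrow$(1) yourself. For (2)$\Rightarrow$(1), you write a rate-$p$ exponential time as a geometric sum of rate-one exponentials and use the independent increments of $L_{p\,\cdot}$. For (1)$\Rightarrow$(3), you solve the geometric-compounding identity for $\hat\nu_p = 1/(1+p\phi)$. You then let $p \downto 0$ in the Bernstein functions $(1-\hat\nu_p)/p = \phi/(1+p\phi)$ and invoke closedness of Bernstein functions under finite pointwise limits. This is the same tool, \cite[Corollary 3.8(ii)]{SchSonVon12}, that the paper uses in the proof of Corollary \ref{c:vague-stability}.

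\textbf{Comparison.} Your route gives a self-contained argument in place of a reference in which, as the paper itself says, the equivalence is only ``implicit.'' The paper's route buys brevity.

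For the extension to $\Cpc$, computing $\ee{e^{-zL_E}}$ directly for complex $z$ is legitimate. Note, though, that it relies on $\ee{e^{-zL_\tau}} = e^{-\tau(\Psi(z)-1)}$ on $\Cpc$, which subsection \ref{s:subord} itself obtains by analytic continuation. So both extensions rest on the same ingredient.

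A small wording point: the identification $1/\hrho - 1 = \Psi - 1$ in the final assertion follows directly from your display. It does not require uniqueness of Laplace transforms.
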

\begin{proof}
The equivalence of (1) and (2) is implicit in \cite[Theorem
2]{KleManMel84}. If $\rho$ is the law of $L_E$, then, since $\Psi-1$ is the Laplace
exponent of $L$, we have
\begin{align}
  \label{rho-lap}
  \hrho(\ld) = \ee{ \exp{-\ld L_E}} = \int_{\tm} e^{-\tau}\,
  \ee{ \exp{-\ld L_{\tau}} }\, \dtau = \oo{\Psi(\ld)}
\end{align}
for all $\ld \geq 0$. 
Bernstein functions vanishing at $0$ 
are exactly the Laplace exponents of
conservative subordinators (see \cite[Theorem 5.2 and the
discussion following Proposition 5.5]{SchSonVon12}), which gives
the equivalence of (2) and (3).
To extend \eqref{rho-lap} to $\Cpc$, note that $\hrho$ is
holomorphic on $\Cp$ and continuous on $\Cpc$, since $\rho$ is a
probability measure. The function $\Psi$ has the same properties by
subsection \ref{s:subord}, and it does not vanish on $\Cpc$ since
$\Re\Psi \geq 1$, so $1/\Psi$ has them as well. Two such functions
that agree on $[0,\infty)$ agree on $\Cp$, and hence on $\Cpc$ by
continuity.
\end{proof}
Equation \eqref{rho-lap} then gives a
bijective correspondence between the laws of L\'evy subordinators and
PGID probability measures.
From now on, whenever $L$ (or $K$) and $\rho$ appear in the same
context, $\rho$ is the law of $L_{E}$, so that \eqref{rho-psi} holds.
The following characterizations of continuity of Feller random measures
are used in both
Section \ref{S:cont} and Section \ref{S:disc}.
\begin{proposition}
\label{p:no-atom}
The following statements are equivalent:
  \begin{enumerate*}[label=(\arabic*), ref=\arabic*]
  \item \label{i:atom0} $\rho(\set{0})=0$,
  \item \label{i:no-atoms} $\rho$ has no atoms at all,
  \item \label{i:not-cp} $L$ is not a compound Poisson process,
  \item \label{i:strict-incr} $L$ is strictly increasing, a.s.,
  \item \label{i:psi-infty} $\lim_{\ld \to \infty} \Psi(\ld) = \infty$,
  \item \label{i:inf-activity} $b > 0$ or $\Pi(0,\infty) = \infty$.
  \end{enumerate*}
\end{proposition}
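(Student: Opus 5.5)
We prove the equivalences by relating each condition to the Lévy–Khinchine data $(b,\Pi)$ of \eqref{exponent} and to the identity $\hrho = 1/\Psi$ of Lemma \ref{l:pgid}, organizing the argument as the chain (\ref{i:inf-activity}) $\Leftrightarrow$ (\ref{i:psi-infty}) $\Leftrightarrow$ (\ref{i:atom0}), then (\ref{i:inf-activity}) $\Leftrightarrow$ (\ref{i:not-cp}) $\Leftrightarrow$ (\ref{i:strict-incr}), and finally (\ref{i:atom0}) $\Leftrightarrow$ (\ref{i:no-atoms}).

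\emph{The analytic part.} By monotone convergence in \eqref{exponent}, as $\ld \to \infty$ we have $\int_{(0,\infty)}(1-e^{-\ld t})\,\Pi(\dt) \uparrow \Pi(0,\infty)$ and $b\ld \to \infty$ if $b>0$. Hence $\lim_{\ld\to\infty}\Psi(\ld) = 1 + \Pi(0,\infty)$ when $b=0$, and the limit is $\infty$ when $b>0$; this gives (\ref{i:psi-infty}) $\Leftrightarrow$ (\ref{i:inf-activity}). Next, by dominated convergence, $\hrho(\ld) = \int e^{-\ld t}\rho(\dt) \to \rho(\set{0})$ as $\ld \to \infty$. Combined with \eqref{rho-psi}, this yields $\rho(\set{0}) = 1/\lim_{\ld\to\infty}\Psi(\ld)$, so (\ref{i:atom0}) $\Leftrightarrow$ (\ref{i:psi-infty}). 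As a by-product, in the compound Poisson case we get $\rho(\set{0}) = 1/(1+\Pi(0,\infty))$.

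\emph{The pathwise part.} A subordinator is compound Poisson exactly when $b=0$ and $\Pi$ is finite, which gives (\ref{i:not-cp}) $\Leftrightarrow$ (\ref{i:inf-activity}). For (\ref{i:strict-incr}), first suppose $L$ is compound Poisson. Then it stays at $0$ up to its first jump time, which is a.s.\ positive, so $L$ is not strictly increasing. Conversely, suppose $b>0$ or $\Pi$ is infinite. If $b>0$, then $L_t - L_s \geq b(t-s)$, so $L$ is strictly increasing. If $\Pi(0,\infty)=\infty$, then $\pp{L_h = 0} = \lim_{\ld\to\infty} e^{-h(\Psi(\ld)-1)} = 0$ for every $h>0$. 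By stationarity of increments and a countable union over rational pairs $s<t$, $L$ is a.s.\ strictly increasing.

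\emph{Atoms away from $0$.} The implication (\ref{i:no-atoms}) $\Rightarrow$ (\ref{i:atom0}) is trivial. For the converse, which I expect to be the only step needing care, assume $\rho(\set{0})=0$ and fix $a>0$. Writing $\rho$ as the law of $L_E$, we have $\rho(\set{a}) = \int_0^\infty e^{-\tau}\pp{L_\tau = a}\,\dtau$. Since $L$ is a.s.\ strictly increasing by the equivalences already proved, for each $\omega$ the set $\sets{\tau}{L_\tau(\omega) = a}$ has at most one point and hence Lebesgue measure zero. Fubini then gives $\int_0^\infty \pp{L_\tau = a}\,\dtau = \ee{\Leb\sets{\tau}{L_\tau = a}} = 0$, so $\rho(\set{a}) = 0$ for every $a>0$. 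Therefore $\rho$ has no atoms, completing the cycle of equivalences.
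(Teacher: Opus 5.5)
Your proof is correct. Your analytic and pathwise parts spell out what the paper dismisses as routine facts about subordinators and their Laplace transforms, including the value $\rho(\set{0}) = 1/(1+\Pi(0,\infty))$ in the compound Poisson case, consistent with Remark \ref{r:pgid-ex}.

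The one step where you take a different route is (3)$\Rightarrow$(2). The paper handles it by citing \cite[Proposition 15]{Ber96}, which says that the resolvent measures of a L\'evy process that is not compound Poisson are diffuse. You instead pass through (4): a.s.\ strictly increasing paths hit each level $a$ at most once, so by Fubini
\begin{align*}
  \int_{0}^{\infty}\pp{L_{\tau} = a}\,\dtau = \ee{\Leb\sets{\tau}{L_{\tau} = a}} = 0,
\end{align*}
and hence $\rho(\set{a}) = 0$. The Fubini step needs joint measurability of $(\tau,\omega)\mapsto \ind{\set{L_{\tau}(\omega)=a}}$, which holds because $L$ is \cd{}. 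The same argument works at $a=0$, so it also gives (4)$\Rightarrow$(1) directly.

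Your argument is self-contained and uses only monotonicity, so it is tailored to subordinators. The cited result applies to arbitrary L\'evy processes, where no such monotonicity is available. For the present setting, your route is the more economical one.
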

\begin{proof}
  For the implication $(\ref{i:not-cp}) \Rightarrow (\ref{i:no-atoms})$, see \cite[Proposition 15]{Ber96}. The other implications follow readily
  from basic properties of subordinators and their Laplace
  transforms.
\end{proof}
\begin{remark}
\label{r:pgid-ex}
We give an overview of
well-known examples of PGID distributions, both continuous and
discontinuous.

The most prominent nontrivial examples of PGID probabilities form the
\define{Mittag-Leffler family}, parameterized up to scaling by
$\al \in (0,1]$ through 
\begin{align*}
  \hrho_{\al}(\ld) = (1+\ld^{\al})^{-1},
\end{align*}
 with
the value $\al = 1$ corresponding to the exponential distribution.
For this family,
$L$ is the $\al$-stable subordinator for $\al<1$ and the
deterministic unit-drift subordinator for $\al=1$.

A large supply of further examples comes from complete
monotonicity (see \cite{SchSonVon12} for the terminology used
here). If $\rho$ admits a completely monotone density $r$, say
$r(t) = \int_{\sp}e^{-ts}\,\sigma(\ds)$, then
$\hrho(\ld) = \int_{\sp}(\ld+s)^{-1}\,\sigma(\ds)$ is a Stieltjes
function, so that $1/\hrho$ is a complete Bernstein function
\cite[Theorem 7.3]{SchSonVon12}. Since $\hrho(0)=1$, the difference
$1/\hrho - 1$ is then a Bernstein function vanishing at the origin,
and $\rho$ is PGID. This covers all mixtures of exponential
distributions, such as the Gamma distributions with shape parameter
$\al \leq 1$, including the case $\theta = 1$ of Example
\ref{e:temper},
and the Pareto densities $r(t) = \beta(1+t)^{-\beta-1}$,
$\beta>0$.

The discontinuous PGIDs admit an equally explicit description. By
Proposition \ref{p:no-atom}, $\rho(\set{0})>0$ precisely when $L$ is a
compound Poisson process, say with rate $c$ and jump distribution
$\pi$ on $(0,\infty)$, in which case $\Psi = 1 + c\prn{ 1-\hat{\pi} }$. Writing
$p := (1+c)^{-1}$, we get
$\hrho = p/\prn{ 1-(1-p)\hat{\pi} }$, i.e.,
$\rho = \sum_{k \geq 0}p(1-p)^{k}\,\pi^{*k}$, with $\rho(\set{0}) = p$.
Thus the discontinuous PGIDs are exactly the laws of
$\sum_{i=1}^{N}Y_{i}$, where $Y_{1},Y_{2},\dots$ are i.i.d.\ and strictly
positive, and $N$ is an independent geometric random variable on
$\set{0,1,2,\dots}$ (the empty sum is interpreted as $0$).
For $c = 0$, i.e., $L \equiv 0$, we get $\rho = \delta_{0}$. For
$\pi = \delta_{1}$ this is the geometric distribution on
$\set{0,1,2,\dots}$, and for $\pi$ exponential with rate $\theta$, it
is $\rho = p\,\delta_{0} + (1-p)\,\rho_{1}$, where $\rho_{1}$ is
exponential with rate $p\theta$.

On the other hand, not all probability measures on $\sp$ are
PGID. For $a>0$, the measure $\rho=\delta_a$ is not PGID, since $1/\hrho(\ld) - 1 = e^{a\ld}-1$ is convex
rather than concave. Less obviously, the uniform distribution
on $[0,1]$, used in Remark \ref{r:cont-disc}(\ref{i:pgid-needed}), is not PGID,
either. If it were, the function $1/\hrho(\ld) - 1 =
\ld/(1-e^{-\ld}) - 1$ would be a Bernstein function, hence concave. It
is not, because its second derivative at $\ld = 0$ equals $1/6 > 0$.
\end{remark}

The correspondence also has a potential-theoretic form,
which explains the title of this subsection.
Let the operator $U$ be as defined in \eqref{conv}, and $L$, $E$ and
$K$ as above. Then
\begin{align}
  \label{pot-op}
  U \vp(t) = \ee{
  \vp(t+ L_E)} =\bE^t\bkt{  \tint_{\tm} \vp(K_{\tau})\, \dtau}.
\end{align}
Therefore, $U$ is the \emph{potential
operator}, i.e., the $0$-resolvent operator, associated to $K$.
Furthermore, $\rho$ is the 
potential measure of $K$, namely $\rho(A) =
\bE^0[  \int_{\tm}\ind{A}(K_{\tau})\,\dtau] $ for $A \in
\sB(\sp)$. Equivalently, $\rho$ is the $1$-resolvent measure of $L$.

We record the following direct consequence of the monotonicity of $L$
for later use:
\begin{lemma}
  \label{l:one-sided}
  Let $I \subseteq \sp$ be a half-line, i.e., $I = (T,\infty)$ or
  $I = [T,\infty)$ for some $T \geq 0$, and let $\vp \in \B(\sp)$
  vanish on
  $I$. Then so do $U \vp$, $\sA \vp$ for $\vp \in \sD(\sA)$, and
  $P_{\tau}\vp$
  for every $\tau \in \tm$.
\end{lemma}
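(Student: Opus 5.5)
The three claims follow from one observation: $L$ is nondecreasing, so started at $t$ the process $K$ (and $t+L_E$) only visits $[t,\infty)\cup\{\dagger\}$. Throughout, extend $\vp$ by $\vp(\dagger)=0$.

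\emph{The operators $U$ and $P_\tau$.} By \eqref{pot-op}, $U\vp(t)=\ee{\vp(t+L_E)}$. For $t\in I$ we have $t+L_E\geq t$, and since $I$ is a half-line $[T,\infty)$ or $(T,\infty)$, the point $t+L_E$ lies in $I$ again. Hence $\vp(t+L_E)=0$ a.s., and so $U\vp(t)=0$. The same reasoning applies to $P_\tau\vp(t)=e^{-\tau}\ee{\vp(t+L_\tau)}$: again $t+L_\tau\in I$, so $P_\tau\vp(t)=0$ for every $\tau\in\tm$. Equivalently, $U\vp(t)=\ab{\theta_t\rho,\vp}$, and $\theta_t\rho$ is carried by $[t,\infty)\subseteq I$.

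\emph{The generator $\sA$.} For $\vp\in\sD(\sA)$ and $t\in I$, we use that the generator is the strong limit
\[
\sA\vp(t)=\lim_{\tau\downarrow0}\frac{P_\tau\vp(t)-\vp(t)}{\tau}.
\]
Both $P_\tau\vp(t)$ and $\vp(t)$ vanish, so the limit is $0$.

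One could also read this off the explicit formula for $\sA$: the terms $-\vp(t)$ and $\vp(t+s)-\vp(t)$ vanish for $t\in I$. The derivative term $b\vp'(t)$ is the only delicate point, since $\vp'(T)$ could be a one-sided issue when $I=[T,\infty)$. The semigroup-limit argument avoids this entirely, so I would use it. The remaining detail is to check that the limit above, taken pointwise, agrees with the generator defined on $\Cz$ or on bounded functions; it does, because strong convergence implies pointwise convergence.

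No step here is a real obstacle. The only care needed is that the half-line $I$ is closed upward, which is exactly what makes the monotonicity of $L$ sufficient.
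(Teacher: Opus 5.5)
Your proof is correct and follows the route the paper has in mind. The paper gives no argument beyond calling the lemma a direct consequence of the monotonicity of $L$: since $t+L_E\ge t$ and $t+L_\tau\ge t$, these points stay in the upward-closed half-line $I$. Your semigroup-limit argument for $\sA\vp$ is a clean way to cover general $\vp\in\sD(\sA)$ without relying on the explicit formula for the generator.
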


\section{The superprocess representation and its stochastic equations}
\label{S:repr}
This section represents $\xi$ as the occupation measure of a
Dawson-Watanabe superprocess, derives from this representation
existence, basic properties and continuity in the parameters, and
extracts from it the
\emph{vertical noise} $N$ and the stochastic equations it drives.

\subsection{Feller random measures as occupation measures}
The central object of this section is the Dawson-Watanabe superprocess $\sfamtau{X_{\tau}}$ with spatial
motion $K$ and the critical branching mechanism $\ld \mapsto
\ld^{2}/2$. Informally, $X_{\tau} \in \Mlf(\sp)$ is a cloud of
infinitesimal particles at (horizontal) time $\tau \in \tm$. Each
particle moves through $\sp$ (vertical time) as an independent
copy of $K$ and branches critically. Equivalently, $X$ has the
law of the superprocess with the conservative spatial motion $L$ and
the subcritical branching mechanism $\ld \mapsto \ld + \ld^{2}/2$.
Appendix \ref{S:spier} contains a short
primer on superprocesses, with the formal definition, the
properties used in the paper, and the accompanying notation. 

\begin{theorem}
\label{t:main}
Let $\rho$ be PGID, with $L$ and $K$ related to $\rho$ as in Lemma \ref{l:pgid}, and let
$\sfamtau{X_{\tau}}$ be a
Dawson-Watanabe superprocess with spatial motion $K$ and
branching mechanism $\ld \mapsto \ld^{2}/2$.
If $X$ is started from $X_0=\mu \in \Mlf(\sp)$, then the
occupation measure
\begin{align}\label{occup}
  \xi(A) = \int_{\tm}X_{\tau}(A)\,\dtau \efor A \in \sB(\sp)
\end{align}
is a
$(\rho,\mu)$-Feller random measure.
\end{theorem}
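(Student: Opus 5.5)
The plan is to compute the Laplace functional of $\xi$ directly from the log-Laplace equation of the superprocess, and then to identify the limiting object with the solution $v$ of \eqref{riccati}. Fix $f \in \Bcp(\sp)$. For $T>0$, the integral $\int_0^T \ab{X_\tau, f}\,\dtau$ is an additive functional of $X$. By the standard formula for Dawson--Watanabe superprocesses (Appendix \ref{S:spier}), we have $\eemu{e^{-\int_0^T \ab{X_\tau,f}\dtau}} = e^{-\ab{\mu, u_T}}$, where $u_\tau(t)$ solves the mild equation
\begin{align*}
u_\tau(t) = \eeup{t}{\tint_0^{\tau} \bprn{ f - \tot u_{\tau-\sg}^2 }(K_\sg)\,\dsg}.
\end{align*}
The existence and uniqueness of a bounded nonnegative solution, together with $u_\tau \le \tau\norm{f}_\infty$, are part of the superprocess theory for bounded $f$. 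The locally finite $\mu$ is handled by noting that, by Lemma \ref{l:one-sided}, $u_\tau$ vanishes beyond $\sup\supp f$, so only $\mu|_{[0,\sup\supp f]}$ matters. Monotone convergence then gives $\eemu{e^{-\ab{\xi,f}}} = e^{-\ab{\mu,u_\infty}}$ with $u_\infty = \lim_{T\to\infty} u_T$, because $u_T$ is nondecreasing in $T$ (the functional increases with $T$).

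The main step is to show that $u_\infty$ is bounded and solves $v = U(f - v^2/2)$. I would get a $T$-uniform bound from a comparison argument. One option is to use $u_T \le \eeup{t}{\int_0^\infty f(K_\sg)\dsg} = Uf \le \norm{f}_\infty$, which follows from nonnegativity of $u^2$ in the mild equation. This is immediate, since the killing at rate one makes $U$ a bounded potential operator by \eqref{pot-op}. With boundedness in hand, I pass $T\to\infty$ in the mild equation, rewritten as
\begin{align*}
u_T(t) = \tint_0^T P_\sg\bprn{f - \tot u_{T-\sg}^2}(t)\,\dsg .
\end{align*}
The $f$-part converges to $Uf$. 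For the quadratic part, I would split $\int_0^T P_\sg u_{T-\sg}^2\,\dsg$. For $\sg \le T - S$ the integrand is close to $P_\sg u_\infty^2$, by monotone convergence of $u_{T-\sg}\uparrow u_\infty$ and boundedness. The tail $\sg \ge T-S$ is controlled by $e^{-\sg}\norm{f}_\infty^2$, since $P_\sg$ carries the factor $e^{-\sg}$. Dominated convergence with dominating function $e^{-\sg}\norm{f}^2_\infty$ then yields $u_\infty = U f - \tot U u_\infty^2$, which is exactly \eqref{riccati} via \eqref{pot-op}.

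I expect the main obstacle to be the technical justification of this interchange and of the log-Laplace formula for integrated functionals when $\mu$ is only locally finite and $K$ is killed, rather than the algebra. The key points are measurability, the joint convergence $u_{T-\sg}\to u_\infty$ pointwise, and ensuring $\xi$ is a.s.\ locally finite. Local finiteness follows since $\ee{\xi[0,t]} = \ab{\mu, U\ind{[0,t]}} \le \mu[0,t] < \infty$, by the first-moment formula $\ee{\ab{X_\tau,\vp}} = \ab{\mu,P_\tau\vp}$. With $u_\infty$ bounded and nonnegative, Definition \ref{d:frm} is verified, and Proposition \ref{p:unique} identifies $u_\infty = v[f]$.
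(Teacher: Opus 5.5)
Your proposal is correct and follows essentially the paper's route: the Feynman--Kac extension of the log-Laplace equation for $\int_0^T\ab{X_\sg,f}\,\dsg$, monotonicity in $T$, the bound $u_T\le Uf\le\norm{f}_\infty$, and passage to the limit in the mild equation via \eqref{pot-op}. The paper uses monotone rather than dominated convergence for the quadratic term, and your split at $T-S$ is unnecessary. For locally finite $\mu$, the vanishing of $u_T$ beyond $\sup\supp f$ handles only the right-hand side; you also need the left-hand side to be unchanged when $\mu$ is replaced by $\mu|_{[0,a)}$ with $a>\sup\supp f$, and Remark \ref{r:lf-ext} supplies exactly this.
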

\begin{proof} In Appendix \ref{s:prf-main}.
\end{proof}
\begin{remark}
\label{r:cluster}
Theorem \ref{t:main} is a continuum version of the cluster, or
Hawkes-Oakes, construction, in which a Hawkes process is built
generation by generation. The superprocess $X_{\tau}$ is ``generation
$\tau$'', recorded as a measure on $\sp$, and \eqref{occup}
superposes the generations. In this picture, $L_{\tau}$ is the
displacement of a descendant $\tau$ generations down from its
ancestor, the killing that turns $L$ into $K$ is the subcriticality,
and $\rho$ is the normalized mean measure of a single cluster.
Conversely, discretizing $\tau$ in steps of length $\eps$ gives the
Hawkes process $H_{\eps}$ with immigration intensity $\eps^{-1}\mu$
and excitation measure the law of $K_{\eps}$, of total mass
$e^{-\eps}<1$. Conditioning on the offspring of an ancestor gives
$\ee*{ e^{-\ab{\eps^{2}H_{\eps},f}} } = e^{-\ab{\mu,v_{\eps}}}$, where
$v_{\eps}$ solves an equation that is, formally, a one-step
discretization of \eqref{riccati}. We do
not pursue the convergence as $\eps \to 0$ here.
\end{remark}

\begin{definition}
\label{d:occ-form}
  A $(\rho,\mu)$-Feller random measure is \define{in occupation
  form} if it is given by \eqref{occup} under $\bP^{\mu}$, where $X$ is
  the canonical process on $D_{\Mlf(\sp)}$ and $\sfammu{\bP^{\mu}}$ is the
  Markov family of subsection \ref{s:construct} and Remark \ref{r:lf-ext}.
\end{definition}

Whenever $\xi$ is in occupation form, we use without further mention
the objects built from $X$: the martingale measure $M$ of subsection
\ref{s:mart}, the horizontal filtration $\sfamtau{\sF_{\tau}}$, and
the vertical noise $N$ and the vertical filtration $\prfis{\sV_{t}}$,
defined in \eqref{noise} and \eqref{vfilt} below. Here $M$ and $N$ are
understood as versions chosen under each $\bP^{\mu}$. When
additional randomness is needed, we work
on a product extension of the canonical
space.

\smallskip

\begin{corollary}[Existence]
\label{c:exists}
A $(\rho,\mu)$-Feller random measure exists, and can be realized
in occupation form, for each PGID probability
measure $\rho$ and each $\mu \in \Mlf(\sp)$.
\end{corollary}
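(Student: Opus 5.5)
The corollary follows from Theorem \ref{t:main} once the superprocess it uses is known to exist. The plan is therefore to construct, for a given PGID $\rho$, the spatial motion, then the superprocess started from an arbitrary locally finite $\mu$, and finally to invoke the theorem.

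First, by Lemma \ref{l:pgid}, there is a conservative L\'evy subordinator $L$ with $\rho$ the law of $L_E$. Let $K$ be $L$ killed at an independent rate-one exponential time, as in subsection \ref{s:subord}. Its semigroup $\sfamtau{P_{\tau}}$ is Feller on $\Cz(\sp)$, and its paths are \cd{}. The standard construction of Dawson--Watanabe superprocesses recalled in Appendix \ref{S:spier} (subsection \ref{s:construct}) then applies. It takes a Borel right (here Feller) spatial motion and the critical branching mechanism $\ld \mapsto \ld^{2}/2$, and yields a Markov family $\sfammu{\bP^{\mu}}$ on the canonical space $D_{\Mlf(\sp)}$.

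The main obstacle is that the standard theory is usually stated for finite initial measures $\mu \in \Mf(\sp)$, while here $\mu \in \Mlf(\sp)$ may have infinite mass. This extension is the content of Remark \ref{r:lf-ext}, and I would make it rest on Lemma \ref{l:one-sided}. Since $K$ only moves upward, mass starting above height $T$ never reaches $[0,T]$. So the restriction of $X_{\tau}$ to $[0,T]$ depends only on $\mu|_{[0,T]}$, which is finite.

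Concretely, I would build independent superprocesses $X^{(n)}$ started from $\mu|_{[n-1,n)}$ and set $X = \sum_{n} X^{(n)}$. This sum is locally finite, because only finitely many summands charge $[0,T]$. By the branching property and the log-Laplace characterization $\ee{e^{-\ab{X_{\tau},f}}} = e^{-\ab{\mu, u_{\tau}}}$, it has the correct law. For $f \in \Bcp$, $u_{\tau}$ vanishes above $\sup \supp f$ by the one-sidedness, so $\ab{\mu,u_{\tau}}$ is finite.

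With $X$ in hand, the occupation measure \eqref{occup} is well defined. It is locally finite a.s. because $\ee{\xi[0,T]} = \ab{\mu, U\ind{[0,T]}} \le \mu[0,T] < \infty$, using \eqref{pot-op} and Lemma \ref{l:one-sided}. Theorem \ref{t:main} then identifies $\xi$ as a $(\rho,\mu)$-Feller random measure. Because $X$ is the canonical process under $\bP^{\mu}$, $\xi$ is in occupation form in the sense of Definition \ref{d:occ-form}.
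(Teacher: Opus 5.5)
Your proposal is correct and follows the paper's proof: apply Theorem \ref{t:main} to the superprocess started from an arbitrary $\mu \in \Mlf(\sp)$, whose existence rests on the one-sidedness of $K$, as in Remark \ref{r:lf-ext}. The only difference is that you build the locally finite extension as an independent superposition $\sum_{n} X^{(n)}$ over the blocks $\mu|_{[n-1,n)}$, where the paper takes a Kolmogorov projective limit of consistent restrictions. The two laws agree because they have the same restriction to each $[0,t)$, and this agreement is also what identifies your $X$, transported to $D_{\Mlf(\sp)}$, with the canonical process under $\bP^{\mu}$ required by Definition \ref{d:occ-form}.
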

\begin{proof}
  Immediate from Theorem \ref{t:main}, since a superprocess
  $\sfamtau{X_{\tau}}$ with the spatial motion $K$ can be started from
  any $\mu \in \Mlf(\sp)$ by Remark \ref{r:lf-ext}.
\end{proof}

\subsection{Basic properties and continuity in the parameters}
\label{s:basic}
With existence at hand, we list some basic properties used in the
sequel. Most of them were established in \cite{PacZit25} in the
continuous case.
\begin{proposition}
\label{p:basic}
Let $\rho$ be PGID, let $\mu \in \Mlf(\sp)$, let $\xi$ be the
$(\rho,\mu)$-Feller random measure, and let
$\vp,\vp' \in \Bc(\sp)$. Then
\begin{enumerate}
  \item \label{i:moments}
    $\ee{ \ab{\xi,\vp} } = \ab{\mu, U\vp}$ and $
    \Cov\prn{ \ab{\xi,\vp},\ab{\xi,\vp'} } =
    \ab{ \mu,\, U\prn{ U\vp \cdot U\vp' } }$.
    In particular, $\ee{\xi} = \rho * \mu$.
  \item \label{i:sum} If $\xi_{1}$ and $\xi_{2}$ are independent Feller random
    measures with parameters $(\rho,\mu_{1})$ and $(\rho,\mu_{2})$, then
    $\xi_{1} + \xi_{2}$ is a $(\rho,\mu_{1}+\mu_{2})$-Feller random
    measure.
  \item \label{i:mass-ig} If $\mu$ is finite, then the total mass $\xi(\sp)$ of $\xi$
    has the
    inverse-Gaussian distribution with mean and
    variance $\mu(\sp)$, independently of the choice of $\rho$, i.e.,
    \begin{align}\label{mass}
      \ee{ e^{-\ld\, \xi(\sp)} } =
      e^{-\mu(\sp)\prn{\sqrt{1+2\ld}-1}} \eforall \ld \geq -\tot .
    \end{align}
  \item \label{i:locality}
    For $T \in \sp$, the
    distribution of the restriction of $\xi$ to $[0,T]$ depends on
    $\mu$ and $\rho$
    only through their restrictions to $[0,T]$.  In particular,
    \begin{align}\label{expmom}
      \ee{ e^{\ld \, \xi[0,T]} } \leq e^{\mu[0,T]}
      \eforall \ld \leq \tot .
    \end{align}
  \item \label{i:diffuse} If $\rho(\set{0})=0$ then 
    $\xi$ is diffuse, i.e., almost surely
    $\xi(\set{t}) = 0$ for all $t \geq 0$.
\end{enumerate}
\end{proposition}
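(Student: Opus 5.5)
The plan is to work with $\xi$ in occupation form (Corollary \ref{c:exists}). Uniqueness in law (Proposition \ref{p:unique}) makes every statement a property of the law, so we may fix a particular realization. Each item then follows either from the Laplace functional \eqref{laplace} or from the superprocess structure of Theorem \ref{t:main}.

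For (\ref{i:moments}), replace $f$ by $\ld f$ in \eqref{riccati}, first with $f\ge0$ and $f = \vp + c\ind{}$-type decompositions handled by linearity and polarization afterwards. Expand $v[\ld f] = \ld U f - \tfrac{\ld^2}{2} U\prn{(Uf)^2} + o(\ld^2)$, which is justified by the bound $0\le v\le \ld\norm{f}_\infty$ of Proposition \ref{p:unique} fed back into the equation. Comparing coefficients of $\ld$ and $\ld^2$ in $-\log\ee{e^{-\ld\ab{\xi,f}}} = \ab{\mu,v[\ld f]}$ gives the mean and the variance. Here $\mu$ is locally finite and $v$ vanishes beyond $\sup\supp f$, so all pairings are finite. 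Exponential moments near $\ld=0$, obtained from (\ref{i:locality}) proved independently below, justify differentiating. Polarization in $\ld f+\ld' f'$ then gives the covariance, and signed $\vp$ follows by linearity. Taking $\vp=\ind{A}$ gives $\ab{\mu,U\ind{A}}=(\rho*\mu)(A)$.

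Item (\ref{i:sum}) is immediate: the Laplace functionals multiply, and $v$ does not depend on $\mu$.

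For (\ref{i:mass-ig}) and (\ref{i:locality}), a direct argument runs as follows. Take $f=\ld\ind{[0,T]}$. By Lemma \ref{l:one-sided}, $v$ vanishes on $(T,\infty)$, and the solution on $[0,T]$ involves only $\rho|_{[0,T]}$, since $U\vp(t)$ for $t\le T$ and $\vp$ supported in $[0,T]$ sees only $\rho[0,T-t]$. Hence the restricted law depends only on the restricted parameters. For the mass, let $T\to\infty$ along the functions $f_T=\ld\ind{[0,T]}$. The constant $c=\sqrt{1+2\ld}-1$ solves $c=\ld-c^2/2$, and since $\rho$ is a probability measure, the constant solves \eqref{riccati} with $f\equiv\ld$. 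One must check that $v[f_T]\uparrow c$ pointwise. I would obtain this from the superprocess side instead: $\xi(\sp)=\int_0^\infty m_\tau\,\dtau$, where $m_\tau=\ab{X_\tau,1}$ is Feller's diffusion with log-Laplace $\ld\mapsto\ld+\ld^2/2$, and the standard CSBP formula for the total integral gives $\mu(\sp)(\sqrt{1+2\ld}-1)$. Extending to $\ld\ge-\tfrac12$ uses analyticity of the inverse-Gaussian transform together with monotone convergence. The bound \eqref{expmom} then follows: $\xi[0,T]$ under $(\rho,\mu)$ has the same law as under $(\rho,\mu|_{[0,T]})$ by locality, and it is dominated by the total mass $\xi(\sp)$ of the latter. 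Hence $\ee{e^{\ld\xi[0,T]}}\le e^{\mu[0,T](1-\sqrt{1-2\ld})}\le e^{\mu[0,T]}$ for $\ld\le\tfrac12$.

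For (\ref{i:diffuse}), the main obstacle, I would use the second moment. It suffices, by locality and by (\ref{i:sum}) to split $\mu$, to show that $\sum_t\xi(\set{t})^2=0$. Partition $[0,T]$ into intervals $I_j$ of mesh $\delta$ and bound $\ee{\sum_j\xi(I_j)^2}=\sum_j\bigl(\ab{\mu,U\ind{I_j}}^2+\ab{\mu,U((U\ind{I_j})^2)}\bigr)$. Since $\rho$ has no atoms (Proposition \ref{p:no-atom}), $\omega(\delta):=\sup_t\rho[t,t+\delta]\to0$, so $U\ind{I_j}\le\omega(\delta)$. This gives $\sum_j(U\ind{I_j})^2\le\omega(\delta)\,U\ind{[0,T]}\le\omega(\delta)$. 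Both sums are then $O(\omega(\delta)\,\mu[0,T])\to0$. Because $\sum_j\xi(I_j)^2\ge\sum_{t\le T}\xi(\set{t})^2$, the atoms vanish almost surely.
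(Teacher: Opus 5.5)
Your proposal is correct. Items (2) and (4) follow the paper's argument; the differences lie in (1), (3) and (5).

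For (1) and (5), the paper gives no proof and cites \cite{PacZit25} (Propositions 3.7 and 3.14). You supply self-contained proofs instead. For the moments you use the expansion $v[\ld f]=\ld Uf-\tfrac{\ld^{2}}{2}U\bprn{(Uf)^{2}}+O(\ld^{3})$. For diffuseness you give a clean second-moment argument: $\sum_j (U\ind{I_j})^{2}\le\omega(\delta)\,U\ind{[0,T]}$ and the continuity of the distribution function of $\rho$ force $\ee{\sum_{t\le T}\xi(\set{t})^{2}}=0$. This argument also shows why an atom $p=\rho(\set{0})>0$ destroys diffuseness, since then $U\ind{I_j}\ge p\,\ind{I_j}$.

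For (3) the routes genuinely differ. You import the occupation-time formula for the subcritical Feller diffusion $m_\tau$, whose Laplace exponent is the inverse of $u\mapsto u+u^{2}/2$. This explains conceptually why the law of $\xi(\sp)$ does not see $\rho$, but it rests on an external CSBP result. The paper stays with \eqref{riccati} and closes exactly the step you chose to bypass, namely $v_T:=v[\ld\ind{[0,T]}]\uparrow\sqrt{1+2\ld}-1$. It reads off monotonicity of $f\mapsto v[f]$ from \eqref{laplace} with $\mu=\delta_t$. The shift identity \eqref{shift} gives $v_T(t)=v_{T-t}(0)$, so the increasing limit is a constant $\kappa$. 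Passing to the limit in \eqref{riccati} then yields $\kappa=\ld-\kappa^{2}/2$.

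There are three minor slips:
\begin{enumerate}
\item The vanishing of $v$ on $(T,\infty)$ is part of Proposition \ref{p:unique}. It is not a consequence of Lemma \ref{l:one-sided}, which would be circular here.
\item Locality requires every $f\in\Bcp(\sp)$ supported by $[0,T]$, not only $\ld\ind{[0,T]}$.
\item For $\ld<0$ the middle inequality in your proof of \eqref{expmom} goes the wrong way, although the final bound is then trivial.
\end{enumerate}
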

\begin{proof} In Appendix \ref{s:prf-basic}.
\end{proof}

\begin{proposition}[Continuity in the parameters]
\label{p:stability}
Let $\rho_{n}$ and $\rho$ be PGID with $\rho_{n} \to \rho$ weakly,
let $\mu_{n}, \mu \in \Mlf(\sp)$ with $\mu_{n} \to \mu$ vaguely,
and let 
$v_{n}[f]$ and $v[f]$ denote the solutions of \eqref{riccati} for
$\rho_{n}$ and $\rho$. 
Then $v_{n}[f] \to v[f]$ uniformly on $\sp$ for
every nonnegative $f \in \Cc(\sp)$, and the $(\rho_{n},\mu_{n})$-Feller
random measures converge in distribution, with respect to the vague
topology, to the $(\rho,\mu)$-Feller random measure.
\end{proposition}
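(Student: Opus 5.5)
Once the uniform convergence $v_n[f]\to v[f]$ is in hand for every nonnegative $f\in\Cc(\sp)$, convergence in distribution follows from convergence of Laplace functionals: $\ee{e^{-\ab{\xi_n,f}}}=e^{-\ab{\mu_n,v_n[f]}}\to e^{-\ab{\mu,v[f]}}$ for all such $f$, which characterizes vague convergence in distribution of random measures on $\sp$ \cite[Theorem 4.11]{Kal17}. So the work splits into two steps. In the first we prove $v_n\to v$ uniformly. In the second we pass to the limit in $\ab{\mu_n,v_n}$.

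For the second step, by Proposition \ref{p:unique} all $v_n$ and $v$ vanish on $(T,\infty)$, where $T=\sup\supp f$, and are bounded by $\norm{f}_\infty$. We also need $v$ continuous (and ideally the $v_n$ as well), so that $\ab{\mu_n,v}\to\ab{\mu,v}$ under vague convergence. Continuity of $v$ on $[0,T)$ follows from $v=U(f-v^2/2)$: $U\vp(t)=\ab{\theta_t\rho,\vp}$ is continuous in $t$ for bounded $\vp$ when $\rho$ has no atoms. In the atomic case, however, one only gets right-continuity. So instead of continuity we use that $v$ is bounded with support in $[0,T]$. The difference is then $\abs{\ab{\mu_n,v_n}-\ab{\mu_n,v}}\le \mu_n[0,T]\,\norm{v_n-v}_\infty\to0$, since $\sup_n\mu_n[0,T]<\infty$. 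For $\ab{\mu_n,v}\to\ab{\mu,v}$ I would show $v$ is continuous whenever $f$ is continuous, by writing $v(t)=\ab{\rho,\theta_t(f-v^2/2)}$ and using a fixed point in $C_0$. If that fails, I would slightly enlarge $T$, use that $v(T)=0=v(T+)$, and invoke a continuity argument at the finitely many atoms.

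The main obstacle is the first step, the uniform convergence. The plan is a Gronwall-type comparison along the vertical axis, exploiting the one-sidedness of Lemma \ref{l:one-sided}. Set $w_n=v_n-v$. Then
\begin{align*}
w_n=(U_n-U)(f-v^2/2)-U_n\bigl(\tfrac12(v_n+v)w_n\bigr),
\end{align*}
with $0\le v_n+v\le 2\norm{f}_\infty$. Let $g=f-v^2/2$. If $g$ is continuous, the source term $(U_n-U)g(t)=\ab{\rho_n-\rho,\theta_tg}$ tends to $0$ uniformly in $t$: the family $\{\theta_tg\}$ is equicontinuous and uniformly bounded, and weak convergence is uniform over such families by tightness. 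To control the remaining linear Volterra term we must avoid $\norm{U_n}\le1$ alone, which gives no contraction. Instead we use the subordinator structure. Writing $\hrho_n=1/\Psi_n$, the equation $w=h-U_n(cw)$ with $0\le c\le C$ can be solved backward from $T$. Since $U_n=\int_0^\infty P^n_\tau\,d\tau$ is a potential operator, the solution is $w=\bE^t$ of $h$ weighted by $\exp(-\int c(K_\tau)d\tau)$ in a Feynman--Kac-type representation, that is, $w=h$ minus a potential with killing $c$. This yields $\norm{w}_\infty\le 2\norm{h}_\infty$, or a similar bound, uniformly in $n$.

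If the Feynman--Kac route proves awkward, the fallback is a compactness argument. The functions $v_n$ are uniformly bounded and supported in $[0,T]$, and any subsequential limit, taken in a suitable sense such as along a countable dense set, solves \eqref{riccati} for $\rho$. Uniqueness in Proposition \ref{p:unique} then identifies the limit as $v$. This fallback requires equicontinuity, which is exactly where atoms of $\rho$ cause trouble, so I expect the Feynman--Kac bound to be the cleaner path.
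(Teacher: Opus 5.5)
Your main comparison estimate is sound, but the proposal has a genuine gap: everything rests on the continuity of $v=v[f]$, and you do not establish it when $\rho$ has atoms. You need it twice. First, for the uniform convergence of the source term $(U_n-U)(f-v^2/2)$, since weak convergence does not control $\ab{\rho_n-\rho,\theta_t g}$ for merely bounded $g$. Second, for $\ab{\mu_n,v}\to\ab{\mu,v}$ under vague convergence.

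The obstacle is not that $U$ destroys continuity. For bounded continuous $\vp$, $U\vp$ is continuous for every $\rho$, by dominated convergence, so the difficulty is only circularity. Neither of your suggested fixes works as stated. The map $\vp\mapsto U(f-\vp^2/2)$ is not a contraction in general: for $\rho=\delta_0$ it acts pointwise, with derivative $-v=1-\sqrt{1+2f}$ at the solution, which has modulus larger than $1$ where $f>3/2$. And $\rho$ may have infinitely many atoms in $[0,T]$: if $\Pi$ has atoms accumulating at $0$, so does $\rho=p\sum_{k}(p\Pi)^{*k}$.

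What is missing is a modulus of continuity for $v$, and, for your fallback, one that is uniform in $n$. The paper gets this with no restriction on atoms. Let $\zeta_n$ be the $(\rho_n,\delta_0)$-Feller random measure. Then \eqref{laplace} and \eqref{shift} give $e^{-v_n(t)}=\ee{e^{-\ab{\zeta_n,\theta_t f}}}$, and $\ee{\zeta_n(\sp)}=1$ yields $\abs{v_n(t)-v_n(s)}\le e^{\norm{f}_\infty}\omega_f(\abs{t-s})$, where $\omega_f$ is the modulus of continuity of $f$. Arzel\`a--Ascoli and Proposition~\ref{p:unique} then finish the proof. This is exactly your fallback, and the obstruction you feared there does not occur.

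Your own tool also closes the gap. Stated precisely, it reads as follows. Suppose $w$ and $c\ge0$ are bounded and $w=h-U(cw)$. Let $U^{c}$ be the potential of $K$ with additional killing at rate $c$. The resolvent identity $U\psi=U^{c}\psi+U^{c}(c\,U\psi)$, applied to $\psi=cw$, gives $U(cw)=U^{c}(ch)$. Since $U^{c}c\le1$, this yields $\norm{w}_\infty\le2\norm{h}_\infty$, with no separate uniqueness argument needed.

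Now apply this to shifts. By \eqref{shift}, $d:=v(\cdot+\delta)-v=v[\theta_\delta f]-v[f]$ solves $d=U(\theta_\delta f-f)-U(cd)$ with $c=\tfrac12(v[\theta_\delta f]+v[f])$. Hence $\norm{v(\cdot+\delta)-v}_\infty\le2\omega_f(\delta)$, and the same bound holds for each $v_n$.

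With this addition, your route is a genuine alternative to the paper's. It gives the quantitative bound $\norm{v_n-v}_\infty\le2\sup_t\abs{\ab{\rho_n-\rho,\theta_t(f-v^2/2)}}$. The paper's compactness argument gives only convergence, but it needs nothing beyond the probabilistic representation and uniqueness.
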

\begin{proof}
In Appendix \ref{s:prf-stability}.
\end{proof}

If $\rho_{n}$ converges only vaguely, part of its mass may escape to
infinity, which,  by Remark
\ref{r:cont-disc}(\ref{i:scaling}), only rescales the limit.
Proposition \ref{p:stability} and the following corollary are used
in Examples \ref{e:delta}, \ref{e:lattice} and \ref{e:exp-disc}.
\begin{corollary}
\label{c:vague-stability}
Let $\rho_{n}$ be PGID with $\rho_{n} \to \rho_{\infty}$ vaguely,
and let $\mu_{n}, \mu \in \Mlf(\sp)$ with $\mu_{n} \to \mu$ vaguely.
If $\rho_{\infty} = 0$, the $(\rho_{n},\mu_{n})$-Feller random measures
converge in distribution to $0$. Otherwise, $\rho_{\infty} = c\,\rho_{0}$
for some $c \in (0,1]$ and a PGID $\rho_{0}$, and the
$(\rho_{n},\mu_{n})$-Feller random measures converge in distribution,
with respect to the vague topology, to $c^{2}\xi_{0}$, where $\xi_{0}$
is the $(\rho_{0},\mu/c)$-Feller random measure.
\end{corollary}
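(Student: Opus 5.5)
Via Lemma \ref{l:pgid}, the statement reduces to Proposition \ref{p:stability} applied to renormalized kernels, plus the scaling rule of Remark \ref{r:cont-disc}(\ref{i:scaling}).

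\emph{Step 1: identify the limit through Laplace exponents.} Let $\Psi_{n} = 1/\hrho_{n}$. Vague convergence of the probability measures $\rho_{n}$ gives $\hrho_{n}(\ld) \to \hrho_{\infty}(\ld)$ for every $\ld > 0$, since $e^{-\ld t}$ lies in $\Cz(\sp)$ and the total masses are bounded by one. Hence $\Psi_{n}(\ld) - 1 \to 1/\hrho_{\infty}(\ld) - 1 \in [0,\infty]$ for $\ld > 0$.

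\begin{enumerate}
\item If $\rho_{\infty} \neq 0$, then $c := \rho_{\infty}(\sp) \in (0,1]$ and $\hrho_{\infty}(\ld) \to c$ as $\ld \downto 0$. Pointwise limits of Bernstein functions on $(0,\infty)$ are Bernstein, since complete monotonicity of derivatives passes to the limit, or by \cite{SchSonVon12}. So $\Phi := 1/\hrho_{\infty} - 1$ is Bernstein on $(0,\infty)$ with $\Phi(0+) = 1/c - 1$.
\item Set $\rho_{0} := \rho_{\infty}/c$. Then $1/\hrho_{0} - 1 = c\,(\Phi - \Phi(0+))$ is a Bernstein function vanishing at $0$. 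By Lemma \ref{l:pgid}, $\rho_{0}$ is PGID.
\end{enumerate}

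\emph{Step 2: rescale to probability kernels.} Let $c_{n} := \rho_{n}([0,T_{n}])$ for suitable truncation levels, or more simply work with $f$ of compact support in $[0,T]$. By locality (Proposition \ref{p:basic}(\ref{i:locality})) and Proposition \ref{p:unique}, $v_{n}[f]$ vanishes beyond $T$, so only $\rho_{n}|_{[0,T]}$ enters the Riccati equation \eqref{riccati} on $[0,T]$.

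The cleanest route is to compare with $\tilde\rho_{n}$, a PGID approximant of the form $c\rho_{0}$-like kernel, but that is awkward. Instead I would prove directly that $v_{n}[f] \to v_{\infty}$ uniformly, where $v_{\infty}$ solves $v = U_{\infty}(f - v^{2}/2)$ with $U_{\infty}$ the operator \eqref{conv} for $\rho_{\infty}$. Two facts suffice:

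\begin{enumerate}
\item The $v_{n}$ are bounded by $\norm{f}_{\infty}$ and supported in $[0,T]$.
\item Equicontinuity (or compactness) holds in the same way as in the proof of Proposition \ref{p:stability}.
\end{enumerate}

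Extract a limit point, pass to the limit in the equation using vague convergence against the functions $s \mapsto (f - v^{2}/2)(t+s)$, which have compact support, and conclude by uniqueness.

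\emph{Step 3: identify the limit and conclude.} Substituting $v = c\,w$ in $v = c\,U_{0}(f - v^{2}/2)$ gives $w = U_{0}(f - c^{2}w^{2}/2)$. Putting $w = c^{-2}\tilde w$ yields $\tilde w = U_{0}(c^{2}f - \tilde w^{2}/2)$, so $\tilde w = v_{0}[c^{2}f]$ and $v_{\infty} = c^{-1}v_{0}[c^{2}f]$. Therefore
\begin{align*}
\ab{\mu, v_{\infty}} = \ab{\mu/c,\, v_{0}[c^{2}f]},
\end{align*}
which is the log-Laplace functional of $c^{2}\xi_{0}$ with $\xi_{0}$ the $(\rho_{0},\mu/c)$-Feller random measure. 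Convergence of Laplace functionals $e^{-\ab{\mu_{n},v_{n}}} \to e^{-\ab{\mu,v_{\infty}}}$ follows from the uniform convergence $v_{n} \to v_{\infty}$, the common compact support, and $\mu_{n} \to \mu$ vaguely. Some care is needed since $v_{\infty}$ may be discontinuous; I would use continuity of $v_{\infty}$ when $f \in \Cc$, as in Proposition \ref{p:stability}. This gives vague convergence in distribution by the standard Laplace-functional criterion \cite{Kal17}.

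\emph{The case $\rho_{\infty} = 0$.} Here $\norm{v_{n}}_{\infty} \leq \sup_{t \leq T} \ab{\theta_{t}\rho_{n}, \norm{f}_{\infty}\ind{[0,T]}} \leq \norm{f}_{\infty}\,\rho_{n}[0,2T] \to 0$, so the Laplace functionals tend to one and the limit is $0$.

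\emph{Main obstacle.} I expect the main difficulty to be the compactness step in Step 2, namely equicontinuity of $v_{n}$ when the $\rho_{n}$ have atoms. I would mimic whatever argument is used in Appendix \ref{s:prf-stability}, applied now to the sub-probability limit. Failing that, I would use monotone or contraction estimates: the map $v \mapsto U(f - v^{2}/2)$ is a contraction in a weighted sup-norm on $[0,T]$.
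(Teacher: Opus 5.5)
Your proposal is correct and is essentially the paper's proof: the same identification $c = \rho_{\infty}(\sp)$, $\rho_{0} = \rho_{\infty}/c$ through pointwise limits of Bernstein functions and Lemma \ref{l:pgid}, the same rerun of the compactness argument of Proposition \ref{p:stability} with vague convergence tested against $s \mapsto h(t+s) \in \Cc(\sp)$, and the same rescaling $c\,v_{\infty} = v_{0}[c^{2}f]$. The obstacle you anticipate does not arise: the equicontinuity bound in that proof, $\abs{e^{-v_{n}(t)} - e^{-v_{n}(s)}} \leq \omega_{f}(\abs{t-s})$, comes from the $(\rho_{n},\delta_{0})$-Feller random measure and uses only $\rho_{n}(\sp) = 1$, so atoms of $\rho_{n}$ play no role; your direct bound in the case $\rho_{\infty} = 0$ is a harmless shortcut.
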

\begin{proof}
In Appendix \ref{s:prf-vague-stability}.
\end{proof}

\begin{remark}
\label{r:stability}
The convergence in Proposition \ref{p:stability} and Corollary
\ref{c:vague-stability} can also be read off the distribution
functions $t \mapsto \nu[0,t]$, extended by $0$ to $(-\infty,0)$ and
viewed as elements of $D_{\bR}$. For nondecreasing functions,
convergence in the Skorokhod $M_{1}$ topology is pointwise convergence
at the continuity points of the limit \cite[Corollary 12.5.1]{Whi02}.
Hence convergence in distribution of Feller random measures is
equivalent to that of their distribution functions in $M_{1}$, and,
when the limit is a.s.\ diffuse, in the Skorokhod $J_{1}$ topology.
\end{remark}

\subsection{Vertical noise as an isometry}
\label{s:noise}
Theorem \ref{t:main} exhibits the Feller random measure $\xi$ as a
``marginal'' of the superprocess $X$: the parameter $t$,
which plays the role of space for $X$, becomes time for
$\xi$, while the branching time $\tau$ gets integrated away. The
\define{vertical noise} $N$ is obtained from the martingale measure $M$
associated to $X$ in the same way, namely
\begin{align}
\label{noise}
N(\vp) = \int_{\sp \times \tm}\vp(t)\,
M\prn{ \dt,\dtau }
\efor \vp \in \Ltn. 
\end{align}
By Tonelli's theorem,
$\vp \in \Ltn$ if and only if
  $\ee*{ \int_{\sp}\vp^{2}(t)\,\xi\prn{ \dt } } < \infty$. By
\eqref{mart-bracket} and \eqref{occup}, this bounds $\ee*{ M_{\tau}(\vp)^{2} }$ uniformly in
$\tau$, so \eqref{noise} is the terminal value of an
$\Ltwo$-bounded martingale. We abbreviate
the norm of $\Ltn$ by
$\norm{\vp}_{N} := \prn*{ \int_{\sp}\vp^{2}\,\diffd\ee*{\xi} }^{1/2}$,
so that the Walsh isometry reads
$\ee*{ N(\vp)^{2} } = \norm*{\vp}_{N}^{2}$.

\subsection{Stochastic equations in weak form}
\label{s:weak}
Continuing in the spirit of Theorem \ref{t:main}, we ``marginalize''
the martingale problem \eqref{mart-prob} to obtain the weak form of
the equation satisfied by $\xi$, in two equivalent versions.

\begin{proposition}
\label{p:weak}
Let $\rho$ be PGID, let $\mu \in \Mlf(\sp)$, and let $\xi$ be the
$(\rho,\mu)$-Feller random measure in occupation form. Then:
\begin{align}\label{weak-gen}
   \ab{\mu, \vp} + \ab{\xi, \sA \vp}
  + N(\vp)  &= 0 \eas \eforeach
  \vp \in \sD(\sA) \cap \Cc(\sp), \eand \\
  \label{weak-res}
   \ab{\mu,U\psi} 
 - \ab{\xi,\psi}
  + N( U\psi)  &= 0 \eas
  \eforeach \psi \in \Cc(\sp).
\end{align}
\end{proposition}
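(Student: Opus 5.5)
The plan is to integrate the martingale problem \eqref{mart-prob} of the superprocess over the branching time, and then to obtain \eqref{weak-res} from \eqref{weak-gen} by substituting $\vp = U\psi$. The martingale problem is recalled in Appendix \ref{S:spier}. For $\vp \in \sD(\sA)$ it gives
\begin{align*}
  \ab{X_{\tau},\vp} = \ab{\mu,\vp} + \int_{0}^{\tau}\ab{X_{\sg},\sA\vp}\,\dsg + M_{\tau}(\vp),
\end{align*}
with $\bkt{M(\vp)}_{\tau} = \int_{0}^{\tau}\ab{X_{\sg},\vp^{2}}\,\dsg$. We then let $\tau \to \infty$ and treat the three terms one at a time.

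\emph{The term $\ab{X_{\tau},\vp}$.} The total mass $\ab{X_{\tau},1}$ restricted to a bounded window is dominated by a Feller diffusion with rate-one killing (Proposition \ref{p:dw-props}). Hence $X_{\tau}$ charges $\supp\vp$ less and less, and $\ab{X_{\tau},\vp} \to 0$ a.s. and in $\Ltwo$; for $\mu$ finite this is simply extinction. Since $\vp$ has bounded support and $K$ only moves upward (Lemma \ref{l:one-sided}), only the mass of $\mu$ on $[0,\sup\supp\vp]$ matters. So we may restrict $\mu$ to a bounded set and assume it is finite.

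\emph{The drift term.} Here $\sA\vp$ need not have compact support, but Lemma \ref{l:one-sided} shows it vanishes above $\sup\supp\vp$, and it is bounded. Therefore
\begin{align*}
  \int_{0}^{\infty}\ab{X_{\sg},\abs{\sA\vp}}\,\dsg \leq \norm{\sA\vp}_{\infty}\,\xi[0,T] < \infty \eas,
\end{align*}
with $T = \sup\supp\vp$, by \eqref{expmom}. By \eqref{occup} and Fubini, the integral converges to $\ab{\xi,\sA\vp}$.

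\emph{The martingale term.} $M_{\tau}(\vp)$ is $\Ltwo$-bounded, because $\ee{\bkt{M(\vp)}_{\infty}} = \ab{\rho*\mu,\vp^{2}} < \infty$ by Proposition \ref{p:basic}(\ref{i:moments}). It therefore converges a.s. and in $\Ltwo$ to $N(\vp)$ as defined in \eqref{noise}. Passing to the limit in the martingale problem gives
\begin{align*}
  0 = \ab{\mu,\vp} + \ab{\xi,\sA\vp} + N(\vp),
\end{align*}
which is \eqref{weak-gen}.

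\emph{Deriving \eqref{weak-res}.} Since $U$ is the $0$-potential of $K$, we expect $U\psi \in \sD(\sA)$ with $\sA U\psi = -\psi$ for $\psi \in \Cc(\sp)$. The obstacle is that $U\psi$ is not compactly supported: it vanishes above $\sup\supp\psi$, but need not vanish near $0$, and this is harmless. The real issues are membership in $\sD(\sA)$ and continuity. $U\psi$ vanishes at infinity and is in $\Cz$, since $P_{\tau}$ is Feller and $\int_{0}^{\infty}\norm{P_{\tau}\psi}_{\infty}\,\dtau \leq \norm{\psi}_{\infty}$ thanks to the killing. The standard resolvent identity then gives $U\psi = R_{0}\psi \in \sD(\sA)$ and $\sA U\psi = -\psi$. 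Finally, $U\psi$ is continuous and vanishes on $(\sup\supp\psi,\infty)$, so it lies in $\Cc(\sp)$. This is exactly why \eqref{weak-gen} was stated for $\sD(\sA)\cap\Cc(\sp)$. Substituting $\vp = U\psi$ yields \eqref{weak-res}.

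If one prefers to avoid the generator domain, \eqref{weak-res} can also be derived directly. Use the mild form $\ab{X_{\tau},\vp} = \ab{\mu,P_{\tau}\vp} + \int_{0}^{\tau}\int P_{\tau-\sg}\vp\,\diffd M$, integrate in $\tau$, and apply stochastic Fubini, using $\int_{0}^{\infty}P_{\tau}\psi\,\dtau = U\psi$. I expect the main technical point to be the justification of the limits when $\mu$ is only locally finite, and the localization via Lemma \ref{l:one-sided} handles it.
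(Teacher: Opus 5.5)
Your proposal is correct and follows essentially the same route as the paper: you let $\tau\to\infty$ in \eqref{mart-prob}. The first term $\ab{X_\tau,\vp}$ vanishes by extinction after localizing $\mu$ to a bounded window via one-sidedness, the drift term converges by Fubini because $\sA\vp$ vanishes above $\sup\supp\vp$, and $M_\tau(\vp)$ converges in $\Ltwo$ to $N(\vp)$; you then substitute $\vp=U\psi\in\sD(\sA)\cap\Cc(\sp)$ with $\sA U\psi=-\psi$. Your remarks that $\sA\vp$ and $U\psi$ ``need not have compact support'' contradict what you then correctly deduce from Lemma \ref{l:one-sided}, namely that both vanish above a finite height and so have compact support in $\sp$, but this slip does not affect the argument.
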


\begin{proof} In Appendix \ref{s:prf-weak}.
\end{proof}

\subsection{Vertical noise as a martingale}
\label{s:noise-mg}
To use $N$ as the driving noise in Proposition \ref{p:weak}, we
identify it as a martingale with respect to a suitable filtration.

Equation \eqref{dw-filt} in Appendix \ref{S:spier} defines the
filtration $\sfamtau{\sF_\tau}$, which we call the
\define{horizontal filtration}, as the universally
completed natural filtration of $X$. To define a
filtration compatible with $\xi$, we need a ``vertical''
construction: for $t \geq 0$, let
\begin{align*}
\sV^0_t
:= \sigma\Bprn{X_\tau(A):\tau\in\tm,\,A\in\sB[0,t]}.
\end{align*}
With
$\sZ^{\mu}$ denoting the family of $\bP^{\mu}$-null sets,
the \define{vertical filtration} $\prfis{\sV_t}$ is defined
as the right-continuous augmentation of the universal completion of
$\prfis{\sV^0_t}$, i.e.,
\begin{align}\label{vfilt}
\sV_t
:= \bigcap_{s>t} \overline{\sV}_s, \ewhere \overline{\sV}_s := \quad
\smashoperator{\bigcap_{\mu\in \Mlf(\sp)}} \quad
(\sV^0_s\vee\sZ^\mu) \efor
t\geq0.
\end{align}
We write $N_{t} := N\prn{ \ind{[0,t]} }$
for the vertical noise accumulated up to the height $t \in \sp$, and
$\Gamma_{t} := \xi[0,t]$ for the distribution function of $\xi$.
The initial value $N_{0}$ of $N$ need not vanish; recall the
conventions of subsection \ref{s:notation} concerning $t=0$.
\begin{proposition}
\label{p:noise-mg}
Let $\rho$ be PGID, let $\mu \in \Mlf(\sp)$, and let $\xi$ be the
$(\rho,\mu)$-Feller random measure in occupation form. Then
$\prfis{N_{t}}$ is a square-integrable mean-zero
    $(\bP^{\mu},\prfis{\sV_{t}})$-martingale, which we always take
    in its \cd{} version,  
    and we have
    \begin{align}\label{stoch-int}
      N(\vp) = \int_0^{\infty} \vp(t) \, \dN_t \efor \vp \in \Ltn.
    \end{align}
    Moreover, the processes
    \begin{align}\label{expmg}
      \prfis{ N_{t}^{2}-\Gamma_t } \eand
      \prfiB{ \Exp{ \ld N_{t} - \tot\ld^{2}\,\Gamma_t} } \efor
      \abs{\ld} \leq 1/4
    \end{align}
    are $(\bP^{\mu}, \prfis{\sV_t})$-martingales.
\end{proposition}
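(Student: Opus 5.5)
The plan is to exploit the one-sided structure of the spatial motion: $K$ only moves upward, so the mass of $X$ in $[0,t]$ is fed only from below and evolves autonomously. The proof has four steps.

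\emph{Step 1: adaptedness of $N_t$.} Fix $t$. Since $\ind{[0,t]}$ is not in $\sD(\sA)$, I would work with \eqref{weak-res} instead, or approximate $\ind{[0,t]}$ from inside by functions in $\sD(\sA)\cap\Cc(\sp)$ that vanish on $(t,\infty)$.

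For $\vp\in\sD(\sA)\cap\Cc(\sp)$ vanishing on $(t,\infty)$, Lemma \ref{l:one-sided} shows that $\sA\vp$ also vanishes on $(t,\infty)$. Then \eqref{weak-gen} gives
\begin{align*}
N(\vp) = -\ab{\mu,\vp} - \ab{\xi,\sA\vp},
\end{align*}
and the right-hand side depends only on $\xi|_{[0,t]}$. This is $\sV^0_t$-measurable, because $\xi(A)=\int X_\tau(A)\,\dtau$. Passing to the limit in $\Ltn$ by the Walsh isometry, $N_t$ is $\overline{\sV}_t$-measurable.

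\emph{Step 2: the martingale property.} For $s<t$ I need $\ee{N_t-N_s\giv \sV_s}=0$, together with the corresponding identities for the quadratic and exponential processes in \eqref{expmg}.

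The key tool is a conditional independence coming from the superprocess. Given $\sV^0_s$, i.e. the whole horizontal history of $X|_{[0,s]}$, the increment $N(\ind{(s,t]})$ is a stochastic integral against $M$ of a function supported on $(s,t]$. The component $X|_{[0,s]}$ is itself a superprocess, with killed motion $K$ stopped upon exiting $[0,s]$. The mass on $(s,\infty)$ evolves as a superprocess with immigration given by the jumps out of $[0,s]$, and those jumps are driven by $X|_{[0,s]}$, so its law is determined by $\sV^0_s$. The martingale measure restricted to $(s,\infty)\times\tm$ then has zero conditional mean given this information.

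To avoid the delicate conditioning, I would verify the martingale property through products: show that
\begin{align*}
\ee{(N_t-N_s)\,e^{-\ab{\xi,g}}}=0 \quad \text{for } g\in\Bcp \text{ supported on } [0,s].
\end{align*}
I would compute this via the Laplace functional with a perturbation. Replace $f=g$ by $g-\eps\psi$, where $\psi$ is supported on $(s,t]$, use \eqref{weak-res}, and differentiate in $\eps$. Proposition \ref{p:unique} gives that $v[g]$ vanishes on $(s,\infty)$, and the one-sided structure should make the derivative collapse to zero. I expect this to be the main obstacle. My first attempt would be the exponential martingale directly, because it yields all three statements at once.

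\emph{Step 3: the exponential martingale.} Consider the process $\exp(\ld N(\vp)-\tot\ld^2\ab{\xi,\vp^2})$ at the horizontal level. It is the stochastic exponential of $\ld M_\tau(\vp)$, with bracket $\int_0^\tau\ab{X_\sg,\vp^2}\,\dsg$ from \eqref{mart-bracket}. It is therefore a horizontal martingale, provided integrability holds, which follows from \eqref{expmom} for $\ld^2/2\le 1/2$, i.e. $\abs{\ld}\le 1/4$ with room to spare for a Hölder step.

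To obtain the vertical statement, take $\vp=\ind{(s,t]}$ and condition on $\sV_s$. The vertical statement then follows from the horizontal one combined with the independence structure above: conditionally on the $[0,s]$ component, the horizontal exponential martingale for functions supported on $(s,t]$ still has conditional expectation $1$.

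\emph{Step 4: consequences.} Expanding the exponential identity in $\ld$ gives the mean-zero martingale property of $N_t$ and the martingale property of $N_t^2-\Gamma_t$; square integrability comes from the isometry. The representation \eqref{stoch-int} holds for indicators by definition, extends to simple functions by linearity, and extends to all of $\Ltn$ by isometry, since $\ab{N}=\Gamma$ and $\ee{\Gamma}=\ee{\xi}$. Finally, right-continuity of the augmentation is handled by the càdlàg version of $N$ and $L^2$-continuity.
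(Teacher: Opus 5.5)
There is a genuine gap at the step that carries the whole proof. Everything rests on the vertical identity $\ee{\sE(\ld\chi)\mid\sV^{0}_{s}}=1$. Here $\sE(\ld\chi)$ is the terminal value of the horizontal stochastic exponential of a Walsh integral whose integrand $\chi$ is supported on $(s,t]\times[0,\infty)$.

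The horizontal martingale property in your Step 3 gives only the unconditional $\ee{\sE(\ld\chi)}=1$. The $\sigma$-algebra $\sV^{0}_{s}$ contains $X_{\tau}|_{[0,s]}$ for \emph{all} $\tau$, i.e.\ information from the horizontal future, so no argument in the $\tau$-filtration lets you condition on it. You bridge this with ``the independence structure above'', but Step 2 describes that structure only heuristically: an immigration decomposition of the mass above $s$, which would also have to handle mass crossing $s$ by drift, and a conditional mean-zero property of $M$ on $(s,\infty)\times[0,\infty)$. The conditional exponential identity needs more than mean zero. For instance, you would need that $M$ restricted to $(s,\infty)\times[0,\infty)$ stays a martingale measure after the horizontal filtration is initially enlarged by $\sV^{0}_{s}$, and none of this is proved.

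The paper closes the gap in two moves, both absent from your proposal.
\begin{enumerate}
\item It shows $\ee{M(\psi)(\sE(\ld\chi)-1)}=0$ for bounded integrands $\psi$ supported on $[0,s]\times[0,\infty)$. Disjoint supports make $M(\psi)$ and $M(\chi)$ orthogonal, so $M(\psi)\sE(\ld\chi)$ is a local martingale. It is a true martingale by the $\bL^{2}$ bound on $\sE(\ld\chi)$, obtained from \eqref{expmom} via Cauchy--Schwarz against $\sE(4\ld\chi)$.
\item It invokes the predictable representation theorem for the martingale measure of the superprocess $X|_{[0,s]}$. That martingale measure is the restriction of $M$, and its terminal $\sigma$-algebra is $\sV^{0}_{s}$ (via the paper's identification of $\sV^{0}_{s}$ with the $\sigma$-algebra of Walsh integrals over $[0,s]\times[0,\infty)$). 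Together with the first move, this makes $\sE(\ld\chi)-1$ orthogonal to every mean-zero element of $\bL^{2}(\sV^{0}_{s})$.
\end{enumerate}

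The product test in your Step 2 does not fill the gap, for two reasons. First, the variables $e^{-\ab{\xi,g}}$ with $g$ supported on $[0,s]$ generate only $\sigma(\xi|_{[0,s]})$, which is in general strictly smaller than $\sV^{0}_{s}$. At best they give the martingale property in the natural filtration of $\Gamma$, not in $\{\sV_t\}$. Second, the computation targets the wrong quantity. Differentiating at $g-\eps\psi$ yields $\ee{\ab{\xi,\psi}e^{-\ab{\xi,g}}}$, and \eqref{weak-res} turns $\ab{\xi,\psi}$ into $N(U\psi)$ plus a constant. But $U\psi$ does not vanish on $[0,s]$, so this is not an increment of $N$ over $(s,t]$ and has no reason to vanish.

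What does collapse is the perturbation in the direction $\sA\phi$, for $\phi\in\sD(\sA)\cap\Cc(\sp)$ vanishing on $[0,s]$. Since $v[g]\phi=0$ and $U\sA\phi=-\phi$, the function $v[g]-\ld\phi$ solves \eqref{riccati} with $f=g+\ld\sA\phi+\tot\ld^{2}\phi^{2}$. Then \eqref{weak-gen} gives $\ee{e^{-\ab{\xi,g}}\,e^{\ld N(\phi)-\tot\ld^{2}\ab{\xi,\phi^{2}}}}=\ee{e^{-\ab{\xi,g}}}$ in one stroke. Turning this into a proof would still require two things: extending \eqref{laplace} to such signed $f$, and redoing the computation with the joint log-Laplace functional of $X|_{[0,s]}$ at finitely many horizontal times, so as to reach all of $\sV^{0}_{s}$.

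Two smaller points. Your Step 1 is a more direct route to adaptedness than the paper's. However, continuous functions vanishing on $(t,\infty)$ vanish at $t$, so they cannot approximate $\ind{[0,t]}$ in $\Ltn$ when $\ee{\xi}$ has an atom at $t$, which is possible if $\rho(\{0\})>0$. Use $\ind{[0,t')}$ with $t'\downarrow t$ instead; this still lands in $\sV_{t}$. In Step 4, $\ab{N}=\Gamma$ is false when $\rho(\{0\})>0$ (Remark \ref{r:not-qv}). For the isometry you only need that $N^{2}-\Gamma$ is a martingale.
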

\begin{proof} In Appendix \ref{s:prf-noise-mg}.
\end{proof}

\begin{remark}
\label{r:not-qv}
Even though the processes in \eqref{expmg} are martingales, it
can happen that 
$\xi[0,t]$
is neither the predictable quadratic variation $\ab{N}_{t}$ nor
the optional quadratic variation $\bkt{N}_{t}$ of $N$. By 
Corollary \ref{c:dichotomy} below, these three processes are
a.s.\ different as soon as $\rho(\set{0})>0$ and $\mu \in \Mlf^{*}(\sp)$.
\end{remark}
\subsection{Stochastic equations in strong form}
\label{s:strong}
With $N$ identified as the noise induced by a \cd{}
martingale, we can transform the weak stochastic equation
\eqref{weak-gen} into a stronger, pointwise form.   
Recall that measures on $\sp$ are
extended trivially to $\bR$. Throughout the rest of this subsection,
$\partial$ denotes
the distributional derivative on $\bR$. A $\vp \in \Conec(\sp)$
need not vanish at the origin, so its trivial extension to $\bR$ need
not be $C^{1}$ there. The pairing of $\partial\nu$ with such a $\vp$
is therefore always understood as
$\ab{ \partial\nu,\vp } := -\ab{ \nu,\vp' }$, with $\vp'$ the
derivative on $\sp$.

\smallskip

For $\nu \in \Mlf(\sp)$, let $\sA^{\ast} \nu$ be the linear
functional on the space $\Conec(\sp)$ defined by the adjoint relation
\begin{align}
  \label{adjoint}
  \ab{ \sA^{\ast}\nu, \vp } := \ab{ \nu, \sA\vp }
\end{align}
 for all
$\vp \in \Conec(\sp)$. 
The right-hand side is finite because
$\sA$ maps $\sD(\sA) \supseteq \Conec(\sp)$ into $\Cz(\sp)$,
and because $\sA\vp$ vanishes on a half-line by Lemma
\ref{l:one-sided}.
For  $\vp \in \Conec(\sp)$,  we have
\begin{align}\label{kgen}
  \sA\vp(t) = b\,\vp'(t) +
  \int_{0}^{\infty}\vp'(t+u)\prn{ 1 + \oPi(u) }\,\du
  \ewhere
  \oPi(t) := \Pi(t,\infty),
\end{align}
with the killing at rate $1$ accounting for the constant in
$1 + \oPi$.
Therefore, if we set
\begin{align}\label{sonin} k(\dt) := b\,\delta_{0}(\dt) +
  \prn{ 1 + \oPi(t) }\,\dt,
\end{align}
we immediately get the following representation, which puts $\sA^*$ in
``divergence form'':
\begin{lemma}
\label{l:adjoint-k}
For every $\nu \in \Mlf(\sp)$, $k * \nu \in \Mlf(\sp)$,
and, interpreted as
linear functionals on $\Conec(\sp)$, we have
\begin{align}\label{divform}
  -\sA^{\ast}\nu = \partial\prn{ k * \nu }.
\end{align}
\end{lemma}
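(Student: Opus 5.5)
The proof has two parts: local finiteness of $k*\nu$, and the identity \eqref{divform}, which is a Fubini computation starting from \eqref{kgen}.

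\textbf{Local finiteness.} By \eqref{sonin}, $k$ is $b\,\delta_{0}$ plus the density $1+\oPi$. Since $\Pi$ integrates $\min(1,t)$, the tail $\oPi$ is nonincreasing and locally integrable on $\sp$. Indeed,
\begin{align*}
  \int_{0}^{T}\oPi(u)\,\du = \int_{(0,\infty)}\min(u,T)\,\Pi(\du) < \infty .
\end{align*}
Hence $k \in \Mlf(\sp)$. For $\nu \in \Mlf(\sp)$ and $T>0$, only the restrictions of $k$ and $\nu$ to $[0,T]$ contribute to $(k*\nu)[0,T]$, because both measures live on $\sp$. This gives
\begin{align*}
  (k*\nu)[0,T] \leq k[0,T]\,\nu[0,T] < \infty .
\end{align*}

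\textbf{The identity.} Fix $\vp \in \Conec(\sp)$ with support in $[0,T]$. By \eqref{adjoint} and \eqref{kgen},
\begin{align*}
  \ab{\sA^{\ast}\nu,\vp} = \int \Bprn{ b\,\vp'(t) + \int_{0}^{\infty}\vp'(t+u)\prn{1+\oPi(u)}\,\du }\,\nu(\dt).
\end{align*}
The integrand vanishes for $t>T$, by Lemma \ref{l:one-sided} or directly. In the inner integral only $u \leq T$ contributes, and it is bounded by $\norm{\vp'}_{\infty}\,k[0,T]$. So the double integral converges absolutely, and Fubini's theorem rewrites the right-hand side as
\begin{align*}
  \iint \vp'(t+u)\,k(\du)\,\nu(\dt) = \ab{k*\nu,\vp'} .
\end{align*}
By the convention for pairing $\partial$ with $\Conec(\sp)$ functions, $\ab{k*\nu,\vp'} = -\ab{\partial(k*\nu),\vp}$. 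This is \eqref{divform}.

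\textbf{Formula \eqref{kgen}.} If \eqref{kgen} is to be verified rather than taken as given, it follows from the generator formula of subsection \ref{s:subord}. Write $f(t+s)-f(t) = \int_{0}^{s}f'(t+u)\,\du$ and interchange the integrals in $\Pi(\ds)\,\du$ to produce the factor $\oPi(u)$. The killing term $-f(t)$ equals $\int_{0}^{\infty}f'(t+u)\,\du$ because $f$ has compact support. The interchange is justified by integrability of $\min(1,s)$ against $\Pi$ together with boundedness and compact support of $f'$.

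The only point needing care is this justification of Fubini near $u=0$, where $\oPi$ may blow up. It is handled by the local integrability of $\oPi$ established in the first part.
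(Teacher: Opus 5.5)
Your proof is correct and follows the paper's own (one-line) route: \eqref{divform} is read off from \eqref{kgen}, which says $\sA\vp(t) = \int \vp'(t+u)\,k(\du)$, so that $\ab{\nu,\sA\vp} = \ab{k*\nu,\vp'}$ by Fubini. You additionally make explicit what the paper leaves implicit, namely the local finiteness of $k*\nu$ via $\int_{0}^{T}\oPi = \int\min(u,T)\,\Pi(\du)$ and the absolute convergence that justifies the interchange.
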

\begin{remark}
\label{r:symbol}
Taking Laplace transforms in \eqref{sonin} gives
  $\hat k(\ld) = \Psi(\ld)/\ld$,
so that \eqref{divform} reads
  $\widehat{ \sA^{\ast}\nu } = -\Psi\,\hat\nu$ for every $\nu$
whose Laplace transform converges. Thus $\sA^*$ is a constant-coefficient
pseudodifferential operator with symbol $-\Psi$.
Combined with $\hrho = 1/\Psi$ from \eqref{rho-psi}, this shows that
the pair $(k,\rho)$
satisfies the \emph{Sonin relation} (see \cite{Son84})
\begin{align}\label{sonin-rel}
\hat k(\ld)\,\hat\rho(\ld) = 1/\ld, \eie
  (k * \rho)(\dt) = \dt.
\end{align}
Equivalently, $k$ is the \emph{resolvent of the first kind} of
$\rho$, in the sense of \cite[Chapter 5]{GriLonSta90}.
\end{remark}
Integrating \eqref{weak-gen} by parts gives the following pointwise
form. Its second equation, \eqref{volterra}, generalizes the
Volterra equations used to define classical and rough Feller (CIR)
processes \cite{AbiLarPul19, Abi21, ElERos19}. It follows from
\eqref{strong-k} by convolution with $\rho$, using \eqref{sonin-rel} and
the stochastic Fubini theorem.
\begin{proposition}[Stochastic equations: the strong form]
\label{p:strong}
Let $\rho$ be PGID, let $\mu \in \Mlf(\sp)$, let $\xi$ be the
$(\rho,\mu)$-Feller random measure in occupation form, and let $k$ be given by
\eqref{sonin}. Then, almost surely,
the measure $k*\xi$ is absolutely continuous and
\begin{align}
  \label{strong-k}
  \prn{ k * \xi }(\dt) &= (N_t + \mu[0,t])\, \dt, \\
  \label{volterra}
  \xi[0,t] &= \int_0^t \rho[0,t-s]\, \dN_s + (\rho*\mu)[0,t]
  \eforall t \in \sp .
\end{align}
\end{proposition}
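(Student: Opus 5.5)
We prove \eqref{strong-k} first and then obtain \eqref{volterra} from it by convolving with $\rho$. Fix $\vp \in \Conec(\sp)$. By \eqref{weak-gen}, Lemma \ref{l:adjoint-k} and the convention $\ab{\partial\nu,\vp} = -\ab{\nu,\vp'}$, we have almost surely
\begin{align*}
  \ab{k*\xi,\vp'} = -\ab{\xi,\sA\vp} = \ab{\mu,\vp} + N(\vp).
\end{align*}
Next rewrite the right-hand side as a pairing with $\vp'$. Since $\vp$ has compact support, $\vp(s) = -\int_{s}^{\infty}\vp'(t)\,\dt$ for every $s \in \sp$. Fubini then gives
\begin{align*}
  \ab{\mu,\vp} = -\int_{\sp}\mu[0,t]\,\vp'(t)\,\dt.
\end{align*}
For the noise term, use \eqref{stoch-int}: $N(\vp) = \int\vp\,\dN$. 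Integration by parts for the \cd{} martingale $N$ (with $N_{0-}=0$ and $\vp$ vanishing near infinity) yields $N(\vp) = -\int_{\sp}N_{t}\,\vp'(t)\,\dt$ almost surely. Equivalently, this is the stochastic Fubini identity $\int\vp\,\dN = -\int\int_{0}^{t}\dN_{s}\,\vp'(t)\,\dt$, which is justified by the Walsh isometry because $\vp' \in \Cc$. Combining the two displays,
\begin{align*}
  \ab{k*\xi,\vp'} = -\int_{\sp}\prn{N_{t}+\mu[0,t]}\vp'(t)\,\dt
  \quad \text{a.s., for each fixed } \vp .
\end{align*}

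The step I expect to be the main obstacle is passing from ``a.s.\ for each $\vp$'' to ``a.s.\ simultaneously for all $\vp$''. Choose a countable family $\sD_{0} \subset \Conec(\sp)$ such that the derivatives $\vp'$ are dense in $\Cc$ for uniform convergence on compacts with uniformly bounded supports. For example, take $\vp$ with $\vp'$ ranging over rational combinations of fixed bumps, adjusted so that $\vp$ has compact support; the condition $\vp(0)\neq 0$ is permitted. Both sides are continuous in $\vp'$ under this convergence, since $k*\xi$ and $N_{t}+\mu[0,t]$ are locally bounded. Hence the identity extends from $\sD_{0}$ to all $\vp$ on a single null-set complement. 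The identity then says that the locally finite measure $k*\xi$ and the measure $(N_{t}+\mu[0,t])\,\dt$ on $\sp$ have the same distributional derivative on $\bR$ when both are extended trivially to $\bR$. Test functions $\vp$ with $\vp(0)\neq0$ are allowed, so the derivatives $\vp'$ exhaust all of $\Cc(\sp)$, not merely functions integrating to zero. This pins down the measure itself, including any atom at $0$, rather than only up to a constant. It follows that $(k*\xi)(\dt) = (N_{t}+\mu[0,t])\,\dt$; in particular $k*\xi$ is absolutely continuous. The fact that $k*\xi \in \Mlf(\sp)$ comes from Lemma \ref{l:adjoint-k}.

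For \eqref{volterra}, convolve \eqref{strong-k} with $\rho$. By \eqref{sonin-rel} and associativity of convolution of locally finite measures on $\sp$,
\begin{align*}
  \rho*(k*\xi) = (\rho*k)*\xi = \Leb*\xi,
  \quad \text{so that} \quad
  (\rho*(k*\xi))[0,t] = \int_{0}^{t}\xi[0,s]\,\ds .
\end{align*}
On the other side,
\begin{align*}
  \int_{0}^{t}\int_{[0,t-u]}\prn{N_{s}+\mu[0,s]}\,\rho(\diffd r)\,\du ,
  \quad \text{with } s = u,\ u+r \le t .
\end{align*}
Write $N_{s} = \int_{0}^{s}\dN_{q}$ and apply the stochastic Fubini theorem; the integrand is bounded and deterministic, so Walsh isometry suffices. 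This gives
\begin{align*}
  \int_{0}^{t}\prn*{\int_{0}^{s}\rho[0,s-q]\,\dN_{q} + (\rho*\mu)[0,s]}\ds .
\end{align*}
Both sides are therefore integrals over $[0,t]$ of right-continuous functions of $s$. The left integrand is right-continuous because $\xi[0,\cdot]$ is; the right one has a right-continuous version by the Volterra structure. Differentiating in $t$ from the right yields \eqref{volterra} for all $t$ simultaneously.
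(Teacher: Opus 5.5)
Your route is the one the paper indicates: integrate \eqref{weak-gen} by parts to get \eqref{strong-k}, then convolve with $\rho$ using \eqref{sonin-rel} and the stochastic Fubini theorem. Your observation that $\{\vp' : \vp \in \Conec(\sp)\} = \Cc(\sp)$ is exactly what is needed, since it pins down the measure $k*\xi$ itself and not only up to a constant. Two steps, however, need repair.

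First, the sign in your opening display is wrong. Pairing $-\sA^{\ast}\xi = \partial(k*\xi)$ with $\vp$ and using $\ab{\partial\nu,\vp} = -\ab{\nu,\vp'}$ gives $\ab{\xi,\sA\vp} = +\ab{k*\xi,\vp'}$. You can check this directly for $\Pi = 0$: there $\ab{k*\xi,\vp'} = b\ab{\xi,\vp'} - \ab{\xi,\vp} = \ab{\xi,\sA\vp}$. As written, your displays combine to $\ab{k*\xi,\vp'} = -\int_{\sp}(N_{t}+\mu[0,t])\vp'(t)\,\dt$. That would give $k*\xi = -(N_{t}+\mu[0,t])\,\dt$, so the conclusion you then state does not follow from what you derived. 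With the correct sign, \eqref{weak-gen} gives $\ab{k*\xi,\vp'} = -\ab{\mu,\vp} - N(\vp) = \int_{\sp}(N_{t}+\mu[0,t])\vp'(t)\,\dt$. The rest of your argument for \eqref{strong-k} then goes through.

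Second, in the derivation of \eqref{volterra}, you assert without proof that $s \mapsto \int_{0}^{s}\rho[0,s-q]\,\dN_{q}$ ``has a right-continuous version by the Volterra structure''. A stochastic Volterra integral has no a priori pathwise regularity, and this regularity is precisely what your differentiation step needs. The simplest fix is to differentiate before invoking stochastic Fubini. Convolving \eqref{strong-k} with $\rho$ gives, pathwise and for all $t$,
\[
  \int_{0}^{t}\xi[0,s]\,\ds = \int_{[0,t]}\int_{0}^{t-r}\bprn{N_{u}+\mu[0,u]}\,\du\,\rho(\diffd r).
\]
Since $N$ is \cd{} and locally bounded, dominated convergence shows that the right derivative of the right-hand side is $\int_{[0,t]}\bprn{N_{t-r}+\mu[0,t-r]}\,\rho(\diffd r)$ for every $t$. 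Hence $\xi[0,t] = \int_{[0,t]}N_{t-r}\,\rho(\diffd r) + (\rho*\mu)[0,t]$ for all $t$, a.s. For each $t$, the stochastic Fubini theorem, applied with the bounded deterministic integrand $\ind{\{q+r\leq t\}}$, then identifies $\int_{[0,t]}N_{t-r}\,\rho(\diffd r)$ with $\int_{0}^{t}\rho[0,t-q]\,\dN_{q}$ a.s.

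Alternatively, \eqref{volterra} at a fixed $t$ is simply \eqref{weak-res} with $\psi = \ind{[0,t]}$. For this you only need to extend \eqref{weak-res} from $\Cc(\sp)$ to that indicator, which follows from bounded convergence and the Walsh isometry.
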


\section{The continuous case}
\label{S:cont}
The noise behaves quite differently in the continuous and the discontinuous cases.
In this section we treat the continuous case, where $\rho(\set{0})=0$,
equivalently $b>0$ or $\Pi(0,\infty)=\infty$.

We first describe the noise (subsection \ref{s:cont-noise}) and show
that, together with its quadratic variation, it characterizes
continuous Feller random measures (subsection \ref{s:mart-char}). The
rest of the section concerns the density $Z$: when it exists
(subsection \ref{s:density}), which equations it satisfies (subsection
\ref{s:st-eq-den}), how regular it is (subsection \ref{s:reg}), and
in what sense this regularity is sharp (subsection \ref{s:sharp}).
Subsection \ref{s:examples} collects examples.

\subsection{The structure of the noise}
\label{s:cont-noise}
We first show that a continuous Feller random measure comes with
a continuous noise process:
\begin{proposition}
  \label{p:noise-cont}
  Let $\rho$ be PGID with $\rho(\set{0})=0$, let
  $\mu \in \Mlf(\sp)$, and let $\xi$ be the $(\rho,\mu)$-Feller random
  measure in occupation form. Then the
  $(\bP^{\mu},\prfis{\sV_{t}})$-martingale $\prfis{N_{t}}$ of
  Proposition \ref{p:noise-mg} is continuous, with $N_{0}=0$ and
  quadratic variation $\bkt{N}_{t} = \Gamma_t$.
\end{proposition}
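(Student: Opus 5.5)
To prove Proposition \ref{p:noise-cont}, I would show that $N$ has no jumps by bounding its jumps with the jumps of $\Gamma$, and then identify $\bkt{N}$ from the martingale properties in \eqref{expmg}. Since $\rho(\set{0})=0$, Proposition \ref{p:basic}(\ref{i:diffuse}) shows that $\xi$ is diffuse. So $\Gamma$ is continuous and $\Gamma_0=\xi(\set{0})=0$ almost surely. In particular $\ee{N_0^2}=\ee{\Gamma_0}=0$, by the first martingale in \eqref{expmg} evaluated at $t=0$, so $N_0=0$.

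Next, I would show that $N$ is continuous. Fix $\ld$ with $0<\abs{\ld}\le 1/4$ and set $\sE^\ld_t=\Exp{\ld N_t-\tot\ld^2\Gamma_t}$, which is a positive martingale by Proposition \ref{p:noise-mg}. I see two routes.

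\emph{Route (a).} Compute fourth moments of increments directly. The Walsh isometry together with the Burkholder inequality for the martingale measure $M$ (or the exponential moments) gives
\begin{align*}
\ee{(N_t-N_s)^4}\lesssim \ee{(\Gamma_t-\Gamma_s)^2}+\ee{\Gamma_t-\Gamma_s}\cdot(\text{small}),
\end{align*}
which is not obviously enough for Kolmogorov's criterion.

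\emph{Route (b), preferred.} Since $\Gamma$ is continuous, $\Delta\sE^\ld_t=\sE^\ld_{t-}(e^{\ld\Delta N_t}-1)$. Both $N^2-\Gamma$ and $\sE^\ld$ are martingales. I would expand the exponential martingale in $\ld$: the family $\sE^\ld$ has Laplace-type control, so its Taylor coefficients in $\ld$ are martingales. The fourth coefficient gives that
\begin{align*}
N^4-6N^2\Gamma+3\Gamma^2
\end{align*}
is a martingale, which is the Hermite-polynomial martingale.

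Combining this with the fact that $N^2-\Gamma$ is a martingale and $\Gamma$ is continuous, Itô's formula for the c\`adl\`ag semimartingale $N$ compares $N^4$ with its compensator. The jump term $\sum_{s\le t}\prn{(\Delta N_s)^4+\ldots}$ must have zero compensator. More simply, I would use the general identity that for a square-integrable martingale $N$ with $N^2-\Gamma$ a martingale and $\Gamma$ continuous, $\Gamma=\ab{N}$. Then
\begin{align*}
\bkt{N}_t=\Gamma_t+\sum_{s\le t}(\Delta N_s)^2-\bigl(\text{compensator of the jumps}\bigr).
\end{align*}
From the fourth-order Hermite martingale and Itô's formula, the process $\sum_{s\le t}(\Delta N_s)^4$ is a nonnegative increasing process whose compensator vanishes. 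Hence it is identically zero, so $\Delta N\equiv0$. The main obstacle is justifying the differentiation in $\ld$ under conditional expectation. I would do this through dominated convergence, using the exponential integrability of $N_t$ and $\Gamma_t$ supplied by \eqref{expmg} and \eqref{expmom} for $\abs{\ld}\le1/4$.

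Finally, once $N$ is continuous, $\bkt{N}=\ab{N}$ is the unique continuous increasing process making $N^2-\bkt{N}$ a local martingale. Since $N^2-\Gamma$ is a martingale and $\Gamma$ is continuous, adapted and null at $0$, uniqueness of the Doob–Meyer decomposition gives $\bkt{N}=\Gamma$.

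Here $\Gamma$ is adapted to $\prfis{\sV_t}$ because $\xi[0,t]=\int X_\tau[0,t]\,\dtau$ is $\sV^0_t$-measurable.
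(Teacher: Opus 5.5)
Your route is genuinely different from the paper's and can be made to work. However, the jump-killing step does not go through as written.

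\textbf{The gap.} Apply It\^o's formula to $H_4(N,\Gamma)=N^4-6N^2\Gamma+3\Gamma^2$, with $\Gamma$ continuous and $N_0=\Gamma_0=0$. This gives
\begin{align*}
  H_4(N_t,\Gamma_t) = \int_0^t \bigl(4N_{s-}^3-12N_{s-}\Gamma_s\bigr)\,\dN_s
  + \int_0^t 6\bigl(N_{s-}^2-\Gamma_s\bigr)\,\diffd\bigl(\bkt{N}-\Gamma\bigr)_s
  + \sum_{s\le t}\Bigl(4N_{s-}(\Delta N_s)^3+(\Delta N_s)^4\Bigr).
\end{align*}
Here $\bkt{N}-\Gamma$ is a local martingale, since both $N^2-\bkt{N}$ and $N^2-\Gamma$ are. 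So the fourth Hermite martingale only tells you that $\sum_{s\le\cdot}\bigl(4N_{s-}(\Delta N_s)^3+(\Delta N_s)^4\bigr)$ is a local martingale. The cross term $4N_{s-}(\Delta N_s)^3$ has no sign, so you cannot yet conclude that $\sum(\Delta N_s)^4$ has zero compensator.

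\textbf{The repair.} It is available within your own framework, because the third Taylor coefficient gives the martingale $H_3(N,\Gamma)=N^3-3N\Gamma$. Its It\^o expansion is $\int(3N_{-}^2-3\Gamma)\,\dN+\int 3N_{-}\,\diffd(\bkt{N}-\Gamma)+V$, where $V_t:=\sum_{s\le t}(\Delta N_s)^3$. This sum converges absolutely, since $\sum\abs{\Delta N}^3\le\sup\abs{\Delta N}\,\bkt{N}$. Hence $V$ is a local martingale of finite variation, and $\int 4N_{-}\,\diffd V$ is a local martingale. It follows that $\sum_{s\le\cdot}(\Delta N_s)^4$ is a local martingale. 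Being nonnegative, increasing and null at $0$ (recall $\Delta N_0=N_0=0$), it vanishes, so $\Delta N\equiv0$.

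\textbf{Justification of the Taylor step.} Your domination argument is fine. The bound $\ee{e^{\ld N_t}}\le\ee{\sE^{2\ld}_t}^{1/2}\ee{e^{2\ld^2\Gamma_t}}^{1/2}$ together with \eqref{expmom} gives exponential moments of $\abs{N_t}$ for $\abs{\ld}\le1/8$. These dominate the first four $\ld$-derivatives.

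\textbf{A small correction.} The identity $\ee{N_0^2}=\ee{\Gamma_0}$ does not follow from the martingale property of $N^2-\Gamma$, which says nothing about its mean at time $0$. Use the Walsh isometry instead: $\ee{N_0^2}=\ee{\xi}(\set{0})=(\rho*\mu)(\set{0})=0$, as the paper does.

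\textbf{Comparison with the paper.} The paper avoids higher moments and the jump calculus altogether.
\begin{enumerate}
  \item It adds an independent Brownian motion $B$, so that the clock $\tilde A=\Gamma+t$ is continuous and strictly increasing.
  \item It stops the exponential martingale $\exp(\ld(N_t+B_t)-\ld^2\tilde A_t/2)$ at the inverse times $\tilde T_v$, where it is $\Ltwo$-bounded.
  \item Optional sampling then shows that $W_u=(N+B)_{\tilde T_u}$ is a Brownian motion.
\end{enumerate}
Continuity of $N=W_{\tilde A}-B$ and the identity $\bkt{N}=\tilde A-t=\Gamma$ follow at once. In effect, this is the time-change representation of Corollary \ref{c:dds} built into the proof.

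Your approach stays on the original probability space and uses only the polynomial martingales of orders two to four plus It\^o's formula for c\`adl\`ag semimartingales. The price is the fourth-order differentiation and the bookkeeping above. Your last step, identifying $\bkt{N}=\Gamma$ by uniqueness of the Doob--Meyer decomposition once $N$ is continuous, is correct.
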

\begin{proof}
  In Appendix \ref{s:prf-noise-cont}.
\end{proof}
The theorem of 
Dambis, Dubins and Schwarz \cite[Chapter V, Theorem 1.7]{RevYor99} now yields
the following:
\begin{corollary}
\label{c:dds}
Under the assumptions of Proposition \ref{p:noise-cont}, the
process $\Gamma$ is continuous 
and $\ab{N} = \bkt{N} = \Gamma$. Moreover, there exists, on a possibly
extended probability space, a Brownian motion $W$ such that $N_{t} =
W_{\Gamma_{t}}$ for all $t \in \sp$, a.s.
\end{corollary}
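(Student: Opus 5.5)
The plan is to read off all three claims from Proposition \ref{p:noise-cont} together with the diffuseness in Proposition \ref{p:basic}(\ref{i:diffuse}), and then apply the Dambis--Dubins--Schwarz theorem.

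\emph{Continuity of $\Gamma$.} Since $\rho(\set{0})=0$, Proposition \ref{p:basic}(\ref{i:diffuse}) gives that $\xi$ is a.s.\ diffuse. Being locally finite, it therefore has a continuous distribution function $\Gamma_{t}=\xi[0,t]$. This includes continuity at $0$ in the sense of the conventions of subsection \ref{s:notation}, since $\Gamma_{0}=\xi(\set{0})=0=\Gamma_{0-}$. Alternatively, continuity follows directly from $\bkt{N}=\Gamma$ and the continuity of $N$.

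\emph{Identification of the brackets.} By Proposition \ref{p:noise-cont}, $N$ is a continuous square-integrable $(\bP^{\mu},\prfis{\sV_t})$-martingale with $N_{0}=0$ and $\bkt{N}=\Gamma$. For a continuous local martingale the optional quadratic variation is continuous and adapted, hence predictable. So it coincides with the predictable compensator $\ab{N}$, and $\ab{N}=\bkt{N}=\Gamma$. As a consistency check, Proposition \ref{p:noise-mg} already states that $N^{2}-\Gamma$ is a martingale. Since $\Gamma$ is continuous, adapted (it is $\sV_t$-measurable by \eqref{occup} and the definition of $\sV^{0}_{t}$) and of finite variation, uniqueness in the Doob--Meyer decomposition gives $\ab{N}=\Gamma$ independently.

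\emph{Time change.} Apply \cite[Chapter V, Theorem 1.7]{RevYor99} to the continuous local martingale $N$ with $N_{0}=0$.

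On the event $\set{\Gamma_{\infty}=\infty}$, set $W_{s}:=N_{T_{s}}$, where $T_{s}:=\inf\set{t:\Gamma_{t}>s}$. Then $W$ is a Brownian motion with respect to the time-changed filtration, and $N_{t}=W_{\Gamma_{t}}$.

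In general $\Gamma_{\infty}=\xi(\sp)$ may be finite; it is a.s.\ finite, for example, when $\mu$ is finite, by Proposition \ref{p:basic}(\ref{i:mass-ig}). In that case we use the version of the theorem on an enlarged space. Take an independent Brownian motion $\beta$ on a product extension and set
\[
W_{s}:=N_{T_{s}} \text{ for } s<\Gamma_{\infty}, \qquad W_{s}:=N_{\infty}+\beta_{s-\Gamma_{\infty}} \text{ for } s\geq \Gamma_{\infty}.
\]
Here $N_{\infty}$ exists a.s.\ because $\bkt{N}_{\infty}<\infty$ there. Again $N_{t}=W_{\Gamma_{t}}$ for all $t$, a.s.

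The only point needing care is that $W$ is well defined on the flat stretches of $\Gamma$, that is, the intervals where $\xi$ puts no mass. This is standard: $N$ is constant on every interval where $\bkt{N}$ is constant, so $N_{t}=N_{T_{\Gamma_t}}$.

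I expect no real obstacle. All the substantive work (continuity of $N$ and $\bkt{N}=\Gamma$) is contained in Proposition \ref{p:noise-cont}, and what remains is verifying the hypotheses of the Dambis--Dubins--Schwarz theorem.
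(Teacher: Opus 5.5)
Your proposal is correct and follows the paper's own route. The paper likewise reads the corollary off Proposition~\ref{p:noise-cont}: continuity of $N$ and $\bkt{N}=\Gamma$ give continuity of $\Gamma$, and $\ab{N}=\bkt{N}$ because a continuous bracket is predictable. It then applies the Dambis--Dubins--Schwarz theorem \cite[Chapter V, Theorem 1.7]{RevYor99}, whose enlargement clause covers the case $\Gamma_{\infty}<\infty$ that you write out explicitly.
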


\begin{remark} \label{r:intrinsic-cont} The occupation form is
  only a convenient device here. By \eqref{strong-k}, $N_{t} +
  \mu[0,t]$ is a density of $k*\xi$. Since $N$ is continuous,
  $N_{t} = \lim_{h \downto 0} h^{-1}(k*\xi)(t-h,t] - \mu[0,t)$, which
  depends only on the restriction of $\xi$ to $[0,t]$, so that $N$ is a
  functional of $\xi$. Proposition \ref{p:noise-cont} and
  Corollary \ref{c:dds} therefore hold with $\prfis{\sV_{t}}$
  replaced by the natural filtration of $\Gamma$, by the tower
  property. By the transfer theorem \cite[Theorem 8.17]{Kal21},
they also hold for every $(\rho,\mu)$-Feller random measure.
\end{remark}

\subsection{A martingale characterization}
\label{s:mart-char}
A first consequence of Proposition \ref{p:noise-cont} is that
the stochastic equation \eqref{strong-k}, with $N$ replaced by a
continuous local martingale with quadratic variation $\Gamma$, characterizes
continuous Feller random measures:

\begin{theorem}[The martingale characterization]
\label{t:mart-char}
Let $\rho$ be a PGID probability on $\sp$,
and let $\mu \in \Mlf^{*}(\sp)$. For a random measure $\xi$ on $\sp$
the following are equivalent:
\begin{enumerate}
  \item \label{i:frm} $\rho(\set{0})=0$ and $\xi$ is a $(\rho,\mu)$-Feller
    random measure.
\item \label{i:strong-form} There exist a 
filtration $\prfis{\sH_{t}}$ on the underlying probability space to
which the increasing process $\prfis{\Gamma_t}$ is adapted, and a
continuous $\prfis{\sH_{t}}$-local martingale $\eta$ with $\eta_{0}=0$,
such that, almost surely, 
\begin{align}\label{bracket}
  \bkt{\eta}_{t} = \Gamma_t \eforall t \in \sp,
\end{align}
and
\begin{align}\label{char-strong}
  \prn{ k * \xi }(\dt) = \prn{ \eta_{t} + \mu[0,t] }\,\dt,
\end{align}
where $k$ is defined by \eqref{sonin}.
\item \label{i:weak-form} The same as (\ref{i:strong-form}), but with \eqref{char-strong} replaced by the weak form
\begin{align}\label{char-weak}
  \ab{\mu,\vp} + \ab{\xi,\sA\vp} + \int_{\sp} \vp\, \diffd \eta = 0 \eas \eforeach
  \vp \in \sD(\sA) \cap \Cc(\sp).
\end{align}
\end{enumerate}
\end{theorem}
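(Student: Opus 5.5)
We prove $(\ref{i:frm})\Rightarrow(\ref{i:strong-form})\Rightarrow(\ref{i:weak-form})\Rightarrow(\ref{i:frm})$.

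\emph{From (\ref{i:frm}) to (\ref{i:strong-form}).} By the transfer theorem we may assume $\xi$ is in occupation form. We take $\sH_t=\sV_t$ and $\eta=N$. Proposition \ref{p:noise-cont} gives that $N$ is continuous with $N_0=0$ and $\bkt{N}=\Gamma$, and Proposition \ref{p:strong} gives \eqref{char-strong}. Remark \ref{r:intrinsic-cont} transports the statement to an arbitrary Feller random measure, with $\sH$ the natural filtration of $\Gamma$.

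\emph{From (\ref{i:strong-form}) to (\ref{i:weak-form}).} This is Lemma \ref{l:adjoint-k} read backwards. For $\vp\in\Conec(\sp)$, pair \eqref{char-strong} with $\vp'$ and integrate by parts. This gives $\ab{\xi,\sA\vp}=-\ab{\partial(k*\xi),\vp}=\ab{k*\xi,\vp'}$. The right-hand side equals $\int(\eta_t+\mu[0,t])\vp'(t)\,\dt$, which is $-\int\vp\,\diffd\eta-\ab{\mu,\vp}$ by stochastic integration by parts, using $\eta_0=0$ and the compact support of $\vp$. To pass to general $\vp\in\sD(\sA)\cap\Cc(\sp)$ we approximate in the graph norm. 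Lemma \ref{l:one-sided} keeps everything supported on a fixed $[0,T]$, so the stochastic integrals converge in probability, since $\bkt{\eta}_T=\Gamma_T<\infty$.

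\emph{From (\ref{i:weak-form}) to (\ref{i:frm}), the main step.} We fix $f\in\Bcp(\sp)$ (by monotone class, continuous $f$ suffice), take $v=v[f]$, and aim to show $\ee{e^{-\ab{\xi,f}}}=e^{-\ab{\mu,v}}$. The plan is to replace the test function by a vertical-time-dependent one. Set $\psi:=f-v^2/2$, so that $v=U\psi$ solves $-\sA v=\psi$ in the potential sense. We then apply \eqref{char-weak} (extended by approximation) to $\vp=v$, where $v$ vanishes beyond $\sup\supp f$ by Proposition \ref{p:unique}. The result is
\begin{align*}
\ab{\xi,f}-\tot\ab{\xi,v^2}=\ab{\mu,v}+\int v\,\diffd\eta.
\end{align*}
Since $\bkt{\eta}=\Gamma$, the quadratic variation of $\int v\,\diffd\eta$ equals $\ab{\xi,v^2}$. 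Hence
\begin{align*}
e^{-\ab{\xi,f}}=e^{-\ab{\mu,v}}\exp\Bigl(-\int v\,\diffd\eta-\tot\ab{\xi,v^2}\Bigr).
\end{align*}
The last factor is the terminal value of a stochastic exponential $\mathcal E$ with bounded integrand $0\le v\le\norm{f}_\infty$, stopped at a finite height. Novikov's criterion applies through the exponential moment bound \eqref{expmom}, after rescaling or splitting $[0,T]$ into pieces if $\norm{f}_\infty$ is large. The bound \eqref{expmom} must, however, first be justified for $\xi$ satisfying only (\ref{i:weak-form}). We will instead localize with $\tau_n=\inf\{t:\Gamma_t\ge n\}$ and use $e^{-\ab{\xi,f}}\le1$ together with Fatou's lemma in one direction. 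We will get the other direction from the fact that $e^{-\ab{\xi,f}}e^{\ab{\mu,v}}$ equals a positive local martingale which is uniformly bounded, by $e^{\ab{\mu,v}}$, at the terminal height. The obstacle is making the time-dependence rigorous: \eqref{char-weak} yields only the terminal identity. We therefore instead apply it to $\vp\,\ind{[0,t]}$-type truncations, i.e. to $v^{(t)}:=v[f\ind{[0,t]}]$ as in \eqref{shift}, to get a process $t\mapsto$ identity. We must also handle that $v$ need not lie in $\sD(\sA)\cap\Cc$, which we do by mollifying $f$ and using the resolvent form. Continuity of $\eta$ is essential: it is why $\rho(\{0\})=0$ comes out. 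If $\rho(\{0\})>0$, Corollary \ref{c:dichotomy} (or the atomicity in Corollary \ref{c:atomic}) contradicts $\Gamma=\bkt{\eta}$ being continuous, and $\mu\neq0$ is used there.
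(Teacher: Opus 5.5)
The implications (\ref{i:frm})$\Rightarrow$(\ref{i:strong-form}) and (\ref{i:strong-form})$\Rightarrow$(\ref{i:weak-form}) are fine and match the paper. For the latter you still need approximants in $\Conec(\sp)$ that vanish on a fixed half-line and satisfy $\sA\vp_\eps\to\sA\vp$ uniformly. The paper gets them by forward mollification $\vp*j_\eps$ with $\supp j\subseteq(-1,0)$, which commutes with $\sA$.

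The genuine gap is in (\ref{i:weak-form})$\Rightarrow$(\ref{i:frm}), at the only step with real content: showing that $\mathcal{E}=\exp{-\int v\,\diffd\eta-\tot\ab{\xi,v^{2}}}$ has expectation $1$. The supermartingale property, or localization at $\tau_n$ followed by Fatou's lemma, gives only $\ee{e^{-\ab{\xi,f}}}\le e^{-\ab{\mu,v}}$. Your argument for the reverse inequality does not work, for three reasons.
\begin{enumerate}
\item A bound on the terminal value of a positive continuous local martingale says nothing about whether it is a true martingale. For example, $e^{W_s-s/2}$ has terminal value $0$ as $s\to\infty$.
\item After stopping at $\tau_n$ you would still need $\ee{\mathcal{E}_{\tau_n}\ind{\set{\tau_n\le T}}}\to0$. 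That requires exactly the control of $\mathcal{E}$ at intermediate heights that you do not have.
\item The truncations $v[f\ind{[0,t]}]$ do not supply this control. In general $v[f\ind{[0,t]}]$ differs from $v[f]$ on all of $[0,t]$, so the resulting identities are not the values of one martingale in $t$. Nor can you invoke \eqref{expmom}: it is a property of Feller random measures, which is what you are trying to prove.
\end{enumerate}

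The missing idea is to extract an exponential moment of $\Gamma$ from (\ref{i:weak-form}) itself, by running your computation with the sign of the quadratic term reversed. Take a nonnegative $f\in\Cc(\sp)$ with $\ind{[0,T-1]}\le4f\le\ind{[0,T]}$, so $a=\norm{f}_\infty<1/2$. The maps $\vp\mapsto U(f\pm\tot\vp^{2})$ are contractions, with constant $2a$, of the continuous functions vanishing on $(T,\infty)$ with $\norm{\vp}_\infty\le2a$. Their fixed points $u_{\pm1}$ lie in $\sD(\sA)\cap\Cc(\sp)$, and $u_{-1}=v[f]$.

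Applying \eqref{char-weak} to $u_{1}$ gives $e^{\ab{\xi,f}}=e^{\ab{\mu,u_1}}\,\mathcal{E}\bigl(\int u_1\,\diffd\eta\bigr)_\infty$. Now the supermartingale inequality points the useful way: $\ee{e^{\ab{\xi,f}}}\le e^{\ab{\mu,u_1}}<\infty$, hence $\ee{e^{\Gamma_t/4}}<\infty$ for every $t$. For $u_{-1}$ and any such small $f$, you then have $\tot\int u_{-1}^{2}\,\diffd\Gamma\le\Gamma_T/8$. Novikov's criterion therefore makes your $\mathcal{E}$ a uniformly integrable martingale, and $\ee{e^{-\ab{\xi,f}}}=e^{-\ab{\mu,v[f]}}$. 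Restricting to small $f$ suffices, because Laplace transforms on an interval determine laws.

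Your way of obtaining $\rho(\set{0})=0$ compares atomicity from Corollary \ref{c:atomic} with continuity of $\Gamma=\bkt{\eta}$. This is legitimate only once the Feller property is established, which is possible because the argument above never uses $\rho(\set{0})=0$. The paper argues more directly. If $\rho(\set{0})>0$, then $k$ has a bounded nonincreasing density, so \eqref{char-strong} makes $\eta$ a continuous local martingale of finite variation. Hence $\eta\equiv0$, which forces $\xi=0$ and then $\mu=0$, a contradiction.
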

\begin{proof} In Appendix \ref{s:prf-mart-char}.
\end{proof}

\begin{remark}
\label{r:sing-noise}
Nothing in Theorem \ref{t:mart-char} requires that $\xi$ has a density.
As in Corollary \ref{c:dds}, there is, on a possibly extended
probability space, a Brownian motion $W$ with $\eta_{t} = W_{\Gamma_{t}}$,
so that \eqref{char-strong} and its convolution with $\rho$ (compare
\eqref{volterra}) read
\begin{align}
  \label{clock-k}
  \prn{ k*\xi }(\dt) &= \bprn{ W_{\Gamma_t} + \mu[0,t] }\,\dt, \\
  \label{clock-volterra}
  \Gamma_t &= \int_{0}^{t}\rho[0,t-s]\,\diffd W_{\Gamma_s}
             + (\rho*\mu)[0,t] .
\end{align}
When $\xi$ has a density $Z$, we have $\Gamma_{t} = \int_{0}^{t}Z_{s}\,\ds$
and $W_{\Gamma_{t}} = \int_{0}^{t}\sqrt{Z_{s}}\,\dB_{s}$ for a Brownian
motion $B$, which returns Theorem \ref{t:Z-eqs}. Equations of the form
\eqref{clock-volterra} appear in the fractional and hyper-rough
literature \cite{JusRos20, Abi21, AbiAtt25, AbiAttSoj26}, also without
a density \cite{AbiAttSoj26}. Theorem \ref{t:mart-char} shows that,
here, they characterize Feller random measures. 
A characterization in Volterra form, with
$\eta = W_{\Gamma}$ for a given Brownian motion $W$, is the
content of \cite[Theorem
2.14]{Xu24a}. Theorem \ref{t:mart-char} allows an arbitrary continuous
local martingale and is stated in divergence and weak form. Since it
needs no density, it covers the hyper-rough range without separate
treatment, whereas \cite{Abi21} formulates the theory there in terms of
the integral of the missing density.
\end{remark}

\subsection{Existence of a density}
\label{s:density}
A measurable process
$\prfis{Z_t}$ such that $\xi(\dt) = Z_t\, \dt$ a.s.\ is called a
\define{density} of $\xi$. We call it
\define{square integrable} if $\ee*{\int_{0}^{T}Z_{t}^{2}\,\dt}<\infty$
for every $T>0$.
We first investigate when a density exists,
and then derive further properties of $\xi$ when it does.

By Proposition \ref{p:basic}(\ref{i:moments}) we have $\ee{\xi} = \rho*\mu$,
which must therefore
be absolutely continuous whenever $\xi$ has a density. In
  particular, $\rho$ itself is absolutely continuous when
$\mu$ is an atom (or purely atomic). The case of a more general
$\rho$ is more
complex and not particularly important for the applications we
have in mind, so the results below assume that $\rho$ admits a density,
except for Corollary \ref{c:holder}, which derives it.

The main result in this direction is Proposition \ref{p:density-crit}
below. It gives a necessary and sufficient condition for the
existence of a square-integrable density, and its proof rests on
\eqref{weak-res}:
\begin{proposition}
\label{p:density-crit}
Let $\rho$ be PGID with $\rho(\dt) = r(t)\,\dt$, let
$\mu \in \Mlf^{*}(\sp)$, and let $\xi$ be
a $(\rho,\mu)$-Feller random measure.
Then
    $\xi$ admits a square-integrable density $\prfis{Z_t}$ if and only if
\begin{align}
  \label{r-sq-int}
  \int_0^T r(t)^2\, \dt < \infty \eforall T>0.
\end{align}
In that case, if $\xi$ is in occupation form, we have
\begin{align}
  \label{Z-rep}
  Z_t = \int_{[0,t]} r(t-s)\, \mu(\ds)
  + \int_0^t r(t-s)\, \dN_{s},
\ \dt \otimes \dbP\text{-a.e.}
\end{align}
\end{proposition}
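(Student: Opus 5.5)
Since the law of $\xi$ is determined by $(\rho,\mu)$ and both sides of the equivalence are properties of that law, I will work throughout with $\xi$ in occupation form (Corollary \ref{c:exists}). The tool for both directions is \eqref{weak-res}:
\begin{align*}
\ab{\xi,\psi} = \ab{\mu,U\psi} + N(U\psi) \eforall \psi \in \Cc(\sp).
\end{align*}
Here $U\psi(s) = \int \psi(t) r(t-s)\,\dt$ when $\rho(\dt)=r(t)\,\dt$, so for fixed $T$ and $\psi$ supported in $[0,T]$ the linear functional $\psi \mapsto \ab{\xi,\psi}$ splits into a deterministic part and a Walsh-isometric part.

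\emph{The ``if'' direction.} Assume \eqref{r-sq-int}. I define $Z$ by the right-hand side of \eqref{Z-rep}, with the stochastic integral taken as $N(r(t-\cdot)\ind{[0,t]})$.
\begin{enumerate}
\item Check that $r(t-\cdot)\ind{[0,t]} \in \Ltn$. By the isometry,
\begin{align*}
\ee*{\int_0^T \Bprn{\int_0^t r(t-s)\,\dN_s}^2 \dt} = \int_0^T \int r^2(t-s)\,(\rho*\mu)(\ds)\,\dt \leq \norm{r}_{\Ltwo[0,T]}^2\,\mu[0,T] < \infty,
\end{align*}
using $(\rho*\mu)[0,T] \leq \mu[0,T]$.
\item Handle the deterministic term: $\int_0^T \bprn{\int r(t-s)\,\mu(\ds)}^2 \dt < \infty$ by Minkowski's integral inequality.
\item Obtain a jointly measurable version. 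This follows because $t \mapsto r(t-\cdot)$ is $\Ltwo$-continuous into $\Ltn$ after a density argument, or more simply via a stochastic Fubini theorem for the martingale measure.
\item Identify $Z$ as a density. Stochastic Fubini gives $\int \psi(t) Z_t\,\dt = \ab{\mu,U\psi} + N(U\psi) = \ab{\xi,\psi}$ a.s. for each $\psi \in \Cc$. Taking a countable dense family of $\psi$ yields $\xi(\dt) = Z_t\,\dt$ a.s. This proves \eqref{Z-rep} as well.
\end{enumerate}

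\emph{The ``only if'' direction.} Suppose $Z$ is a square-integrable density.
\begin{enumerate}
\item Use the covariance formula of Proposition \ref{p:basic}(\ref{i:moments}):
\begin{align*}
\var{\ab{\xi,\psi}} = \ab{\mu,U((U\psi)^2)} = \int (U\psi)^2\, \diffd(\rho*\mu).
\end{align*}
\item Bound the left side by Cauchy--Schwarz: $\ee{\ab{\xi,\psi}^2} \leq C_T \norm{\psi}_{\Ltwo[0,T]}^2$ with $C_T = \ee*{\int_0^T Z^2}$. Hence $\int (U\psi)^2\,\diffd(\rho*\mu) \leq C_T\norm{\psi}_2^2$ for all $\psi$ supported in $[0,T]$.
\item Choose a point where the measure $\rho*\mu$ has enough mass. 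Since $\mu \neq 0$, fix $t_0$ in the support of $\mu$. Then $\rho*\mu$ charges $[t_0, t_0+\delta]$; more precisely, since $\rho$ has a density, $\rho*\mu \geq$ a positive multiple of Lebesgue on some interval near $t_0 + a$.
\item Read the bound as $\Ltwo$-boundedness of $U$. Its adjoint is convolution with $r$, so boundedness gives $\norm{r*g}_{\Ltwo}$-type bounds. The cleaner route is duality: the bound says $s \mapsto \int \psi(t) r(t-s)\,\dt$ is controlled, and by duality $r(\cdot - s)\ind{[0,T]} \in \Ltwo$ on average over $s$ with respect to $\rho*\mu$. Concretely, the Riesz representation applied to $\psi \mapsto U\psi(s)$, averaged over $s$, gives
\begin{align*}
\int \int_0^{T} r(t-s)^2\,\dt\,(\rho*\mu)(\ds) \leq C_T
\end{align*}
after expanding $\psi$ in an orthonormal basis of $\Ltwo[0,T]$, since $\sum_n (U e_n)(s)^2 = \norm{r(\cdot-s)}^2_{\Ltwo[0,T]}$ by Parseval.
\item Conclude. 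This yields $\int_0^{T-s} r^2 < \infty$ for $\rho*\mu$-a.e. $s$. Since $\rho*\mu$ charges $[0,\eps]$ for $\eps$ small only when $\mu$ is supported near $0$, handle general $T$ by the shift \eqref{shift} and locality. A finite $\Ltwo[0,T']$ norm of $r$ for some $T'$ of the form $T - s$ with $T$ arbitrary gives \eqref{r-sq-int} for all $T$.
\end{enumerate}

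The Parseval trick is the heart of the ``only if'' half. The main obstacle is the measure-theoretic care in the last step, ensuring the set of admissible $s$ is nonempty in $[0,T-T_1]$ for every $T_1$. This holds because $T$ is arbitrary and $\mu \neq 0$.
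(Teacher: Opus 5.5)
\textbf{Gap in the ``only if'' direction, step 4.} Step 2 only gives an operator-norm bound,
\[
\int (U\psi)^2\,\mathrm{d}(\rho*\mu)\le C_T\|\psi\|_{2}^2 .
\]
Summing this over an orthonormal basis $(e_n)$ of $\mathbb{L}^2[0,T]$ gives $\sum_n C_T=\infty$, not $C_T$. What your conclusion needs is that $\psi\mapsto U\psi$ is \emph{Hilbert--Schmidt} from $\mathbb{L}^2[0,T]$ to $\mathbb{L}^2(\rho*\mu)$. Boundedness does not give that. Your Riesz-representation phrasing is also circular: $\psi\mapsto U\psi(s)$ is a bounded functional on $\mathbb{L}^2$ only if $r(\cdot-s)1_{[s,T]}\in\mathbb{L}^2$, which is the conclusion.

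This is not a loss of constants: the bound of step 2 carries no information about \eqref{r-sq-int}. Take the critical kernel of Example \ref{e:critical} and $\mu=\mathrm{Leb}$. Then $g=r*\mu=\rho[0,\cdot]\le 1$, and Minkowski's inequality gives $\|U\psi\|_{\mathbb{L}^2}\le\|r\|_{\mathbb{L}^1}\|\psi\|_2$. Hence $\int (U\psi)^2\,\mathrm{d}(\rho*\mu)\le\|\psi\|_2^2$, and even $\mathbb{E}[\langle\xi,\psi\rangle^2]\le(1+T)\|\psi\|_2^2$, although $r\notin\mathbb{L}^2_{\mathrm{loc}}$. So no argument that starts from the Cauchy--Schwarz bound can work.

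\textbf{How to repair it.} Use the square integrability of $Z$ as a Hilbert--Schmidt property, not through Cauchy--Schwarz. Take a \emph{bounded} orthonormal basis $(e_n)$ of $\mathbb{L}^2[0,T]$, for example the trigonometric or Haar system, so that Proposition \ref{p:basic}(\ref{i:moments}) applies. Pathwise Parseval and Tonelli then give
\[
\sum_n \mathrm{Var}\,\langle\xi,e_n\rangle\le\sum_n\mathbb{E}\bigl[\langle Z,e_n\rangle^2\bigr]=\mathbb{E}\int_0^T Z_t^2\,\mathrm{d}t<\infty .
\]
On the other side, $\mathrm{Var}\,\langle\xi,e_n\rangle=\int (Ue_n)^2\,\mathrm{d}(\rho*\mu)$ and $Ue_n(s)=\langle e_n, r(\cdot-s)1_{[s,T]}\rangle$. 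For these bases, an $\mathbb{L}^1$ function with square-summable coefficients lies in $\mathbb{L}^2$, so $\sum_n (Ue_n(s))^2=\int_0^{T-s}r^2\in[0,\infty]$. Tonelli then yields $\|r^2*g\|_{\mathbb{L}^1[0,T]}\le\mathbb{E}\int_0^T Z^2$, which is exactly what the paper obtains.

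Your step 5 then finishes correctly: pick $s_0$ with $(\rho*\mu)[0,s_0]>0$ and take $T=s_0+T'$. The shift and locality are not needed. Step 3 plays no role and can be dropped.

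\textbf{Comparison with the paper.} The paper reaches the same bound differently. It tests against $\psi_\varepsilon=\varepsilon^{-1}1_{(a,a+\varepsilon)}$, applies Jensen, integrates over $a$, and uses Lebesgue points and Fatou. This is a hands-on version of the same Hilbert--Schmidt computation, with an approximate identity in place of a basis. With the repair, your route is valid and somewhat slicker. Your ``if'' direction coincides with the paper's sufficiency argument: isometry, stochastic Fubini, and \eqref{weak-res}.
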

\begin{proof} In Appendix \ref{s:prf-density-crit}.
\end{proof}
\begin{remark}\leavevmode
\label{r:density-conj}
\begin{enumerate}
  \item 
The ``if'' half of Proposition \ref{p:density-crit} is close to
\cite[Corollary 2.16]{Xu24a}. There, a locally square-integrable
density of $\rho$ and a differentiable $(\rho*\mu)[0,\cdot]$ yield a
density solving \eqref{Z-resolvent}. Proposition \ref{p:density-crit}
adds the converse, drops the differentiability assumption, and gives
the explicit form \eqref{Z-rep}. 
\item
Proposition \ref{p:density-crit} concerns only square-integrable
densities. We conjecture that
there are no others: for $\mu \in \Mlf^{*}(\sp)$, $\xi$ admits a
  density \emph{only if} $\rho(\dt) = r(t)\,\dt$ with $r$ satisfying
\eqref{r-sq-int}, so that every density is automatically square
integrable. A precedent is the occupation measure of
super-Brownian motion in $\bR^{d}$, which has a density exactly when $d
\leq 3$, that is, exactly when the Green kernel is locally square
integrable (see \cite{Sug89, Per02}).

The condition \eqref{r-sq-int} is also the dividing line
between the $\Ltwo$-theory of stochastic Volterra equations
\cite{AbiLarPul19} and the $\Lone$-theory \cite{Abi21}, in which
only the integrated solution exists.
\end{enumerate}
\end{remark}

\subsection{Stochastic equations for the density}
\label{s:st-eq-den}
The general stochastic equations of Propositions \ref{p:weak} and 
\ref{p:strong} start to resemble the stochastic differential
equations for the classical Feller diffusion and its rough
counterpart when the Feller random measure admits a density. 

Before we state our main theorem, we introduce the 
occupation measure $U^L$ of the subordinator $L$:
\begin{align}\label{U-def}
  U^{L}(A) := \ee{ \textstyle\int_{\tm} \ind{A}(L_{\tau})\,\dtau }
  \efor A \in \sB(\sp).
\end{align}
The Laplace exponent of $L$ is $\Psi-1$, so that
$\widehat{U^L} = 1/(\Psi -1 )$. It follows that $U^L$ is  locally
finite and  that 
\begin{align}
  \label{U-renewal}
  U^L = \textstyle\sum_{n\geq 1} \rho^{*n},
\end{align}
where $\rho^{*n}$ denotes the $n$-fold self-convolution. This
representation implies that $U^L$ has a density, denoted by $\ell$,
whenever $\rho$ has one. 
\begin{theorem}[Stochastic equations for $Z$]
\label{t:Z-eqs}
Let $\rho$ be PGID with density $r$, let $\mu \in \Mlf(\sp)$, and
let $\xi$ be a $(\rho,\mu)$-Feller random measure with density $Z$. Then
there exists a Brownian motion $B$, on a
possibly extended probability space, such that, almost surely,
for almost every $t \geq 0$, we have
\begin{align}
  \label{Z-sde}
  b Z_t  &=    \int_{[0,t]} \Bprn{  \mu(\ds) - (1+\oPi(t-s)) Z_s\, \ds }
  +  \int_0^t \sqrt{Z_s}\, \dB_{s} .
\end{align}
  If, moreover,
$r$ satisfies \eqref{r-sq-int}, then, in the same sense,
\begin{align}
  \label{Z-resolvent}
  Z_t & =  \int_{[0,t]}   r(t-s)\, \mu(\ds) + \int_0^t r(t-s)
  \sqrt{Z_s}\, \dB_s, \eand \\
  \label{Z-potential}
   Z_{t}  &=  \int_{[0,t]} \ell(t-s)\, \Bprn{ \mu(\ds)
   -  Z_{s}\, \ds }
   + \int_{0}^{t} \ell(t-s) \sqrt{Z_{s}}\, \dB_{s}.
\end{align}
\end{theorem}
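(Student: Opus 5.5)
By the transfer theorem (as in Remark \ref{r:intrinsic-cont}) we may assume $\xi$ is in occupation form, so that $N$ and Propositions \ref{p:strong}, \ref{p:noise-cont} and \ref{p:density-crit} are available. Since $\rho$ has a density, $\rho(\set{0})=0$. By Corollary \ref{c:dds}, $N$ is then a continuous martingale with $\bkt{N}_t=\Gamma_t=\int_0^t Z_s\,\ds$. Standard representation of continuous local martingales with absolutely continuous bracket (\cite[Chapter V, Proposition 3.8]{RevYor99}) gives, on an extension, a Brownian motion $B$ with $N_t=\int_0^t\sqrt{Z_s}\,\dB_s$. Concretely, set $B=\int Z^{-1/2}\ind{Z>0}\,\dN+\int\ind{Z=0}\,\diffd W'$ with an independent Brownian motion $W'$. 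All three equations will be rewritten in terms of this single $B$.

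For \eqref{Z-sde}, I start from \eqref{strong-k}. With $k=b\delta_0+(1+\oPi)\,\dt$ we get
\begin{align*}
(k*\xi)(\dt)=bZ_t\,\dt+\Bprn{\int_{[0,t]}(1+\oPi(t-s))Z_s\,\ds}\dt .
\end{align*}
The inner integral is finite for a.e.\ $t$, since $k*\xi$ is locally finite (Lemma \ref{l:adjoint-k}). Equating densities, \eqref{strong-k} gives, for a.e.\ $t$,
\begin{align*}
bZ_t+\int_0^t(1+\oPi(t-s))Z_s\,\ds=\mu[0,t]+N_t .
\end{align*}
Rearranging and substituting $N_t=\int_0^t\sqrt{Z_s}\,\dB_s$ gives \eqref{Z-sde}.

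Under \eqref{r-sq-int}, the equation \eqref{Z-resolvent} is \eqref{Z-rep} of Proposition \ref{p:density-crit} with $\dN_s=\sqrt{Z_s}\,\dB_s$. Two points need care. First, the stochastic integral $\int_0^t r(t-s)\,\dN_s$ is defined for a.e.\ $t$, because $\ee{\int_0^t r(t-s)^2 Z_s\,\ds}=\int_0^t r(t-s)^2(\rho*\mu)(\ds)$. Integrating this over $t\in[0,T]$ and using $r\in\Ltwol$ shows it is finite for a.e.\ $t$. Second, the integral equals $\int_0^t r(t-s)\sqrt{Z_s}\,\dB_s$ by associativity of stochastic integrals. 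The statement ``almost surely, for a.e.\ $t$'' then follows from the $\dt\otimes\dbP$-a.e.\ identity by Fubini.

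For \eqref{Z-potential}, I use $\rho+\rho*U^L=U^L$, which is immediate from \eqref{U-renewal}; in density form it reads $\ell=r+r*\ell$. Convolving \eqref{Z-resolvent} with $\ell$ and subtracting should give $Z=\ell*\mu+\ell*\dN-\ell*Z$, which is \eqref{Z-potential}. The step needing justification is the stochastic Fubini theorem for $\ell*(r*\dN)=(\ell*r)*\dN$. Here $\ell$ is only locally integrable, not obviously square integrable. However, $\ell*r=\ell-r$, and $\ell=\sum_n r^{*n}$ is locally square integrable because $\norm{r^{*n}}_{\Ltwo[0,T]}\le\norm{r}_{\Ltwo[0,T]}\norm{r}_{\Lone[0,T]}^{n-1}$, the $\Lone[0,T]$-norm of $r$ is $\rho[0,T]<1$, and the series converges. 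A deterministic Fubini handles $\ell*(r*\mu)$ and $\ell*Z$.

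The main obstacle is these measure-theoretic justifications: stochastic Fubini with singular kernels, and the fact that the equations hold only for a.e.\ $t$. I expect them to be handled by the $\Ltwo(\dt\otimes\dbP)$ bounds above, with $\ee{\xi}=\rho*\mu$ from Proposition \ref{p:basic}(\ref{i:moments}) controlling all second moments.
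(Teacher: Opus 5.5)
Your proposal is correct and follows essentially the paper's route: the noise is written as $\int\sqrt{Z}\,\dB$, \eqref{Z-sde} is read off the divergence-form equation, and \eqref{Z-potential} comes from $\ell = r + r*\ell$ and stochastic Fubini. The differences are minor. You work with $N$ in occupation form via the transfer argument and cite \eqref{Z-rep}. The paper instead takes $\eta$ from Theorem \ref{t:mart-char} on the original space and re-derives \eqref{Z-resolvent} by convolving with $\rho$ and differentiating.

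One small point: your Neumann-series bound uses $\rho[0,T]<1$. This is true, since a density forces $L\not\equiv 0$, so $L_{E}$ has unbounded support, but you should justify it. Alternatively, bypass it as the paper does: combine $\ell\in\Lonel(\sp)$ (local finiteness of $U^{L}$) with Young's inequality in $\ell = r + r*\ell$.
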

\begin{proof} In Appendix \ref{s:prf-Z-eqs}.
\end{proof}

\begin{remark}[A generalized fractional SDE]
\label{r:gen-frac-sde}
The operator $\sA^{L} := \sA + I$ is the generator of the
subordinator $L$ (without killing). With $k^{L}(\dt) := b\,\delta_{0}(\dt)
+ \oPi(t)\,\dt$, so that $k = k^{L} + \Leb$, equation \eqref{Z-sde}
reads
\begin{align*}
  (k^{L}*Z)(t) = \int_{[0,t]}\Bprn{ \mu(\ds) - Z_{s}\,\ds }
  + \int_{0}^{t}\sqrt{Z_{s}}\,\dB_{s}.
\end{align*}
By Lemma \ref{l:adjoint-k}, $\partial(k^{L}*\nu) = \partial(k*\nu) -
\nu = -(\sA^{L})^{*}\nu$ for $\nu \in \Mlf(\sp)$, and, by Remark
\ref{r:symbol}, $-(\sA^{L})^{*}$ has symbol $\Psi-1$, the Laplace
exponent of $L$. Informally, therefore, $Z$ solves
\begin{align*}
  d^{L}Z_{t} = \mu(\dt) - Z_{t}\,\dt + \sqrt{Z_{t}}\,\dB_{t},
  \ewhere d^{L}Z_{t} := -((\sA^{L})^{*}Z)(\dt).
\end{align*}
When $\Pi = 0$, we have $d^{L}Z_{t} = b\,\dZ_{t}$, and this is the
classical Feller SDE \eqref{cir-sde}. In Example \ref{e:rough-cir}, we
have $k^{L}*Z = I^{1-\al}Z$, and $d^{L}$ is the fractional differential
$d^{\al}$ discussed there. Like the latter, this SDE is informal, since
\eqref{Z-sde} holds only in integrated form, for a.e.\ $t$. Operators
of the form $\partial(k^{L}*\cdot)$ are known as \emph{general fractional
derivatives} associated with the Bernstein function $\Psi-1$
(see, e.g.,  \cite{Koc11}, \cite{Toa15} and \cite{Luc21}).
\end{remark}

\begin{remark}[The fractional form in the literature]
\label{r:frac-lit}
Equations similar to \eqref{frac-int} in Example \ref{e:rough-cir}
below, though not always in this
form, have been considered in the literature on affine Volterra
processes. The passage from \eqref{rough-potential} to
\eqref{frac-int} rests on the identity $\sL*(X-X_{0}) = Y-Y_{0}$ for
$X-X_{0} = \sK*\diffd Y$, where $\sL$ is the resolvent of the first kind
of the kernel $\sK$ and $Y$ is a continuous semimartingale
\cite[Lemma 2.6]{AbiLarPul19}, \cite[Lemma B.1]{AbiElE19}. For
$\sK(t) = t^{\al-1}/\Gamma(\al)$, convolution with $\sL$ is $I^{1-\al}$.
A version of the identity for semimartingales with jumps is
\cite[Proposition 4]{BonLivPul24}, and in \cite[Lemma 2.5]{WanWuZhu26},
it is applied to the variance process of the rough Hawkes-Heston
model, yielding an equation of the form \eqref{frac-int} whose
right-hand side is a semimartingale with jumps.
We also note that 
fractional stochastic differential equations of Caputo type with
Lipschitz coefficients have been studied in \cite{DoaHuoKloTua18}.
\end{remark}

\begin{remark}[Setting the initial condition]
  \label{r:init-cond}
The second moment 
$\prn{r^{2}*(r*\mu)}(t)$
of the stochastic term in \eqref{Z-resolvent} is bounded from above by 
$\norm{r*\mu}_{\infty,t}\norm{r}^{2}_{\Ltwo[0,t]}$, which
vanishes as $t \downto 0$ whenever $r*\mu$ is locally bounded.
The initial value $Z_{0+}$, understood as the
$\Ltwo$-limit of $Z_{t}$ as $t \downto 0$, is therefore $(r*\mu)(0+)$,
whenever the latter limit exists. Under the conditions of
Proposition \ref{p:Z-modulus} below, this is of no use if one is trying to
set the initial value of $Z$ via $\mu$. Indeed, in that case
$r*m$, with $m$ the density of $\mu$, is
continuous and vanishes at the origin, so $Z_{0+}=0$ no matter what $\mu$ is.
To reach a prescribed
$z_{0}\geq 0$ one has to leave those conditions, and set
$\mu = \mu_{0}+\mu'$, where
\begin{align*}
  \mu_{0}(\ds) := z_{0}\bprn{ b\,\delta_{0}(\ds) + \oPi(s)\,\ds }
  = z_{0}\bprn{ k(\ds) - \ds }
\end{align*}
with $k$ as in \eqref{sonin}, and $\mu' \in \Mlf(\sp)$
such that
\begin{align}
  \label{mobed}
  r*\mu' \text{ is locally bounded, and } (r*\mu')(0+) = 0.
\end{align}
Indeed, the Sonin relation \eqref{sonin-rel}
  gives $(\rho*k)(\dt) = \dt$, while $\rho*\Leb$ has density
  $t \mapsto \rho[0,t]$, so that $\rho*\mu_{0}$ has density
  $g_{0}(t) = z_{0}\bprn{ 1 - \rho[0,t] }$.
This function is non-increasing, bounded by $z_{0}$, and satisfies
  $g_{0}(0+) = z_{0}$ because $\rho$ has no atom at the origin.
  Consequently $r*\mu = g_{0}+r*\mu'$ is locally bounded, with
  $(r*\mu)(0+) = z_{0}$, so that $Z_{0+} = z_{0}$, as required.
  A similar construction in the rough CIR case is described
  in \cite[Definition 2.3, Remark 2.4]{ElERos19}.
  The corresponding device on the side of stochastic Volterra
  equations is the class of admissible input curves of
  \cite[Equations (2.4)--(2.5)]{AbiElE19}. It is imposed there to secure the
  nonnegativity that is automatic here, and it is formulated in terms of the
  resolvent of the first kind of the kernel --- that is, since that
  kernel plays the role of $\ell$ in \eqref{Z-potential}, in terms of the
  measure $k^{L} = k - \Leb$ of Remark \ref{r:gen-frac-sde}.

  Both conditions in \eqref{mobed} hold, for
  example, if $r \in \Lpeel(\sp)$
  and $\mu'(\dt) = m'(t)\, \dt$ with
  $m' \in \Lquel(\sp)$, where $p,q$ are
  conjugate exponents. Indeed, H\"older's inequality then gives
  $\abs{ (r*m')(t) } \leq \norm{r}_{\Lpee[0,t]}\norm{m'}_{\Lque[0,t]}$.
\end{remark}

\subsection{Regularity of the density}
\label{s:reg}
When $b>0$, the density is well behaved (Corollary
\ref{c:drift-smooth} below). When $b=0$, it need not even be locally
bounded (Remark \ref{r:atom-blowup}). The results below rely on
\eqref{Z-resolvent}, which does not involve $b$ explicitly.

\smallskip

Let the
\emph{modulus of continuity} $\omega_T$ be defined, for $T, h \geq
0$, by
\begin{align*}
  \omega_{T}(h)^{2} := \norm{r}^2_{\Ltwo[0,h]}+ 
  \int_{0}^{T}\prn{ r(u+h)-r(u) }^{2}\,\du.
\end{align*}

\begin{proposition}
\label{p:Z-modulus}
Let $\rho$ be PGID with an $\Ltwol(\sp)$-density $r$, let
$\mu \in \Mlf^{*}(\sp)$ have an $\Ltwol(\sp)$-density $m$, and
let $\xi$ be a $(\rho,\mu)$-Feller random measure.
Then  $\xi$ admits a density $Z \in
\Ltwol(\sp)$, a.s., such that,
for every $p \geq 2$ and $T>0$, there is a constant $C = C(p,T)
< \infty$ 
such that for all $s,t \in [0,T]$, we have
\begin{align}\label{Z-incr}
  \norm{Z_t}_{\bL^p} \leq C \eand 
  \norm{ Z_{t}-Z_{s} }_{\bL^{p}} \leq
  C\,\omega_{T}\prn{ \abs{t-s} }. 
\end{align}
If $\omega_{T}(h) \leq C_{T}\,h^{\gm}$ for some
$\gm \in (0,1/2]$, every $T>0$ and every $h \in (0,1]$, then $Z$ admits a version whose
paths are locally H\"older continuous of every order $\theta < \gm$.

In the opposite direction, for all $t \geq 0$ and $h>0$,
\begin{align}\label{Z-incr-lb}
  \norm{ Z_{t+h}-Z_{t} }_{\Ltwo} \geq
  \norm{r}_{\Ltwo[0,h]} \,\bprn{ \textstyle\inf_{[t,t+h]} r*m
  }^{1/2}.
\end{align}
\end{proposition}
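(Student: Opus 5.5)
We work in occupation form, which is legitimate because the statement concerns only the law of $\xi$ (transfer it back afterwards as in Remark \ref{r:intrinsic-cont}). Then Proposition \ref{p:density-crit} applies, since $r \in \Ltwol(\sp)$. It gives a square-integrable density
\begin{align*}
  Z_t = (r*m)(t) + Y_t, \qquad Y_t := \int_0^t r(t-s)\,\dN_s .
\end{align*}
Here $N$ is the continuous martingale of Proposition \ref{p:noise-cont}, with $\bkt{N} = \Gamma$ and $\diffd\Gamma_s = Z_s\,\ds$. The deterministic part is easy. First, $\abs{(r*m)(t)} \leq \norm{r}_{\Ltwo[0,T]}\norm{m}_{\Ltwo[0,T]}$ on $[0,T]$. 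Second, its increment $(r*m)(t+h)-(r*m)(t)$ is split as $\int_t^{t+h} r(t+h-s)m(s)\,\ds + \int_0^t (r(u+h)-r(u))\,m(t-u)\,\du$. Cauchy--Schwarz bounds it by $\norm{m}_{\Ltwo[0,T]}\,\omega_T(h)$ up to a constant.

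For the stochastic part, fix $t$ and treat $s\mapsto \int_0^s r(t-u)\,\dN_u$, $s\le t$, as a continuous martingale, so that Burkholder--Davis--Gundy applies. We get
\begin{align*}
  \norm{Y_t}_{\bL^p}^2 \leq C_p \Bigl\|\int_0^t r(t-s)^2 Z_s\,\ds\Bigr\|_{\bL^{p/2}} \leq C_p \int_0^t r(t-s)^2\norm{Z_s}_{\bL^{p/2}}\,\ds ,
\end{align*}
using Minkowski's inequality for the last step. The moment bound follows by a Volterra--Gronwall argument. Let $\phi_p(t) := \sup_{s\le t}\norm{Z_s}_{\bL^p}$, with finiteness supplied by \eqref{expmom}-type exponential moments or by localization. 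Then $\phi_p^2 \le C + C\norm{r}_{\Ltwo[0,T]}^2\,\phi_{p/2} \le C + C\phi_p$, which forces $\phi_p$ to be bounded. Alternatively, induct on $p=2^k$: start from $p=2$, where $\ee{Z_t}=(r*m)(t)$ is bounded, and use $\norm{Z}_{p/2}\le\norm{Z}_p$. The increment $Y_{t+h}-Y_t$ splits as $\int_t^{t+h} r(t+h-s)\,\dN_s + \int_0^t (r(t+h-s)-r(t-s))\,\dN_s$. Applying BDG and the uniform moment bound to each piece gives $C\,\omega_T(h)$, which proves \eqref{Z-incr}. (One works with the version of $Z$ defined pointwise by this formula; it is $\dt\otimes\dbP$-a.e. equal to the density.) If $\omega_T(h)\le C_T h^\gm$, Kolmogorov's criterion with $p$ large gives local H\"older continuity of every order $\theta<\gm - 1/p$, and letting $p\to\infty$ yields every $\theta<\gm$.

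For the lower bound \eqref{Z-incr-lb}, the two stochastic pieces above are orthogonal in $\Ltwo$: they are integrals against $N$ over the disjoint sets $(t,t+h]$ and $[0,t]$. The deterministic increment and the mean-zero stochastic terms are orthogonal as well. Hence
\begin{align*}
  \norm{Z_{t+h}-Z_t}_{\Ltwo}^2 \geq \ee*{\Bigl(\int_t^{t+h} r(t+h-s)\,\dN_s\Bigr)^2} = \int_t^{t+h} r(t+h-s)^2\,\ee{Z_s}\,\ds .
\end{align*}
Since $\ee{Z_s}=(r*m)(s) \ge \inf_{[t,t+h]} r*m$, this is at least $\norm{r}^2_{\Ltwo[0,h]}\inf_{[t,t+h]} r*m$, as required.

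The main obstacle is the uniform $\bL^p$ bound. One must verify a priori finiteness of $\sup_{s\le T}\norm{Z_s}_{\bL^p}$ before absorbing it in the Gronwall step, and I would secure this through a stopping/truncation argument or the exponential moments of $\Gamma_T$. A smaller technical point is justifying BDG for the Volterra-type integral at a fixed $t$.
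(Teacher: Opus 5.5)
Your proposal is correct and follows essentially the same route as the paper: the representation \eqref{Z-rep} (equivalently \eqref{Z-resolvent}, with $\diffd N_s = \sqrt{Z_s}\,\dB_s$), BDG plus Minkowski with induction along $p=2^{k}$ starting from the bounded mean $r*m$, the same four-term increment decomposition, Kolmogorov--Chentsov, and orthogonality of the two stochastic pieces for \eqref{Z-incr-lb}. The induction route, which is the one the paper uses, needs no a priori finiteness of $\sup_{s\le T}\norm{Z_s}_{\bL^p}$ (BDG holds even when the right-hand side is infinite, and the previous step makes it finite), so you can drop the Gronwall variant and the localization you list as the main obstacle.
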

\begin{proof} In Appendix \ref{s:prf-Z-modulus}.
\end{proof}

Since the bound on $\omega_{T}$ required by Proposition
\ref{p:Z-modulus} is rarely available directly, we recast it, via
Plancherel's theorem and the relation $\hrho = 1/\Psi$, 
as a growth condition on the Laplace exponent $\Psi$. 
\begin{corollary}
\label{c:holder}
Suppose that 
there exist 
$\beta\in(1/2,1]$, $c>0$, and $\ld_{0} > 0$ such that 
\begin{align}\label{psi-power}
  \abs{ \Psi(-iu) } \geq c\,\abs{u}^{\beta}
  \eforall u \in \bR \ewith
  \abs{u}\geq\ld_{0}.
\end{align}
 Then $\rho$
admits a density $r \in \Ltwo(\sp)$,
and there exists a constant $C = C(c,\beta,\ld_{0})$, independent of
$T$ and $h$, such that
\begin{align*}
  \omega_{T}(h) \leq C\,h^{\beta-\tot}
\eforall T>0 \eand h>0.
\end{align*}
If, in addition,
$\mu \in \Mlf^{*}(\sp)$ has an $\Ltwol(\sp)$-density $m$, then the
$(\rho,\mu)$-Feller random measure has a density $Z$ with a
version whose paths are locally
H\"older continuous of every order $\theta<\beta-1/2$.
\end{corollary}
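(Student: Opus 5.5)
The plan is to work on the Fourier side, using $\hrho(z)=1/\Psi(z)$ on $\Cpc$ from Lemma \ref{l:pgid}. Write $g(u):=\hrho(-iu)=\int e^{iut}\rho(\dt)$ for the characteristic function of $\rho$. Since $\Re\Psi\geq 1$, we have $\abs{g}\leq 1$ everywhere. By \eqref{psi-power}, $\abs{g(u)}\leq c^{-1}\abs{u}^{-\beta}$ for $\abs{u}\geq\ld_{0}$. As $\beta>1/2$, these bounds give $g\in\Ltwo(\bR)$, with
\begin{align*}
\norm{g}_{\Ltwo}^{2}\leq 2\ld_{0}+2c^{-2}\ld_{0}^{1-2\beta}/(2\beta-1).
\end{align*}
A finite measure whose Fourier transform is square integrable is absolutely continuous with an $\Ltwo$ density. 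To see this, convolve $\rho$ with an approximate identity: the smoothed densities are bounded in $\Ltwo$ by Plancherel, so a weak limit gives the density. Hence $\rho(\dt)=r(t)\,\dt$ with $r\in\Ltwo(\bR)$ supported on $\sp$, and $\hat r=g$.

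Next I bound $\omega_T(h)$, and I bound it by a $T$-independent quantity. Extending $r$ by $0$ on $(-\infty,0)$, we have, for every $T$,
\begin{align*}
\norm{r}^{2}_{\Ltwo[0,h]}+\int_{0}^{T}(r(u+h)-r(u))^{2}\du\leq 2\int_{\bR}(r(u+h)-r(u))^{2}\du .
\end{align*}
Indeed, the full integral over $\bR$ contains $\int_{-h}^{0}r(u+h)^{2}\du=\norm{r}^{2}_{\Ltwo[0,h]}$, and it also dominates the integral over $[0,T]$. By Plancherel, the right-hand side equals $\pi^{-1}\int\abs{e^{-iuh}-1}^{2}\abs{g(u)}^{2}\du$. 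I split this integral at $\abs{u}=1/h$.

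On the region $\abs{u}\leq 1/h$, I use $\abs{e^{-iuh}-1}^{2}\leq u^{2}h^{2}$. For $\abs{u}\leq\ld_{0}$ I bound $\abs{g}$ by $1$; for $\ld_0\leq\abs{u}\leq 1/h$ I use the power bound on $\abs{g}$. This gives a contribution at most
\begin{align*}
C\bigl(h^{2}\ld_{0}^{3}+h^{2}c^{-2}\textstyle\int_{\ld_0}^{1/h}u^{2-2\beta}\du\bigr)\leq C\bigl(h^{2}\ld_0^3+h^{2\beta-1}\bigr),
\end{align*}
where the last step uses $2-2\beta\in[0,1)$. On the region $\abs{u}>1/h$, I use $\abs{e^{-iuh}-1}^{2}\leq 4$ together with the power bound on $g$ wherever $\abs{u}\geq\ld_0$. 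Where $1/h<\ld_0$, the bound $\abs g\le1$ covers the bounded range $[1/h,\ld_0]$, with contribution at most $8\ld_0$. The tail contributes $C\int_{1/h}^{\infty}u^{-2\beta}\du=Ch^{2\beta-1}$.

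This yields $\omega_T(h)^2\leq C(h^{2\beta-1}+h^{2}\ld_0^3)$ for $h\leq 1/\ld_0$. For large $h$ the left side is trivially at most $4\norm r_{\Ltwo}^2$, which is $\le C h^{2\beta-1}$ once $h\geq 1/\ld_0$. The $h^2$ term is also absorbed into $Ch^{2\beta-1}$ for $h\le 1/\ld_0$, since $2>2\beta-1$. The constants depend only on $(c,\beta,\ld_0)$. The main care point is exactly this bookkeeping, making sure the bound holds for all $h>0$ and not only small $h$; the rest is routine.

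Finally, $r\in\Ltwo\subseteq\Ltwol$. If $\mu\in\Mlf^*(\sp)$ has an $\Ltwol$ density $m$, the hypotheses of Proposition \ref{p:Z-modulus} are satisfied with $\gm=\beta-\tot\in(0,1/2]$. That proposition yields a density $Z$ of the $(\rho,\mu)$-Feller random measure with locally H\"older paths of every order $\theta<\beta-1/2$.
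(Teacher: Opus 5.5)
Your proposal is correct and follows essentially the same route as the paper: Plancherel with $\hrho(-iu)=1/\Psi(-iu)$, domination of both terms of $\omega_{T}(h)^{2}$ by $\int_{\bR}(r(u+h)-r(u))^{2}\,\du$, a split of the frequency integral at $\abs{u}=1/h$ and $\abs{u}=\ld_{0}$, absorption of large $h$ and of the $h^{2}$ term into $h^{2\beta-1}$, and finally Proposition \ref{p:Z-modulus} with $\gm=\beta-\tot$. The differences are cosmetic: the paper avoids your factor $2$ by observing that $(-h,0)$ and $[0,T]$ are disjoint, which is also what makes your large-$h$ bound $4\norm{r}_{\Ltwo}^{2}$ literally correct; and you spell out an approximate-identity argument for the $\Ltwo$ density, which the paper simply attributes to Plancherel's theorem.
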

\begin{proof} In Appendix \ref{s:prf-holder}.
\end{proof}

\begin{corollary}[No roughness when there is a drift]
\label{c:drift-smooth}
Let $b>0$, so that $\rho$ has a density bounded by $1/b$ (see \cite[Chapter III]{Ber96}), and let $\xi$ be a $(\rho,\mu)$-Feller
random measure with density $Z$. Then $Z$ is locally
(essentially) bounded and has a \cd{} version, which is continuous if
and only if $\mu$ is diffuse. If, moreover, $\mu$ admits an
$\Ltwol(\sp)$-density, then $Z$ admits a version whose paths are
locally H\"older continuous of every order $\theta<1/2$, irrespective
of $\Pi$.
\end{corollary}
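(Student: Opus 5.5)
The plan is to work with the Volterra representation \eqref{Z-resolvent} together with the structure of $r$ when $b>0$. First, a standard fact about subordinators with positive drift: the potential measure of $L$ killed at rate one has a density $r$ with $0\le r\le 1/b$, and this density can be chosen \cd{} on $\sp$ (it is right-continuous with left limits, see \cite[Chapter III]{Ber96}). In fact $r(t)=\bP(\text{$L$ hits $t$ before $E$})/b$ for $t>0$, and $r(0+)=1/b$, since $\rho(\dt)=r(t)\,\dt$ with $\hrho=1/\Psi$ and $\Psi(\ld)\sim b\ld$. Hence \eqref{r-sq-int} holds trivially, and Proposition \ref{p:density-crit} and Theorem \ref{t:Z-eqs} apply. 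The density is therefore
\[
Z_t = (r*\mu)(t) + \int_0^t r(t-s)\sqrt{Z_s}\,\dB_s .
\]

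Local boundedness and the \cd{} version come from \eqref{Z-sde} rather than from the Volterra form. Divide \eqref{Z-sde} by $b$:
\[
Z_t = b^{-1}\mu[0,t] - b^{-1}\int_0^t (1+\oPi(t-s)) Z_s\,\ds + b^{-1}\int_0^t\sqrt{Z_s}\,\dB_s
\quad\text{for a.e.\ } t.
\]
The right-hand side is the sum of three terms:
\begin{enumerate*}[label=(\roman*)]
\item a \cd{} nondecreasing term;
\item the convolution of the locally integrable function $1+\oPi$ (integrable near $0$ since $\Pi$ integrates $\min(1,t)$) with the nonnegative locally integrable $Z$;
\item a continuous local martingale.
\end{enumerate*}
Since $Z\ge0$, the right-hand side bounds $Z$ from above by $b^{-1}\mu[0,t]+b^{-1}\sup_{[0,T]}\abs{\int\sqrt Z\,\dB}$, which gives local essential boundedness.

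For regularity, the convolution term (ii) needs care. With $Z$ locally bounded and $\oPi$ locally integrable, $t\mapsto\int_0^t\oPi(t-s)Z_s\,\ds$ is continuous, because $L^1$-continuity of translations applies to $\oPi\ind{[0,T]}$ against a bounded function. The right-hand side is therefore a \cd{} function agreeing a.e.\ with $Z$, and we take it as the version. Its jumps are exactly those of $b^{-1}\mu[0,\cdot]$, so the version is continuous if and only if $\mu$ has no atoms. To see that no other version can be continuous at an atom $t_0$ of $\mu$, note that any two \cd{} versions agreeing a.e.\ coincide everywhere. Hence the jump $\mu\{t_0\}/b$ is intrinsic.

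For the H\"older statement when $\mu(\dt)=m(t)\,\dt$ with $m\in\Ltwol$, use the same \cd{} version. Term (i) is $b^{-1}\int_0^t m$, which is H\"older of order $1/2$ by Cauchy--Schwarz. Term (iii) is H\"older of every order $<1/2$ by Kolmogorov, since its quadratic variation has locally bounded density $Z/b^2$; a localization handles the path-dependent bound, or alternatively one uses the moment bound $\norm{Z_t}_{\bL^p}\le C$ from Proposition \ref{p:Z-modulus} and BDG. The main obstacle is the convolution term (ii), since $\oPi$ may blow up near $0$ and a general L\'evy measure gives no power modulus for $\oPi$ in $L^1$. I would handle it by writing the increment
\[
\int_0^{t+h}\oPi(t+h-s)Z_s\,\ds-\int_0^t\oPi(t-s)Z_s\,\ds=\int_0^{h}\oPi(u)Z_{t+h-u}\,\du+\int_0^{t}\oPi(u)\prn{Z_{t+h-u}-Z_{t-u}}\,\du .
\]
The first piece is $O(h\oPi$-averaged$)$, and since $\int_0^h\oPi\le \int\min(u,h)\Pi(\du)$ is $o(1)$ this alone is too weak. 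So I would instead bootstrap: the Volterra form yields the $L^p$ increment bound $\norm{Z_{t+h}-Z_t}_{\bL^p}\le C\omega_T(h)$ from Proposition \ref{p:Z-modulus}, and boundedness of $r$ by $1/b$ with $r$ of bounded variation locally (it is a renewal-type density) gives $\omega_T(h)\le C h^{1/2}$. Hence Kolmogorov applies directly to $Z$, giving every order $\theta<1/2$ irrespective of $\Pi$. The crux is proving the local bounded variation, or more weakly $\int_0^T(r(u+h)-r(u))^2\du\le Ch$. I would obtain it from $\hrho=1/\Psi$ via $\Psi(-iu)=b(-iu)+O(\abs{u})$, that is $\abs{\Psi(-iu)}\ge c\abs u$ for large $\abs u$, and apply Corollary \ref{c:holder} with $\beta=1$, which directly gives $\omega_T(h)\le Ch^{1/2}$.
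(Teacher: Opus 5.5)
Your route is the paper's. For the first part, you drop the nonnegative term $(1+\oPi)*Z$ in \eqref{Z-sde} to get local essential boundedness. You then use continuity of $\oPi*Z$ to read off a \cd{} version whose jumps are those of $b^{-1}\mu[0,\cdot]$. For the H\"older claim you end up, as the paper does, checking \eqref{psi-power} with $\beta=1$ and invoking Corollary \ref{c:holder}. The hitting-probability formula for $r$, the claim $r(0+)=1/b$, and the detour through local bounded variation of $r$ are not needed. The bounded-variation claim is also asserted without justification.

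There is one step to repair, and it is the entire content of the second half: the estimate behind \eqref{psi-power}. Writing $\Psi(-iu) = -ibu + O(\abs{u})$ does not give $\abs{\Psi(-iu)}\geq c\abs{u}$, because an $O(\abs{u})$ error can cancel the drift term. What you need, and what the paper proves, is that the jump contribution is $o(\abs{u})$. Since $\abs{1-e^{iut}}\leq\min(2,\abs{u}t)$, the integrand divided by $\abs{u}$ tends to $0$ pointwise as $\abs{u}\to\infty$. For $\abs{u}\geq 1$ it is dominated by $\min(2,t)$, which is $\Pi$-integrable because $\Pi$ is a L\'evy measure. Dominated convergence then gives $\int_{(0,\infty)}(1-e^{iut})\,\Pi(\dt) = o(\abs{u})$, and the constant $1$ in $\Psi$ is also $o(\abs{u})$. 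Hence $\abs{\Psi(-iu)}\geq b\abs{u}/2$ for all large $\abs{u}$, and Corollary \ref{c:holder} applies with $\beta=1$. The paper runs the same argument on $\Im\Psi(-iu)$ using $\abs{\sin(ut)}\leq\min(1,\abs{u}t)$.
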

\begin{proof}
Since $b>0$,
the term $(1+\oPi)*Z$ in \eqref{Z-sde} is nonnegative, so 
$0 \leq b Z_{t} \leq  \mu[0,t] +  \int_{0}^{t}\sqrt{Z_{s}}\,\dB_{s}$
for a.e.\ $t$, a.s. It follows that $Z$ is locally
(essentially) bounded, which, in turn, implies that 
$\oPi*Z$  is continuous. Hence, by \eqref{Z-sde},
$Z$ has a \cd{} version, which is continuous if and only if $\mu$ is diffuse.

For the second statement, we have $\Im\Psi(-iu) = -bu -
\int_{(0,\infty)}\sin(ut)\,\Pi(\dt)$. 
The integral is $o(\abs{u})$ as $\abs{u} \to \infty$, by dominated
convergence, since $\abs{\sin(ut)} \leq \min(1,\abs{u}t)$ and
$\Pi$ is a L\'evy measure. 
Hence
$\abs{\Psi(-iu)} \geq b \abs{u}/2$ for $\abs{u}$ large. Thus
\eqref{psi-power} holds with $\beta=1$, and Corollary \ref{c:holder}
applies.
\end{proof}

\begin{remark}[Explosive behavior when $\mu$ is atomic]
  \label{r:atom-blowup}
Without the absolute continuity of $\mu$, the process $Z$ may
behave in an entirely different way from what Proposition \ref{p:Z-modulus} 
describes. Take, for example, $\mu = \delta_{0}$ and
$\Psi(\ld) = 1+\ld^{\al}$ with $\al \in (1/2,1)$, so that $L$ is a
stable subordinator, $\rho$ is a Mittag-Leffler distribution,
and we have
$r(t) \sim \frac{t^{\al-1}}{\Gamma(\al)}$ as $t \downto 0$.
It follows that $r \in \Ltwol(\sp)$ and, therefore, by Proposition
\ref{p:density-crit}, that the density $Z$ exists. On the other hand,
the density $r*\mu = r$ of $\ee{\xi}$ is unbounded around the origin, so
$\ee{Z_{t}} = r(t) \to \infty$ as $t \downto 0$. By
\eqref{Z-resolvent}, we have  
\begin{align*}
  \norm{ \frac{Z_{t}}{r(t)} - 1 }_{\Ltwo}^{2}
  = \oo{r(t)^{2}}\int_{0}^{t}r(t-u)^{2}\,r(u)\,\du
  \sim \frac{\Gamma(2\al-1)}{\Gamma(3\al-1)}\,t^{\al}, \text{ for
a.e. } t \in \sp,
\end{align*}
so that $Z_{t} / t^{\al-1} \to \Gamma(\al)^{-1} \in (0,\infty)$
in $\Ltwo$ along a set of $t$ of full measure. 

To get a pathwise estimate, we start with 
Proposition \ref{p:basic}(\ref{i:moments}), which  gives
$\Var\prn{ \Gamma_t } = \int_{[0,t]}\rho[0,t-u]^{2}\,\rho(\du)
\asymp t^{3\al}$ against $\rho[0,t]^{2} \asymp t^{2\al}$, so that
$\Gamma_t/\rho[0,t] \to 1$ in $\Ltwo$, with the squared $\Ltwo$-distance
of order $t^{\al}$, as above.
Chebyshev's inequality and the Borel-Cantelli lemma, applied along
$t_{n}=q^{n}$ with $q \in (0,1)$, whose $\al$-th powers are summable,
yield $\xi[0,t_{n}] \geq \rho[0,t_{n}]/2$ for all large $n$, a.s.
Since $\esssup_{(0,t_{n})}Z \geq \xi[0,t_{n}]/t_{n} \geq
\rho[0,t_{n}]/(2t_{n}) \asymp
t_{n}^{\al-1}$, we conclude that $\esssup_{(0,\delta)}Z = \infty$ for
every $\delta>0$, a.s.

The same argument
applies at any atom $t_{0}$ of $\mu$: by Proposition
\ref{p:basic}(\ref{i:sum}), we may realize $\xi$ as
$\xi_{1}+\xi_{2}$,
where $\xi_{1}$ is a $(\rho,\mu(\set{t_{0}})\delta_{t_{0}})$-Feller
random measure independent of $\xi_{2}$, so that $Z$ dominates a
density of $\xi_{1}$ almost everywhere, and
$\esssup_{(t_{0},t_{0}+\delta)}Z = \infty$ for every $\delta>0$,
a.s. Thus, if $\mu$ is purely atomic with dense support and $\rho$
is as above,
$Z$ is essentially unbounded on every nonempty interval, a.s.
\end{remark}

\subsection{Sharpness of the regularity}
\label{s:sharp}
This subsection contains two negative results, namely
Propositions \ref{p:max-holder} and \ref{p:not-semimart}. 
Both of them are stated under the following assumption:

\begin{assumption}
\label{a:pi-bound}
The measure $\rho$ is absolutely continuous, $b = 0$, $\xi$ has a
density $Z$, and the jump measure $\Pi$ has a density $\pi$ such that,
for some $\al \in (1/2,1)$ and $C<\infty$,
\begin{align}\label{pi-bound}
  \pi(t) \leq C\,t^{-1-\al} \efor t \in (0,1], \eand
  \sup_{t \geq 1}\pi(t) < \infty .
\end{align}
\end{assumption}

Assumption \ref{a:pi-bound} holds, e.g., for the rough CIR family
(Example \ref{e:rough-cir}), and for its tempered version (Example
\ref{e:temper}) when $\al>1/2$.
When $\mu$ has an $\Ltwol(\sp)$-density, Corollary
\ref{c:holder} gives H\"older continuity of every order below
$\al-1/2$ for both of these examples. 
This is well known in the rough case (see, e.g.,
\cite[Theorem 7.1(i)]{AbiLarPul19}).

Proposition \ref{p:max-holder} below shows that in these examples
the H\"older order $\al-1/2$ is not attained path by path, so that the
range above is the best possible. Proposition \ref{p:not-semimart}
shows that, when $\mu \in \Mlf^{*}(\sp)$, no version of $Z$ is a semimartingale. Both facts are often
asserted in the literature, but we have been unable to locate proofs
of either.

Under Assumption \ref{a:pi-bound}, 
we have the following consequence of \eqref{Z-sde}, valid
almost surely, for almost every $t \geq 0$:
\begin{align}\label{gen-frac-sde}
  (\oPi * Z)(t) = \int_{[0,t]}\Bprn{ \mu(\ds) - Z_{s}\,\ds }
  + \int_{0}^{t}\sqrt{Z_{s}}\,\dB_{s},
\end{align}
where $B$ is the Brownian motion of Theorem \ref{t:Z-eqs}.
Both proofs rest on the fact that the left-hand side of
\eqref{gen-frac-sde} is a fractional-type integral of $Z$, which
raises the regularity of $Z$, while the regularity of the right-hand
side is known.

\begin{proposition}
\label{p:max-holder}
Under Assumption \ref{a:pi-bound}, almost surely, $Z$ is not
Lebesgue-a.e.\ equal to a locally $(\al-1/2)$-H\"older function on any
interval on which it does not vanish a.e.
\end{proposition}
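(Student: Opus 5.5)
We argue by contradiction, on the event $F$ that for some rational interval $(a,c)$ the density $Z$ is a.e.\ equal to a locally $(\al-1/2)$-H\"older function $\tilde Z$ on $(a,c)$ and does not vanish a.e.\ there. By countable additivity it suffices to show $\pp{F\cap G}=0$ for each fixed rational $(a,c)$ and each fixed H\"older constant, where $G$ denotes the corresponding event. The strategy compares the two sides of \eqref{gen-frac-sde}. The right-hand side is $\mu[0,t]-\Gamma_t+\int_0^t\sqrt{Z_s}\,\dB_s$. On the left, $\oPi*Z$ is a fractional-type integral of $Z$. Since $\pi(t)\le Ct^{-1-\al}$ near $0$ and $\pi$ is bounded on $[1,\infty)$, we have $\oPi(t)\le C't^{-\al}$ near $0$ and $\oPi$ is of finite variation away from $0$. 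Hence convolution with $\oPi$ acts like the Riemann--Liouville integral $I^{1-\al}$ locally, up to a smoother remainder.

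The first key step is a deterministic regularity lemma. If $g$ is $(\al-1/2)$-H\"older on $[a',c']\subset(a,c)$ and locally bounded elsewhere on $[0,c]$, then $t\mapsto(\oPi*g)(t)$ is H\"older of order strictly greater than $1/2$ on compact subintervals of $(a',c')$. Morally the order is $(\al-1/2)+(1-\al)=1/2$, which is only borderline. So I would use the sharper form: the contribution from the singular part $\oPi\ind{[0,\delta]}$ applied to a $\gm$-H\"older function yields a modulus $o(h^{1/2})$, indeed $O(h^{1/2})$ with a constant proportional to the H\"older seminorm times $\delta^{0}$ scale factors. The part from $\oPi\ind{(\delta,\infty)}$ is Lipschitz, since $\pi$ is bounded there. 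The cleaner route is to write $\oPi*g(t+h)-\oPi*g(t)=\int_0^h\oPi(u)g(t+h-u)\,\du+\int(\oPi(u+h)-\oPi(u))g(t-u)\,\du$. Subtracting the constant $g(t)$, using $\int_0^\infty(\oPi(u+h)-\oPi(u))\,\du$-type cancellations, and then H\"older continuity gives increments bounded by $C h^{1-\al}\cdot h^{\al-1/2}=Ch^{1/2}$. Crucially, the bound also holds with a vanishing constant on small scales if we only use that $g$ is H\"older of the exact order. Since this only yields $O(h^{1/2})$ and not $o(h^{1/2})$, I expect this to be the main obstacle. The plan is to use the law of the iterated logarithm / L\'evy modulus on the martingale side, which beats $h^{1/2}$ by a $\sqrt{\log}$ factor. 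Thus $O(h^{1/2})$ on the left suffices.

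The second step handles the right-hand side. The terms $\mu[0,t]$ and $\Gamma_t$ have $\mu[0,t+h]-\mu[0,t]=\int_t^{t+h}m$. For the left-hand bound we need $\mu$ regular enough; if it is not, we note that \eqref{gen-frac-sde} gives $\mu[0,\cdot]$ as a difference of the other terms, so I would instead argue via quadratic variation. The cleanest formulation is as follows. On $[a',c']$, the process $Y_t:=(\oPi*Z)(t)+\Gamma_t$ has increments $O(h^{1/2})$ uniformly on $G$, and $\Gamma$ is Lipschitz there since $\tilde Z$ is bounded. Hence $M_t:=\int_0^t\sqrt{Z_s}\,\dB_s=Y_t-\mu[0,t]$. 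Taking a partition, the realized quadratic variation of $Y$ over $[a',c']$ along mesh $h$ is bounded by $C$, but, more usefully, $\sup_{|t-s|\le h}|M_t-M_s|$ must be $O(h^{1/2})$ up to the $\mu$-term. I would handle $\mu$ by choosing the subinterval where the finite-variation term is negligible. Since $\mu[0,\cdot]$ is monotone, it is differentiable a.e., so at Lebesgue points its increments are $O(h)$.

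The final step is the contradiction. By Corollary \ref{c:dds}, $M_t=W_{\Gamma_t}$. Because $\tilde Z$ is continuous and not identically zero on $(a,c)$, there is a subinterval $[a',c']$ where $\tilde Z\ge\eps>0$. There $\Gamma$ is bi-Lipschitz, and L\'evy's modulus of continuity for $W$ forces $\sup_{|t-s|\le h,\,s,t\in[a',c']}|M_t-M_s|\ge c\sqrt{h\log(1/h)}$ a.s. This contradicts the $O(h^{1/2})$ bound after handling $\mu$, for instance by the LIL at a fixed Lebesgue point $t_0$ of $m$ chosen measurably, or via quadratic variation. Thus $\pp{F}=0$. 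The delicate point, again, is the uniform $O(h^{1/2})$ bound for $\oPi*\tilde Z$ when $Z$ is only H\"older locally, since $Z$ away from the interval enters through the tail of $\oPi$. This is handled by the boundedness of $\pi$ on $[\delta,\infty)$ together with $Z\in\Lonel$.
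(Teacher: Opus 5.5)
Your $O(h^{1/2})$ bound for the increments of $\oPi*Z$ is exactly the paper's main ingredient: you split $\oPi$ into the part near $0$, controlled by the H\"older seminorm, and the part away from $0$, controlled by $Z\in\Lonel$. So is your plan to beat $h^{1/2}$ by the logarithmic factor in the Brownian fluctuations after the Dambis--Dubins--Schwarz time change.

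The gap is the last step, which you leave as ``after handling $\mu$''. None of your three options closes it as stated.
\begin{enumerate}
\item \emph{L\'evy's modulus on the random interval.} Assumption \ref{a:pi-bound} imposes nothing on $\mu$; there need not be a density $m$. So $\mu[0,\cdot]$ need not have increments $o(\sqrt{h\log(1/h)})$ uniformly on any subinterval. A.e.\ differentiability of $\mu[0,\cdot]$ is a pointwise statement and does not produce such a subinterval. Hence L\'evy's modulus on your random $[a',c']$ cannot be played against a uniform $O(h^{1/2})$ bound for $M+\mu[0,\cdot]$.
\item \emph{The LIL at a point.} The LIL holds a.s.\ at each \emph{fixed} time, not at all times simultaneously. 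Brownian motion has random slow points at which increments are $O(\sqrt h)$. So a bound $|M_{t_0+h}-M_{t_0}|\le C\sqrt h$ at a point $t_0=t_0(\omega)$, however measurably chosen, contradicts nothing.
\item \emph{Quadratic variation.} Increments of size $O(\sqrt h)$ are compatible with positive quadratic variation, so this gives no contradiction.
\end{enumerate}

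The paper resolves this with a Fubini argument over deterministic times. It fixes $t$ in a full-measure set $D$ of points at which $\mu[0,\cdot]$ is differentiable, $Z$ has a Lebesgue point a.s., and \eqref{gen-frac-sde} holds a.s. It then shows $\mathbb{P}(\{Z_t>0\}\cap\{|Z|_{\al-1/2,J}<\infty\})=0$ using the LIL at that fixed $t$. Integrating over $t\in J$ (Tonelli) gives $Z=0$ a.e.\ on $J$ on the event $\{|Z|_{\al-1/2,J}<\infty\}$, and one takes the union over rational $J$. Equivalently, a.s.\ the LIL holds at Lebesgue-a.e.\ $t$, and $t_0$ must be picked from that full-measure set intersected with the differentiability points of $\mu[0,\cdot]$; this is the justification your sketch lacks.

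Your uniform route can also be rescued without Fubini, using only that $\mu[0,\cdot]$ is nondecreasing. On $[a',c']$, \eqref{gen-frac-sde} gives
\[
M_t-M_s\le C\sqrt{t-s}+(t-s)\sup_{[a',c']}\tilde Z \quad \text{for } s<t .
\]
This bounds upward increments only. Since $\Gamma$ is bi-Lipschitz on $[a',c']$, it contradicts the one-sided L\'evy modulus of the DDS Brownian motion, which holds a.s.\ simultaneously on all intervals. For this you also need \eqref{gen-frac-sde} at every point of $[a',c']$, not only a.e. That follows from continuity of $\oPi*Z$ there and right-continuity of the right-hand side.
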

\begin{proof} In Appendix \ref{s:prf-max-holder}.
\end{proof}

\begin{proposition}
\label{p:not-semimart}
Suppose that Assumption \ref{a:pi-bound} holds and that $\mu \in
\Mlf^{*}(\sp)$. Then no version of $Z$ is a semimartingale, with
respect to any filtration.
\end{proposition}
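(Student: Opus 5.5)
We argue by contradiction. Suppose that, on an event of positive probability, some version $\tilde Z$ of $Z$ is a semimartingale with respect to some filtration. We will show that this forces $Z$ to be more regular than Proposition \ref{p:max-holder} allows, on an interval where $Z$ does not vanish.

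\emph{Step 1: regularity from the left-hand side of \eqref{gen-frac-sde}.} The right-hand side of \eqref{gen-frac-sde} is $\mu[0,t] - \int_0^t Z_s\,\ds + \int_0^t \sqrt{Z_s}\,\dB_s$. Its last term is a continuous local martingale, whose paths are locally H\"older of every order $<1/2$ and have finite, nonzero quadratic variation $\int_0^t Z_s\,\ds$ wherever $Z$ does not vanish.

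On the other hand, $\tilde Z$ is a semimartingale, hence \cd{} and locally bounded, and it decomposes as $\tilde Z = \tilde Z_0 + M + A$. Consequently $\oPi * \tilde Z$ can be analysed term by term via stochastic Fubini. Write $(\oPi*\tilde Z)(t) = \int_0^t \oPi(t-s)\,\tilde Z_s\,\ds$ and integrate by parts in $s$, with $\bar{\bar\Pi}(u) := \int_0^u \oPi$, which is finite since $\oPi(u)\le C u^{-\al}$ with $\al<1$. This gives
\begin{align*}
(\oPi * \tilde Z)(t) = \bar{\bar\Pi}(t)\,\tilde Z_0 + \int_0^t \bar{\bar\Pi}(t-s)\,\diffd\tilde Z_s .
\end{align*}
Since $\bar{\bar\Pi}(u) \le C u^{1-\al}$ with $1-\al>0$, and $\bar{\bar\Pi}(0)=0$, the martingale part $\int_0^t \bar{\bar\Pi}(t-s)\,\dM_s$ has quadratic variation over $[t,t+h]$ of order $o(h)$. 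The finite-variation part is of bounded variation. Hence $\oPi*\tilde Z$ is a process with zero quadratic variation (the finite-variation part plus a Volterra integral whose kernel vanishes at the diagonal). Making this precise, and showing that the quadratic variation along partitions vanishes, is the key technical step. I would bound the increment $\int_0^{t+h}[\bar{\bar\Pi}(t+h-s)-\bar{\bar\Pi}(t-s)]\,\dM_s$ in $\Ltwo$ by localizing $\bkt{M}$, using the H\"older-type estimate $\int_0^T(\bar{\bar\Pi}(u+h)-\bar{\bar\Pi}(u))^2\du = o(h)$, which follows from the bound \eqref{pi-bound} on $\pi$.

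\emph{Step 2: contradiction.} Comparing quadratic variations on both sides of \eqref{gen-frac-sde}, which holds for a.e.\ $t$ and extends to all $t$ since both sides are \cd{}, yields $\int_0^t Z_s\,\ds = 0$ for all $t$. Thus $\Gamma\equiv 0$ on the event, and therefore, by Proposition \ref{p:basic}(\ref{i:moments}) and \eqref{mass}/\eqref{expmom}, $\xi = 0$ there.

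This contradicts the nondegeneracy of $\xi$ when $\mu \in \Mlf^{*}(\sp)$. To finish, it suffices to show $\pp{\xi[0,T]=0}<1$ for every $T$ with $\mu[0,T]>0$. This follows from $\ee{\xi[0,T]} = (\rho*\mu)[0,T] > 0$, using that $\rho$ is PGID with $\rho(\set{0})=0$ and full support near $0$.

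On the event where $\tilde Z$ is a semimartingale and $\xi\neq0$, we therefore get the desired contradiction. If that event had positive probability but $\xi=0$ on it, the same argument applied after conditioning still gives a contradiction. This is because $\Gamma$ is then identically zero, while the Laplace transform \eqref{laplace} shows $\pp{\xi[0,T]=0}=e^{-\ab{\mu,v_\infty}}$, where $v_\infty := \lim_{\lambda\to\infty} v[\lambda\ind{[0,T]}]$ solves the limiting Riccati equation. Hence $\xi[0,T]=0$ cannot hold almost surely, and, more to the point, quadratic variation is zero precisely on $\set{\xi[0,T]=0}$. So I would phrase the conclusion as: a.s.\ on $\set{\xi\ne0}$, $Z$ is not a semimartingale, and this set has positive probability.

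The main obstacle is Step 1: the Volterra integral of a general semimartingale against a kernel that is only H\"older of order $1-\al$ at the diagonal. The jump part of $\tilde Z$ must also be handled, since the semimartingale need not be continuous. For the jumps, I expect that the continuity of the left side of \eqref{gen-frac-sde} forces $\tilde Z$ to have no jumps, via the same comparison, which reduces matters to the continuous case.
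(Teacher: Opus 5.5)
Your overall route is the paper's. You decompose $\tilde Z=\tilde Z_0+M+A$ and integrate by parts to get $(\oPi*M)(t)=\int_0^t\Lambda(t-u)\,\diffd M_u$, with $\Lambda:=\bar{\bar\Pi}$. You then show that the sums of squared increments of $\oPi*\tilde Z$ along $t_i=ih$ vanish. Finally you compare with the right-hand side of \eqref{gen-frac-sde}, whose quadratic variation on $[0,T]$ is at least $\int_0^T Z_s\,\ds$.

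The gap is in the estimate you propose for the key step. The It\^o isometry gives
\[
\mathbb{E}\Bigl[\sum_i\bigl(X(t_i+h)-X(t_i)\bigr)^2\Bigr]
=\mathbb{E}\Bigl[\int_0^T\sum_i\phi_i(u)^2\,\diffd\bkt{M}_u\Bigr],
\qquad \phi_i(u):=\Lambda(t_i+h-u)-\Lambda(t_i-u).
\]
Here $\diffd\bkt{M}$ is an arbitrary random measure, possibly singular with respect to Lebesgue measure. So the Lebesgue-averaged bound $\int_0^T(\Lambda(u+h)-\Lambda(u))^2\,\du=o(h)$ controls nothing unless $\diffd\bkt{M}_u\le C\,\du$, which you do not have. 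Your heuristic that the martingale part has quadratic variation $o(h)$ over $[t,t+h]$ tacitly assumes exactly this.

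What is needed is $\sup_{u\le T}\sum_i\phi_i(u)^2\to0$. The missing ingredient is the concavity of $\Lambda$, i.e., the monotonicity of $\oPi$:
\begin{itemize}
\item The (at most two) indices with $t_i+h-u\le 2h$ give $\phi_i(u)\le\Lambda(h)\le C'h^{1-\alpha}$ by subadditivity.
\item For the remaining ones, $t_i-u\ge(k-1)h$ with $k=2,3,\dots$, and $\phi_i(u)\le h\,\oPi((k-1)h)\le C_Th^{1-\alpha}(k-1)^{-\alpha}$.
\end{itemize}
Hence $\sum_i\phi_i(u)^2\le Ch^{2-2\alpha}$ uniformly in $u$. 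Equivalently, since the increments of $\Lambda$ are nonincreasing, the sum at any fixed $u$ is at most $2\Lambda(h)^2+h^{-1}\int_0^T(\Lambda(v+h)-\Lambda(v))^2\,\diffd v$. Only after this comparison does your $o(h)$ estimate close the argument. The isometry also needs $\mathbb{E}[\bkt{M}_T]<\infty$ after stopping. So the decomposition must be chosen with $M$ locally bounded, since for a general local martingale $\bkt{M}$ need not be locally integrable.

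Your treatment of jumps is a misdiagnosis. Continuity of $\oPi*\tilde Z$ says nothing about the jumps of $\tilde Z$, because the convolution of any locally bounded path with $\oPi\in\Lonel(\sp)$ is continuous. No reduction to the continuous case is needed either: the isometry above, with the optional bracket $\bkt{M}$, holds for square-integrable martingales with jumps. Note also that $\oPi*A$ must be continuous, not merely of finite variation, for its realized quadratic variation to vanish. It is continuous, by the same convolution argument.

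Being a semimartingale is not a pathwise property. So drop the framing ``on an event of positive probability'' and the closing discussion of $\{\xi\neq0\}$ and the Laplace transform. Instead, assume a version is a semimartingale, deduce $\int_0^TZ_s\,\ds=0$ a.s., and contradict $\mathbb{E}[\int_0^TZ_s\,\ds]=(\rho*\mu)[0,T]$. This expectation is positive only for $T$ large enough, not for every $T$ with $\mu[0,T]>0$: for $\mu=\delta_T$ it equals $\rho(\set{0})=0$.
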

\begin{proof} In Appendix \ref{s:prf-not-semimart}.
\end{proof}

Combining these results with Corollary \ref{c:holder}, we obtain the
exact H\"older exponent for jump measures with a power-law density at the
origin.

\begin{corollary}
\label{c:exact-holder}
Let $\rho$ be PGID with $b = 0$, and suppose that $\Pi$ has a density
$\pi$ such that, for some $\al \in (1/2,1)$ and $0<c\leq C<\infty$,
\begin{align}\label{pi-two-sided}
  c\,t^{-1-\al} \leq \pi(t) \leq C\,t^{-1-\al} \efor t \in (0,1],
  \eand \sup_{t\geq 1}\pi(t) < \infty.
\end{align}
Suppose that $\mu \in \Mlf^{*}(\sp)$ has an $\Ltwol(\sp)$-density, and let
$\xi$ be the $(\rho,\mu)$-Feller random measure. Then $\xi$ has a density
$Z$ with a version whose paths are locally H\"older continuous of every
order $\theta<\al-1/2$. Moreover, almost surely, $Z$ is not
Lebesgue-a.e.\ equal to a locally $(\al-1/2)$-H\"older function on any
interval on which it does not vanish a.e. Finally, no version of $Z$
is a semimartingale, with respect to any filtration.
\end{corollary}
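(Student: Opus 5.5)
The corollary combines Corollary \ref{c:holder} with Propositions \ref{p:max-holder} and \ref{p:not-semimart}. The only real work is to check their hypotheses from \eqref{pi-two-sided}. The last two parts need Assumption \ref{a:pi-bound}, and the upper bounds in \eqref{pi-two-sided} are exactly \eqref{pi-bound}, with $b=0$ given. It therefore remains to show that $\rho$ is absolutely continuous with a locally square-integrable density, and that $\xi$ has a density. Both follow once \eqref{psi-power} is established with $\beta=\al\in(1/2,1)$.

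Corollary \ref{c:holder} then gives $\rho(\dt)=r(t)\,\dt$ with $r\in\Ltwo(\sp)$. Since $\mu$ has an $\Ltwol(\sp)$-density, it also gives a density $Z$ with a version that is locally H\"older continuous of every order $\theta<\al-1/2$. Alternatively, $r\in\Ltwo$ and Proposition \ref{p:density-crit} already give existence of $Z$. At that point Assumption \ref{a:pi-bound} holds in full, and Propositions \ref{p:max-holder} and \ref{p:not-semimart} give the remaining two statements. The latter uses $\mu\in\Mlf^{*}(\sp)$, which is assumed.

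The main step is the lower bound $\abs{\Psi(-iu)}\ge c'\abs{u}^{\al}$ for $\abs{u}\ge\ld_0$. Since $b=0$,
\begin{align*}
  \Re\Psi(-iu) = 1+\int_{(0,\infty)}\prn{1-\cos(ut)}\,\pi(t)\,\dt
  \ge \int_{0}^{1}\prn{1-\cos(ut)}\,c\,t^{-1-\al}\,\dt .
\end{align*}
Substitute $s=\abs{u}t$ in the last integral. For $\abs{u}\ge1$ this gives
\begin{align*}
  c\,\abs{u}^{\al}\int_{0}^{\abs{u}}\prn{1-\cos s}\,s^{-1-\al}\,\ds
  \ge c\,\abs{u}^{\al}\int_{0}^{1}\prn{1-\cos s}\,s^{-1-\al}\,\ds .
\end{align*}
The integral on the right is a finite positive constant. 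Since $\abs{\Psi}\ge\Re\Psi$, this yields \eqref{psi-power} with $\beta=\al$ and $\ld_0=1$.

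Only the lower bound on $\pi$ enters this step. The upper bounds are used only through Assumption \ref{a:pi-bound} in the two negative results. I expect no genuine obstacle. The only point needing care is ordering the argument so that the absolute continuity of $\rho$ and the existence of $Z$ required by Assumption \ref{a:pi-bound} are derived from Corollary \ref{c:holder} before the two propositions are invoked.
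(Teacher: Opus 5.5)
Your proposal is correct and follows the paper's own proof essentially step for step. You use the lower bound on $\pi$ and the substitution $s=\abs{u}t$ to get \eqref{psi-power} with $\beta=\al$, take $r\in\Ltwo(\sp)$, the density $Z$ and its H\"older version from Corollary \ref{c:holder}, and then use $b=0$ and the upper bounds to verify Assumption \ref{a:pi-bound}, so that Propositions \ref{p:max-holder} and \ref{p:not-semimart} give the remaining two claims.
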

\begin{proof}
For $\abs{u} \geq 1$, the lower bound in \eqref{pi-two-sided} and the
substitution $s = \abs{u}t$ give
\begin{align*}
  \abs{\Psi(-iu)} \geq \Re\Psi(-iu) \geq c\int_{0}^{1}\prn{1-\cos ut}\,t^{-1-\al}\,\dt
  \geq c\,\abs{u}^{\al}\int_{0}^{1}\prn{1-\cos s}\,s^{-1-\al}\,\ds.
\end{align*}
The last integral is finite and positive, so \eqref{psi-power} holds
with $\beta = \al$. By Corollary \ref{c:holder}, $\rho$ has a density
in $\Ltwo(\sp)$, so $\xi$ has a density $Z$, a version of which
is H\"older continuous by the same corollary. Together with $b=0$
and the upper bound in \eqref{pi-two-sided}, this verifies Assumption
\ref{a:pi-bound}, and the last two claims follow from Propositions
\ref{p:max-holder} and \ref{p:not-semimart}.
\end{proof}

\subsection{Examples}
\label{s:examples}
This subsection gives a number of examples, ranging from the
classical Feller diffusion to the rough CIR process and its tempered
variants, in which various quantities and equations 
of this section can be written out
explicitly. The critical index
$\al=1/2$ gets two examples of its own. It is both the
empirically relevant end of the range of the Hurst index and the
exact threshold for the existence of a square-integrable density.

\begin{example}[The classical Feller (CIR) process]
  \label{e:cir}
In the special case when
$\rho$ is the exponential distribution with mean $b>0$, i.e.,
$\Pi=0$ and $L$ is deterministic with slope $b$, the density
$r(t) = b^{-1}e^{-t/b}$ is bounded, so that Proposition
\ref{p:density-crit} provides a density $Z$ of $\xi$, and
\eqref{Z-sde} becomes the SDE for the classical Feller (CIR) process with immigration:
\begin{align}\label{cir-sde}
  \dZ_t = \oo{b} \mu(\dt)  - \oo{b} Z_t\, \dt + \oo{b} \sqrt{Z_t}\, \dB_t.
\end{align}
In particular, when $\mu(\dt) = b z_{0}\, \delta_0(\dt) + m\, \dt$,
as in Remark \ref{r:init-cond}, we
get the familiar CIR equation
\begin{align*}
  \dZ_t = \oo{b} ( m - Z_t)\,  \dt + \oo{b} \sqrt{Z_t}\,
  \dB_t \eand Z_0 = z_0.
\end{align*}
Here the spatial motion is linear, so vertical and horizontal time
coincide up to the factor $b$, and the construction of Theorem
\ref{t:main} reduces to Feller's original derivation of his
famous diffusion in \cite{Fel51}: for $\mu = b z_{0}\,\delta_{0}$,
$Z_{t} = b^{-1}\ab{X_{t/b},1}$ is his branching diffusion.
\end{example}

\begin{example}[The rough CIR process]
\label{e:rough-cir}
Let $\al \in (1/2,1)$,
$b=0$ and
$\Pi(\dt) = \frac{\al}{\Gamma(1-\al)}\,t^{-1-\al}\,\dt$, so that
$\Psi(\ld) = 1+\ld^{\al}$ and
$\oPi(t) = t^{-\al}/\Gamma(1-\al)$. The measure $\rho$ is then
the Mittag-Leffler distribution with density 
\begin{align*}
  r(t) = t^{\al-1} E_{\al, \al}(-t^{\al}) \efor t>0,  \ewhere
E_{\al,\be}(x) = \sum_{n=0}^{\infty}
\frac{x^{n}}{\Gamma(\al n + \be)}
\end{align*}
is the Mittag-Leffler function.
Throughout the example we assume, in addition, that $\mu \in
\Mlf^*(\sp)$
admits an $\Ltwol(\sp)$-density $m$. Since $\al>1/2$, the density $r$ is
locally square integrable, so that Proposition \ref{p:Z-modulus}
provides an a.s.~locally square-integrable density $Z$ of $\xi$.
We have $\abs{\Psi(-iu)} \sim \abs{u}^{\al}$ as $\abs{u} \to \infty$, so that
\eqref{psi-power} holds with $\beta=\al$ and Corollary \ref{c:holder}
applies. Moreover, $\pi(t) = \frac{\al}{\Gamma(1-\al)}\,t^{-1-\al}$, so
that \eqref{pi-two-sided} holds. By Corollary \ref{c:exact-holder}, the
H\"older range $(0,\al-1/2)$ is optimal, and no version of $Z$
is a semimartingale. Since $r(t) \sim t^{\al-1}/\Gamma(\al)$ as $t \downto 0$, we have
$\norm{r}_{\Ltwo[0,h]} \asymp h^{\al-1/2}$, so the upper bound
\eqref{Z-incr} and the lower bound \eqref{Z-incr-lb} (on intervals where
$r*m$ is bounded away from $0$) are both of order $h^{\al-1/2}$. The
$\Ltwo$-modulus of $Z$ is therefore exactly of order $h^{\al-1/2}$. The resolvent-form equation
\eqref{Z-resolvent} becomes 
\begin{multline}
  \label{rough-resolvent}
  Z_{t} = \int_{[0,t]}(t-s)^{\al-1}
  E_{\al,\al}\bprn{ -(t-s)^{\al} }\,\mu(\ds) 
   + \int_{0}^{t}(t-s)^{\al-1}
  E_{\al,\al}\bprn{ -(t-s)^{\al} }\sqrt{Z_{s}}\,\dB_{s},
\end{multline}
which is (a minimal generalization of)
\cite[Equation (9)]{JaiRos16} with $\mu^*=\ld=1$ in
their notation.

Next, since $\widehat{U^{L}}(\ld) = \ld^{-\al}$ and
  $\ell(t) = \frac{t^{\al-1}}{\Gamma(\al)}$, 
equation \eqref{Z-potential} reads
\begin{align}\label{rough-potential}
  Z_{t} = \oo{\Gamma(\al)}\int_{[0,t]}(t-s)^{\al-1}
  \Bprn{ \mu(\ds) - Z_{s}\,\ds }
  + \oo{\Gamma(\al)}\int_{0}^{t}(t-s)^{\al-1}
  \sqrt{Z_{s}}\,\dB_{s}. 
\end{align}
The choice
$\mu(\dt) = z_0 \oPi(t)\, \dt + m\,\dt$ falls outside our standing assumption, but it corresponds to
the equation in
\cite[Equation (1.3)]{ElERos19}, where
$Z_{0+} = z_{0}$.
\cite[Proposition 6.3]{ElERos19} gives the equivalence of
\eqref{rough-resolvent} and \eqref{rough-potential}.

In addition to \eqref{rough-resolvent} and \eqref{rough-potential},
Remark \ref{r:gen-frac-sde} gives a third equation for $Z$. Here $b=0$,
and $k^{L}*f = \oPi*f$ is the Riemann-Liouville fractional integral
$I^{1-\al}f(t) = \Gamma(1-\al)^{-1}\int_{0}^{t}(t-s)^{-\al}f(s)\,\ds$,
so that \eqref{Z-sde} becomes
\begin{align}\label{frac-int}
  I^{1-\al}Z(t) = \int_{[0,t]} \prn*{\mu(\ds) - Z_{s}\, \ds } +
  \int_{0}^{t}\sqrt{Z_{s}}\,\dB_{s}.
\end{align}
Unlike in \eqref{rough-resolvent} and \eqref{rough-potential}, the
stochastic integral in \eqref{frac-int} carries no path-dependence.
The informal SDE of Remark \ref{r:gen-frac-sde} becomes
\begin{align}
  \label{frac-sde}
  d^{\al} Z_t = \mu(\dt) - Z_t\, \dt + \sqrt{Z_t}\, \dB_t,
\end{align}
where $d^{\al} = d^{L}$ is a fractional differential of
Riemann-Liouville type. 
\end{example}

\begin{example}[Tempered fractional CIR processes]
  \label{e:temper}
Tempering allows us to separate roughness and memory, both of which
are controlled in the rough CIR process by the single index $\al$.
It consists of
multiplying the jump
measure of Example \ref{e:rough-cir} by $e^{-\theta t}$:
$\Pi(\dt) = \frac{\al}{\Gamma(1-\al)}\,t^{-1-\al}e^{-\theta t}\,\dt$
for $\al \in (0,1)$ and $\theta>0$, which gives
\begin{align*}
  \Psi(\ld) = 1+(\ld+\theta)^{\al}-\theta^{\al}.
\end{align*}
As $\ld \to \infty$, we have $\Psi(\ld) \sim \ld^{\al}$, just
  like 
  in the case of the  
rough CIR process. Hence, for $\al>1/2$, the H\"older range
$(0,\al-1/2)$ is unchanged. The two differ at the origin, where
$\Psi(\ld)-1 \sim \al\theta^{\al-1}\ld$ in the tempered case, 
so that the relaxation time
$\Psi'(0+) = \int_{\sp}t\,\rho(\dt) = \al\theta^{\al-1}$ is finite. The density $r$ also decays
exponentially as $t\to\infty$, as the explicit formula below shows.
At $\al = 1/2$, the process $L$ is the inverse
Gaussian subordinator, and the first-passage interpretation of
Example \ref{e:critical} below goes
through with $W$ replaced by a Brownian motion with drift
$\sqrt{2\theta}$.

Comparing Laplace transforms, one checks that
\begin{align*}
  \ell(t) = t^{\al-1}e^{-\theta t}
  E_{\al,\al}\bprn{ (\theta t)^{\al} } \eand
  r(t) = t^{\al-1}e^{-\theta t}
  E_{\al,\al}\bprn{ -(1-\theta^{\al})\,t^{\al} } .
\end{align*}
At $\theta=1$ the killing matches the
tempering exactly, $\Psi(\ld) = (1+\ld)^{\al}$, and the formula
for $r$ 
collapses to the density of the $(\al,1)$-Gamma distribution
$r(t) = t^{\al-1}e^{-t}/\Gamma(\al)$.

Tempered kernels are not new in this context. The \emph{gamma
kernel} $t^{\al-1}e^{-\beta t}/\Gamma(\al)$ appears in the affine
Volterra literature \cite{AbiLarPul19}, and the Volterra square-root
equation with that kernel in place of $\ell$ in \eqref{Z-potential}
corresponds to $\Psi(\ld) = 1+(\ld+\beta)^{\al}$. Up to the
normalizations of Remark \ref{r:cont-disc}(\ref{i:scaling}) 
and a linear change of time,
this is the family above. The present setting recovers the explicit
formulas for $\ell$ and $r$, and adds the interpretation of $L$ as a
tempered stable subordinator and the relaxation time $\Psi'(0+)$.

Let $\al>1/2$, and let $\mu \in \Mlf^{*}(\sp)$ have an
$\Ltwol(\sp)$-density. The bounds
\begin{align*}
  \frac{\al e^{-\theta}}{\Gamma(1-\al)}\,t^{-1-\al} \leq \pi(t) \leq
  \frac{\al}{\Gamma(1-\al)}\,t^{-1-\al}
\end{align*}
 on $(0,1]$ show that Corollary
\ref{c:exact-holder} applies. As in Example \ref{e:rough-cir}, the
H\"older range $(0,\al-1/2)$ is optimal, and no version of $Z$ is a
semimartingale.
\end{example}

\begin{example}[The critical index $\al=1/2$]
\label{e:critical}
The boundary case $\al = 1/2$, which Example \ref{e:rough-cir}
excludes, corresponds to $H=0$ in the parametrization $\al = H+1/2$,
toward which empirical estimates of the Hurst index have moved
\cite{BayHarPig21, FukTakWes22, ForFukGerSmi22}.

Here
$\Psi(\ld) = 1+\sqrt{\ld}$, i.e., $L$ is the $1/2$-stable
subordinator. The drift is $b=0$, and the jump
measure is $\Pi(\dt) = (2\sqrt{\pi})^{-1} t^{-3/2}\,
\dt$. The probability measure $\rho$ is the
Mittag-Leffler distribution with $\al=1/2$ and explicit density
\begin{align}\label{r-half}
  r(t) = \oo{\sqrt{\pi t}} - 2 e^{t}\Phi(-\sqrt{2 t}) \sim
  \oo{\sqrt{\pi t}} \text{ as } t\downto 0,
\end{align}
where $\Phi$ is the
distribution function of the standard normal, 
so that $r \not \in \Ltwol(\sp)$. Consequently, by Proposition
\ref{p:density-crit}, the $(\rho,\mu)$-Feller random measure admits no
square-integrable density $Z$ 
when $\mu \in \Mlf^{*}(\sp)$ (already proved in \cite[Corollary
3.13]{PacZit25}). 

As is well known, the process $\{L_{\sqrt{2}\tau}\}_{\tau \in \tm}$
has the same law as the first-passage process $\{T_{\tau}\}_{\tau \in
\tm}$ of a standard Brownian motion $W$, where $T_{\tau} =
\inf\sets{u}{W_{u} = \tau}$, and as the inverse local time of $W$ at
$0$. Vertical time can thus be read as the first-passage time of $W$,
and the horizontal time of the superprocess of Theorem \ref{t:main} as
the level of $W$.

This is the only case in which superprocesses have previously
appeared in connection with Feller random measures. It is the case
$H=0$, the upper boundary of the hyper-rough range $H \leq 0$ of
\cite{Abi21}, where the square-root Volterra equation with $H=0$ is
obtained from the local occupation time at a point catalyst of a
super-Brownian motion that branches only there \cite{DawFle94, FleLeG95,
MytSal15}. The kernel $\ell(t) = t^{-1/2}/\sqrt{\pi}$ of
\eqref{rough-potential}, with $\al=1/2$, is
completely monotone, so the weak uniqueness of \cite[Theorems 2.2 and
2.13]{Abi21} applies, and, with matching inputs, the two constructions
give the same law of $\prfis{\Gamma_t}$. Heuristically, the two
superprocesses differ by a time change. Measured in its local time at
the catalyst, the elapsed time of a
particle of the catalytic superprocess is the inverse local time, that
is, the $1/2$-stable subordinator that serves as our spatial motion.
\end{example}

\begin{example}[The critical index $\al=1/2$ adjusted]
\label{e:critical-log}
Suppose that, for modeling purposes, one wants a random measure of
roughness similar to that in Example \ref{e:critical}, but with a
square-integrable density. Then it suffices to add a slowly varying
factor to the jump measure. This is a square-root analogue of the
log-modulated fractional Brownian motion of \cite{BayHarPig21}, a
Gaussian process obtained by multiplying the Riemann-Liouville kernel
by a power of $\log(1/t)$, which remains well defined at $H=0$. For $\kappa \geq 0$ and $c_{0}>0$, keep
$b=0$ and define
\begin{align*}
  \Pi(\dt) = c_{0}\,t^{-3/2}\log^{\kappa}(1/t)\,\dt  \eon (0,1],
\end{align*}
and extend it arbitrarily to $(1,\infty)$, as long as it is finite
there. Built this way, $\Pi$ is easily seen to be a L\'evy measure.
Throughout the example, we assume that $\mu \in \Mlf^{*}(\sp)$ admits
an $\Ltwol(\sp)$-density.

By Plancherel's theorem, both the existence of an $\Ltwo$-density
of $\rho$ and the estimate \eqref{modulus-fourier} of $\omega_{T}$
depend on $\Psi$ only through integrals of $\abs{\Psi(-iu)}^{-2}$,
$u \in \bR$. It is therefore enough to know $\abs{\Psi(-iu)}$ up to
constant factors for large $\abs{u}$. By Karamata's theorem
\cite[Theorem 1.5.11]{BinGolTeu87}, the function $\oPi$ is regularly
varying of index $-1/2$ at $0$, namely $\oPi(t) \sim
2c_{0}\,t^{-1/2}\log^{\kappa}(1/t)$ as $t\downto0$. Restricting the
integral in $\Re\Psi(-iu) = 1+\int(1-\cos ut)\,\Pi(\dt)$ to $t \leq
1/\abs{u}$ gives the lower bound, and $\abs{1-e^{iut}} \leq
\min(2,\abs{u}t)$, together with the behavior of $\oPi$ at $0$,
gives the upper bound in
\begin{align*}
  c\,\abs{u}^{1/2}\log^{\kappa}\abs{u} \leq \abs{\Psi(-iu)} \leq
  C\,\abs{u}^{1/2}\log^{\kappa}\abs{u} \efor \abs{u} \geq 2,
\end{align*}
for some constants $0<c \leq C<\infty$. Since also $\abs{\Psi(-iu)}
\geq \Re\Psi(-iu) \geq 1$, the integral $\int_{\bR}\abs{\Psi(-iu)}^{-2}\,\du$
converges precisely when $\kappa>1/2$.

Let $\kappa>1/2$. Plancherel's theorem then provides an
$\Ltwo(\bR)$-density $r$ for $\rho$, and \eqref{modulus-fourier}, from
the proof of Corollary \ref{c:holder}, applies verbatim. Together with
the bounds above, it gives
\begin{align}
  \label{log-modulus}
  \omega_{T}(h)^{2} \leq \oo{2\pi}\int_{\bR}
  \min\bprn{ 4, u^{2}h^{2} }\,\abs{ \Psi(-iu) }^{-2}\,\du
  \leq C'\log^{1-2\kappa}(1/h)
  \efor h \in (0,1/2].
\end{align}

In particular, \eqref{r-sq-int} holds, and Proposition
\ref{p:density-crit} produces a square-integrable density $Z$ at the
very index $\al=1/2$ at which it does not exist in 
Example \ref{e:critical}.
The power condition \eqref{psi-power} cannot
distinguish between the two: it holds with $\beta=1/2$ in both, while
Corollary \ref{c:holder} requires $\beta>1/2$.

The bound \eqref{log-modulus} decays more slowly than any power of
$h$, so the condition $\omega_{T}(h) \leq C_{T}h^{\gm}$ is not known to
hold for any $\gm>0$, and we cannot obtain the pathwise continuity of
$Z$ this way. The estimate \eqref{Z-incr} still gives continuity of $t
\mapsto Z_{t}$ in $\bL^{p}$ with a logarithmic modulus: for every $p
\geq 2$ and $T>0$ there is a $C'' = C''(p,T)<\infty$ such that
\begin{align*}
  \norm{ Z_{t+h}-Z_{t} }_{\Lpee} \leq
  C''\log^{1/2-\kappa}(1/h)
  \eforall h \in (0,1/2] \eand t,t+h \in [0,T].
\end{align*}
\end{example}

\section{The discontinuous case}
\label{S:disc}

In this section we treat the case when $p:=\rho(\set{0})>0$. According
to Proposition \ref{p:no-atom}, this is equivalent to $L$ being a
compound Poisson process, i.e., to $b=0$ and $\Pi$ being a finite
measure. The structure of $\xi$ and of its vertical noise $N$ then
differs from the continuous case: for $\mu \neq 0$, $\xi$ is purely
atomic, and $N$ is a time-changed compensated inverse-Gaussian process
rather than a time-changed Brownian motion.

Subsection \ref{s:disc-noise} describes the noise as a time-changed
compensated inverse-Gaussian process (Theorem \ref{t:ig-rep}), locates
the atoms of $\xi$ (Corollary \ref{c:atomic}), contrasts the two cases
(Corollary \ref{c:dichotomy}), and writes $N$ as a Brownian motion
time-changed by $\Gamma$ (Corollary \ref{c:bm-tc} and Remark
\ref{r:bm-tc}). Subsection \ref{s:disc-eqs} derives a stochastic
Volterra equation for $\xi[0,\cdot]$, driven by an inverse-Gaussian
subordinator, and proves uniqueness (Theorem \ref{t:disc-eqs}).
Subsection \ref{s:disc-examples} collects examples.

\subsection{The structure of the noise}
\label{s:disc-noise}

By analogy with the classical Feller diffusion, Theorem
\ref{t:ig-rep} below shows that $\xi(\dt)$ governs the local size of
the increments of $N$. In the continuous case,
$\xi(\dt)$ plays the role of the infinitesimal variance of a
locally Gaussian noise. In the discontinuous case $p > 0$, the
noise is a time-changed, compensated inverse-Gaussian process. Here $\xi(\dt)$ is still
a ``local measure of spread'', but only in the weaker sense of Remark \ref{r:not-qv}: in general,
$\xi[0,t]$ is neither the predictable nor the optional
quadratic variation of $N$.

\smallskip

For $y \geq 0$, let
$\IG(y,p^2)$ denote the inverse-Gaussian distribution on
$\sp$ with mean $y$ and dispersion parameter $p^2$, characterized by its
Laplace transform
\begin{align}\label{ig-laplace}
 \int_{\sp} e^{-\ld z}\, \IG(y,p^2)(\diffd z) =
 \Exp{ - y \phi(\ld) },  \ \phi(\ld) = p^{-2} 
 \bprn{ \sqrt{1+2p^2 \ld} - 1 }, \  
\ld \geq 0. 
\end{align}
A L\'evy subordinator $\{S_y\}_{y \geq 0}$ 
with Laplace exponent $\phi$ is called an
inverse-Gaussian subordinator (the dispersion parameter
is fixed at $p^2$ throughout). Thus
$S_{y} \sim \IG\prn{y,p^{2}}$. In particular,
$\ee{S_{y}} = y$ and $\Var\prn{S_{y}} = p^{2}y$. For $y=0$, we have $\IG(0,p^{2}) = \delta_{0}$.

\smallskip

We recall that the vertical noise $\prfi{N_t}$ associated with
a Feller random measure $\xi$ in occupation form is defined in
\eqref{noise}, and that its fundamental distributional properties
are given in Proposition \ref{p:noise-mg}. Since $N$ turns out
to be of finite variation in the discontinuous case, the measure
$\dN$ is well defined, with the conventions of subsection \ref{s:notation}. 

Recall that the filtration $\prfi{\sV_t}$ is given in
\eqref{vfilt}. We also define 
  $\sV_{0-}$ to be the completed trivial $\sg$-algebra and
  set $\Gamma_{0-} = \Gamma^{p}_{0-} = 0$
  for the processes $\Gamma$ and $\Gamma^{p}$ defined in
  \eqref{Gp-def} below.
The following theorem describes the structure of the noise in the
discontinuous case. 
\begin{theorem}
\label{t:ig-rep}
Let $\rho$ be PGID with $p := \rho(\set{0}) > 0$, let
$\mu \in \Mlf(\sp)$, let $\xi$ be the $(\rho,\mu)$-Feller random
measure in occupation form, let $\Gamma = \xi[0,\cdot]$, and define
\begin{align}\label{Gp-def}
  \Gamma^{p}_{t} := p\,\mu[0,t] + p\textstyle\int_{(0,t]}\Gamma_{t-s}\,\Pi(\ds), \quad t\geq 0. 
\end{align}
Then
\begin{enumerate}[leftmargin=*, label=\arabic*., ref=\arabic*]
  \item \emph{Martingale structure.}
  \begin{enumerate}[label=(\alph*), ref=\arabic{enumi}.(\alph*)]
  \item \label{i:comp} $\Gamma^{p}$ is $\prfi{\sV_t}$-predictable, 
    $N$ is a $\prfi{\sV_t}$-martingale of 
    finite variation and 
\begin{align}\label{strong-disc}
  \Gamma_{t} - \Gamma^{p}_{t} = p\,N_{t} \eforall t \in \sp, \eas,
\end{align}
so that $\Gamma^p$ is the compensator of $\Gamma$.
\item \label{i:qv}The martingale $N$ is 
  purely discontinuous and  
\begin{align*}
  \ab{N}_{t} = \Gamma^{p}_{t}.
\end{align*}

\item \label{i:pred-jump}For every predictable time $\tau$, the conditional law of
$\Delta\Gamma_{\tau}$ given $\sV_{\tau-}$ is, on $\set{\tau<\infty}$,
$\IG\prn{\Delta\Gamma^{p}_{\tau},p^{2}}$. In particular, $N$ jumps at
a predictable time exactly when $\Gamma^{p}$ does, and $N$ is
quasi-left-continuous if and only if $\Gamma^{p}$ is continuous a.s.
  \end{enumerate}
\item \emph{Time-change representation.}
There exists, on a possibly extended probability space
carrying
$\prfis{\sV_{t}}$, an
in\-ver\-se-Gaussian subordinator $\set{S_{y}}_{y \geq 0}$  such that
\begin{enumerate}[label=(\alph*), ref=\arabic{enumi}.(\alph*)]
  \item \label{i:ig-tc}
    $N_{t} = \tS_{\Gamma^{p}_t}$ and $\Gamma_t =
    S_{\Gamma^p_t}$, for $t\in \sp$, a.s.,  
    where $\tS_y =\too{p}( S_{y} - y)$, and
\item \label{i:indep-sub} there exists a filtration $(\sH_{x})_{x \in
  \sp}$ which satisfies the usual conditions, such that $S$ is an
  inverse-Gaussian subordinator with respect to $(\sH_{x})$, and each
  $\Gamma^{p}_{t}$ is an $(\sH_{x})$-stopping time with $\sV_{t}
  \subseteq \sH_{\Gamma^{p}_{t}}$. In particular, for each $t \in \sp$
  the process $\{ S_{\Gamma^{p}_{t}+w} - S_{\Gamma^{p}_{t}} \}_{w \geq 0}$
  is an inverse-Gaussian subordinator, independent of $\sV_{t}$.
\end{enumerate}
\end{enumerate}
\end{theorem}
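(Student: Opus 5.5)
The plan is to specialize the weak equation \eqref{weak-gen} to the compound Poisson case, read off \eqref{strong-disc} directly, and then obtain the conditional inverse-Gaussian structure from the special Markov property of $X$ at a vertical level. When $p>0$, Proposition \ref{p:no-atom} gives $b=0$ and $c:=\Pi(0,\infty)<\infty$. Moreover $p=\rho(\set{0})=1/(1+c)$ by Remark \ref{r:pgid-ex}, so
\begin{align*}
  \sA\vp = -p^{-1}\vp + \int_{(0,\infty)}\vp(\cdot+s)\,\Pi(\ds)
\end{align*}
is a bounded operator on $\B(\sp)$.

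For part \ref{i:comp}, I would first extend \eqref{weak-gen} from $\sD(\sA)\cap\Cc(\sp)$ to all $\vp\in\Bc(\sp)$. This uses bounded pointwise approximation, the boundedness of $\sA$, the isometry $\ee{N(\vp)^2}=\norm{\vp}_N^2$, and Proposition \ref{p:basic}(\ref{i:moments}). Taking $\vp=\ind{[0,t]}$ and multiplying by $p$ gives $\Gamma_t = p\mu[0,t] + p\int_{(0,t]}\Gamma_{t-s}\,\Pi(\ds) + pN_t$, which is \eqref{strong-disc}. Since $\Gamma$ and $\Gamma^p$ are nondecreasing, $N$ has finite variation. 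For predictability, I would write $\int_{(0,t]}\Gamma_{t-s}\Pi(\ds)=\int_{[0,t)}\Pi(0,t-u]\,\xi(\du)$. Here the integrand $\ind{u<t}\Pi(0,t-u]$ is left-continuous in $t$, so $\Gamma^p$ is a Stieltjes integral of a predictable kernel against the adapted measure $\xi$, and is therefore $\prfis{\sV_t}$-predictable. Proposition \ref{p:noise-mg} then identifies $\Gamma^p$ as the compensator of $\Gamma$.

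For parts \ref{i:qv} and \ref{i:pred-jump}, the key step, which I expect to be the main obstacle, is a conditional description of the jump of $\Gamma$ at a fixed height $t$. I would use Dynkin's special Markov property for the exit of $K$ from $[0,t)$. Let $\nu$ be the exit measure, supported on $[t,\infty)$. Conditionally on the pre-exit $\sigma$-field, which contains $\sV_{t-}$, the post-exit contribution is a superprocess started from $\nu$ together with the immigration from $\mu|_{[t,\infty)}$. The mass at the single point $t$ stays there for an exponential time of rate $1+c=1/p$, so it is a Feller diffusion $\diffd m=-p^{-1}m\,\dtau+\sqrt m\,\diffd W$ started from $m_0=\nu\set{t}+\mu\set{t}$. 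Its integral has Laplace transform $e^{-m_0 u(\ld)}$, where $u/p+u^2/2=\ld$, that is, $u=\phi(\ld)$. Hence $\Delta\Gamma_t\mid\sV_{t-}\sim\IG(m_0,p^2)$. Taking expectations and comparing with the compensator gives $m_0=\Delta\Gamma^p_t$.

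I would then pass from fixed $t$ to predictable times. The cleanest route is to show that
\begin{align*}
  \exp\bprn{-\ld\Gamma_t+\phi(\ld)\Gamma^p_t}, \quad \ld\geq0,
\end{align*}
is a local martingale. This follows by combining the fixed-time statement with the continuous-part computation already encoded in \eqref{expmg}, and then invoking the standard characterization of conditional jump laws at predictable times. Part \ref{i:qv} then follows from the identity $\ab{N}=\Gamma^p$, obtained from the quadratic-variation bookkeeping between the compensator of $\sum(\Delta N)^2$ and the $\IG$ variance $p^2\Delta\Gamma^p$. The identity $N=\tS_{\Gamma^p}$ in part \ref{i:ig-tc} already exhibits $N$ as purely discontinuous.

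For part 2, I would construct $S$ by a time-change theorem of Knight/Kallenberg type for processes with independent-increment compensators, as an analogue of the Dambis--Dubins--Schwarz argument behind Corollary \ref{c:dds}. On the flat stretches, that is, on the intervals $[\Gamma^p_{t-},\Gamma^p_t]$ at jumps of $\Gamma^p$, together with $[\Gamma^p_\infty,\infty)$, I would fill in independent inverse-Gaussian bridges or subordinators on a product extension. These are chosen so that $S_{\Gamma^p_t}=\Gamma_t$, which is consistent because the conditional jump law is $\IG(\Delta\Gamma^p,p^2)$. I would then verify the subordinator property via the exponential martingale above, time-changed and optionally stopped at the inverse $\inf\sets{t}{\Gamma^p_t>x}$. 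With $\sH_x$ generated by the time-changed $\sV$ and the filler randomness, this also yields part \ref{i:indep-sub}. Finally, $\tS_{\Gamma^p_t}=p^{-1}(\Gamma_t-\Gamma^p_t)=N_t$ follows from \eqref{strong-disc}.
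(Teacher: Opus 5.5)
Apart from the predictability argument (see below), your part 1(a) is the paper's argument. Your route to 1(b)--(c), however, has a genuine gap. Jump laws at fixed heights cannot be ``combined'' with \eqref{expmg} to produce the martingale $e^{-\kappa\Gamma_t+\phi(\kappa)\Gamma^p_t}$. A fixed height says nothing about jumps of $\Gamma$ at totally inaccessible times, and if $\mu$ and $\Pi$ are diffuse, every jump is of that kind. Also, \eqref{expmg} is not a ``continuous-part'' statement; it is the full exponential martingale. The observation you are missing is that, given \eqref{strong-disc}, \eqref{expmg} already \emph{is} the martingale you want. With $\ld=-p\,\phi(\kappa)$ and $pN=\Gamma-\Gamma^p$,
\[
  \ld N_t-\tfrac{\ld^2}{2}\Gamma_t
  =-\bigl(\phi(\kappa)+\tfrac{p^2}{2}\phi(\kappa)^2\bigr)\Gamma_t+\phi(\kappa)\Gamma^p_t
  =-\kappa\Gamma_t+\phi(\kappa)\Gamma^p_t .
\]
So $Q_t:=e^{-\kappa\Gamma_t+\phi(\kappa)\Gamma^p_t}$ is a true $\prfis{\sV_t}$-martingale whenever $p\,\phi(\kappa)\le 1/4$. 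Then $\bE[Q_\tau\mid\sV_{\tau-}]=Q_{\tau-}$ at predictable $\tau$ gives 1(c) on an interval of $\kappa$, which determines the law. At $\tau=0$, check $\bE Q_0=1$ directly from \eqref{riccati} with $f=\kappa\ind{\set{0}}$. This makes the special Markov step unnecessary, which is just as well, for two reasons. First, the positive root of $u/p+u^2/2=\ld$ is $p\,\phi(\ld)$, not $\phi(\ld)$. Second, conditioning on the pre-exit $\sigma$-field gives, given $\sV_{t-}$, only a mixture of inverse-Gaussian laws unless you prove that $\nu\set{t}$ is $\sV_{t-}$-measurable; matching means against the compensator yields only $\bE[p\,m_0\mid\sV_{t-}]=\Delta\Gamma^p_t$. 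Likewise, 1(b) needs no variance bookkeeping over jumps, which would again see only the predictable ones. Since $N^2-\Gamma$ (Proposition~\ref{p:noise-mg}) and $\Gamma-\Gamma^p$ are martingales, so is $N^2-\Gamma^p$, and $\ab{N}=\Gamma^p$ by predictability. $N$ is purely discontinuous simply because it is a martingale of finite variation.

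Your reason for predictability is false. The map $t\mapsto\ind{u<t}\Pi(0,t-u]$ is right-continuous and jumps at $t=u+s$ for each atom $s$ of $\Pi$, and $\Gamma^p$ itself need not be left-continuous (Example~\ref{e:lattice}). Instead, split $\Pi$ into its diffuse part, which contributes a continuous adapted process, and its atoms, which contribute the processes $t\mapsto\Gamma_{t-s}$, $s>0$, each of them predictable. In part 2, your plan (bridges in the gaps $[\Gamma^p_{\tau_n-},\Gamma^p_{\tau_n})$, an independent subordinator above $\Gamma^p_{\infty-}$, an exponential martingale) is the paper's, but three ingredients are missing.
(i) $\Gamma$ must be constant on every interval on which $\Gamma^p$ is constant; otherwise $S_{\Gamma^p_t}:=\Gamma_t$ is not well defined on the attained levels. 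The paper gets this from $\bE\int\ind{F}\,\diffd\Gamma=\bE\int\ind{F}\,\diffd\Gamma^p=0$ for the predictable set $F$ of flat stretches of $\Gamma^p$.
(ii) You need $\Gamma_{\infty-}<\infty$ on $\set{\Gamma^p_{\infty-}<\infty}$, so that the tail can be attached.
(iii) Time-changing by $R_x=\inf\sets{t}{\Gamma^p_t>x}$ fails inside a gap. There $Q_{R_x}=Q_{\tau_n}$ does not depend on $x$. Moreover, a level filtration built from $\sV_{R_x}=\sV_{\tau_n}$ already contains $\Delta\Gamma_{\tau_n}$, the total increment of $S$ across the gap, so $S$ could not have independent increments with respect to it. The paper uses instead $\Theta_x=\inf\sets{t}{\Gamma^p_t\ge x}$, the filtration $\sG_x=\sV_{\Theta_x-}\vee\sigma(S_w:w\le x)$ and an auxiliary process $W_x$. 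It proves $\bE[W_x\mid\sG_x]=\hQ_x$ from the martingale property of the bridge exponential $e^{-\kappa G^n_v+\phi(\kappa)v}$ on $[0,\Delta\Gamma^p_{\tau_n}]$ (Lemma~\ref{l:W-id}). This is where the compatibility of the bridges with the filtration is actually checked.
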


\begin{proof} In Appendix \ref{s:prf-ig-rep}.
\end{proof}

\begin{remark}[Hawkes structure]
\label{r:hawkes-disc}
By \eqref{Gp-def} and part \ref{i:comp} of Theorem \ref{t:ig-rep}, the
compensator of $\Gamma$ is $\Gamma^{p}_{t} = p\,\mu[0,t] +
(p\,\Pi*\xi)[0,t]$. This is the intensity structure of a Hawkes process
with immigration $p\,\mu$ and excitation kernel $p\,\Pi$, whose total
mass $1-p$ is the branching ratio. Its noise, however, is
inverse-Gaussian rather than Poisson, by part \ref{i:ig-tc} of Theorem
\ref{t:ig-rep}. Consistently with Remark \ref{r:cluster}, $\rho =
p\sum_{k \geq 0}(p\,\Pi)^{*k}$ is the normalized mean measure of a
cluster of this Hawkes process.
\end{remark}

\begin{remark}
\label{r:intrinsic}
As in the continuous case (see Remark \ref{r:intrinsic-cont}),
  the occupation form is only a convenient device in Theorem \ref{t:ig-rep}.
By \eqref{Gp-def} and \eqref{strong-disc}, $\Gamma^{p}$ and $N =
(\Gamma - \Gamma^{p})/p$ are functionals of $\xi$. Let
$\prfis{\sF^{\xi}_{t}}$ be the natural filtration of $\Gamma$, so that
$\sF^{\xi}_{t} \subseteq \sV_{t}$. The argument of subsection \ref{s:pred-Gp}
shows that $\Gamma^{p}$ is $\prfis{\sF^{\xi}_{t}}$-predictable, and the
tower property turns the $\prfis{\sV_{t}}$-martingales $\Gamma -
\Gamma^{p}$ and $N^{2} - \Gamma^{p}$ into
$\prfis{\sF^{\xi}_{t}}$-martingales. Hence parts \ref{i:comp} and
\ref{i:qv} hold with $\prfis{\sV_{t}}$ replaced by
$\prfis{\sF^{\xi}_{t}}$. So does part \ref{i:pred-jump}, since every
$\prfis{\sF^{\xi}_{t}}$-predictable time is
$\prfis{\sV_{t}}$-predictable and $\Delta\Gamma^{p}_{\tau}$ is
$\sF^{\xi}_{\tau-}$-measurable, and so does part \ref{i:indep-sub},
since $\sF^{\xi}_{t} \subseteq \sV_{t}$.

In this form, all parts of Theorem \ref{t:ig-rep}, including the
representation of part \ref{i:ig-tc}, are statements about the joint
law of $\xi$ and $S$ alone, because $\Gamma$, $\Gamma^{p}$, $N$ and
$\prfis{\sF^{\xi}_{t}}$ are functionals of $\xi$. By the transfer
theorem \cite[Theorem 8.17]{Kal21}, they therefore hold for every
$(\rho,\mu)$-Feller random measure, with $S$ defined on an extension
of its probability space.
\end{remark}

\begin{corollary}
\label{c:atomic}
Under the assumptions and notation of Theorem \ref{t:ig-rep}, we
have
\begin{align}\label{atoms}
   \xi = \tsum_{ y>0 :\, \Theta_{y}<\infty }
  \Delta S_{y}\,\delta_{\Theta_{y}}
\eas,\ewhere
\Theta_{y} := \inf\sets{t}{\Gamma^{p}_{t} \geq y} \efor y>0.
\end{align}
In particular, $\xi$ is purely atomic, and its atoms are dense in
every interval on which $\Gamma^{p}$ is strictly increasing. If, in
addition, $\mu$ and $\Pi$ are purely atomic, then the atoms of $\xi$
lie in the deterministic countable set
\begin{align}\label{atom-set}
   \Sigma := \set{ \text{atoms of }\mu } +
  \Bprn{ \set{0} \cup \set{ \text{finite sums of atoms of }\Pi } }. 
\end{align}
\end{corollary}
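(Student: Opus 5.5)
The plan is to read the atomic decomposition \eqref{atoms} directly off the time-change representation of Theorem \ref{t:ig-rep}, part \ref{i:ig-tc}, namely $\Gamma_t = S_{\Gamma^{p}_t}$ for all $t$, a.s. The inverse-Gaussian subordinator $S$ has zero drift: $\phi(\ld)/\ld \to 0$ as $\ld \to \infty$ in \eqref{ig-laplace}. Hence $S$ is pure jump, $S_y = \sum_{0<z\leq y}\Delta S_z$ for every $y$, a.s. The whole argument reduces to transporting this pure-jump structure through the nondecreasing, right-continuous time change $\Gamma^{p}$, whose generalized inverse is $\Theta$.

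First, fix $t$ and use the identity $\Theta_y \leq t \iff \Gamma^p_t \geq y$, which holds by right-continuity of $\Gamma^p$. It gives
\begin{align*}
\Gamma_t = S_{\Gamma^p_t} = \sum_{0<y\leq \Gamma^p_t}\Delta S_y = \sum_{y>0:\,\Theta_y\leq t}\Delta S_y .
\end{align*}
So $\Gamma$ and the distribution function of the measure on the right of \eqref{atoms} agree at every $t$, a.s. Since $\Gamma$ is \cd{}, it suffices to check this simultaneously for all rational $t$, and the two measures then coincide. The set $\{y:\Theta_y<\infty\}$ is the interval $(0,\Gamma^p_\infty)$, possibly with its right endpoint. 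Jumps of $S$ at levels $y$ with $\Theta_y = \infty$ are excluded automatically.

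One subtlety is that several levels $y$ may share the same $\Theta_y$. This happens on a flat stretch of $\Theta$, that is, at a jump of $\Gamma^p$. The corresponding atoms then merge into a single atom of $\xi$ at that height, and \eqref{atoms} still holds as a sum. It follows that $\xi$ is purely atomic.

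For density of atoms, suppose $\Gamma^p$ is strictly increasing on an interval $(a,c)$. Then $\Gamma^p_a<\Gamma^p_{c-}$, and $\Theta$ maps $(\Gamma^p_a,\Gamma^p_{c-})$ into $(a,c]$. Since $\Pi_S$ has infinite mass near $0$, the jump times of $S$ are dense. Every subinterval $(a',c')\subseteq(a,c)$ corresponds to a nondegenerate level interval $(\Gamma^p_{a'},\Gamma^p_{c'-})$, which contains a jump of $S$ and hence produces an atom of $\xi$ in $(a',c']$. Shrinking $c'$ slightly gives an atom in $(a',c')$. This density statement needs the jump times of $S$ to be dense in every interval simultaneously, a.s., which is immediate.

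For \eqref{atom-set}, assume $\mu$ and $\Pi$ are purely atomic. Atoms of $\xi$ sit at values of $\Theta$ at jump levels. I will show that these lie in $\Sigma$ by induction on the ordered structure of heights. If $\Theta_y=t$ with $\Delta S_y>0$, the jump of $\Gamma$ at $t$ is positive. By \eqref{atoms} and the first part, either $\Gamma^p$ jumps at $t$, or $t$ is a point of increase of $\Gamma^p$. By \eqref{Gp-def}, $\Gamma^p$ is a pure-jump function whose jumps occur at atoms of $\mu$ and at points $t=s+a$, with $a$ an atom of $\Pi$ and $s$ an atom of $\xi$.

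The main obstacle is the following. Purely atomic $\mu$ and $\Pi$ make $\Gamma^p$ a pure-jump function, so every increase is a jump. I will then argue by contradiction. Let $t_0$ be the infimum of the atoms of $\xi$ outside $\Sigma$. Since $\Pi$ charges $(0,\infty)$ only and $\Pi$ is finite, the atoms of $\Pi$ below any level are bounded away from $0$ only in finite number. This is where care is needed: a direct induction requires excluding accumulation. The cleaner route is to define the measure $\xi_\Sigma := \xi|_\Sigma$ and observe that $\Gamma^p$ built from $\xi$ and from $\xi_\Sigma$ agree. This holds because $p\,\mu + p\,\Pi*\xi$ restricted to $\Sigma$ depends only on $\xi|_\Sigma$, since $\Sigma + \text{atoms}(\Pi)\subseteq\Sigma$. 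The measure $p\,\Pi*(\xi-\xi_\Sigma)$ charges no points of $\Sigma$, yet $\Gamma^p$ is pure jump. It follows that $\Gamma^p$ is carried by $\Sigma$, and hence so is $\xi$, by the first part.
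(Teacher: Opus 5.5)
Your derivation of \eqref{atoms} from $\Gamma_t = S_{\Gamma^p_t}$ and the zero drift of $S$ is correct and is essentially the paper's argument. So is your density argument; the paper phrases it as $\xi(s,t] = S_{\Gamma^p_t}-S_{\Gamma^p_s}>0$, using that $S$ is strictly increasing.

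The gap is in the last claim, that the atoms lie in $\Sigma$. It has two parts.

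\emph{Points of increase versus jumps.} That $\mathrm{d}\Gamma^p$ is purely atomic does not make every point of increase of $\Gamma^p$ a jump time. If $\mu$ charges every rational, $\Gamma^p$ is strictly increasing, yet, being c\`adl\`ag, it is continuous at all but countably many points. An atom of $\xi$ at such a point $t$ would come from a jump of $S$ exactly at the random level $\Gamma^p_t$. Ruling this out is not a pathwise matter.

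\emph{The ``cleaner route'' is circular and uses a false step.} The inclusion $\Sigma + \{\text{atoms of }\Pi\}\subseteq\Sigma$ only shows that $\Pi*\xi_\Sigma$ is carried by $\Sigma$. It does not show that $\Pi*(\xi-\xi_\Sigma)$ avoids $\Sigma$. That would need $(\Sigma - a)\cap[0,\infty)\subseteq\Sigma$ for every atom $a$ of $\Pi$, which fails: for $\mu=\delta_0$ and $\Pi=\delta_1+\delta_{3/2}$ we have $2\in\Sigma$ but $1/2\notin\Sigma$. Even granting it, you would only have split $\mathrm{d}\Gamma^p$ into a part on $\Sigma$ and the part $p\,\Pi*(\xi-\xi_\Sigma)$ off $\Sigma$. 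Nothing forces the latter to vanish unless you already know $\xi=\xi_\Sigma$. A minimal-counterexample induction does not rescue this either: pathwise, nothing excludes an infinite descending chain of atoms outside $\Sigma$, for instance when the atoms of $\Pi$ accumulate at $0$.

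The missing ingredient is an averaging step, after which the claim takes two lines. The paper notes that $\rho = p\sum_{k\geq0}(p\,\Pi)^{*k}$ is carried by $\{0\}\cup\{\text{finite sums of atoms of }\Pi\}$ when $\Pi$ is purely atomic. Hence $\mathbb{E}[\xi] = \rho*\mu$ (Proposition~\ref{p:basic}(\ref{i:moments})) is carried by the countable set $\Sigma$, and $\mathbb{E}[\xi(\Sigma^{c})]=0$.

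If you prefer to stay with your compensator picture, the same idea works there. The indicator $1_{\Sigma^{c}\cap[0,T]}$ is deterministic, hence predictable. Moreover $\mu(\Sigma^{c})=0$ and $(\Pi*\xi_\Sigma)(\Sigma^{c})=0$. Therefore
$\mathbb{E}[\xi(\Sigma^{c}\cap[0,T])] = \mathbb{E}[\mathrm{d}\Gamma^{p}(\Sigma^{c}\cap[0,T])] \leq p\,\Pi(0,\infty)\,\mathbb{E}[\xi(\Sigma^{c}\cap[0,T])] = (1-p)\,\mathbb{E}[\xi(\Sigma^{c}\cap[0,T])]$.
This expectation is finite, bounded by $\mu[0,T]$, so it vanishes.
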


\begin{proof} In Appendix \ref{s:prf-atomic}.
\end{proof}

\begin{corollary}
\label{c:dichotomy}
Let $\rho$ be PGID, let $\mu \in \Mlf^{*}(\sp)$, and let $\xi$ be the
$(\rho,\mu)$-Feller random measure in occupation form.
\begin{enumerate}
  \item \label{i:dich-cont}If $\rho(\set{0})=0$, then $\xi$ is a.s.\
    diffuse, and $\Gamma = \bkt{N} = \ab{N}$.
  \item \label{i:three-procs}If $\rho(\set{0})>0$, then $\xi$ is
    a.s.\ purely atomic and nonzero. The increasing processes $\Gamma$,
    $\bkt{N}$ and $\ab{N}$ have the common compensator $\Gamma^{p}$, and
    they are a.s.\ pairwise distinct.
\end{enumerate}
\end{corollary}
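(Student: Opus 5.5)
Part (\ref{i:dich-cont}) is a direct assembly of earlier results. Proposition \ref{p:basic}(\ref{i:diffuse}) gives that $\xi$ is a.s.\ diffuse when $\rho(\set{0})=0$. Proposition \ref{p:noise-cont} gives $\bkt{N}=\Gamma$, and Corollary \ref{c:dds} gives $\ab{N}=\bkt{N}=\Gamma$, since $\Gamma$ is continuous, hence predictable.

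For part (\ref{i:three-procs}), I would first get pure atomicity from Corollary \ref{c:atomic}. Nonvanishing follows because $\mu\neq 0$. Choose $T$ with $\mu[0,T]>0$. The Laplace transform of $\xi[0,T]$ is $e^{-\ab{\mu,v}}$ with $v=v[\ld\ind{[0,T]}]$. Proposition \ref{p:unique} and $\rho(\set{0})=p>0$ give $v(t)\ge p(\ld-v^2(t)/2)$ up to nonnegative contributions, so $v$ is bounded below on $[0,T]$ by a positive constant that tends to $\infty$ as $\ld\to\infty$. More simply, I would argue via $v\ge U(\ld\ind{[0,T]})-\tot U(v^2)$ and boundedness. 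Letting $\ld\to\infty$ then shows $\pp{\xi[0,T]=0}=\lim e^{-\ab{\mu,v}}=0$.

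Next, common compensator. Theorem \ref{t:ig-rep}(\ref{i:comp}) says $\Gamma^p$ compensates $\Gamma$, and part (\ref{i:qv}) gives $\ab{N}=\Gamma^p$. Since $\ab{N}$ is the compensator of $\bkt{N}$ by definition, and $\ab{N}$ is predictable, $\Gamma^p$ is trivially the compensator of $\ab{N}$ itself.

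Pairwise distinctness is the main work. $N$ is purely discontinuous of finite variation, so $\bkt{N}_t=\sum_{s\le t}(\Delta N_s)^2=p^{-2}\sum_{s\le t}(\Delta\Gamma_s-\Delta\Gamma^p_s)^2$.

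$\Gamma\neq\ab{N}$: $\Gamma$ is purely atomic and nonzero. If it equalled the predictable $\Gamma^p$, then $N\equiv0$ by \eqref{strong-disc}. That contradicts $\ee{N_t^2}=\ee{\Gamma_t}>0$ for $t$ with $\mu[0,t]>0$ (Proposition \ref{p:noise-mg}).

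$\bkt{N}\neq\ab{N}$: the same argument applies, since equality makes the martingale $\bkt{N}-\ab{N}$ vanish. That forces $\bkt{N}$ to be predictable, with predictably timed jumps only. By Theorem \ref{t:ig-rep}(\ref{i:pred-jump}), at a predictable time the jump $\Delta\Gamma_\tau$ is conditionally $\IG(\Delta\Gamma^p_\tau,p^2)$, which is nondegenerate whenever $\Delta\Gamma^p_\tau>0$. So $(\Delta N_\tau)^2$ is not $\sV_{\tau-}$-measurable, whereas predictability of $\bkt{N}$ would make it so. Hence $\bkt{N}=\ab{N}$ would force $\Delta\bkt{N}=0$ at all predictable times, so $\bkt{N}$ would be continuous, hence zero, hence $N=0$, which is again a contradiction.

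$\Gamma\neq\bkt{N}$: this is the hardest step. I would use the time-change representation of part \ref{i:ig-tc}, $\Gamma=S_{\Gamma^p}$ and $N=\tS_{\Gamma^p}$. On a stretch where $\Gamma^p$ increases continuously, a jump of size $z=\Delta S$ contributes $z$ to $\Gamma$ and $z^2/p^2$ to $\bkt{N}$. These disagree unless $z=p^2$, and the IG Lévy measure is diffuse. Jumps of $\Gamma^p$ contribute the IG increment $Y$ versus $(Y-\Delta\Gamma^p)^2/p^2$, which also a.s.\ differ since $Y$ has a density. The event that $\Gamma$ and $\bkt{N}$ agree therefore has probability zero, given $\Gamma^p\not\equiv0$, which follows from $\mu\neq0$ and $\Gamma^p_t\ge p\mu[0,t]$.
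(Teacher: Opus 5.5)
Part (1), the compensator claim, and your jump-by-jump argument for $\Gamma\neq\bkt{N}$ are fine and match the paper. The gap is in the other two pairs.

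\textbf{The gap: your arguments show non-equality, not a.s.\ distinctness.} The corollary asserts that the three processes are a.s.\ pairwise distinct, i.e.\ $\pp{\Gamma=\ab{N}}=\pp{\bkt{N}=\ab{N}}=0$. This is also how you read it for $\Gamma$ versus $\bkt{N}$. Your arguments for $\Gamma\neq\ab{N}$ and $\bkt{N}\neq\ab{N}$ only show that these equalities do not hold almost surely.

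\begin{itemize}
\item The contradiction between $N\equiv 0$ and $\ee{N_{t}^{2}}>0$ needs $\Gamma=\Gamma^{p}$ on all of $\Omega$. If the equality holds only on an event $A$ with $0<\pp{A}<1$, you get $N=0$ on $A$, which contradicts nothing.
\item Likewise, $\bkt{N}=\ab{N}$ makes $\bkt{N}$ predictable only if it holds a.s. Predictability is a property of the process, not of individual paths, so that argument cannot be restricted to the event of equality either.
\end{itemize}

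\textbf{The fix.} Do for all three pairs what you already did for the third, which is the paper's proof. Fix $t$ with $\mu[0,t]>0$.

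\begin{itemize}
\item \emph{A jump of $\Gamma^{p}$ in $[0,t]$.} Take a predictable jump time $\tau$ of $\Gamma^{p}$ in $[0,t]$, including $\tau=0$ if $\Gamma^{p}_{0}>0$, and set $c:=\Delta\Gamma^{p}_{\tau}>0$. The jumps of $\Gamma$, $\bkt{N}$, $\ab{N}$ at $\tau$ are $x$, $p^{-2}(x-c)^{2}$, $c$. Here $c$ is $\sV_{\tau-}$-measurable, and $x=\Delta\Gamma_{\tau}$ has the diffuse conditional law $\IG(c,p^{2})$ given $\sV_{\tau-}$. Each coincidence pins $x$ to at most two $\sV_{\tau-}$-measurable values, so the three jumps are a.s.\ pairwise distinct.
\item \emph{No jump of $\Gamma^{p}$ in $[0,t]$.} Then $\Gamma^{p}$ is continuous on $[0,t]$ with $\Gamma^{p}_{t}\geq p\,\mu[0,t]>0$. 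So $\Gamma=S_{\Gamma^{p}}$ has a jump $x>0$ at some $s\leq t$ where $\Gamma^{p}$ does not jump. The three jumps there are $x$, $p^{-2}x^{2}$, $0$. They are distinct because a.s.\ no jump of $S$ equals $p^{2}$.
\end{itemize}

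\textbf{A smaller point on nonvanishing.} Proposition \ref{p:unique} only gives $v\leq\ld$. That does not make $\int_{(0,\infty)}(f-v^{2}/2)(t+s)\,\rho(\ds)$ nonnegative for large $\ld$. You need $v\leq\sqrt{1+2\ld}-1$, which follows from monotonicity in $T$ and the limit $\kappa$ in the proof of Proposition \ref{p:basic}(\ref{i:mass-ig}). Your ``more simply'' variant gives no useful bound as $\ld\to\infty$. It is quicker to argue as the paper does: $\Gamma_{t}=S_{\Gamma^{p}_{t}}>0$, since $\Gamma^{p}_{t}\geq p\,\mu[0,t]>0$ and $S$ is strictly increasing.
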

\begin{proof} In Appendix \ref{s:prf-dichotomy}.
\end{proof}

Since the inverse-Gaussian subordinator is the first-passage
process of a Brownian motion with drift, we have the following
result:
\begin{corollary}
\label{c:bm-tc}
Under the hypotheses of Theorem \ref{t:ig-rep}, there exists, on
a possibly extended probability space, a Brownian motion $\beta$ such
that, almost surely, 
\begin{align}\label{bm-tc}
   N_{t} = \beta_{\Gamma_{t}} \eand
  \Gamma^{p}_{t} = \sup_{s \leq \Gamma_{t}} \prn{ s - p\,\beta_{s} }
  \eforall t\geq 0.
\end{align}
\end{corollary}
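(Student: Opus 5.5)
The plan is to build $\beta$ from the inverse-Gaussian subordinator $S$ of Theorem \ref{t:ig-rep}(\ref{i:ig-tc}), using the classical fact that $S$ is a first-passage process. If $\beta$ is a Brownian motion, then
\begin{align*}
  T_{y} := \inf\sets{s}{s - p\beta_{s} > y}
\end{align*}
is a L\'evy subordinator. Its Laplace exponent solves $\ld = q - \tfrac{p^{2}}{2}q^{2}$ for the exponent $q$ of $e^{-q(s-p\beta_s)}$. Hence it equals $p^{-2}(\sqrt{1+2p^2\ld}-1) = \phi(\ld)$, and so $T$ is an inverse-Gaussian subordinator with dispersion $p^{2}$. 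The task is the converse: starting from $S$, I construct $\beta$ with $S_{y} = \inf\sets{s}{s-p\beta_s>y}$ for all $y$, a.s.

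On a further extension, I fill each jump of $S$ independently. The jumps are the pairs $(y,\Delta S_{y})$ with $\Delta S_y>0$. Across the interval $[S_{y-},S_{y})$ I insert a path $e$ of $Y_s := s - p\beta_s$, with $Y$ starting and ending at level $y$ and staying below it, of duration $\Delta S_y$. These excursions are drawn from the excursion law of the drifted Brownian motion $Y$ below its running maximum, conditioned on duration. I then set $Y_s := y$ plus the inserted excursion for $s \in [S_{y-},S_y)$. Itô's excursion theory for $Y - \sup Y$ identifies the resulting law. The running supremum of a Brownian motion with drift $1/p$ (after scaling) is the local time at $0$ of $\sup Y - Y$, and its inverse is a subordinator with no drift. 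Therefore the reconstructed $Y$ has the law of $s-p\beta_s$, with $\beta$ a Brownian motion, and $\sup_{u\le s}Y_u = \inf\sets{y}{S_y > s}$. Because the excursion lengths are exactly the jumps of $S$, the first-passage process of $Y$ is $S$ itself.

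With this $\beta$, the two identities follow from Theorem \ref{t:ig-rep}(\ref{i:ig-tc}). First, $\Gamma_t = S_{\Gamma^p_t}$ is a first-passage time, so $Y_{\Gamma_t} = \Gamma^p_t$. This holds since $Y$ has no upward jumps, and it holds whether or not $\Gamma^p_t$ is a jump time of $S$, because $Y$ returns to level $y$ exactly at $S_y$. Second, by the defining relation of the inverse, $\sup_{s\le S_y} Y_s = y$. This gives $\Gamma^p_t = \sup_{s\le\Gamma_t}(s-p\beta_s)$. Finally,
\begin{align*}
  \beta_{\Gamma_t} = \frac{\Gamma_t - Y_{\Gamma_t}}{p} = \frac{S_{\Gamma^p_t}-\Gamma^p_t}{p} = \tS_{\Gamma^p_t} = N_t .
\end{align*}
All statements hold simultaneously for all $t$ a.s., since they hold pathwise once $S=T$ identically.

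The main obstacle is the reconstruction step, namely that filling in the jumps of a given inverse-Gaussian subordinator produces an honest Brownian motion. My first attempt would avoid the excursion construction. I would take an independent pair $(\beta', S')$ with $S'$ the first-passage process of $s-p\beta'_s$, which has the same law as $S$. I would then use the existence of a regular conditional law of $\beta'$ given $S'$ to transfer, via \cite[Theorem 8.17]{Kal21}, a process $\beta$ with $(\beta,S)\eqd(\beta',S')$. The almost sure relation $S'_y = \inf\sets{s}{s-p\beta'_s>y}$ for all $y$ is a measurable property of the pair, so it transfers to $(\beta,S)$. This yields the claim without explicit excursion theory. No filtration statement about $\beta$ is needed, so joint measurability is the only point to check.
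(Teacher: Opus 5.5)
Your proposal is correct, and the transfer argument you offer as a "first attempt" is exactly the paper's proof. The paper observes that the first-passage process of $s - p\beta'_s$ is an inverse-Gaussian subordinator with dispersion $p^2$, and then invokes \cite[Corollary 8.18]{Kal21}, which is Theorem 8.17 specialized to a functional relation. That corollary returns directly a Brownian motion $\beta$ with $S_a = \inf\{s : s - p\beta_s > a\}$ for all $a$, a.s., so the measurability check you flag is absorbed into the citation. Your subsequent deductions match the paper's: continuity gives $s - p\beta_s \le a$ on $[0,S_a]$, with equality at $S_a$, and one then evaluates at $a = \Gamma^p_t$. The only difference is that the paper reads off $N_t = \beta_{\Gamma_t}$ from \eqref{strong-disc} rather than from $N_t = \tS_{\Gamma^p_t}$, which is equivalent.

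Your excursion-theoretic construction is a genuinely different, constructive route, and it is sound. It makes explicit the conditional law of $\beta$ given $S$ that the transfer argument uses only abstractly: independent excursions below the running maximum, with prescribed lengths $\Delta S_y$. This is in the same spirit as the bridge-filling in the paper's proof of Theorem \ref{t:ig-rep}. The price is substantial, however. You need a disintegration of the excursion measure by duration, a measurable randomization of the jumps of $S$, Itô's reconstruction theorem, and the fact that $Y$ spends zero Lebesgue time at its maximum (so that the inverse local time has no drift). None of this is required once you note that only the law of $S$ matters.

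One small slip concerns the exponent relation. With $Y_s = s - p\beta_s$, the relevant martingale is $e^{qY_s - \lambda s}$ with $\lambda = q + \frac{p^2}{2}q^2$, i.e., $\phi(\lambda) + \frac{p^2}{2}\phi(\lambda)^2 = \lambda$, as noted in the proof of Theorem \ref{t:ig-rep}. The relation $\lambda = q - \frac{p^2}{2}q^2$ that you wrote does not yield $\phi$, even though the $\phi$ you state is correct.
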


\begin{proof}
For a Brownian motion $\beta'$, the process
$T'_{a} := \inf\sets{s}{s - p \beta'_s > a}$
is an inverse-Gaussian subordinator with dispersion parameter $p^{2}$ (see, e.g.,
\cite[Section 1.2.5]{Kyp06}). Therefore, by 
\cite[Corollary 8.18]{Kal21}, we can find
a Brownian motion $\beta$, possibly on an extension of the
underlying probability space,  such
that $S_{a} = \inf\sets{s}{s - p\,\beta_{s} > a}$ for all $a$, a.s.
By continuity, $s - p\,\beta_{s} \leq a$ for $s \leq S_{a}$, with
equality at $s = S_{a}$. At $a = \Gamma^{p}_{t}$, where $S_{a} =
\Gamma_{t}$, this gives the second identity in \eqref{bm-tc} and
$\Gamma_{t} - p\,\beta_{\Gamma_{t}} = \Gamma^{p}_{t}$, i.e., $p\,N_{t}
= \Gamma_{t} - \Gamma^{p}_{t} = p\,\beta_{\Gamma_{t}}$ by
\eqref{strong-disc}.
\end{proof}

\begin{remark}[Two descriptions of the noise]\label{r:bm-tc}
Theorem \ref{t:ig-rep} and Corollary \ref{c:bm-tc} give two
descriptions of the noise, each of which also holds in the continuous
case. First, $N_{t} = \tS_{\ab{N}_{t}}$, where $\ab{N}$ is the
compensator of $\Gamma$ and $\tS$ is a centered L\'evy process with
$\Var\prn*{\tS_{y}} = y$. In the continuous case, $\ab{N} = \Gamma$ and
$\tS$ is a Brownian motion (Corollary \ref{c:dds}). In the
discontinuous case, $\ab{N} = \Gamma^{p}$ and $\tS$ is a compensated
inverse-Gaussian process. Its characteristic exponent converges to
$u^{2}/2$ as $p \to 0$, so $\tS$ converges in law to a Brownian motion
in the Skorokhod topology \cite[Corollary VII.3.6]{JacShi03}.

Second, $N_{t} = \beta_{\Gamma_{t}}$ for a Brownian motion $\beta$, as
in \eqref{clock-k}. In the continuous case the two descriptions
coincide. In the discontinuous case they differ: the Brownian motion
runs at the speed $\Gamma$, which jumps, while the L\'evy process runs at
the speed $\ab{N}$. By Corollary \ref{c:dichotomy}, for $\mu \in
\Mlf^{*}(\sp)$ the time change $\Gamma$ is a.s.\ neither $\ab{N}$ nor
$\bkt{N}$. This is consistent with \eqref{expmg} and Remark \ref{r:not-qv}:
formally, the processes in \eqref{expmg} are the Brownian
martingales $\beta_{s}^{2} - s$ and $\exp{\ld\beta_{s} -
\ld^{2}s/2}$ evaluated at the times $s = \Gamma_{t}$.
\end{remark}

\subsection{Stochastic equations}
\label{s:disc-eqs}
In the continuous case, $\xi$ is characterized by a stochastic
equation for $\Gamma$ driven by a continuous martingale whose
quadratic variation is $\Gamma$ itself (Theorem \ref{t:mart-char}).
This is also how stochastic Volterra equations are usually posed in
the literature. In the discontinuous case this description no longer
suffices: $N$ is purely discontinuous, and neither $\ab{N}$ nor
$\bkt{N}$ equals $\Gamma$ (Remark \ref{r:bm-tc}). Theorem
\ref{t:ig-rep} supplies the missing information: $N$ is the
compensated inverse-Gaussian process $\tS$ time-changed by $\ab{N}$. The next
theorem shows that, for a given $S$, the Volterra equation
\eqref{volterra-disc} together with this description \eqref{ig-noise}
of the noise has $(\Gamma,N)$ as its unique solution. Equivalently,
$\Gamma$ is the unique solution of the time-change equation
\eqref{disc-tc}. This is the counterpart of the characterization
\eqref{bracket}--\eqref{char-strong} of Theorem \ref{t:mart-char}, with the
inverse-Gaussian subordinator $S$ in place of a Brownian motion,
entering through a time change.
\begin{theorem}[Stochastic equation: the discontinuous case]
\label{t:disc-eqs}
Let $\xi$, $\Gamma$, $\Gamma^{p}$, $N$, $S$ and $(\sH_{x})$ be as in Theorem
\ref{t:ig-rep}. Then $\Gamma$ and $N$ solve, almost surely for
all $t \in \sp$,
\begin{align}
  \label{volterra-disc}
  \Gamma_{t} &= \prn{\rho*\mu}[0,t] +
  \textstyle\int_{[0,t]} \rho[0,t-s]\,\dN_{s}, \\
  \label{ig-noise}
  N_{t} &= \tS_{ \Gamma_{t} - p\,N_{t} },
\ewhere
\tS_{y} = \too{p}(S_{y}-y), 
\end{align}
and $\Gamma_{t} - p\,N_{t} = \Gamma^{p}_{t}$ is an $(\sH_{x})$-stopping
time for each $t$. Equivalently, $\Gamma$ solves
\begin{align}
  \label{disc-tc}
  \Gamma_{t} = S_{ p\,\mu[0,t] + p \int_{(0,t]}
  \Gamma_{t-s}\,\Pi(\ds) } \eforall t \in \sp, \eas.
\end{align}

Moreover, let $\Gamma'$ and $N'$ be processes on the same space,
with $\Gamma'$ nondecreasing and \cd{}, and $N'$ \cd{} of finite variation with
$N'_{0-} = 0$. If they solve \eqref{volterra-disc}--\eqref{ig-noise} and
each $\Gamma'_{t} - p\,N'_{t}$ is an $(\sH_{x})$-stopping
time, then $\Gamma' = \Gamma$ and $N' = N$ a.s.
\end{theorem}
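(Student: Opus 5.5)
The existence part is a direct assembly of earlier results. Equation \eqref{volterra-disc} is \eqref{volterra} of Proposition \ref{p:strong}, valid for every PGID $\rho$. Since $N$ has finite variation here, $\int_0^t\rho[0,t-s]\,\dN_s$ can be read pathwise, with the term at $s=0$ included by the conventions of subsection \ref{s:notation}. By \eqref{strong-disc}, $\Gamma_t-pN_t=\Gamma^p_t$, and part \ref{i:ig-tc} of Theorem \ref{t:ig-rep} gives $N_t=\tS_{\Gamma^p_t}$, which is \eqref{ig-noise}. The stopping-time property of $\Gamma^p_t$ is part \ref{i:indep-sub}. Finally, \eqref{disc-tc} is the identity $\Gamma_t=S_{\Gamma^p_t}$ of part \ref{i:ig-tc}, combined with the definition \eqref{Gp-def}.

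For the equivalence, first assume \eqref{volterra-disc}--\eqref{ig-noise} hold for some pair $(\Gamma',N')$. Set $G'_t:=\Gamma'_t-pN'_t$. Then \eqref{ig-noise} reads $pN'_t=S_{G'_t}-G'_t$, i.e.\ $\Gamma'_t=S_{G'_t}$. It remains to show that \eqref{volterra-disc} forces $G'_t=p\mu[0,t]+p\int_{(0,t]}\Gamma'_{t-s}\,\Pi(\ds)$. I would do this by a deterministic convolution argument. Write \eqref{volterra-disc} as $\diffd\Gamma'=\rho*\mu+\rho*\diffd N'$ as measures, then convolve with the inverse of $\rho$ in the convolution algebra of measures on $\sp$. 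Since $\hrho=p/(1-(1-p)\hat\pi)$ with $(1-p)\pi=p\Pi$ (Remark \ref{r:pgid-ex}), this inverse is $p^{-1}(\delta_0-p\Pi)$, so $\rho^{-1}$ is a genuine finite signed measure. Applying it gives
\[
p^{-1}(\diffd\Gamma'-p\Pi*\diffd\Gamma')=\mu+\diffd N'.
\]
Integrating over $[0,t]$ yields $\Gamma'_t-pN'_t=p\mu[0,t]+p(\Pi*\diffd\Gamma')[0,t]$, and the last term equals $p\int_{(0,t]}\Gamma'_{t-s}\Pi(\ds)$ by Fubini. The converse direction runs the same computation backwards. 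All manipulations are pathwise, with locally finite measures, and they are justified because $\Pi$ is finite.

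For uniqueness, it suffices to show that \eqref{disc-tc} has a unique nondecreasing \cd{} solution for the given $S$, given the stopping-time requirement; $N'$ is then determined as $(\Gamma'-G')/p$. The natural tool is induction along the support structure of $\Pi$: the argument of $S$ at height $t$ involves $\Gamma'$ only on $[0,t)$, plus $\Gamma'_0$ when $\Pi$ has mass near $0$. The main obstacle is that $\Pi$ may charge arbitrarily small times, so there is no fixed positive lag to induct on.

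I would first try a Picard/Gronwall-type comparison. Take two solutions, set $D_t=\sup_{s\le t}|\Gamma'_s-\Gamma_s|$, and compare their time-change arguments, which differ by at most $p\,\Pi(0,t]\,D_{t}$. Since $S$ is not Lipschitz, this comparison fails directly. The fix is to use monotonicity, ordering solutions instead of estimating differences. Start the iteration from $\Gamma^{(0)}=0$ and set $\Gamma^{(n+1)}_t=S_{p\mu[0,t]+p\int\Gamma^{(n)}_{t-\cdot}\,\diffd\Pi}$; this increases to a minimal solution. Any solution that dominates it but differs from it must, at the first height where they differ, create a gap in the argument of $S$. The stopping-time property, together with the strong Markov property of $S$ relative to $(\sH_x)$, makes the choice of that gap non-anticipating, i.e.\ adapted. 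Because $\Pi(0,h]\to0$, the contraction factor $p\Pi(0,h]<1$ on short lags, and the expectation identities $\ee{S_{G}}=\ee{G}$ for stopping times $G$ show that the expected gap is multiplied by at most $p\Pi(0,h]$ at each step. Localizing $[0,T]$ into finitely many short intervals then forces the gap to vanish. This expectation-based contraction, using optional stopping for $S_y-y$, is the step I expect to require the most care.
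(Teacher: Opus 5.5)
Your existence part, and your reduction of \eqref{volterra-disc}--\eqref{ig-noise} to \eqref{disc-tc} via $\rho^{-1} = p^{-1}(\delta_{0} - p\,\Pi)$, are exactly what the paper does. The uniqueness part has a genuine gap.

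\emph{Missing integrability.} The expectation contraction you end with only closes if the competing solution is integrable, and you never establish this. The identity $\ee{S_{\sigma}} = \ee{\sigma}$ holds for every finite $(\sH_{x})$-stopping time. But if both sides are $+\infty$, an inequality $e^{*} \leq q\,e^{*}$ with $q<1$ yields nothing. The hypotheses (nondecreasing, \cd{}, stopping-time property) give no a priori bound on $\ee{\Gamma'_{t}}$. The paper supplies this step as follows. Put $\sigma := {\Gamma'}^{p}_{t}$ and $a := p\,\mu[0,t]$. Monotonicity of $\Gamma'$ and $p\,\Pi(0,\infty) = 1-p$ give the self-referential bound $\sigma \leq a + (1-p)\,S_{\sigma}$. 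For $p=1$ this is simply $\sigma \leq a$. For $p<1$, let $\psi(\theta) = \log\ee{e^{\theta S_{1}}}$ and choose $\theta \in (0,1/(2p^{2}))$ with $c := \theta/(1-p) - \psi(\theta) > 0$; this is possible since $\psi'(0) = 1 < 1/(1-p)$. Fatou's lemma for the martingale $e^{\theta S_{x} - \psi(\theta)x}$ stopped at $\sigma \wedge k$ then gives $\ee{e^{c\sigma}} \leq e^{\theta a/(1-p)}$, so $\ee{\sigma}<\infty$. Rearranging $\ee{\Gamma'_{t}} = \ee{\sigma} \leq a + (1-p)\ee{\Gamma'_{t}}$ gives $\ee{\Gamma'_{t}} \leq \mu[0,t]$.

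\emph{The minimal-solution detour.} This step is unjustified, and it is also unnecessary. An increasing sequence of arguments $G^{(n)}_{t} \uparrow G^{\infty}_{t}$ only gives $\lim_{n} S_{G^{(n)}_{t}} \in \set{S_{G^{\infty}_{t}-}, S_{G^{\infty}_{t}}}$. So it is not clear that the limit solves \eqref{disc-tc}; you would at least need the $G^{(n)}_{t}$ to be stopping times and the quasi-left-continuity of $S$. Nor is it clear that the limit inherits the stopping-time property, and ``the first height where they differ'' need not be attained.

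Your reason for abandoning the direct comparison, that $S$ is not Lipschitz, does not apply in $\Lone$. For two solutions, $\sigma := \Gamma^{p}_{t} \vee {\Gamma'}^{p}_{t}$ and $\sigma' := \Gamma^{p}_{t} \wedge {\Gamma'}^{p}_{t}$ are $(\sH_{x})$-stopping times. Monotonicity of $S$ gives $\abs{\Gamma_{t} - \Gamma'_{t}} = S_{\sigma} - S_{\sigma'}$ exactly. Optional sampling for $S_{y} - y$ then gives $e(t) := \ee{\abs{\Gamma_{t} - \Gamma'_{t}}} = \ee{\sigma - \sigma'} \leq p\int_{(0,t]} e(t-s)\,\Pi(\ds)$.

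Localization to short lags is also not needed: $p\,\Pi(0,\infty) = 1-p < 1$ is already a global contraction constant. Hence $e^{*}(T) := \sup_{t \leq T} e(t)$, which is finite by the integrability bound, satisfies $e^{*}(T) \leq (1-p)\,e^{*}(T)$ and therefore vanishes.
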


\begin{proof}
In Appendix \ref{s:prf-disc-eqs}.
\end{proof}

\begin{remark}\label{r:xu}
Up to normalization, equation \eqref{volterra-disc} is the
Volterra equation of \cite[Theorem 2.13]{Xu24a} for the limit $\Xi$
and its martingale part $M$, which holds for the
scaling limits of nearly unstable Hawkes processes also when the
kernel has an atom at $0$. In \cite[Theorem 2.14]{Xu24a}, the driving
martingale is identified as $M = B_{\Xi}$, a Brownian motion $B$
time-changed by $\Xi$,
and the pair $(\Xi,M)$ is shown to be the unique continuous
solution of the resulting system. This theorem assumes that the
limit is continuous. Corollary
\ref{c:bm-tc} shows that the representation $N_{t} = \beta_{\Gamma_{t}}$ 
persists in the discontinuous case. The time change $\Gamma$,
however, is then no longer the quadratic variation of $N$ (Remark
\ref{r:bm-tc}). In
Theorem \ref{t:disc-eqs}, the role of $M_{t} = B_{\Xi_{t}}$ is
taken by \eqref{ig-noise}, in which the noise is the
compensated inverse-Gaussian process $\tS$ time-changed by 
the
predictable quadratic variation $\ab{N} = \Gamma - p\,N$ of $N$. With this
replacement, uniqueness holds, for a given $S$, also in the
discontinuous case.
\end{remark}

\subsection{Examples}
\label{s:disc-examples}

\begin{example}[The case $\rho = \delta_{0}$]
\label{e:delta}
Let $\rho = \delta_{0}$, so that $p=1$ and $L = 0$. 
The Riccati-Volterra equation \eqref{riccati} becomes the pointwise identity $v
= f - v^{2}/2$, whose nonnegative solution is $v = \sqrt{1+2f} - 1$.
Hence
\begin{align}\label{laplace-delta}
  \ee{ e^{-\ab{\xi,f}} } = \exp{ -\textstyle\int_{\sp}
  \bprn{ \sqrt{1+2f} - 1 }\,\diffd\mu } \efor f \in \Bcp(\sp).
\end{align}
Therefore, $\xi$ is independently scattered, i.e., 
$\xi(A_{1}), \dots, \xi(A_{n})$ are independent
whenever $A_{1}, \dots, A_{n}$ are disjoint, 
with 
$\xi(A) \sim 
\IG\prn{\mu(A),1}$ for every Borel set $A$ with $\mu(A)<\infty$.
In particular, there is no
self-excitation. 

All objects of Theorem \ref{t:ig-rep} are explicit. The compensator
$\Gamma^{p}_{t} = \mu[0,t]$ is deterministic, so \eqref{disc-tc} is the
formula $\Gamma_{t} = S_{\mu[0,t]}$, and the vertical noise $N_{t} =
\tS_{\mu[0,t]} = S_{\mu[0,t]} - \mu[0,t]$ has independent increments.
Its predictable quadratic variation $\ab{N}_{t} = \mu[0,t]$ is
deterministic, while $\Gamma$ is not, as in Remark \ref{r:bm-tc}. By Corollary \ref{c:atomic},
$\xi$ is purely atomic. Moreover, a.s., the closure of the set of
its atoms is $\supp\mu$: by \eqref{laplace-delta}, $\xi$ vanishes on
every interval of $\mu$-measure $0$, and charges, hence has an atom
in, every interval with rational endpoints and positive
$\mu$-measure. 
For $\mu(\dt) = m\,\dt$
with $m>0$, the process $\Gamma_{t} = S_{mt}$ is an inverse-Gaussian
L\'evy process and $N$ is its compensated version. 

\smallskip

The kernels of Example \ref{e:cir} converge weakly to $\delta_{0}$
as $b \downto 0$, so the corresponding Feller random measures converge
in law to $\xi$ by Proposition \ref{p:stability}. For $\mu(\dt) =
m\,\dt$, this is the inverse-Gaussian limit of the integrated
square-root process under fast mean reversion and high volatility,
recalled in \cite{AbiAtt25, AbiAttRos26}. In this case, the
distribution function $\Gamma$ of $\xi$ also appears, up to a
constant factor, in
\cite{AbiAttRos26} as the weak limit of integrated hyper-rough
square-root processes as $H \downto -1/2$, and Corollary
\ref{c:vague-stability} recovers that convergence. There, for
$\al_{n} := H_{n} + 1/2 \downto 0$ and constants $\lambda > 0$,
$\theta, V_{0} \geq 0$ and $\nu \neq 0$, the integrated hyper-rough
square-root process $X^{n}$ and a continuous local martingale $M^{n}$
with $\ab{M^{n}} = \nu^{2}X^{n}$ satisfy
\begin{align*}
  X^{n}_{t} = G^{n}(t) + \int_{0}^{t}K_{n}(t-s)\prn{M^{n}_{s} -
  \lambda X^{n}_{s}}\,\ds, \qquad K_{n}(t) = \al_{n}t^{\al_{n}-1},
\end{align*}
with $G^{n}(t) = V_{0}\,t + \lambda\theta\,t^{1+\al_{n}}/(1+\al_{n})$
\cite[Equations (1.4)--(1.7)]{AbiAttRos26}. The kernel $K_{n}$ is
completely monotone, so the law of $X^{n}$ is unique \cite[Theorems 2.2
and 2.13]{Abi21}. Let $\rho_{n}$ be the PGID law with $\Psi_{n}(z) = 1
+ z^{\al_{n}}/\prn{\lambda\,\Gamma(1+\al_{n})}$, let
\begin{align*}
  \mu_{n}(\dt) = \frac{\lambda}{\nu^{2}}\Bprn{ \frac{V_{0}\,
  t^{-\al_{n}}}{\Gamma(1+\al_{n})\Gamma(1-\al_{n})} + \lambda\theta }\,\dt,
\end{align*}
and let $\Gamma_{n}$ be the distribution function of the
$(\rho_{n},\mu_{n})$-Feller random measure. Since $\lambda^{-1}\hrho_{n}
= \hat K_{n}/(1+\lambda\hat K_{n})$ and $K_{n} * (\nu^{2}\lambda^{-1}\mu_{n})
= \diffd G^{n}$, a comparison of Laplace transforms turns
\eqref{volterra} for $\Gamma_{n}$, together with $\ab{N_{n}} = \Gamma_{n}$
(Corollary \ref{c:dds}), into the equation above, with $X^{n} :=
\nu^{2}\lambda^{-2}\,\Gamma_{n}$ and $M^{n} := \nu^{2}\lambda^{-1}N_{n}$, where $N_{n}$ is the noise of that Feller random measure. By
uniqueness, this identifies the law of $X^{n}$. As $n \to \infty$, we
have $\Psi_{n}(z) \to 1 + 1/\lambda$ for $z > 0$, so that $\rho_{n}
\to \frac{\lambda}{1+\lambda}\,\delta_{0}$ vaguely, and $\mu_{n} \to
\lambda g_{0}\nu^{-2}\,\dt$ vaguely, with $g_{0} := V_{0} +
\lambda\theta$. By Corollary \ref{c:vague-stability} and Remark
\ref{r:stability}, $X^{n}$ converges in distribution, in the Skorokhod
$M_{1}$ topology, to $\kappa\,\Gamma$, where $\Gamma$ corresponds to
$\mu(\dt) = m\,\dt$ with
\begin{align*}
  \kappa = \frac{\nu^{2}}{(1+\lambda)^{2}} \eand m =
  \frac{g_{0}(1+\lambda)}{\nu^{2}}.
\end{align*}
This is the convergence of the first component in \cite[Theorem
3.1]{AbiAttRos26}. Since each $X^{n}$ is continuous and $\Gamma$ is
not, the convergence cannot hold in the $J_{1}$ topology, so $M_{1}$ is
the natural topology here. The joint convergence with $M^{n}$, in the $S$
topology, is not covered by this argument.
\end{example}

\begin{example}[The Poisson case]
\label{e:lattice}
Let $b = 0$ and $\Pi = c\,\delta_{1}$ for some $c>0$, so that $L$
is a Poisson process of rate $c$ and $\Psi(\ld) = 1 +
c\prn{1-e^{-\ld}}$. By Remark \ref{r:pgid-ex}, $p = \rho(\set{0}) =
1/(1+c)$ and, with $q = 1-p$, $\rho$ is the geometric distribution
\begin{align}\label{geom}
  \rho = \textstyle\sum_{n \in \N_{0}} p\,q^{n}\,\delta_{n}.
\end{align}
With $\Pi = (a/h)\,\delta_{h}$ for some $a>0$ instead, $\rho$ is
the geometric distribution with mean $a$ on the grid $h\,\N_{0}$. As
$h \downto 0$, it converges weakly to the exponential distribution with
mean $a$, i.e., to the kernel of Example \ref{e:cir} with $b = a$, and
the Feller random measures converge accordingly, by Proposition
\ref{p:stability}.
Let $\mu = \sum_{n}\mu_{n}\delta_{n}$ be
carried by $\N_{0}$, and write $\xi_{n} := \xi(\set{n})$. By Corollary
\ref{c:atomic}, $\xi$ is carried by $\N_{0}$ as well.

\smallskip

The infinitesimal generator of the spatial motion is
$\sA g(n) = c\prn{g(n+1) - g(n)} - g(n)$, so the
superprocess of Appendix \ref{S:spier} can be
represented as a sequence of
nonnegative processes $\seqz{X_{\tau}(n)}$. This way the 
martingale problem \eqref{mart-prob} becomes the system
\begin{align}\label{walk-system}
  \dX_{\tau}(n) = \bprn{ c\,X_{\tau}(n-1) - (1+c)\,X_{\tau}(n) }\,\dtau
  + \sqrt{X_{\tau}(n)}\,\dB^{n}_{\tau}, 
\end{align}
with initial conditions $X_{0}(n) = \mu_{n}$, 
driven by independent Brownian motions $B^{n}$, and following the convention
$X_{\tau}(-1) = 0$. 

\smallskip

Let $f \in \Bcp(\sp)$
vanish on $(T,\infty)$, and put $f_{n} := f(n)$ and $v_{n} := v(n)$.
Using the fact that $\rho$ is geometric, we can write the Riccati-Volterra
equation \eqref{riccati} as 
\begin{align*}
  v_{n} = p\prn{f_{n} - v_{n}^{2}/2} + q\,v_{n+1}.
\end{align*}
Since $v$ is nonnegative, we obtain the backward recursion
\begin{align}\label{recursion}
  v_{n} = \oo{p}\Bprn{ \sqrt{ 1 + 2p\prn{ p\,f_{n} + q\,v_{n+1} } } - 1 }
  \efor n \leq T, \eand v_{n} = 0 \efor n > T,
\end{align}
which can be used to compute the values of $v$. 

\smallskip

Since $p\,\Pi
= q\,\delta_{1}$, \eqref{Gp-def} gives $\Gamma^{p}_{n} = \sum_{k=0}^{n}
\prn{p\,\mu_{k} + q\,\xi_{k-1}}$, with $\xi_{-1} := 0$. 
By \eqref{strong-disc} and \eqref{ig-noise}, evaluated at $t = n$, we obtain the recursive system
\begin{equation}\label{lattice-system}
  \begin{aligned}
    \xi_{n} &= p\,\mu_{n} + q\,\xi_{n-1} + p\,\Delta N_{n},\\
    \Gamma^{p}_{n} &= \Gamma^{p}_{n-1} + p\,\mu_{n} +
    q\,\xi_{n-1}, \\
  \Delta N_{n} &= \tS_{\Gamma^{p}_{n}} - \tS_{\Gamma^{p}_{n-1}},
\end{aligned}
\end{equation}
for $n \in \N_{0}$, where $\Gamma^{p}_{-1} := 0$ and $\tS$ is as
in \eqref{ig-noise}. 
The first equation can also be obtained by
integrating \eqref{walk-system} over $\tau \in \tm$ and setting 
$\Delta N_{n} = \int_{\tm}\sqrt{X_{\tau}(n)}\,\dB^{n}_{\tau}$.

\smallskip

The equations \eqref{lattice-system} make it clear that 
$\seqz{\xi_n}$ is a Markov chain in its own filtration. It
is an autoregressive process with ``square-root''
non-Gaussian noise. Its transition mechanism is 
\begin{align*}
  \xi_n \mid \sg(\xi_{n-1}, \dots) \sim 
  \IG( p \mu_n + q \xi_{n-1}, p^2). 
\end{align*}
This Markov chain is also known in the literature as a \emph{Ji\v{r}ina process} 
with inverse-Gaussian offspring law and time-inhomogeneous immigration
(see \cite{Jir58}).
When $\mu_{n}$ does not depend on $n$, it is also a compound
autoregressive process in the sense of \cite[Section 3.2]{DarGouJas06},
with an inverse-Gaussian compounding law. It is the inverse-Gaussian
counterpart of the autoregressive gamma process discussed there.
\end{example}

\begin{example}[The general lattice case]
\label{e:lattice-gen}
Let $b = 0$ and $\Pi = c\,\pi$, where $c>0$ and $\pi$ is a
probability measure on $\set{1,2,\dots}$ with $\pi_{j} :=
\pi(\set{j})$. With $p$, $q$, $\mu$ and $\xi_{n}$ as in Example
\ref{e:lattice}, everything goes through as in the Poisson case. The
only difference is that the dependence on the past is no longer
Markovian. By Remark \ref{r:pgid-ex}, $\rho = p\sum_{k\geq0}q^{k}\,\pi^{*k}$, so that $\rho =
p\,\delta_{0} + q\,\pi*\rho$, and the Riccati-Volterra equation \eqref{riccati}
becomes the backward recursion
\begin{align*}
  v_{n} = \oo{p}\Bprn{ \sqrt{ 1 + 2p\bprn{ p\,f_{n} +
  q\textstyle\sum_{j\geq1}\pi_{j}\,v_{n+j} } } - 1 }.
\end{align*}
The recursive system \eqref{lattice-system} becomes
\begin{equation}\label{lattice-system-gen}
  \begin{aligned}
    \xi_{n} &= p\,\mu_{n} + q\textstyle\sum_{j\geq1}\pi_{j}\,\xi_{n-j}
    + p\,\Delta N_{n},\\
    \Gamma^{p}_{n} &= \Gamma^{p}_{n-1} + p\,\mu_{n} +
    q\textstyle\sum_{j\geq1}\pi_{j}\,\xi_{n-j}, \\
    \Delta N_{n} &= \tS_{\Gamma^{p}_{n}} - \tS_{\Gamma^{p}_{n-1}},
  \end{aligned}
\end{equation}
with $\xi_{m} := 0$ for $m<0$, and the transition mechanism is
\begin{align*}
  \xi_{n} \mid \sg(\xi_{n-1}, \xi_{n-2}, \dots) \sim \IG\Bprn{
  p\,\mu_{n} + q\textstyle\sum_{j\geq1}\pi_{j}\,\xi_{n-j},\, p^{2} }.
\end{align*}
\end{example}

\begin{example}[Exponential excitation]
\label{e:exp-disc}
Let $b = 0$ and $\Pi(\du) = c\,\theta\,e^{-\theta u}\,\du$ with
$c,\theta>0$, so that $p = \rho(\set{0}) = 1/(1+c)$, and let $q = 1-p$. Then $\Psi(\ld) = 1
+ c\ld/(\theta+\ld)$, and, by Remark \ref{r:pgid-ex},
\begin{align*}
  \rho(\dt) = p\,\delta_{0}(\dt) + q\,p\,\theta\,e^{-p\theta t}\,\dt.
\end{align*}
This is the discontinuous counterpart of Example \ref{e:cir}. If
$c\to\infty$ and $\theta/(1+c)$ converges to a positive limit, then
the measure $\rho$ converges weakly to an exponential distribution,
i.e., to the kernel $\rho$ of Example \ref{e:cir}, and the Feller
random measures converge accordingly, by Proposition \ref{p:stability}. The kernel $\rho = \delta_{0}$ of Example \ref{e:delta}
corresponds formally to $c = 0$.

\smallskip

Since $\Pi$ is diffuse, $(\Pi*\xi)(\dt) = \zeta_{t}\,\dt$ with
$\zeta_{t} := c\,\theta\int_{[0,t)}e^{-\theta(t-s)}\,\xi(\ds)$.
Therefore
\begin{align}\label{exp-intensity}
  \Gamma_{t} = S_{\Gamma^{p}_{t}}, \qquad \Gamma^{p}_{t} = p\,\mu[0,t]
  + p\textstyle\int_{0}^{t}\zeta_{s}\,\ds, \qquad \diffd\zeta_{t} =
  -\theta\,\zeta_{t}\,\dt + c\,\theta\,\diffd\Gamma_{t}.
\end{align}
If $\mu$ is diffuse, then $\Gamma^{p}$ is continuous. By part
\ref{i:pred-jump} of Theorem \ref{t:ig-rep}, $N$ is then
quasi-left-continuous, and all jumps of $\Gamma$ are totally
inaccessible. This is the opposite of the lattice cases, where every
jump occurs at a predictable time.

For $\mu(\dt) = m\,\dt$ with $m>0$, the jumps of $\Gamma$ have the
compensator $p\,(m+\zeta_{t})\,\dt\,\nu(\diffd z)$, where $\nu$ is the
L\'evy measure of $S$. Using part \ref{i:indep-sub} of Theorem
\ref{t:ig-rep} and the uniqueness in Theorem \ref{t:disc-eqs}, one can
show that $\zeta$ is then a Markov process with generator
\begin{align*}
  \mathcal{Q} g(x) = -\theta\,x\,g'(x) + p\,(m+x)\textstyle\int_{\sp}
  \bprn{ g(x+c\,\theta z) - g(x) }\,\nu(\diffd z).
\end{align*}
Thus $\zeta$ is a continuous-state branching process with
immigration, without a diffusion part and with inverse-Gaussian jumps.
Since $\int z\,\nu(\diffd z) = 1$ and $p\,c = q$, its mean reverts to
$c\,m$ at rate $p\,\theta$. 
\end{example}

\appendix

\section{Dawson-Watanabe superprocesses}
\label{S:spier}

This appendix collects what we use about the Dawson-Watanabe
superprocess $\sfamtau{X_{\tau}}$ with \define{spatial motion} $K$ (the
killed subordinator of subsection \ref{s:subord}) and \define{branching
mechanism} $z^{2}/2$. We work first with finite measures and extend
to locally finite ones in Remark \ref{r:lf-ext}. Our reference
throughout is \cite{Li23}; see \cite{Eth00} for an overview in
the Brownian case. 

\subsection{Construction}
\label{s:construct}
For $F \in \Bp(\sp)$, the \emph{log-Laplace equation}
\begin{align}
\label{log-laplace}
V_{\tau}[F](t) = \eeup{t}{ F(K_{\tau}) } - \eeup{t}{
\int_{0}^{\tau}\tot  V_{\tau - \sg}[F]^{2}\prn{ K_{\sg} }\,\dsg},
\quad \tau \in \tm,\ t \in \sp,
\end{align}
has a unique nonnegative, locally bounded, measurable solution
\cite[Proposition 2.20]{Li23}. It is a weak form of
$\frac{\diffd}{\dtau} V_{\tau}[F] = \sA\, V_{\tau}[F] - 
V_{\tau}[F]^{2}/2$, $V_{0}[F] = F$. The \define{cumulant semigroup}
of its solutions
satisfies $V_{\tau_{1}+\tau_{2}}[F] = V_{\tau_{1}}\bkt{V_{\tau_{2}}[F]}$
\cite[Theorem 2.21]{Li23}. For $\mu \in \Mf(\sp)$ and $\tau \in
\tm$, the map $F \mapsto e^{-\ab{\mu,V_{\tau}[F]}}$, $F \in \Bcp(\sp)$,
is the Laplace functional of a probability measure on $\Mf(\sp)$. These
measures are the transition kernels of a Markov semigroup, whose
Chapman-Kolmogorov property is the semigroup property of $V$. We
denote by $\set{\bP^{\mu}}_{\mu \in \Mf(\sp)}$ the associated Markov
family on the space $D_{\Mf(\sp)}$ of \cd{} paths.

\subsection{First properties}
\label{s:first-prop}
Since $K$ is Feller, the family $\set{\bP^{\mu}}$ is a Hunt process
\cite[Corollary 5.12]{Li23}. In particular, it is quasi-left-continuous and strongly Markov for the natural and the universally
completed filtrations. Under $\bP^{\mu}$, the canonical process $X$ is
the Dawson-Watanabe superprocess with spatial motion $K$ and branching
mechanism $z^{2}/2$, and
\begin{align}\label{dw-laplace}
\eemu{ e^{- \ab{X_{\tau},F}} } = e^{- \ab{\mu,V_{\tau}[F]}}.
\end{align}
\begin{proposition}
\label{p:dw-props}
Suppose that $\mu \in \Mf(\sp)$. Then
\begin{enumerate}
  \item \label{i:first-moment} $ \eemu{ \ab{X_{\tau},\vp} } = \ab{ \mu, P_{\tau}\vp
    }$ for $\vp \in \Bcp(\sp)$ and $\tau \in \tm$.
  \item \label{i:branching} The
    \define{branching property}: under $\bP^{\mu_{1}+\mu_{2}}$, $X$ is
    distributed as the sum of two independent copies, under
    $\bP^{\mu_{1}}$ and under $\bP^{\mu_{2}}$.
  \item \label{i:mass-feller} The \define{total mass process} $m_{\tau} =
    \ab{X_{\tau},1}$ is a Feller diffusion,
    \begin{align}\label{mass-sde}
      \dm_{\tau} = - m_{\tau}\,\dtau + \sqrt{m_{\tau}}\,\dW_{\tau},\
      m_0 = \ab{\mu,1},
    \end{align}
    whatever the law of $L$. It is absorbed at $0$ in finite time, a.s.,
    so $X$ goes extinct in finite time, a.s.
  \item \label{i:support}
    $\set{ \supp X_{\tau}  \subseteq [t,\infty) } \subseteq
    \set{ \supp X_{\sg}  \subseteq [t,\infty) }$
    for all $ \sg \geq \tau$ and $t \in \sp$, modulo $\bP^{\mu}$-null
    sets.
\end{enumerate}
\end{proposition}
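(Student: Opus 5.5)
The plan is to derive all four items from the log-Laplace representation \eqref{dw-laplace}, the semigroup property of $V$, and the Markov property of $\set{\bP^{\mu}}$ from subsection \ref{s:construct}.

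\emph{Item (\ref{i:first-moment}).} Replace $F$ by $\ld\vp$ and differentiate at $\ld=0$. From \eqref{log-laplace} we get $0\le V_\tau[\ld\vp]\le \ld P_\tau\vp$. The quadratic term is therefore $O(\ld^2)$, so $\partial_\ld V_\tau[\ld\vp]|_{\ld=0}=P_\tau\vp$ uniformly. Differentiating $\eemu{e^{-\ld\ab{X_\tau,\vp}}}=e^{-\ab{\mu,V_\tau[\ld\vp]}}$ at $0^+$, with monotone convergence on the left, gives the formula.

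\emph{Item (\ref{i:branching}).} The log-Laplace functional is linear in $\mu$. Hence the one-dimensional Laplace functionals of $\bP^{\mu_1+\mu_2}$ factor into those of $\bP^{\mu_1}$ and $\bP^{\mu_2}$. Using the Markov property and the semigroup property of $V$, finite-dimensional Laplace functionals $\ee{e^{-\sum_j\ab{X_{\tau_j},F_j}}}$ are obtained recursively, by nesting $V_{\tau_j-\tau_{j-1}}[F_j+\cdots]$, and they again have the form $e^{-\ab{\mu,\cdot}}$. This proves equality of finite-dimensional laws, and hence equality of laws on $D_{\Mf(\sp)}$.

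\emph{Item (\ref{i:mass-feller}).} Take $F\equiv\ld$ constant. Since $\eeup{t}{\ld(K_\tau)}=\ld e^{-\tau}$ with $f(\dagger)=0$, uniqueness shows that $V_\tau[\ld]$ is the constant $u_\tau$ solving $u'=-u-u^2/2$, $u_0=\ld$. To justify this, check that a spatially constant $u$ satisfies \eqref{log-laplace}: $\eeup{t}{u(K_\sg)}=e^{-\sg}u$, and then solve the resulting scalar Volterra equation, which is equivalent to the ODE. This is exactly the cumulant of the CSBP with branching mechanism $z+z^2/2$, i.e.\ of the Feller diffusion \eqref{mass-sde}. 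The Markov property of $X$ then makes $m$ Markov with this semigroup, and the SDE follows from the martingale problem (or by identifying laws and realizing $W$ on an extension). Extinction holds because $u_\tau(\infty)=\lim_{\ld\to\infty}u_\tau=2/(e^{\tau}-1)<\infty$, so $\bP^\mu(m_\tau=0)=e^{-\ab{\mu,1}u_\tau(\infty)}\to1$; absorption at $0$ is standard for CSBPs.

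\emph{Item (\ref{i:support}).} I expect this to be the only genuinely spatial step. Apply the Markov property at $\tau$. It suffices to show that $\bP^{\nu}\bigl(X_{s}[0,t)>0\bigr)=0$ for $\nu$ carried by $[t,\infty)$ and every $s$. Take $F=\ld\ind{[0,t)}$. Since $K$ is nondecreasing, Lemma \ref{l:one-sided} applied to the half-line $[t,\infty)$ gives $P_s F=0$ on $[t,\infty)$. From \eqref{log-laplace}, $0\le V_s[F]\le P_sF$, so $V_s[F]$ vanishes on $[t,\infty)$ and $\ab{\nu,V_s[F]}=0$. Therefore $\eeup{\nu}{e^{-\ld X_s[0,t)}}=1$, and so $X_s[0,t)=0$ a.s.

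Finally, pass from fixed $\sg$ to all $\sg\ge\tau$. Use rational $\sg$ and the c\`adl\`ag paths in the weak topology. Approximating $\ind{[0,t)}$ from below by continuous functions supported in $[0,t)$ gives that $X_\sg[0,t)=0$ holds simultaneously for all $\sg$, up to null sets.
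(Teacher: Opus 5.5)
Your proposal is correct and takes essentially the same route as the paper. For part~(\ref{i:support}), the paper also combines Lemma~\ref{l:one-sided}, the Markov property at $\tau$, and right-continuity of $X$ with the portmanteau theorem for the open set $[0,t)$; the only difference is that it gets $\mathbb{E}^{\nu}[X_s[0,t)]=0$ from the first-moment formula of part~(\ref{i:first-moment}), where you use the equivalent bound $0\le V_s[F]\le P_sF$ in the Laplace functional. For parts~(\ref{i:first-moment})--(\ref{i:mass-feller}) the paper simply cites \cite{Li23} (and \cite{Alf15} for reaching $0$ in finite time), and your derivations from the log-Laplace equation are the standard arguments behind those references.
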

\begin{proof}
Part (\ref{i:first-moment}) is \cite[Proposition 2.24]{Li23},
and (\ref{i:branching}) follows from \eqref{dw-laplace}. Part
(\ref{i:mass-feller}) is \cite[Example 3.2]{Li23}. The state $0$ is absorbing, since the drift in
\eqref{mass-sde} has no constant term, and it is reached in finite
time, a.s. \cite[Proposition 1.2.15]{Alf15}.

For (\ref{i:support}), Lemma \ref{l:one-sided} shows that
$P_{\tau}\ind{[0,t)}$ vanishes on $[t,\infty)$. By
(\ref{i:first-moment}), $\eeup{\nu}{ X_{\tau}[0,t) } = 0$ for every $\nu
\in \Mf(\sp)$ carried by $[t,\infty)$. Hence $X_{\tau}[0,t) = 0$ a.s.\
for each $\tau$, and, by right-continuity of $X$ and the
portmanteau theorem, since $[0,t)$ is open, for all $\tau$
simultaneously. The claim follows from the Markov property at $\tau$.
\end{proof}

\begin{remark}
\label{r:lf-ext}
For general superprocesses, the passage to locally finite measures
needs appropriate tail restrictions. Here it is simple, thanks to the one-sidedness
of $K$. Let $\mu$ be finite and $t<t'$. By the branching property and
Proposition \ref{p:dw-props}(\ref{i:support}), mass started in
$[t,t')$ never enters $[0,t)$, so the restrictions to $[0,t)$ of the
superprocesses started from $\mu|_{[0,t')}$ and from $\mu|_{[0,t)}$
have the same law. Hence, for $\mu \in \Mlf(\sp)$, the laws of the
restrictions to $[0,t)$ of the superprocesses started from
$\mu|_{[0,t)}$, $t \in \bN$, are consistent. Their projective limit
(Kolmogorov's extension theorem on the Polish path spaces) is a law
$\bP^{\mu}$ of an $\Mlf(\sp)$-valued process with vaguely \cd{} paths.
We take $\sfammu{\bP^{\mu}}$ to be the Markov family of the canonical
process $X$ on $D_{\Mlf(\sp)}$. This is the setup used throughout the
paper. All the properties above, except those of the total mass and
extinction, carry over.
\end{remark}

\subsection{The martingale measure}
\label{s:mart}
Let $\sfamtau{\sF_{\tau}}$ be the universally completed filtration
of $X$,
\begin{align}\label{dw-filt}
\sF_{\tau} = \bigcap_{\mu \in \Mlf(\sp)}\sF_{\tau}^{\mu}, \ewhere
\sF_{\tau}^{\mu} = \sigma\prn{ X_{\tau'},\tau' \leq \tau } \vee
\sZ^{\mu},
\end{align}
and $\sZ^{\mu}$ is the family of $\bP^{\mu}$-null sets. Martingales
are understood with respect to $\sfamtau{\sF_{\tau}}$ and all
$\bP^{\mu}$ simultaneously. For $\vp \in \sD(\sA)$ with bounded support
(no restriction is needed when $\mu$ is finite), the process
\begin{align}\label{mart-prob}
M_{\tau}(\vp) := \ab{X_{\tau},\vp} - \ab{X_{0},\vp}
- \int_{0}^{\tau}\ab{X_{\sg},\sA\vp}\,\dsg
\end{align}
is a continuous martingale \cite[Theorem 7.16]{Li23} with
\begin{align}\label{mart-bracket}
\bkt{ M(\vp),M(\vp') }_{\tau} =
\int_{0}^{\tau}\ab{X_{\sg},\vp\vp'}\,\dsg,\quad \tau \geq 0 .
\end{align}

\subsection{The Walsh integral}
\label{s:walsh}
By \eqref{mart-bracket}, $\eemu{M_{\tau}(\vp)^{2}} =
\eemu{\int_{0}^{\tau}\ab{X_{\sg},\vp^{2}}\,\dsg}$, so $\vp \mapsto
M_{\tau}(\vp)$ is an isometry into $\Ltwo(\bP^{\mu})$ from $\Ltwo$ of the
mean measure $(\mu P_{\sg})(\dt)\,\dsg$ on $\sp \times [0,\tau]$. It
therefore extends from $\sD(\sA) \supseteq \Conec(\sp)$ to all bounded
Borel $\vp$ with bounded support (to all bounded Borel $\vp$ if $\mu$ is
finite), and, for bounded Borel $A$, $M_{\tau}(A) := M_{\tau}(\ind{A})$ is an orthogonal, hence worthy, martingale measure
with covariance measure $X_{\tau}(\dt)\,\dtau$ (see \cite{Wal86} and
\cite[Chapter 7]{Li23}). For predictable $\psi = \psi(t,\sg,\omega)$
with $\eemu{ \int_{0}^{\tau} \ab{X_{\sg},\psi(\cdot,\sg)^{2}}\,\dsg } <
\infty$ for all $\tau$ (the class $\Ltwo(M)$), the \define{Walsh
integral}
\begin{align*}
M_{\tau}(\psi)=\int_{0}^{\tau}\int_{\sp}\psi(t,\sg)\, M(\dt,\dsg)
\end{align*}
is a continuous $\Ltwo$-martingale with quadratic variation
$\int_{0}^{\tau}\ab{X_{\sg},\psi(\cdot,\sg)^{2}}\,\dsg$. By stopping, it
extends to a local martingale for $\psi$ with $\int_0^{\tau}
\ab{X_{\sg}, \psi(\cdot, \sg)^{2}}\,\dsg < \infty$ a.s.\ for all
$\tau$.

\subsection{The SPDE}
\label{s:SPDE}
If $X_{\tau}(\dt) = Y(\tau,t)\,\dt$, then \eqref{mart-prob} is the
weak formulation of the SPDE
\begin{align}\label{spde}
\tfrac{\diffd}{\dtau} Y = \sA^* Y + \sqrt{Y} \dot{W},
\end{align}
with $W$ a space-time white noise and $\sA^*$ the adjoint
\eqref{adjoint}. Its mild form, valid without a density, is the
\define{convolution representation} \cite[Theorem 7.32]{Li23}
\begin{align}\label{conv-rep}
\ab{X_{\tau},\vp} = \ab{\mu,P_{\tau}\vp}
+ \int_{0}^{\tau}\int_{\sp}\prn{ P_{\tau - \sigma}\vp }(t)\,
M\prn{\dt,\dsg}, \quad \vp \in \Bc(\sp).
\end{align}

\section{Proofs for Section \ref{S:defs}}
\subsection{Proof of Proposition \ref{p:unique}}
\label{s:prf-unique}
Let $T$ be such that $\supp f \subseteq
[0,T]$,
let $v \geq 0$ be a bounded solution of \eqref{riccati}, and let
$U$ be the operator defined in \eqref{conv}.
Since $U(v^{2})/2 \geq 0$,
\eqref{riccati} gives $v \leq Uf \leq \norm{f}_{\infty}$. For $t > T$
we have $f(t+s) = 0$ for every $s \geq 0$, so that 
$v = -U(v^{2})/2 \leq 0$ and hence $v = 0$ on $(T,\infty)$.

Recalling the convention that all functions and measures are extended
trivially to $(-\infty,0)$, we observe that, since $f$ and $v$ both vanish above $T$, the reflections $\tv(t)
:= v(T-t)$ and $\tilde f(t) := f(T-t)$ vanish for $t < 0$.
Thus, \eqref{riccati} turns into
\begin{align}\label{reflect}
  \tv = \prn{ \tilde f - \tot \tv^{2} } * \rho \efor
  t \in [0,T],
\end{align}
all convolutions here and below being understood with functions so extended.
Up to a sign, \eqref{reflect} is the convolutional Riccati equation of \cite{PacZit25}. Its kernel $\rho$ may, however, charge the origin, so we transform the equation before applying the results of \cite{PacZit25}.
Let
$p := \rho(\set{0})$ and let
$\rho_{0}$ be the restriction of $\rho$ to $(0,\infty)$. The
equation  \eqref{reflect} now
reads
\begin{align}\label{riccati-fix}
  F(\tv)
  = \tilde f * \rho - \tot \tv^2 * \rho_0,
  \ewhere
  F(x) = x+ \tot p x^2.
\end{align}

Let now $v$ and $w$ be two bounded nonnegative solutions of
\eqref{riccati}, and let $\tv$ and $\tw$ be their reflections, so that
both satisfy \eqref{riccati-fix}. Since
$\abs{F(x) - F(y)} \geq \abs{y-x}$ for all $x,y \in \sp$,
subtracting the equations \eqref{riccati-fix} for $\tv$ and $\tw$
gives $\abs{\tv-\tw} \leq \prn{ \abs{\tv^{2}-\tw^{2}} * \rho_{0}
}/2$. By the first paragraph $0 \leq \tv,\tw \leq
\norm{f}_{\infty}$, so that $\abs{\tv^{2}-\tw^{2}} =
\abs{\tv-\tw}\,(\tv+\tw) \leq 2\norm{f}_{\infty}\abs{\tv-\tw}$
and therefore
\begin{align*}
  \abs{\tv - \tw} \leq M
  \prn{ \abs{\tv - \tw} * \rho_{0} } \ewhere M = \norm{f}_{\infty} .
\end{align*}
We now repeat, for self-containment, the argument of \cite[Proposition
A.1]{PacZit25}. The running supremum $m(t) :=
\sup_{[0,t]}\abs{\tv - \tw}$ is bounded and satisfies $m \leq M\prn{m *
\rho_{0}}$ on $[0,T]$, because $m * \rho_{0}$ is nondecreasing.
Convolution with $\rho_{0}$ preserves inequalities, so iterating gives,
for $k \in \mathbb{N}$ and $t \in [0,T]$,
\begin{align*}
  m(t) \leq M^{k}\prn{m * \rho_{0}^{*k}}(t) \leq
  M^{k}\norm{m}_{\infty}\,\rho_{0}^{*k}[0,T] \leq
  \norm{m}_{\infty}\,e^{\ld T}\prn{M\hrho_{0}(\ld)}^{k}
  \efor \ld>0,
\end{align*}
where $\hrho_{0}(\ld) = \int_{(0,\infty)} e^{-\ld s}\,\rho(\ds)$. By
dominated convergence, $\hrho_{0}(\ld) \to \rho_{0}(\set{0}) = 0$ as
$\ld \to \infty$. Choosing $\ld$ with $M\hrho_{0}(\ld)<1$ and letting
$k \to \infty$ yields $m = 0$, i.e.\ $\tv = \tw$ and so
$v = w$ on $[0,T]$. Since both $v$ and $w$ vanish on $(T,\infty)$, it follows that $v = w$.

\section{Proofs for Section \ref{S:repr}}
\subsection{Proof of Theorem \ref{t:main}}
\label{s:prf-main}
Let $\xi$ be given by \eqref{occup}. By Definition \ref{d:frm} it
is enough to produce, for each $f \in \Bcp(\sp)$, a bounded
nonnegative solution of \eqref{riccati} satisfying \eqref{laplace}. We fix such an $f$ for the rest of the proof and set
$T_{f} := \sup\supp f < \infty$.
The first-moment formula of Proposition
\ref{p:dw-props}(\ref{i:first-moment}), available for
$\mu \in \Mlf(\sp)$ by Remark \ref{r:lf-ext}, 
gives
\begin{align*}
  \eemu{\xi[0,t]}
  = \int_0^{\infty}\ab{ \mu, P_{\tau}\ind{[0,t]} }\,\dtau
  = \ab{ \mu, U\ind{[0,t]} }
  \leq \mu[0,t] < \infty ,
\end{align*}
because $U\ind{[0,t]}(s) = \rho[0,t-s] \leq 1$ for $s \leq t$ and
$U\ind{[0,t]}(s) = 0$ for $s>t$.
In particular $\xi$ is a.s.\ locally finite, and
$\ab{\xi,f} = \int_{\tm}\ab{X_{\sg},f}\,\dsg$.

Assume for the moment that $\mu \in \Mf(\sp)$. The general case is recovered at the end of the proof. A Feynman-Kac-type extension of
the log-Laplace equation \eqref{log-laplace} (see
\cite[Theorem 5.15]{Li23}) gives
\begin{align}\label{fk}
  \eeup{\tau, \mu}{ e^{- \int_{\tau}^{T}\ab{X_{\sg},f}\,\dsg} }
  = e^{- \ab{\mu,v^{T}(\tau, \cdot )}}
  \efor 0 \leq \tau \leq T < \infty,
\end{align}
where $v^{T}$ is the unique nonnegative bounded solution on
$[0,T]\times\sp$ of
\begin{align}\label{mild}
  v^{T}(\tau,t) + \eeup{\tau,t}{ \int_{\tau}^{T}\tot \prn{ v^{T}\prn{
  \sg,K_{\sg} } }^{2}\,\dsg }
  = \eeup{\tau, t}{ \int_{\tau}^{T}f\prn{ K_{\sg} }\,\dsg } .
\end{align}
Taking $\tau = 0$ and $\mu = \delta_{t}$ in \eqref{fk} and letting
$T \to \infty$, we see that $v^{T}(0,t)$ increases to a limit
$v^{\infty}(t)$. By time-homogeneity $v^{T}(\tau,t) = v^{T-\tau}(0,t)$,
so in fact $v^{T}(\tau,t) \nearrow v^{\infty}(t)$ for every $\tau$.
Dropping the second term on the left of \eqref{mild} gives, in
addition, the uniform bound
\begin{align*}
  0 \leq v^{T}(\tau,t)
  \leq \eeup{\tau,t}{ \int_{\tau}^{T}f(K_{\sg})\,\dsg }
  \leq \norm{f}_{\infty}\int_{\tau}^{T}e^{-(\sg-\tau)}\,\dsg
  \leq \norm{f}_{\infty}. 
\end{align*}

We may now set $\tau = 0$ in \eqref{mild} and let $T \to \infty$.
Monotone convergence applies to both sides, and \eqref{pot-op}
allows us to express the result as 
\begin{align}
  \label{v-inf}
  v^{\infty} = U\prn{ f - \tot\prn{v^{\infty}}^{2} } ,
\end{align}
which is \eqref{riccati}. By Proposition \ref{p:unique}, \eqref{v-inf}
has a unique bounded nonnegative solution, so $v^{\infty} = v[f]$.

Finally, let $\mu \in \Mlf(\sp)$ be arbitrary and fix $a >
T_{f}$. We check that neither side of \eqref{fk}, at $\tau = 0$,
changes when $\mu$ is replaced by $\mu|_{[0,a)}$. On the left,
$\int_{0}^{T}\ab{X_{\sg},f}\,\dsg$ is a functional of the
restriction of $X$ to $[0,a)$, because $f$ is supported by
$[0,T_{f}]$, and Remark \ref{r:lf-ext} gives that restriction the
same law under $\bP^{\mu}$ as under $\bP^{\mu|_{[0,a)}}$. On the
right, Lemma \ref{l:one-sided} makes $\bE^{0,t}\bkt*{
\int_{0}^{T}f(K_{\sg})\,\dsg }$ vanish for $t > T_{f}$. Since the
two terms on the left of \eqref{mild} are nonnegative,
$v^{T}(0,\cdot)$ vanishes on $(T_{f},\infty)$, and so
$\ab*{\mu,v^{T}(0,\cdot)} = \ab*{\mu|_{[0,a)},v^{T}(0,\cdot)}$.
Since $\mu|_{[0,a)}$ is finite, \eqref{fk} holds for it and
therefore also for $\mu$. Letting $T \to \infty$ gives $\eemu{
\exp*{-\ab{\xi,f}} } = \exp*{-\ab*{\mu,v[f]}}$, which is
\eqref{laplace}.

\subsection{Proof of Proposition \ref{p:basic}}
\label{s:prf-basic}
(\ref{i:moments}) was proven in
\cite[Proposition 3.7]{PacZit25}, and
the proof there does not depend on the assumption that
$\rho(\set{0})=0$. Statement (\ref{i:sum}) follows from \eqref{laplace}, since $\xi_{1}$ and $\xi_{2}$ have the same $v[f]$ by Proposition \ref{p:unique}. It is also recorded in \cite[Proposition 3.9]{PacZit25}. Statement (\ref{i:diffuse}) is the content of \cite[Proposition 3.14]{PacZit25}.

Both (\ref{i:mass-ig}) and (\ref{i:locality}) below use the existence of auxiliary Feller random
measures, with parameters $(\rho,\delta_{t})$ and $(\rho,\mu|_{[0,T]})$
respectively. Their existence follows from Corollary \ref{c:exists}.

\smallskip

(\ref{i:mass-ig}) For $\ld \geq 0$ let $v_{T} := v[\ld\,\ind{[0,T]}]$.
Applying \eqref{laplace} to a $(\rho,\delta_{t})$-Feller random
measure shows that the map $f \mapsto v[f]$ is
nondecreasing and so is $T \mapsto v_{T}$.
The relation
\eqref{shift} gives
$v_{T}(t) = v_{T-t}(0)$, so the increasing limit of $v_{T}$ does not
depend on $t$. By Proposition \ref{p:unique} it is a number
$\kappa := \lim_{T \to \infty}v_{T}(0) \in [0,\ld]$. Passing to
the limit in \eqref{riccati} gives $\kappa = \ld - \kappa^{2}/2$,
and, by positivity, $\kappa = \sqrt{1+2\ld}-1$. Monotone
convergence on both sides of \eqref{laplace} then implies
\eqref{mass} for $\ld \geq 0$, which is enough to identify the
law of $\xi(\sp)$ as inverse-Gaussian with mean and variance
$\mu(\sp)$. That law has exponential moments of every order below
$1/2$, and $\ee{\exp{\xi(\sp)/2}} = e^{\mu(\sp)}$, so
both sides of \eqref{mass} are finite and analytic on $\ld >
-1/2$ and continuous at $\ld = -1/2$. The identity
therefore extends to the whole range stated.

\smallskip

(\ref{i:locality}) Let $f \in \Bcp(\sp)$ be supported by $[0,T]$. By the second
statement of Proposition \ref{p:unique} the solution $v[f]$ vanishes on
$(T,\infty)$, so that $\ab{\mu,v[f]} = \ab{\mu|_{[0,T]},v[f]}$ in
\eqref{laplace}. Moreover, for $t \leq T$ the integrand in \eqref{riccati}
vanishes for $s > T-t$, so that $v[f]$ depends on $\rho$ there only
through $\rho|_{[0,T]}$. Since the $f \in \Bcp(\sp)$ supported by
$[0,T]$ form a measure-determining class for the restriction of $\xi$
to $[0,T]$ \cite[Corollary 2.3]{Kal17}, the first statement
follows. 

For the second statement, we may assume that $0 \leq \ld \leq
1/2$, since the left-hand side of \eqref{expmom} is at most $1$
for $\ld \leq 0$. Let $\xi$ be a $(\rho,\mu)$-Feller random
measure, and let $\xi'$ be a $(\rho,\mu|_{[0,T]})$-Feller random
measure. Using the first statement, and (\ref{i:mass-ig}) for
$\xi'$ with $\ld$ replaced by $-\ld$, we get
\begin{align*}
  \ee{ \exp{ \ld \xi[0,T]} }
  &= \ee{ \exp{ \ld \xi'[0,T]} }
  \leq \ee{ \exp{ \ld \xi'(\sp)} }
  \\ &= \exp{ \mu[0,T] (1- \sqrt{1-2\ld})} \leq \exp{ \mu[0,T] }
  \efor 0 \leq \ld \leq \tot.
\end{align*}

\subsection{Proof of Proposition \ref{p:stability}}
\label{s:prf-stability}
Fix a nonnegative $f \in \Cc(\sp)$, let $T := \sup\supp f$, and
write $v_{n} := v_{n}[f]$ and $v := v[f]$. By Proposition
\ref{p:unique}, $0 \leq v_{n}, v \leq \norm{f}_{\infty}$, and all of
them vanish on $(T,\infty)$.

We first show that $\seq{v_{n}}$ is equicontinuous. Let $\zeta_{n}$
be the $(\rho_{n},\delta_{0})$-Feller random measure, which exists by
Corollary \ref{c:exists}. By \eqref{laplace} and \eqref{shift},
$e^{-v_{n}(t)} = \ee{e^{-\ab{\zeta_{n},\theta_{t}f}}}$ for $t \in \sp$.
Since $\ee{\zeta_{n}(\sp)} = \rho_{n}(\sp) = 1$ by Proposition
\ref{p:basic}(\ref{i:moments}), we have, for $s,t \in \sp$,
\begin{align*}
  \babs{ e^{-v_{n}(t)} - e^{-v_{n}(s)} } \leq
  \ee{ \abs{ \ab{\zeta_{n},\theta_{t}f - \theta_{s}f} } } \leq
  \omega_{f}\prn{\abs{t-s}},
\end{align*}
where $\omega_{f}$ is the modulus of continuity of $f$. Since
$e^{-v_{n}} \geq e^{-\norm{f}_{\infty}}$, it follows that
$\abs{v_{n}(t) - v_{n}(s)} \leq e^{\norm{f}_{\infty}}\,
\omega_{f}\prn{\abs{t-s}}$. The same bound holds for $v$.

By the Arzel\`a-Ascoli theorem, every subsequence of $\seq{v_{n}}$
has a further subsequence which converges uniformly on $[0,T+1]$, and
hence on $\sp$, to a continuous function $w$ with $0 \leq w \leq
\norm{f}_{\infty}$. Along it, $h_{n} := f - v_{n}^{2}/2$ converges
uniformly to $h := f - w^{2}/2$, which is bounded and continuous.
With $U_{n}$ and $U$ as in \eqref{conv}, for $\rho_{n}$ and $\rho$, we
have
\begin{align*}
  \abs{ U_{n}h_{n}(t) - Uh(t) } \leq \norm{h_{n} - h}_{\infty} +
  \babs{ \textstyle\int_{\sp} h(t+s)\,\prn{\rho_{n}-\rho}(\ds) },
\end{align*}
and both terms tend to $0$, the second one by weak convergence.
Hence $w = U\prn{f - w^{2}/2}$, and $w = v$ by Proposition
\ref{p:unique}. It follows that $v_{n} \to v$ uniformly on $\sp$.

Finally, vague convergence gives $\sup_{n}\mu_{n}[0,T] < \infty$,
and $v$ is continuous with support in $[0,T]$. Therefore
\begin{align*}
  \abs{ \ab{\mu_{n},v_{n}} - \ab{\mu,v} } \leq \mu_{n}[0,T]\,
  \norm{v_{n}-v}_{\infty} + \abs{ \ab{\mu_{n}-\mu, v} } \to 0 .
\end{align*}
By \eqref{laplace}, the Laplace transforms of the
$(\rho_{n},\mu_{n})$-Feller random measures at $f$ converge to that of
the $(\rho,\mu)$-Feller random measure, for every nonnegative $f \in
\Cc(\sp)$. This implies convergence in distribution with respect to
the vague topology \cite[Chapter 4]{Kal17}.

\subsection{Proof of Corollary \ref{c:vague-stability}}
\label{s:prf-vague-stability}
Since $\rho_{n}(\sp) = 1$, vague convergence implies that 
$\int
g\,\diffd\rho_{n} \to \int g\,\diffd\rho_{\infty}$ for every 
$g \in \Cz(\sp)$. In particular, 
$\hrho_{n}(\ld) \to
\hrho_{\infty}(\ld)$ for $\ld > 0$.

Suppose first that $\rho_{\infty} \neq 0$. Then $\hrho_{\infty} > 0$
on $(0,\infty)$, and $\Psi_{n} = 1/\hrho_{n}$ converges pointwise on
$(0,\infty)$ to $\Phi := 1/\hrho_{\infty}$. As a pointwise limit of
Bernstein functions, $\Phi$ is a Bernstein function
\cite[Corollary 3.8(ii)]{SchSonVon12}. Since $\Psi_{n} \geq \Psi_{n}(0) =
1$, the limit $a := \Phi(0+)$ satisfies $a \geq 1$, and $\Phi - a$ is a
Bernstein function vanishing at the origin. Put $c := 1/a$ and $\rho_{0}
:= a\,\rho_{\infty}$. Then $\rho_{0}(\sp) = a\,\hrho_{\infty}(0+) = 1$,
and $1/\hrho_{0} - 1 = (\Phi - a)/a$ is a Bernstein function, so that
$\rho_{0}$ is PGID by Lemma \ref{l:pgid}.

The proof of Proposition \ref{p:stability} now applies with two
changes. First, the function $s \mapsto h(t+s)$ is in $\Cc(\sp)$, 
so that the second term converges to $0$ by
vague convergence alone. Second, the limit $w$ solves $w = c\,U_{0}\prn{f
- w^{2}/2}$, where $U_{0}$ is the operator \eqref{conv} for
$\rho_{0}$. By 
Remark \ref{r:cont-disc}(\ref{i:scaling})
with $\sigma = 1$, the function
$c\,w$ solves \eqref{riccati} for $\rho_{0}$ with $f$ replaced by
$c^{2}f$. Hence $c\,w = v_{0}[c^{2}f]$ by Proposition \ref{p:unique},
where $v_{0}$ denotes the solution for $\rho_{0}$. Therefore the
Laplace transforms at $f$ converge to
\begin{align*}
  e^{-\ab{\mu,w}} = e^{-\ab{\mu/c,\,v_{0}[c^{2}f]}}
  = \ee{ e^{-\ab{c^{2}\xi_{0},f}} },
\end{align*}
and the claim follows as before. If $\rho_{\infty} = 0$, the same
argument gives $w = 0$, and the limit is $0$.

\subsection{Proof of Proposition \ref{p:weak}}
\label{s:prf-weak}
For \eqref{weak-gen}, we let $\tau \to \infty$ in the
defining equation \eqref{mart-prob} of $M$. Since $\Cc(\sp)
\subseteq \Ltn$, the martingale $\tau \mapsto M_{\tau}(\vp)$
converges almost surely and in $\Ltwo$ toward $N(\vp)$, as noted
after \eqref{noise}.

The integral term converges to
$\ab{\xi,\sA\vp}$ by dominated convergence, since $\sA\vp \in
\Cc(\sp)$: it vanishes above $\sup\supp\vp$ by Lemma
\ref{l:one-sided}, and is continuous because $\sA$ maps $\sD(\sA)$ into
$\Cz(\sp)$.

To show that $\ab{X_{\tau},\vp} \to 0$, we fix $T \in \sp$ such
that the support of $\vp$ is a subset of $[0,T)$. Then
$\ab{X_{\tau},\vp}$ is a functional of the restriction of $X$ to
$[0,T)$. By Remark \ref{r:lf-ext}, this restriction agrees in law
with the corresponding restriction of the superprocess started
from the finite measure $\mu|_{[0,T)}$. The latter goes extinct
in finite time by Proposition
\ref{p:dw-props}(\ref{i:mass-feller}), so that $\ab{X_{\tau},\vp}
= 0$ for all large $\tau$, a.s. Substituting these three limits
into \eqref{mart-prob} gives \eqref{weak-gen}. To prove
\eqref{weak-res}, it suffices to substitute $\vp = U \psi \in
\Cc(\sp)$ in \eqref{weak-gen}. Then $\vp \in \sD(\sA)$ and $\sA
\vp = -\psi$.

\subsection{Proof of Proposition \ref{p:noise-mg}}
\label{s:prf-noise-mg}
The proof is divided into three steps.
Throughout, we write
$N_{t}(\vp) := N\prn{\vp\,\ind{[0,t]}}$ whenever
$\vp\,\ind{[0,t]} \in \Ltn$, so that, e.g.,
$N_{t} = N_{t}(1)$.

\smallskip

\emph{Step 1.} This step identifies the raw vertical
$\sigma$-algebras with those generated by the martingale measure on the
strips $[0,t] \times \tm$. We set 
\begin{align}\label{restfilt}
  \sF^{[0,t]}_{\tau} := \sigma\prn{ X_{\sg}(A) : \sg \leq \tau,\,
  A \in \sB[0,t] }, \quad \tau \in \tm.
\end{align}
With $\Ltwo(M)$ as defined in
subsection \ref{s:walsh}, let $\Ltwo_{[0,t]}(M)$ consist of those $\phi \in
\Ltwo(M)$ which are supported by $[0,t] \times \tm$, predictable with
respect to $\sfamtau{\sF^{[0,t]}_{\tau}}$ (and, therefore, also
with respect to $\sfamtau{\sF_{\tau}}$)
and satisfy
$\seemu{ \int_{\tm}\ab{ X_{\sg},\phi(\cdot,\sg)^{2} }\,\dsg } <
\infty$ (e.g., bounded $\phi$).
With
\begin{align*}
  \sV^{0,M}_{t} := \sigma\sets{ \tint_{[0,t]} \tint_{\tm}
  \phi(s,\sigma)\, M(\ds, \dsg)}{ \phi \in \Ltwo_{[0,t]}(M)} \efor t
  \in \sp, 
\end{align*}
we claim that
\begin{align}\label{filt-eq}
  \sV^{0}_{t} \vee \sZ^{\mu} = \sV^{0,M}_{t} \vee \sZ^{\mu},
  \eforall t \in \sp.
\end{align}
For $A \in \sB[0,t]$,
Lemma \ref{l:one-sided} implies that
$P_{\tau} \ind{A}$ vanishes off $[0,t]$
for
all $\tau \in \tm$.
The convolution representation \eqref{conv-rep} for $\vp = \ind{A}$
then yields
\begin{align*}
X_{\tau}(A) = \ab{\mu,P_{\tau} \ind{A}}
+ \int_{[0,t]} \int_{0}^{\tau}
\prn{ P_{\tau - \sigma}\ind{A} }(s)\,
M\prn{\ds,\dsg} \in \sV^{0,M}_t \vee \sZ^{\mu}.
\end{align*}
Conversely, let $\Phi_{t}$ be the set of all $\vp \in \sD(\sA)$ with
$\supp \vp \subseteq [0,t]$ when $\rho(\set{0})=0$.
When $\rho(\set{0})>0$, $L$ is a compound Poisson process by
Proposition \ref{p:no-atom}, so that $b=0$ and $\Pi$ is finite. The
formula for $\sA$ in subsection \ref{s:subord} then defines a bounded
operator on $\B(\sp)$, with
$\tfrac{\diffd}{\dtau}P_{\tau}\vp = P_{\tau}\sA\vp$ in the supremum
norm, and we let $\Phi_{t}$ be the set of all $\vp \in \B(\sp)$
vanishing on $(t,\infty)$. Applying \eqref{conv-rep} to $\sA\vp$,
integrating over $[0,\tau]$ and using the stochastic Fubini theorem,
we see that \eqref{mart-prob} also holds for all $\vp \in \Phi_{t}$
in this case, with $M_{\tau}(\vp)$ as extended in subsection
\ref{s:walsh}. For
$\vp \in \Phi_{t}$ and
$0 \leq \tau_{1} < \tau_{2}$,
\eqref{mart-prob} implies that
\begin{align}
  \label{strip-id}
  \int_{[0,t]} \int_{(\tau_{1},\tau_{2}]} \hspace{-1.5em}
\vp(s)\,
M(\ds, \dsg) =
\ab{X_{\tau_{2}}-X_{\tau_{1}}, \vp} -
\int_{\tau_{1}}^{\tau_{2}} \ab{X_{\sg}, \sA \vp}\, \dsg \in
\sV^0_t \vee \sZ^{\mu},
\end{align}
where we used Lemma \ref{l:one-sided} again (or, if
$\rho(\set{0})>0$, the formula for $\sA$) to conclude that $\sA
\vp$ vanishes off $[0,t]$.
Finally, linear combinations of random fields of the form
$Z\,\ind{(\tau_{1}, \tau_{2}]}(\sg)\,\vp(s)$, with
$Z$ bounded and $\sF^{[0,t]}_{\tau_{1}}$-measurable and
$\vp \in \Phi_{t}$, are dense in $\Ltwo_{[0,t]}(M)$. If
$\rho(\set{0})=0$, this uses that
$\ee{\xi}(\set{t}) = (\rho*\mu)(\set{t}) = 0$, since $\rho$ has no
atoms by Proposition \ref{p:no-atom}. The integral of such a field is
$Z\prn{ M_{\tau_{2}}(\vp) - M_{\tau_{1}}(\vp) } \in
\sV^{0}_{t} \vee \sZ^{\mu}$
by \eqref{strip-id}. By the isometry \eqref{mart-bracket}, the integrals of
such approximations of a general $\phi \in \Ltwo_{[0,t]}(M)$ converge in
$\Ltwo(\bP^{\mu})$, so the limit is
$\sV^{0}_{t}\vee\sZ^{\mu}$-measurable. This proves \eqref{filt-eq}.
In particular, each $N_{t}$ is
$\sV^{0}_{t}\vee\sZ^{\mu}$-measurable, hence has a
$\sV_{t}$-measurable version,
and it is square integrable by the Walsh isometry for the integrand
$\ind{[0,t]}$ and Proposition
\ref{p:basic}(\ref{i:moments}).

\smallskip

\emph{Step 2.} This step proves the conditional identity
\begin{align} \label{cond-one}
\eemu{ \sE(\ld\chi) \giv \sV^{0}_{s} } = 1 \eas
\end{align}
for the stochastic exponential $\sE(\ld\chi)$ of the Walsh integral
of $\ld\chi$, in the notation introduced below. The passage from
$\sV^{0}_{s}$ to $\sV_{s}$ is carried out in Step 3.
For a bounded $I \subseteq \sp$, let $\Linf_{I}(M)$ denote the set
of all bounded $\phi \in \Ltwo(M)$, $\bP^{\mu}$-a.s.\ supported by
$I \times \tm$.
For such $\phi$ we abbreviate $M(\phi) := M_{\infty}(\phi)$ and write
\begin{align*}
\sE_{\tau}(\phi) := \exp{ M_{\tau}(\phi) - \tot\bkt{ M(\phi)
}_{\tau} }, \quad \sE(\phi) := \sE_{\infty}(\phi),
\end{align*}
for the stochastic exponential of the continuous martingale
$\tau \mapsto M_{\tau}(\phi)$, whose bracket we denote by
$\bkt{ M(\phi) }$.

For $\phi \in \Linf_{[0,t]}(M)$ we have
$\bkt{ M(\phi) }_{\tau} \leq \norm{\phi}_{\infty}^{2}\,\Gamma_t$. If, in addition,
$\phi$ does not depend on the branching variable, then
$\bkt{ M(\phi) }_{\infty} = \ab{ \xi, \phi^{2} }$.

Fix $0 \leq s < t$, let $\chi \in \Linf_{(s,t]}(M)$ and $\ld
  \in \bR$
satisfy $\abs{\ld}\norm{\chi}_{\infty} \leq \tfrac{1}{4}$. Being a
nonnegative local martingale, $\sfamtau{\sE_{\tau}(4\ld\chi)}$ is a supermartingale, so
that $\eemu{ \sE_{\tau}(4\ld\chi) } \leq 1$ for all $\tau$. Since
\begin{align*}
\sE_{\tau}(\ld\chi)^{2} = \sE_{\tau}(4\ld\chi)^{1/2}\,
e^{3\ld^{2}\bkt{ M(\chi) }_{\tau}},
\end{align*}
the Cauchy-Schwarz inequality and \eqref{expmom} give
\begin{align}\label{L2bound}
\sup_{\tau \in \tm}\ 
\eemu{ \sE_{\tau}(\ld\chi)^{2} } \leq
\eemu{ e^{6\ld^{2}\norm{\chi}_{\infty}^{2}\,\xi[0,t]} }^{1/2} <
\infty,
\end{align}
since
$6\ld^{2}\norm{\chi}_{\infty}^{2} \leq \tfrac{3}{8}<
1/2$. The same bound holds with $\tau$ replaced by any stopping
time $S$, because $\eemu{ \sE_{S}(4\ld\chi) } \leq 1$ by Fatou's lemma
and $\bkt{ M(\chi) }_{S} \leq \norm{\chi}_{\infty}^{2}\,\xi[0,t]$.
Consequently $\sfamtau{\sE_{\tau}(\ld\chi)}$ is of class (D), hence a uniformly integrable martingale
with $\eemu{ \sE(\ld\chi) } = 1$, and
$\sup_{\tau}\sE_{\tau}(\ld\chi) \in \Ltwo(\bP^{\mu})$ by Doob's
inequality.

Given $\psi \in \Linf_{[0,s]}(M)$, the supports of $\psi$ and
$\chi$ are disjoint, so $M(\psi)$ and $M(\chi)$ are orthogonal by 
\eqref{mart-bracket}. Hence, by integration by parts,  
$\tau \mapsto M_{\tau}(\psi)\,\sE_{\tau}(\ld\chi)$ is a local martingale,
which vanishes at $\tau = 0$. It is dominated by
$\sup_{\tau}\abs{ M_{\tau}(\psi) }
\sup_{\tau}\sE_{\tau}(\ld\chi)$, which is integrable by the
Cauchy-Schwarz inequality, Doob's inequality and \eqref{L2bound}.
It is
therefore a true martingale, so that
$\eemu{ M(\psi)\,\sE(\ld\chi) } = 0$ and then 
\begin{align}\label{orth}
\eemu{ M(\psi)\prn{ \sE(\ld\chi) - 1 } } = 0.
\end{align}

Next, we extend this orthogonality from Walsh integrals with
respect to $M$ to all mean-zero random variables in
$\Ltwo(\sV^{0}_s)$. By Remark \ref{r:lf-ext} (with $[0,s]$ in
place of $[0,t)$), the restriction $X|_{[0,s]}$ is itself a
superprocess, with spatial motion $K$ killed upon exiting
$[0,s]$. Since the motion is one-sided, this killed motion has
the same generator as $K$ on functions supported by $[0,s]$. The
martingale measure of $X|_{[0,s]}$ is therefore the restriction
of $M$ to $[0,s] \times \tm$, and its integrands are predictable
with respect to the filtration \eqref{restfilt} (with $s$ in
place of $t$). We use the predictable representation theorem for
martingale measures (see \cite[Theorem 2.2 and subsection
3.1]{Ove95}). It states that every square-integrable
$\sV^{0,M}_{s}$-measurable (equivalently, by Step 1,
$\sV^{0}_{s}$-measurable) mean-zero random variable is a Walsh
integral of an integrand in $\Ltwo_{[0,s]}(M)$. Bounded
integrands are dense in $\Ltwo_{[0,s]}(M)$, and $\sE(\ld\chi) \in
\Ltwo(\bP^{\mu})$ by \eqref{L2bound}, so \eqref{orth} extends
from $\Linf_{[0,s]}(M)$ to all of $\Ltwo_{[0,s]}(M)$. Thus
$\sE(\ld\chi) - 1$ is orthogonal to every mean-zero element of
$\Ltwo(\sV^{0}_{s})$, so that $\seemu{ \sE(\ld\chi) \giv
\sV^{0}_{s} }$ is almost surely constant. Its mean is $1$, so
\eqref{cond-one} holds.

\smallskip

\emph{Step 3.} In this step we complete the proof of the
proposition.
We start by specializing
\eqref{cond-one} 
to $\chi := \vp \ind{(s,t]}$ with
$\vp \in \B(\sp)$; then $\vp\,\ind{[0,t]} \in \Ltn$ automatically,
since $\ee{\xi}[0,t] < \infty$. 
Since $\chi$ does not depend on the branching variable, we have,
for $\abs{\ld}\norm{\vp}_{\infty} \leq 1/4$,
\begin{align}
  \label{raw-exp}
  \eemu{ \exp{ \ld\prn{ N_{t}(\vp) - N_{s}(\vp) }
  - \tot\ld^{2}\tint_{(s,t]}\vp^{2}\,\dxi }
   \giv \sV^{0}_s} = 1 \eas.
\end{align}
The noise $N$ is right-continuous in $\bL^{2}$: by the Walsh
isometry we have
$\seemu{ \prn{N_{s+h}-N_{s}}^{2} } = \ee{\xi}(s,s+h] \to 0$
as $h \downto 0$. To replace $\sV^{0}_{s}$ by $\sV_{s}$ in
\eqref{raw-exp}, let $r \in (s,t)$. Since
$\sV_{s} \subseteq \sV^{0}_{r} \vee \sZ^{\mu}$, \eqref{raw-exp} with
$r$ in place of $s$ remains true when we condition on $\sV_{s}$. As
$r \downto s$, the random variables inside the conditional
expectation converge in probability, because
$N_{r}(\vp) \to N_{s}(\vp)$ in $\bL^{2}$ by the Walsh isometry and
$\xi(r,t] \to \xi(s,t]$. They are uniformly integrable by
\eqref{L2bound}, so \eqref{raw-exp} holds with $\sV_{s}$ in place of
$\sV^{0}_{s}$.
The bound \eqref{L2bound}, valid for $\ld$
in a neighborhood of $0$, lets us pass derivatives at $\ld=0$
inside the conditional expectation to obtain
\begin{align*}
\eemu{ N_{t}(\vp) - N_{s}(\vp) \giv \sV_{s}} = 0 \eand
\eemu{ \prn{ N_{t}(\vp) - N_{s}(\vp) }^{2}
- \tint_{(s,t]}\vp^{2}\,\dxi \giv \sV_{s}} = 0,
\end{align*}
for all $0 \leq s \leq t$. Taking $\vp \equiv 1$, we conclude
that $\prfis{N_{t}}$ and $\prfis{N_{t}^{2}-\Gamma_t}$ are
$(\bP^{\mu},\prfis{\sV_{t}})$-martingales, while \eqref{raw-exp}
itself, with $\sV_{s}$ in place of $\sV^{0}_{s}$, is the
martingale property of the exponential in \eqref{expmg}. The
range $\abs{\ld} \leq 1/4$ comes from the constraint
$\abs{\ld}\norm{\chi}_{\infty} \leq 1/4$ at $\chi = \ind{(s,t]}$.
The filtration \eqref{vfilt} is right-continuous and $N$ is
right-continuous in $\bL^{2}$, so $N$ admits 
an a.s.\ \cd{} modification adapted to the $\bP^{\mu}$-completion
of $\prfis{\sV_{t}}$, and \eqref{stoch-int} follows by
$\bL^{2}$-approximation.

%
%

\section{Proofs for Section \ref{S:cont}}
\subsection{Proof of Proposition \ref{p:noise-cont}}
\label{s:prf-noise-cont}
Since $\rho(\set{0}) = 0$,
the measure $\xi$ has no atoms by
Proposition \ref{p:basic}(\ref{i:diffuse}), so $t \mapsto \xi[0,t]$
is continuous and
nondecreasing, and finite for every $t$. In particular
$\ee{N_{0}^{2}} = \ee{\xi}(\set{0}) = 0$ by the Walsh isometry, so
that $N_{0} = N_{0-} = 0$.
Moreover, by \eqref{expmg},
the process $\exp{ \ld N_{t} - \ld^{2}\xi[0,t]/2 }$
is a positive $\prfis{\sV_{t}}$-martingale for $\ld$ in a
neighborhood of $0$.

Let $B$ be a Brownian motion independent of everything else,
possibly defined on an extension of the probability space, and put
\begin{align*}
  \tN_{t} := N_{t} + B_{t} \eand \tA_{t} := \xi[0,t] + t .
\end{align*}
The process
$\sE_{t} := \exp*{ \ld \tN_{t} - \ld^{2}\tA_{t}/2 }$ is again a positive
martingale, now for the filtration $\{ \sV^B_t \}_{t \in \sp}$
defined as $\{\sV_{t} \vee \sF^{B}_t\}_{t\in\sp}$, where $\{ \sF^B_t \}_{t\in
\sp}$ is the completed right-continuous augmentation of the
natural filtration of $B$.
The process $\tA$ is continuous, strictly increasing and unbounded,
so its inverse $\tT$ consists of finite stopping times and
$\tA_{\tT_{u}} = u$, $\tT_{\tA_{t}} = t$.

Stopped at $\tT_{v}$, the exponential
martingale $\sE$
 is bounded in $\Ltwo$ for small enough $\ld$. Indeed,
$\sE_t^2 = \sE'_t \exp*{\ld^{2}\tA_{t}}$, where
$\sE'_t = \exp*{ 2\ld \tN_{t} - 2\ld^{2}\tA_{t} }$ is
a positive supermartingale and
$\tA_{t \wedge \tT_{v}} \leq v$. Therefore,
$\bE^{\mu}[ \sE_{t \wedge \tT_{v}}^{2} ] \leq
\exp*{\ld^{2}v}$ and  we can use
the optional-sampling theorem at $\tT_{u} \leq \tT_{v}$ to conclude
that 
\begin{align*}
  \eemu{ e^{ \ld\prn{ W_{v} - W_{u} } - \tot\ld^{2}(v-u) }
  \giv \sV^B_{\tT_{u}} } = 1,
\end{align*}
where $W_{u} := \tN_{\tT_{u}}$.
The process $N$ is \cd{} and $B$ is continuous
so $\tN$ is \cd{}. Furthermore,
as $\tT$ is continuous and increasing, 
$W$ is right-continuous and the display therefore 
makes $W$ a Brownian motion, started at
$W_{0} = \tN_{0} = N_{0} = 0$. Since $\tT_{\tA_{t}} = t$, we have
$\tN_{t} = W_{\tA_{t}}$, and so $N_{t} = W_{\tA_{t}} - B_{t}$ is
continuous and $\bkt*{\tN}_{t} = \tA_{t}$. Finally, $B$ is independent
of $N$, so $\bkt{N,B} = 0$ and
$\bkt{ N }_{t} = \bkt*{\tN}_{t} - \bkt{B}_{t} = \tA_{t} - t =
\xi[0,t]$.

\subsection{Proof of Theorem \ref{t:mart-char}}
\label{s:prf-mart-char}

\emph{$(\ref{i:frm}) \Rightarrow (\ref{i:strong-form})$.} 
Let $\xi$ be a $(\rho,\mu)$-Feller random
measure on some probability space $(\Omega, \sG, \bP)$, and let 
$\prfis{\sH_{t}}$ be the usual augmentation of the filtration generated
by $\prfis{\Gamma_t}$, where $\Gamma_t = \xi[0,t]$. 
Let the process $\eta$ be defined by
\begin{align}
  \label{eta-def}
  \eta_t  = \lim_{n} n \prn{ (k*\xi)[t-1/n,t]
  - \textstyle\int_{t-1/n}^{t}\mu[0,s]\,\ds } ,
\end{align}
where $k*\xi$ and $\mu$ are extended trivially to $(-\infty,0)$, 
and $\eta := 0$ on the event where the limit in \eqref{eta-def}
fails to exist uniformly on compact sets. By Proposition
\ref{p:strong}, the measure $k*\xi$ is almost surely absolutely
continuous, so each approximant in \eqref{eta-def} is continuous
in $t$. Hence $\eta$ is a measurable functional of $\xi$ with
values in $C(\sp)$. Since
the restriction of $k*\xi$ to
$[0,t]$ is a function of $\xi|_{[0,t]}$, we have $\eta_t
\in \sH_t$ for all $t\in \sp$.

It remains to show that the limit in \eqref{eta-def} exists
uniformly on compact sets, that \eqref{char-strong} and
\eqref{bracket} hold, and that $\eta$ is an
$\prfis{\sH_{t}}$-martingale with $\eta_{0}=0$. By standard
approximation and separability arguments, these claims depend on
the law of $\xi$ alone. We may therefore assume for the rest of
the paragraph that $\xi$ is in occupation form. Then
\eqref{strong-k} reads $\prn{k*\xi}(\dt) =
\prn{N_{t}+\mu[0,t]}\,\dt$, so the $n$-th approximant in
\eqref{eta-def} equals $n\int_{(t-1/n)\vee 0}^{t}N_{s}\,\ds$.
Since $N$ is continuous with $\bkt{N}_{t} = \xi[0,t]$ by
Proposition \ref{p:noise-cont}, the convergence to $N_{t}$ is
uniform on compact sets and $\eta = N$ almost surely. Thus
\eqref{char-strong} and \eqref{bracket} follow. Proposition
\ref{p:noise-mg} makes $N$ a $\prfis{\sV_{t}}$-martingale,
with $N_{0}=0$ because $\ee{N_{0}^{2}} = (\rho*\mu)(\set{0}) = 0$ here.
As $N$ is $\prfis{\sH_{t}}$-adapted and $\sH_{t} \subseteq \sV_{t} \vee \sZ^{\mu}$ for
every $t$, it is an $\prfis{\sH_{t}}$-martingale as well.

\smallskip

\emph{$(\ref{i:strong-form}) \Rightarrow (\ref{i:weak-form})$.} Suppose that (\ref{i:strong-form}) holds, with a
filtration $\prfis{\sH_{t}}$ and a local martingale $\eta$ as described
there. We show that (\ref{i:weak-form}) holds with the same $\prfis{\sH_{t}}$ and
$\eta$. Since (\ref{i:weak-form}) repeats all the requirements of
(\ref{i:strong-form}) except for
replacing \eqref{char-strong} by \eqref{char-weak}, we only need
to prove \eqref{char-weak}. 
For $\vp \in \Conec(\sp)$, \eqref{adjoint} and Lemma
\ref{l:adjoint-k} give
$\ab{\xi,\sA\vp} = -\ab{\partial\prn{k*\xi},\vp} = \ab{k*\xi,\vp'}$,
which by \eqref{char-strong} equals
$\int_{\sp}\vp'(t)\prn{\eta_{t}+\mu[0,t]}\,\dt$. Standard
integration by parts, in both
deterministic and stochastic variants, yields
\eqref{char-weak}.
A general $\vp \in \sD(\sA) \cap \Cc(\sp)$ is approximated by forward
mollification: fix $j \in \Conec(\bR)$ with $\supp j \subseteq (-1,0)$
and $\int j = 1$, put $j_{\eps}(w) := \oo{\eps}\,j(\oo{\eps} w)$ and
$\vp_{\eps} := \vp * j_{\eps}$. Then $\vp_{\eps} \in \Conec(\sp)$, and,
since $\sA$ is a closed operator which commutes with the shift,
we have 
$\sA \vp_{\eps} = (\sA \vp)* j_{\eps}$. Moreover, by Lemma \ref{l:one-sided} both
$\vp_{\eps}$ and $\sA\vp_{\eps}$ are supported by $[0,\sup\supp\vp]$.
Since $\vp$ and $\sA\vp$ are uniformly continuous, $\vp_{\eps} \to \vp$
and $\sA\vp_{\eps} \to \sA\vp$ uniformly, so
$\ab{\mu,\vp_{\eps}} \to \ab{\mu,\vp}$,
$\ab{\xi,\sA\vp_{\eps}} \to \ab{\xi,\sA\vp}$, and
$\bkt*{ \int\prn{ \vp_{\eps}-\vp }\,\diffd\eta }_{\infty} =
\int_{\sp}\prn{ \vp_{\eps}-\vp }^{2}\,\dxi \to 0$ a.s.
Hence
$\int \vp_{\eps}\,\diffd\eta \to \int \vp\,\diffd\eta$ in
probability, and the three
convergences give \eqref{char-weak}.

\smallskip

\emph{$(\ref{i:weak-form}) \Rightarrow (\ref{i:frm})$.} We first
check that $\rho(\set{0})=0$. Assume, to the contrary, that
$\rho(\set{0})>0$. Then, by Proposition \ref{p:no-atom}, $L$ is a
compound Poisson process, so that $b=0$ and $\Pi$ is finite. The
kernel $k$ therefore has a bounded and nonincreasing density
$1+\oPi$, and convolution with such a kernel turns a locally finite
measure into an absolutely continuous one whose density is of finite
variation (split $1+\oPi(u) = (1+c)-\Pi(0,u]$, where
$c := \Pi(0,\infty)$; the two resulting terms are both nondecreasing
in $t$). The integration by parts of the previous paragraph, read
backwards, turns \eqref{char-weak} into \eqref{char-strong}, so, by
the monotonicity of $\mu[0,\cdot]$, the process $\eta$ is of finite
variation. Since it is a continuous local martingale with
$\eta_{0}=0$, it vanishes identically. Hence $\xi=0$ by
\eqref{bracket}, and then $\mu=0$ by \eqref{char-strong},
contradicting $\mu \ne 0$.

It remains to show that $\xi$ is a $(\rho,\mu)$-Feller random
measure. The idea is to first apply \eqref{char-weak} to 
the solution of \eqref{riccati} with the sign of the quadratic term
reversed, which will ensure the finiteness of an exponential
moment of $\ab{\xi,f}$. This will, in turn, allow us to use
Novikov's condition and deduce \eqref{laplace} from \eqref{char-weak}
applied to $v[f]$.

We start by preparing the test
functions to be used later.
Let $f$ be a nonnegative function in $\Cc(\sp)$, supported by
$[0,T]$, with $a := \norm{f}_{\infty} < 1/2$, and let
\begin{align*}
  \B_{T,2a} := \sets{ \vp \in \B(\sp) }{ \vp = 0 \eon (T,\infty)
  \eand \norm{\vp}_{\infty} \leq 2a }.
\end{align*}
It is a complete metric space under $\norm{\cdot}_{\infty}$ with 
$\B_{T,2a} \cap \Cc(\sp)$ as a closed subset.
Using \eqref{conv} and Lemma
\ref{l:one-sided}, together with the fact that $a<1/2$,
one checks that, for each $\theta \in \set{-1,1}$, the map $\vp
\mapsto U\prn{ f + \tfrac{\theta}{2}\vp^{2} }$ maps
both $\B_{T,2a}$ and $\B_{T,2a} \cap \Cc(\sp)$ into
itself, with a $\norm{\cdot}_{\infty}$-Lipschitz constant at most
$2a<1$. It therefore admits a unique fixed point $u_{\theta}$ in
$\B_{T,2a}$, which lies in $\B_{T,2a} \cap \Cc(\sp)$.
Being the image of a
continuous compactly supported function under the resolvent operator
$U$, it is an element of $\sD(\sA) \cap \Cc(\sp)$ and satisfies
\begin{align}\label{fixed-pts}
  -\sA u_{\theta} = f + \tfrac{\theta}{2}\, u_{\theta}^{2} .
\end{align}
Moreover, by Corollary \ref{c:exists}, Definition \ref{d:frm} and Proposition
\ref{p:unique}, the solution $v[f]$ of \eqref{riccati} is bounded and
nonnegative, vanishes on $(T,\infty)$, and satisfies
$0 \leq v[f] = U\prn{ f - v[f]^{2}/2 } \leq U f \leq a$. Hence
$v[f] \in \B_{T,2a}$, and uniqueness of the fixed point for
$\theta = -1$ gives $u_{-1} = v[f]$ and, in particular,
$u_{-1} \geq 0$.

\smallskip

Relation \eqref{char-weak}, applied to $u_{\theta}$, gives
$\ab{\mu,u_{\theta}}
- \ab{\xi, f + \tfrac{\theta}{2} u_{\theta}^{2}}
+ \int u_{\theta}\,\diffd\eta = 0$, so that, by
\eqref{bracket},
\begin{align}
  \label{exp-id}
  \exp{ \theta \ab{\xi,f} } = \exp{ \theta\ab{\mu,u_{\theta}} }\,
  \exp{ \theta\int u_{\theta}\,\diffd\eta
  - \tot \bkt{ \int u_{\theta}\,\diffd\eta }_{\infty} } .
\end{align}
The last factor is the terminal value of the stochastic
exponential of $\theta \int u_{\theta}\,\diffd\eta$, which is a
nonnegative continuous local martingale, hence a supermartingale.

Take first $\theta = 1$ and, for $T \geq 1$, an $f$
  with $\ind{[0,T-1]} \leq 4f \leq \ind{[0,T]}$.
By \eqref{exp-id},
$\Exp{\xi[0,T-1]/4} \leq \Exp{\ab{\xi,f}}$ is then integrable, with
expectation at most $\Exp{\ab{\mu,u_{1}}}$. Since $T \geq 1$ is arbitrary, we get
$\ee{ \Exp{\xi[0,t]/4} } < \infty$ for all $t \in \sp$.

Next, pick $\theta = -1$ and a general nonnegative
$f \in \Cc(\sp)$ supported by $[0,T]$ with
$a = \norm{f}_{\infty} < 1/2$. Since $0 \leq u_{-1} \leq a$, we have
$\bkt*{ \int u_{-1}\,\diffd\eta }_{\infty}/2 \leq
a^{2}\,\xi[0,T]/2 \leq \tfrac{1}{8} \xi[0,T]$, whose exponential is
integrable by the
bound just established. Novikov's criterion therefore upgrades the
supermartingale in \eqref{exp-id} to a uniformly integrable martingale, and taking
expectations  gives
\begin{align}\label{laplace-id}
  \ee*{ e^{ -\ab{\xi,f} } } = e^{ -\ab{\mu,u_{-1}} } .
\end{align}
By Corollary \ref{c:exists} there exists a $(\rho,\mu)$-Feller
random measure $\xi'$, and \eqref{laplace}, together with the
identification $u_{-1} = v[f]$ above, turns the right-hand side of
\eqref{laplace-id} into $\ee*{ e^{-\ab{\xi',f}} }$. Applied to
$f = c\,\vp$ for a nonnegative $\vp \in \Cc(\sp)$ and
$c \in \prn{0,1/(2\norm{\vp}_{\infty})}$, this shows that the
Laplace transforms of $\ab{\xi,\vp}$ and $\ab{\xi',\vp}$ coincide in a
right neighborhood of $0$. 

A Laplace transform that is finite on an interval determines the
law of a nonnegative random variable, so $\ab{\xi,\vp}$ and
$\ab{\xi',\vp}$ have the same law for every nonnegative $\vp \in
\Cc(\sp)$. Hence $\xi$ and $\xi'$ have the same law and $\xi$ is
a $(\rho,\mu)$-Feller random measure.

\subsection{Proof of Theorem \ref{t:Z-eqs}}
\label{s:prf-Z-eqs}
The case $\mu = 0$ is trivial, since then $Z = 0$.
By absolute continuity, $\rho(\set{0}) = 0$, so Theorem
\ref{t:mart-char} guarantees the existence of a continuous local
martingale $\eta$ with $\bkt{\eta}_{t} = \xi[0,t] =
\int_{0}^{t}Z_{u}\,\du$, satisfying \eqref{char-strong}. We
choose $Z$ progressively measurable, e.g.,
$Z_{t} = \limsup_{n} n\,\xi(t-1/n,t]$. Since
its bracket is absolutely continuous,
\cite[Theorem 3.4.2]{KarShr91} provides a Brownian motion $B$, on
a possibly extended probability space, such that
$\eta_{t} = \int_{0}^{t}\sqrt{Z_{u}}\,\dB_{u}$. Therefore, if we
split $k$ at its atom in \eqref{sonin},
equation \eqref{char-strong} yields \eqref{Z-sde} for almost all
$t\in\sp$, a.s.

For \eqref{Z-resolvent}, we convolve \eqref{char-strong} with $\rho$ and
use the Sonin relation \eqref{sonin-rel}, $(k * \rho)(\dt) = \dt$, to
obtain
$\xi[0,t] = (\rho*\mu)[0,t] + \int_{0}^{t}\rho[0,t-s]\,\diffd\eta_{s}$,
the analogue of \eqref{volterra} with $\eta$ in place of $N$. Since
$\xi[0,t] = \int_{0}^{t}Z_{u}\,\du$ and $\rho[0,v] = \int_{0}^{v}r$,
differentiating in $t$, which \eqref{r-sq-int} and the stochastic
Fubini theorem \cite[Theorem 2.6]{Wal86} permit, yields
\eqref{Z-resolvent}, with the same Brownian motion $B$.

Lastly, we turn to \eqref{Z-potential}.
Every summand in \eqref{U-renewal} is absolutely continuous and,
therefore, so is the measure $U^{L}$.
Convolving \eqref{U-renewal} with $\rho$
gives $\rho * U^{L} = \sum_{n\geq2}\rho^{*n} = U^{L} - \rho$, that
is,
\begin{align}\label{r-ell}
  \int_{0}^{t} r(t-s)\,\ell(s)\,\ds = \ell(t) - r(t)
\end{align}
for almost every $t>0$. In particular $\ell = r + r*\ell$ lies in
$\Ltwo[0,T]$ for every $T$, by \eqref{r-sq-int}, the local
integrability of $\ell$, and Young's
inequality.

Next, for almost every $t \geq 0$, we integrate \eqref{Z-resolvent} against $\ell(t-\cdot)$
over $[0,t]$, so that the left-hand side becomes
$\int_{0}^{t}\ell(t-u)Z_{u}\,\du$. On the right, we use Fubini's
theorem for the first term and its stochastic version
\cite[Theorem 2.6]{Wal86} for the second one. By \eqref{r-ell},
we have
\begin{align*}
  \int_{s}^{t}\ell(t-u)\,r(u-s)\,\du
  = \int_{0}^{t-s}\ell(t-s-v)\,r(v)\,\diffd v
  = \ell(t-s) - r(t-s),
\end{align*}
so that 
\begin{align*}
  \int_{0}^{t}\ell(t-u)\,Z_{u}\,\du
  &= \int_{[0,t]}\prn{ \ell(t-s) - r(t-s) }\,\mu(\ds) \\
  &\quad + \int_{0}^{t}\prn{ \ell(t-s) - r(t-s) }
  \sqrt{Z_{s}}\,\dB_{s} .
\end{align*}
By \eqref{Z-resolvent}, the two terms with $r$ in them add up to
$-Z_{t}$, and moving them to the left-hand side gives \eqref{Z-potential}.

\subsection{Proof of Proposition \ref{p:density-crit}}
\label{s:prf-density-crit}
The proof relies on the following lemma:
\begin{lemma}
\label{l:conv-l1}
Given $\nu, \mu \in \Mlf^{*}(\sp)$, suppose that
$\nu$
admits a density $r$. With $g = r*\mu$ denoting the density of
$\nu* \mu$, we have
\begin{align}
  \label{conv-equiv}
r^2 \in \Lonel(\sp) \eiff\, r^2*g \in \Lonel(\sp). 
\end{align}
In that case, we also have $g \in \Ltwol(\sp)$.
\end{lemma}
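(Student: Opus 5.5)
The plan is to prove both directions of \eqref{conv-equiv} by comparing $r^2*g$ with $r^2$ near the bottom of the support of $\mu$, and then to deduce $g\in\Ltwol(\sp)$ from the Cauchy--Schwarz inequality. Throughout, $r\geq 0$, $\mu\neq 0$ is locally finite, and $g=r*\mu$ is locally integrable because $\nu$ is locally finite. The two directions use different ingredients: the forward one uses only Young's inequality, the converse uses a lower bound on $g$ on an interval.

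For the forward direction, assume $r^2\in\Lonel(\sp)$. Then $r^2*g=r^2*r*\mu$ is the convolution of the locally integrable function $r^2*r$ with a locally finite measure. Local integrability of $r^2*r$ follows from Young's inequality on $[0,T]$, since $\norm{r^2*r}_{\Lone[0,T]}\le\norm{r^2}_{\Lone[0,T]}\norm{r}_{\Lone[0,T]}$. Convolving once more with $\mu$ gives $\int_0^T (r^2*g)\le \norm{r^2*r}_{\Lone[0,T]}\,\mu[0,T]<\infty$, so $r^2*g\in\Lonel(\sp)$.

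For the converse, assume $r^2*g\in\Lonel(\sp)$, and let $t_0:=\inf\supp\mu$.
\begin{enumerate}
\item Produce an interval on which $g$ is bounded below. For a suitable small $\delta>0$, the set $\set{g\ge c}$ should have positive measure inside $[t_0,t_0+\delta]$ for some $c>0$; this holds because $\int_{t_0}^{t_0+\delta}g=(\nu*\mu)[t_0,t_0+\delta]\ge \nu[0,\eps]\,\mu[t_0,t_0+\delta-\eps]>0$ once $\eps,\delta$ are chosen suitably.
\item Improve a set of positive measure to a full interval. For $t\in[t_0,t_0+\delta]$, $g(t)=\int r(t-s)\mu(\ds)$ involves only $s\in[t_0,t]$. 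I would avoid needing pointwise lower bounds by integrating instead: for $T>0$, $\int_0^{T}(r^2*g)(t)\,\dt=\int\int r^2(u)\,g(t-u)\,\du\,\dt\ge \int_0^{T'} r^2(u)\,\bigl(\int_{u}^{u+\delta'}g(w)\,\diffd w\bigr)\,\du$ after shifting the $t$-window. This reduces the problem to showing that the sliding integral $G(u):=\int_u^{u+\delta'}g$ is bounded below for small $u$.
\item Bound $G$ near zero. $G(u)$ is not bounded below for small $u$ unless $t_0=0$, so translate first: $r^2*g$ on $[t_0,t_0+T]$ dominates $\int_0^{T} r^2(u)\,g(\cdot-u)$ with $g(\cdot-u)$ evaluated on $[t_0,\dots]$. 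Then $G_{t_0}(u):=\int_{t_0}^{t_0+\delta'}g(w-u)\dots$ needs care. A cleaner route is Fubini with $\mu$ directly: $\int_0^{T}(r^2*r*\mu)=\int \bigl(\int_0^{T-s}(r^2*r)\bigr)\mu(\ds)\ge \mu[t_0,t_0+\eps]\int_0^{T-t_0-\eps}(r^2*r)$. This reduces the claim to $r^2*r\in\Lonel$ implying $r^2\in\Lonel$.
\item Prove that last implication. Write $\int_0^{S}(r^2*r)=\int_0^S r^2(u)\,\rho[0,S-u]\,\du$, where $\rho[0,v]=\int_0^v r$ (the mass function of $\nu$, up to the normalization $\nu$), which is positive for $v>0$ only if $0\in\supp\nu$. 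In general $\int_0^v r>0$ for $v>a:=\inf\supp\nu$. Then $\int_0^{S}r^2(u)\,\nu[0,S-u]\,\du\ge \nu[0,a+\eta]\int_0^{S-a-\eta}r^2$, and since $S$ is arbitrary, $r^2\in\Lonel$.
\end{enumerate}

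Finally, $g\in\Ltwol(\sp)$ follows from Cauchy--Schwarz with respect to $\mu$: $g(t)^2\le \mu[0,t]\int_{[0,t]}r(t-s)^2\mu(\ds)$, so $\int_0^T g^2\le \mu[0,T]^2\norm{r}^2_{\Ltwo[0,T]}$.

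The main obstacle is the converse, specifically handling supports of $\mu$ and $\nu$ that do not contain $0$, which is what the Fubini reduction in steps (3)--(4) is designed to avoid; steps (1)--(2) were a first attempt that this reduction replaces.
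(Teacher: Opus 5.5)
Your proof is correct. The forward direction and the bound $g\in\Ltwol(\sp)$ match the paper's. You use Young's inequality, and your Cauchy--Schwarz estimate in $\mu$ gives $\norm{g}_{\Ltwo[0,T]}\le\mu[0,T]\,\norm{r}_{\Ltwo[0,T]}$, which is exactly the paper's Minkowski bound.

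The converse, however, is organized differently. The paper neither factors $r^2*g=(r^2*r)*\mu$ nor looks at supports. It writes $\norm{r^2*g}_{\Lone[0,T]}$ as the integral of $r^2(u)\,g(s)$ over $\{(u,s)\in[0,T]^2 : u+s\le T\}$, which contains $[0,T/2]^2$, so that
\[
\norm{r^2*g}_{\Lone[0,T]}\ \ge\ \norm{r^2}_{\Lone[0,T/2]}\,(\nu*\mu)[0,T/2].
\]
The last factor increases to $\nu(\sp)\mu(\sp)>0$, hence is bounded below by a positive constant for $T\ge T_0$.

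Your steps (3)--(4) reach the same conclusion with two applications of Tonelli, peeling off $\mu$ and then $\nu$. Positivity comes from $\mu[t_0,t_0+\eps]>0$ and $\nu[0,a+\eta]>0$ at the infima of the supports. This gives a marginally sharper window: $\int_0^{T-t_0-a-\eps-\eta}r^2$ is controlled by $\int_0^T r^2*g$, rather than $\int_0^{T/2}r^2$. That gain is immaterial for local integrability.

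The paper's one-line product bound is simpler and needs only $\nu*\mu\neq0$. A single split $[0,T-c]\times[0,c]$ of the triangle, with $c$ chosen so that $(\nu*\mu)[0,c]>0$, would merge your two steps into one in the same way.

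In a write-up, discard steps (1)--(2) and the first half of (3), which your final argument does not use. Also replace the stray $\rho$ in step (4) by $\nu$.
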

\begin{proof}
By Young's convolution inequality and
$\norm{g}_{\Lone[0,T]} = (\nu*\mu)[0,T] \leq \nu[0,T]\,\mu[0,T]$, we
have
\begin{align}
  \label{conv-bound}
  \norm{ r^2*g}_{\Lone[0,T]}
  \leq \norm{r^2}_{\Lone[0,T]} \norm{g}_{\Lone[0,T]}
  \leq \nu[0,T]\,\mu[0,T]\, \norm{r^2}_{\Lone[0,T]} .
\end{align}
On the other hand,
$[0,T/2]^{2} \subseteq D$, where $D :=
\sets{(u,s) \in [0,T]^2}{ u+s \leq T}$,  so
\begin{align*}
  \norm{r^2*g}_{\Lone[0,T]}
  & =\int_{0}^{T} \int_{0}^{t}r^2(t-s)g(s)\,\ds\,\dt =
  \iint_D r^2(u) g(s)\, \du\,\ds\\
  & \geq
  \int_0^{T/2} r^2(u)\, \du
  \int_0^{T/2} g(s)\, \ds = \norm{r^2}_{\Lone[0,T/2]}
  \norm{g}_{\Lone[0,T/2]}.
\end{align*}
Since
$\lim_{T\to\infty} \norm{g}_{\Lone[0,T/2]} =
\int_0^{\infty} g(s)\, \ds = \nu(\sp)\, \mu(\sp) \in (0,\infty]$, there are a
constant $A>0$ and a $T_0 \geq 0$ with
$\norm{g}_{\Lone[0,T/2]} \geq A$ for $T \geq T_{0}$. Together with
\eqref{conv-bound} this gives
\begin{align*}
  A \norm{r^2}_{\Lone[0,T/2]} \leq \norm{r^2 * g}_{\Lone[0,T]} \leq
  \nu[0,T]\, \mu[0,T]\, \norm{r^2}_{\Lone[0,T]}  \eforall T
  \geq T_0,
\end{align*}
and \eqref{conv-equiv} follows. Finally, $g \in \Ltwol(\sp)$
follows from Minkowski's integral inequality
$\norm{g}_{\Ltwo[0,T]}
= \norm{r*\mu}_{\Ltwo[0,T]} \leq \norm{r}_{\Ltwo[0,T]}\, \mu[0,T]$.
\end{proof}

\emph{Proof of Proposition \ref{p:density-crit}.}
The existence of a density with
$\ee*{\int_{0}^{T}Z^{2}_u\, \du}<\infty$ depends on
$\xi$ only through its law, so we assume throughout that $\xi$ is
in occupation form. 

\smallskip

\emph{Sufficiency.}
By Lemma \ref{l:conv-l1}, \eqref{r-sq-int} implies that
$r^{2}*g \in \Lonel(\sp)$. In particular, $(r^{2}*g)(t)<\infty$
for almost every $t$. For such $t$, $s \mapsto r(t-s)\ind{[0,t]}(s)$ belongs
to $\Ltn$, and the inner stochastic integral in \eqref{dn-moment} below is
defined. The isometry of \eqref{noise}, the fact that
$\ee{\xi} = \rho * \mu$ from Proposition \ref{p:basic}(\ref{i:moments}), and
Tonelli's theorem now give
\begin{align}\label{dn-moment}
  \ee{\int_0^T\prn{\int_0^t r(t-s)\,\dN_{s}}^2\,\dt}
  =\int_0^T\int_0^t r(t-s)^2g(s)\,\ds\,\dt<\infty.
\end{align}
Consequently, for each $T \geq 0$, the restriction to $[0,T]$ of the
process $Z$ defined by \eqref{Z-rep} (which we take in its jointly
measurable version) is in
$\Ltwo(\Omega\times[0,T])$. The bound
$\int_{0}^{T}g^{2}(u)\, \du<\infty$ of
Lemma \ref{l:conv-l1} takes care of the
deterministic part. For
$\vp \in \Cc(\sp)$
with $\supp \vp \subseteq [0,T]$, we have
\begin{align*}
  \int_0^T \vp(t) Z_t\, \dt
&=\int_0^T \vp(t) \int_{[0,t]} r(t-s)\, \mu(\ds) \, \dt +
    \int_{\sp} U\vp(s)\,\dN_{s},
\end{align*}
where we used the stochastic Fubini theorem (\cite[Theorem 2.6]{Wal86})
for the second integral. Its hypothesis, namely
$\int_{0}^{T}\vp(t)^{2}\,(r^{2}*g)(t)\,\dt < \infty$, holds because
$\vp$ is bounded and $r^{2}*g \in \Lonel(\sp)$. Thus,
\eqref{weak-res} of Proposition \ref{p:weak} implies that 
\begin{align*}
  \ab{Z, \vp} = \ab{\rho*\mu, \vp} + \tint_{\sp} U\vp(s)\,\dN_{s} = \ab{\xi,
  \vp}, \eas \eforeach \vp \in \Cc(\sp).
\end{align*}
Both sides are continuous under uniform convergence of $\vp$
within a fixed compact support, because $\xi[0,T]<\infty$ and
$\int_{0}^{T}\abs{Z_t}\, \dt<\infty$, a.s. Passing to a countable family,
dense for the uniform norm in the set of $\vp \in \Cc(\sp)$ supported
by $[0,T]$, for each $T \in \N$, we find a single exceptional
set for all $\vp \in \Cc(\sp)$. Hence
$\xi(\dt) = Z_{t}\, \dt$, a.s.

\smallskip

\emph{Necessity.}
Suppose that $\xi(\dt)=Z_{t}\,\dt$ with
$\ee*{ \int_0^T Z_{t}^2\,\dt}<\infty$ for every $T$.
With $g(t) = (r*\mu)(t) = \ee{Z_t}$, a.e., 
Jensen's inequality implies that 
$\int_0^T g(t)^2\, \dt \leq \int_0^T \ee{Z_t^2}\, \dt < \infty$ so
that 
\begin{align*}
  \tZ := Z - g \in \Ltwo([0,T] \times \Omega) \eforall T>0 .
\end{align*}
Since $\xi$ is absolutely continuous and
$\ab{\mu,U\psi} = \ab{\rho*\mu,\psi} = \ab{g,\psi}$,
\eqref{weak-res} reads
\begin{align}\label{Z-weak}
  \ab*{\tZ, \psi} = \int_{\sp} U\psi(s)\,\dN_{s} \eas \eforeach \psi \in
  \Cc(\sp).
\end{align}
Both sides of \eqref{Z-weak} pass to the limit under bounded pointwise
convergence of $\psi$ within a fixed compact support --- on the right
because $U\psi_{n} \to U\psi$ boundedly and $\supp U\psi_{n}$ stays in
a fixed compact set by Lemma \ref{l:one-sided} --- so, by the
monotone class theorem, \eqref{Z-weak} holds for
every bounded Borel $\psi$ of bounded support. 

The isometry of \eqref{noise} and the fact that $\rho*\mu$ has
density $g$ turn
\eqref{Z-weak} into
\begin{align}
  \label{Z-iso}
  \tint_{\sp}\prn{ U\psi(s) }^{2}\, g(s)\,\ds
  = \ee*{ \ab*{ \tZ, \psi }^{2} } \eforall \psi \in \Bc(\sp).
\end{align}

For $T>0$, pick
$0 < \eps \leq 1$ and $a \in (0,T)$, and define
$\psi_{\eps} = \eps^{-1}\ind{(a,a+\eps)}$, so that
$U\psi_{\eps}(s) = \bar r_{\eps}(a,s) :=
\eps^{-1}\int_{a}^{a+\eps}r(w-s)\,\dw$ for $s<a$.
Jensen's inequality yields
$\ab*{\tZ,\psi_{\eps}}^{2} \leq
\eps^{-1}\int_{a}^{a+\eps}\tZ_{u}^{2}\, \du$. By \eqref{Z-iso}, after discarding the contribution of $s \geq a$ to
its left-hand side, we have
\begin{align*}
  \int_{0}^{a}\bar r_{\eps}(a,s)^{2}\,g(s)\,\ds
  \leq \oo{\eps}\, \ee{ \int_{a}^{a+\eps}
   \tZ_{u}^{2}\,\du } .
\end{align*}
Next, we integrate both sides over $a \in [0,T]$. The right-hand
side is bounded by $\int_0^{T+1} \ee*{\tZ_u^2}\,\du < \infty$,
uniformly over $\eps \leq 1$. 

For each $s$, almost every $a$ is a Lebesgue point of $r(\cdot-s)$, so
by Fubini's theorem the integrand converges to
$r(a-s)^{2}g(s)$ for almost every pair $(a,s)$. With
$D_{T} := \set{(a,s) : 0<s<a<T}$, Fatou's lemma implies
\begin{align*}
  \iint_{D_{T}} r^{2}(a-s)\,g(s)\,\ds\,\da
  &= \iint_{D_{T}} \liminf_{\eps \to 0}
     \bar r_{\eps}(a,s)^{2}\,g(s)\,\ds\,\da \\
  &\leq \liminf_{\eps \to 0} \iint_{D_{T}}
     \bar r_{\eps}(a,s)^{2}\,g(s)\,\ds\,\da
   \leq \int_{0}^{T+1}\ee*{\tZ_{t}^{2}}\,\dt < \infty .
\end{align*}
Hence $r^{2}*g \in \Lonel(\sp)$, and Lemma
\ref{l:conv-l1} turns it into \eqref{r-sq-int}. 

\smallskip 

It remains to establish \eqref{Z-rep}. Under \eqref{r-sq-int}, the
right-hand side of \eqref{Z-rep} is a density of $\xi$ by the
sufficiency part above. Since any two densities of $\xi$ agree
$\dt \otimes \dbP$-a.e., every density satisfies \eqref{Z-rep}.

\subsection{Proof of Proposition \ref{p:Z-modulus}}
\label{s:prf-Z-modulus}
Since $r,m \in \Ltwol(\sp)$,
Young's convolution inequality (see, e.g., \cite[Theorem 10.49]{Leo17}) implies that the density $g = r*m$ of
$\rho*\mu$ is bounded on each $[0,T]$.
Proposition
\ref{p:density-crit} provides a density $Z$ of $\xi$ with
$\ee*{ \int_{0}^{T}Z_{t}^{2}\,\dt } < \infty$ for each $T$.
In particular,
$Z \in \Ltwol(\sp)$, a.s., and Theorem \ref{t:Z-eqs}
shows that it satisfies \eqref{Z-resolvent}. Since
$\prn{r^{2}*g}(t) \leq \norm{g}_{\infty,T}\,\norm{r}^{2}_{\Ltwo[0,T]}$
is finite at every $t \leq T$, we may take $Z$ to be a jointly
measurable version of the right-hand side of \eqref{Z-resolvent}. This version is again
a density of $\xi$ and satisfies \eqref{Z-resolvent} at each fixed $t$,
a.s. We use it throughout. 

For $p \geq 2$,
the Burkholder-Davis-Gundy inequality applied to the martingale
$u \mapsto \int_{0}^{u}r(t-s)\sqrt{Z_{s}}\,\dB_{s}$, $u \in [0,t]$,
followed by Minkowski's integral inequality (valid since $p/2 \geq
1$), implies that, for every $t \leq T$, we have
\begin{align*}
  \norm{Z_t}_{\bL^p}^2 - 2g(t)^{2}
& \leq
  2 \norm{ Z_{t}-g(t) }_{\bL^{p}}^{2}
  \leq 2C_{p}^{2}
  \norm{ \tint_{0}^{t}r(t-s)^{2}Z_{s}\,\ds }_{\bL^{p/2}}
  \\  & \leq 2C_{p}^{2}\tint_{0}^{t}r(t-s)^{2}\, 
  \norm{ Z_{s} }_{\bL^{p/2}}\,\ds
      \leq
      2C_{p}^{2} \norm{r}^{2}_{\Ltwo[0,T]} \sup_{s\leq t}
  \norm{Z_s}_{\bL^{p/2}}.
\end{align*}
It follows that
$\sup_{t\leq T} \norm{Z_t}_{\bL^{p/2}}<\infty$ implies
$\sup_{t\leq T} \norm{Z_t}_{\bL^{p}}<\infty$. For the base
case $p=1$, since $\ee{\xi} = \rho*\mu$, we have $\ee{Z_s} = g(s)$
for a.e.\ $s$, so that, by Fubini's theorem, $\ee*{\int_{0}^{t}r(t-s)^{2}Z_{s}\,\ds} \leq
\norm{g}_{\infty,T}\norm{r}^{2}_{\Ltwo[0,T]} < \infty$.
The stochastic integral in \eqref{Z-resolvent} is then a true
martingale, and $\ee{Z_t} = g(t)$ for every $t \leq T$, with $g\in
\Linfl$.
Iterating along $p = 2^{n}$ and
interpolating, we conclude that
$\sup_{t\leq T} \norm{Z_t}_{\bL^{p}}<\infty$ for all $p \geq 1$.

To establish the second inequality in \eqref{Z-incr}, we let $0 \leq s < t \leq T$ and put $h := t-s$. Subtracting two
instances of \eqref{Z-resolvent}, we get
  $Z_{t}-Z_{s} = I_1 + I_2 + J_1  + J_2$, where
  \begin{equation}
    \label{decomp}
\begin{aligned}
  I_{1} &:= \int_{0}^{s}\bprn{ r(t-u)-r(s-u) } m(u)\,\du,
        &I_{2} &:= \int_{s}^{t}r(t-u) m(u)\,\du, \\
  J_{1} &:= \int_{0}^{s}\bprn{ r(t-u)-r(s-u) }
\sqrt{Z_{u}}\,\dB_{u},
        &J_{2} &:= \int_{s}^{t}r(t-u)
        \sqrt{Z_{u}}\,\dB_{u}.
\end{aligned}
\end{equation}
By the Cauchy-Schwarz inequality, the terms $I_1$ and $I_2$ are both bounded
by $\norm{m}_{\Ltwo[0,T]} \omega_T(h)$.
For $J_1$ and $J_2$,
the Burkholder-Davis-Gundy inequality, followed by Jensen's
inequality against the measures
$\bprn{ r(t-u)-r(s-u) }^{2}\,\du$ on $[0,s]$ and $r(t-u)^{2}\,\du$
on $[s,t]$, whose total masses are at most $\omega_{T}(h)^{2}$,
implies that
\begin{align*}
  \ee{ \abs{J_{j}}^{p} } \leq C_{T, p}\,\omega_{T}(h)^{p}\,
  \textstyle \sup_{u \leq T}\ee*{ Z_{u}^{p/2} }, \, j = 1,2,
\end{align*}
the supremum being finite by the first inequality of
\eqref{Z-incr}. Summing the four bounds proves the second inequality of
\eqref{Z-incr}.

When $\omega_{T}(h) \leq C_{T}h^{\gm}$ for $h \in (0,1]$, the
resulting bound on $\norm{Z_{t}-Z_{s}}_{\bL^{p}}$ persists for $h \in [1,T]$ as well, since the first inequality of
\eqref{Z-incr} gives $\norm{Z_{t}-Z_{s}}_{\bL^{p}} \leq 2C \leq
2C h^{\gm}$ there. The
Kolmogorov-Chentsov criterion produces a version of $Z$ on $[0,T]$
whose paths are H\"older continuous of every order
$\theta < \gm - 1/p$, for each $p$, and, therefore, for each
order $\theta < \gm$. This version agrees with $Z$ for a.e.\ $t$,
a.s., so it is still a density of $\xi$.

To show \eqref{Z-incr-lb}, we apply the decomposition
\eqref{decomp} above to the pair
$(t,t+h)$. The terms $I_{1}$ and $I_{2}$ are then deterministic, while
$J_{1}$ and $J_{2}$ are centered and, being stochastic integrals over
disjoint intervals, orthogonal, so that
\begin{align*}
  \norm{ Z_{t+h}-Z_{t} }_{\Ltwo}^{2} \geq
  \Var\prn{ Z_{t+h}-Z_{t} } = \ee{J_{1}^{2}} + \ee{J_{2}^{2}}
  \geq \ee{ J_{2}^{2} } .
\end{align*}
Moreover, 
\begin{align*}
  \ee{J_{2}^{2}} = \int_{t}^{t+h}
  r(t+h-u)^{2}g(u)\,\du
  = \int_{0}^{h} r(v)^{2}g(t+h-v)\,\diffd v \geq
  \norm{r}_{\Ltwo[0,h]}^2 \inf_{[t,t+h]}g.
\end{align*}

\subsection{Proof of Corollary \ref{c:holder}}
\label{s:prf-holder}
By \eqref{rho-psi}, the Fourier transform of $\rho$ is
$u \mapsto \hrho(-iu) = 1/\Psi(-iu)$. Since
$\abs{\hrho(-iu)} \leq = 1$ and $2\beta>1$, \eqref{psi-power}
gives
$\int_{\bR}\abs{ \Psi(-iu) }^{-2}\,\du \leq 2\ld_{0} +
c^{-2}\int_{\abs{u}>\ld_{0}}\abs{u}^{-2\beta}\,\du < \infty$. By
Plancherel's theorem, $\rho$ therefore admits a density
$r \in \Ltwo(\bR)$, which we take to vanish on $(-\infty,0)$, so that
\begin{align*}
  \hrho(-i u) = \oo{\Psi(-i u)} = \int_{\bR} e^{i u t}
  r(t)\,\dt \efor u \in \bR.
\end{align*}

Since $r$ vanishes on $(-h,0)$, 
we have 
$\int_{-h}^{0}\prn{ r(u+h)-r(u) }^{2}\,\du = \norm{r}^{2}_{\Ltwo[0,h]}$.
As $(-h,0)$ and $[0,T]$ are disjoint, both terms of $\omega_{T}$ are
dominated at once: for $h>0$,
\begin{align}
  \label{modulus-fourier}
  \omega_{T}(h)^{2} \leq
  \int_{\bR}\prn{ r(u+h)-r(u) }^{2}\,\du =
  \frac{1}{2\pi}\int_{\bR}\abs*{ e^{ - i u h}-1 }^{2}
  \abs{ \hrho( - i u) }^{2}\,\du,
\end{align}
where we used Plancherel's theorem for the equality. The bound in \eqref{modulus-fourier} does not depend on
$T$. 

It suffices to treat $h < 1/\ld_{0}$. Indeed, \eqref{modulus-fourier} bounds
$\omega_{T}(h)^{2}$ by $4\norm{r}^{2}_{\Ltwo(\bR)}$ for every $h$,
while $h^{2\beta-1} \geq \ld_{0}^{1-2\beta}$ for $h \geq 1/\ld_{0}$, so
that those $h$ are absorbed into the constant.

In order to use the standard estimate
$\abs*{ e^{ - i u h}-1 } \leq \min(2, \abs{u} h)$, we split
  the integral on the right-hand side of \eqref{modulus-fourier}
  at $\abs{u} = 1/h$ and at $\abs{u} = \ld_{0} < 1/h$.
  Using \eqref{psi-power}, we get
\begin{align*}
  \int_{\abs{ u}>1/h}\abs*{ e^{-iuh}-1 }^{2}
  \abs{ \hrho(-iu) }^{2}\,\du
  & \leq C_c \int_{\abs{ u}>1/h}\abs{ u}^{-2\beta}\,\du
  = C_{c, \beta} h^{2\beta-1},  \\
  \int_{\ld_{0}<\abs{ u}\leq 1/h}\abs*{ e^{-iuh}-1 }^{2}
  \abs{ \hrho(-iu) }^{2}\,\du
  & \leq
  C_c \int_{\ld_{0}<\abs{ u}\leq 1/h}h^2 \abs{ u}^{2-2\beta}\,\du
  \\
  & \leq C_c \frac{2h^{2}}{3-2\beta}\,h^{2\beta-3}
  = C_{c, \beta} h^{2\beta-1}, \eand  \\
  \int_{\abs{u}\leq \ld_0}\abs*{ e^{-iuh}-1 }^{2}
  \abs{ \hrho(-iu) }^{2}\,\du &\leq 2 h^{2}\ld_{0}^{3}  =
  C_{\ld_0} h^2.
\end{align*}
Since $3-2\beta>0$ and $h<1/\ld_{0}$, we have
$h^{2} = h^{3-2\beta}h^{2\beta-1} \leq \ld_{0}^{2\beta-3}h^{2\beta-1}$,
so the third bound is absorbed into the first two. We conclude that
$\omega_{T}(h)^{2} \leq C\,h^{2\beta-1}$.

\subsection{Proof of Proposition \ref{p:max-holder}}
\label{s:prf-max-holder}
We define the essential H\"older seminorm for $\gm>0$ and
an interval $J$ by
\begin{align*}
  \abs{Z}_{\gm,J} = \esssup_{(u,v) \in J^{2},\ u \neq v}
  \Phi_{\gm}(u,v), \ewhere
  \Phi_{\gm}(u,v) := \frac{\abs{ Z_{u}-Z_{v} }}{\abs{u-v}^{\gm}},
\end{align*}
with the essential supremum being taken with respect to
two-dimensional Lebesgue measure. 
It is readily seen to be a random variable whose a.s.-equivalence
class does not depend on the version of $Z$.

The function $\mu[0,\cdot]$ is monotone and almost all paths of
$Z$ are locally integrable, so, by Lebesgue's differentiation
theorem and Fubini's theorem,
there exists a set $D \subseteq \sp$ of
full Lebesgue measure such that each $t\in D$ is a point of
differentiability of $\mu[0,\cdot]$, a Lebesgue point of $Z$ a.s.,
and a point at which \eqref{gen-frac-sde} holds a.s. We may also
assume that $0 \notin D$.

Fix $t \in D$, and suppose that there exist an $A \in \sF$ with
$\pp{A}>0$
and $c,\eps>0$ such that $Z_t \geq c$ and
$\abs{Z}_{\al-1/2,(t-\eps,t+\eps)} < \infty$, on $A$.
Since $t>0$, we may choose a smaller $\eps$ if necessary,
and assume that $\eps \leq \min(t,1)$.
The standard extension argument for
uniformly continuous functions implies that
$Z$ admits a $\Leb \otimes \bP$-version
such that $Z(\omega)$ is $(\al-1/2)$-H\"older continuous on
$(t-\eps, t+\eps)$, for each $\omega \in A$ (we do not
distinguish
a.s.-equal sets notationally).
Once we know
such a version exists, an explicit one is given by
\begin{align}
  \label{leb-version}
  \tZ_r = \liminf_{h \downto 0} \oo{h} \int_r^{r+h} Z_u\, \du
  \efor r \in \sp.
\end{align}
Since $t$ is a Lebesgue point of $Z$, a.s., we have $\tZ_t = Z_t$,
so that $\tZ_t\geq c$ on $A$.

Next, we show that $(\oPi*\tZ)(t+h)-(\oPi*\tZ)(t)=O(h^{1/2})$ as
$h\downto0$.
Fix $\omega \in A$ and $0<h\leq\eps/2$, and split the increment
of $\oPi*\tZ$ at $t$:
\begin{multline*}
  (\oPi*\tZ)(t+h)-(\oPi*\tZ)(t)
   = \int_{0}^{t}\bprn{ \oPi(t+h-s)-\oPi(t-s) }\tZ_{s}\,\ds
  + \int_{t}^{t+h} \!\!\!\!\!\oPi(t+h-s)\tZ_{s}\,\ds .
\end{multline*}
For $u \geq \eps/2$ we have $0 \leq \oPi(u)-\oPi(u+h) \leq h\sup_{[\eps/2,\infty)}\pi$, which is finite by
\eqref{pi-bound}. Hence, the part of the first integral over $[0,t-\eps/2]$ is
$O(h)\int_{0}^{t}\tZ_{s}\,\ds$. On the remaining range
$[t-\eps/2,t+h]$ we write $\tZ_{s} = \tZ_{t}+(\tZ_{s}-\tZ_{t})$. The
coefficient of $\tZ_{t}$ is
\begin{align*}
  \int_{0}^{\eps/2}\bprn{ \oPi(u+h)-\oPi(u) }\,\du
  + \int_{0}^{h}\oPi(u)\,\du
  = \int_{\eps/2}^{\eps/2+h}\oPi(u)\,\du
  = O(h).
\end{align*}
In the remainder we may use
$\abs*{\tZ_{s}-\tZ_{t}} \leq
\abs{Z}_{\gm,(t-\eps,t+\eps)}\,\abs{s-t}^{\gm}$, where
$\gm := \al-1/2$. By \eqref{pi-bound}, $\oPi(u) \leq C'u^{-\al}$ for
$u \in (0,1]$, so that $\int_{0}^{h}\oPi \leq C''h^{1-\al}$. Hence, over $[t,t+h]$ the remainder contributes
$O\bprn{h^{\gm}h^{1-\al}}$, and over $[t-\eps/2,t]$, after splitting
at $u=h$ and using $\oPi(u)-\oPi(u+h) \leq C h\,u^{-\al-1}$ for
$u>h$, it contributes $O\bprn{h^{1-\al+\gm}}$ as well. Since
$1-\al+\gm=1/2$, we conclude that
$(\oPi*\tZ)(t+h)-(\oPi*\tZ)(t) = O\bprn{h^{1/2}}$ for
$0<h\leq\eps/2$, with a random constant, on $A$. 
This convolution stays in its indistinguishability class
if $\tZ$ is replaced by $Z$, and becomes the left-hand side of
\eqref{gen-frac-sde}.

It follows that  the same is true of the right-hand side of
\eqref{gen-frac-sde}, on $A$. Indeed, \eqref{gen-frac-sde}
holds at $t$, by the choice of $D$, and at $t+h$ for a.e.\ $h$, by
Theorem \ref{t:Z-eqs}, a.s.
Its right-hand side is right-continuous in $h$, so the bound extends
to every $h \in (0,\eps/2]$. In particular, there
exist a constant $C>0$ and $B \subseteq A$ with $\pp{B}>0$ such that
$\abs*{ \mu(t,t+h] - \int_t^{t+h} Z_s\,\ds + \int_t^{t+h}
\sqrt{Z_s}\, \dB_s} \leq C h^{1/2}$
for $h \leq \eps/2$, on $B$. We chose $t$ as a point of
differentiability of $\mu[0,\cdot]$, so that $\mu(t,t+h] = O(h)$.
Moreover,
$\tZ$ is bounded on $[t,t+\eps/2]$ on $A$, so
$\int_{t}^{t+h}Z_{s}\,\ds = O(h)$ as well, but with a random
constant. Increasing $C$ by a deterministic amount absorbs the first
term, and choosing $\delta$ small enough absorbs the second, so that
\begin{align}
  \label{sqrt-bound}
  \tZ_{t+h} \geq c/2 \eand
  \abs*{\tint_t^{t+h} \sqrt{Z_s}\, \dB_s} \leq C
  h^{1/2} \efor h\in[0, \delta], \eon B,
\end{align}
where $\delta$ is a strictly positive random variable.
By the theorem of Dambis, Dubins and  Schwarz (see \cite[Theorem
4.6]{KarShr91}), $h \mapsto \int_{t}^{t+h} \sqrt{Z_s}\, \dB_s$
is, on a possibly enlarged probability space, a time-changed Brownian motion $\beta_{Q_h}$, where $Q_h =
\int_{t}^{t+h} Z_s\, \ds$. Since
$\limsup_{q\downto 0} \abs{\beta_q}/\sqrt{q} = + \infty$ by the law
of the iterated logarithm \cite[Theorem 9.23]{KarShr91}
and
$\tZ_s \geq c/2$ for $s \in [t,t+\delta]$, we
conclude that
\begin{align*}
  \limsup_{h \downto 0} \oo{\sqrt{h}}
  \abs{\int_{t}^{t+h} \sqrt{Z_s}\, \dB_s} = +\infty
  \eon B,
\end{align*}
which contradicts \eqref{sqrt-bound}.

It remains to pass to a single exceptional set for all $t\in D$.
We fix an interval $J$ with rational endpoints.
If $(t-\eps,t+\eps) \subseteq J$, then
$\abs{Z}_{\al-1/2,(t-\eps,t+\eps)} \leq \abs{Z}_{\al-1/2,J}$, so the
preceding argument gives
$\pp*{ \set{Z_{t}>0} \cap \{\abs{Z}_{\al-1/2,J}<\infty\} } = 0$ for
almost every $t \in J$. By Tonelli's theorem,
\begin{align*}
  \ee*{ \ind{\set{\abs{Z}_{\al-1/2,J}<\infty}}\,
\Leb\sets{ t \in J}{Z_{t}>0 } } = \tint_{J}
  \pp*{ \set{Z_{t}>0} \cap \{\abs{Z}_{\al-1/2,J}<\infty\} }\,\dt
  = 0,
\end{align*}
so that, a.s., $\abs{Z}_{\al-1/2,J}<\infty$ implies $Z = 0$
Lebesgue-a.e.\ on $J$. Since there are countably many intervals with
rational endpoints, this holds a.s.\ simultaneously for all of them.
To deduce the statement, let $J$ be any interval on which $Z$ does
not vanish a.e., and suppose that $f$ is locally H\"older continuous of
order $\al-1/2$ on $J$ and equal to $Z$ Lebesgue-a.e.\ on $J$. Since
$J$ is the union of countably many intervals with rational endpoints
and closures contained in $J$, $Z$ does not vanish a.e.\ on one of
them, say $J'$. Since $f$ is H\"older continuous on the closure of
$J'$, we have $\abs{Z}_{\al-1/2,J'}<\infty$. By the above, a.s., no
interval $J'$ with rational endpoints has both properties.

\subsection{Proof of Proposition \ref{p:not-semimart}}
\label{s:prf-not-semimart}
Suppose that some version of $Z$, denoted again by $Z$, is a
semimartingale. Its paths are then \cd{}, hence locally bounded.

By \eqref{pi-bound} and the monotonicity of $\oPi$, for each $T>0$ there
is a constant $C_{T}$ such that $\oPi(x) \leq C_{T}\,x^{-\al}$ for
$x \in (0,T]$. In particular, $\oPi \in \Lonel(\sp)$, and the function
$\Lambda(t) := \int_{0}^{t}\oPi(s)\,\ds$ is continuous,
nondecreasing and concave, with $\Lambda(0)=0$ and $\Lambda(h) \leq
C' h^{1-\al}$ for $h \in (0,1]$ and some constant $C'$. Since $Z$ is locally bounded, the
convolution $\oPi*Z$ is continuous, while the right-hand side of
\eqref{gen-frac-sde}, which we denote by $Y$, is \cd{}. By Theorem
\ref{t:Z-eqs}, $\oPi*Z = Y$ for a.e.\ $t$, a.s., and right-continuity
gives $\oPi*Z = Y$ for all $t$, a.s.

The process $Y$ is a semimartingale whose continuous local
martingale part $\int_{0}^{t}\sqrt{Z_{s}}\,\dB_{s}$ has quadratic
variation $\int_{0}^{t}Z_{s}\,\ds$. Hence, along partitions of
$[0,T]$ with mesh tending to $0$, the sums of squared increments of $Y$
converge in probability to a limit that is at least
$\int_{0}^{T}Z_{s}\,\ds$. Since $\mu \neq 0$, we have
$\ee*{\int_{0}^{T}Z_{s}\,\ds} = (\rho*\mu)[0,T] > 0$ for large enough
$T$. We fix such a $T$. Then $\int_{0}^{T}Z_{s}\,\ds>0$ with
positive probability.

We write $Z = Z_{0}+M+A$, where $M$ is a local martingale and $A$ is
of finite variation, with $M_{0}=A_{0}=0$. We also assume, without
loss of generality, that $M$ is locally bounded (see
\cite[Theorem III.25]{Pro04}).
We have
\begin{align*}
  \oPi*Z = Z_{0}\,\Lambda + \oPi*M + \oPi*A.
\end{align*}
The first term is continuous and monotone. The last one is continuous
by the argument above, and of finite variation, since $\oPi*A^{\pm}$
are nondecreasing, where $A = A^{+}-A^{-}$ with $A^{\pm}$
nondecreasing. Since 
the sums
of squared increments of a continuous function of finite variation
tend to $0$
along any partitions with mesh tending to $0$, 
it suffices to treat $X := \oPi*M$.

For fixed $t$, integration by parts and the fact that 
$\Lambda(0) = M_{0} = 0$ yield
\begin{align*}
  X(t) = \int_{0}^{t}\Lambda(t-u)\,\diffd M_{u}.
\end{align*}
Since $M$ is locally bounded, there exists a sequence 
$\tau_{n} \upto \infty$ of stopping times such that
$\ee{\bkt{M^{\tau_{n}}}_{T}}<\infty$. On $\set{\tau_{n} \geq T}$, the
paths of $\oPi*M^{\tau_{n}}$ and $X$ agree on $[0,T]$, and
$\pp{\tau_{n} \geq T} \to 1$. It is therefore enough to treat
$M^{\tau_{n}}$ in place of $M$, and we assume that
$\ee{\bkt{M}_{T}}<\infty$.

Let $t_{i} = ih$, $h = T/m$, for some $m\in\bN$, and let 
  $\phi_{i}(u) := \Lambda(t_{i}+h-u)-\Lambda(t_{i}-u)$. 
Since $\Lambda$ vanishes on $(-\infty,0)$, we have
  $X(t_{i}+h)-X(t_{i}) = \int_{0}^{t_{i}+h}\phi_{i}(u)\,\diffd
  M_{u}$, 
and the It\^o isometry yields
\begin{align*}
  \ee{ \tsum_{i}\bprn{ X(t_{i}+h)-X(t_{i}) }^{2} }
  = \ee{ \tint_{0}^{T}\Bprn{ \tsum_{i}\phi_{i}(u)^{2} }\,\diffd\bkt{M}_{u} }.
\end{align*}
Fix $u \in [0,T]$. Since $\phi_{i}(u) = 0$ when $t_{i}+h \leq u$,
only the indices $i$ with $t_{i}+h>u$ contribute to the sum above.
We number them
$k = 0,1,2,\dots$ in increasing order, so that the $k$-th one satisfies
$t_{i}+h-u = (k+\theta)h$, where $\theta \in (0,1]$ depends only on
$u$. The first two terms, $k=0$ and $k=1$, are bounded by the
subadditivity of the concave function $\Lambda$:
$\phi_{i}(u) \leq \Lambda(h) \leq C' h^{1-\al}$ for $h \leq 1$. For
the remaining terms, $k \geq 2$, concavity gives
$\Lambda(x+h)-\Lambda(x) \leq h\,\oPi(x)$ with $x = (k-1+\theta)h
\geq (k-1)h$, and the bound on $\oPi$ yields
\begin{align*}
  \phi_{i}(u) \leq h\,\oPi\bprn{ (k-1)h } \leq C_{T}\,h^{1-\al}(k-1)^{-\al}.
\end{align*}
Hence
\begin{align*}
  \tsum_{i}\phi_{i}(u)^{2} \leq \Bprn{ 2(C')^{2} + C_{T}^{2}\tsum_{j \geq
  1}j^{-2\al} }\,h^{2-2\al},
\end{align*}
where the series converges because $\al>1/2$. The expected sum of
squared increments of $X$ is therefore at most a constant multiple of
$h^{2-2\al}\,\ee{\bkt{M}_{T}}$, which tends to $0$ because $\al<1$.
It follows that the sums of squared
increments of $\oPi*Z$ along the partitions $t_{i}=ih$ tend to $0$ in
probability as $m\to\infty$. This contradicts the lower bound of
$\int_{0}^{T}Z_{s}\,\ds$ obtained above.

\section{Proofs for Section \ref{S:disc}}

\subsection{Proof of Theorem \ref{t:ig-rep}}
\label{s:prf-ig-rep}
The proof is quite long, but the underlying idea is
relatively simple:
the starting point is 
the fundamental exponential martingale of \eqref{expmg}, which, in
this case, takes the form $\Exp{-\kappa\Gamma_{t} +
\phi(\kappa)\Gamma^{p}_{t}}$. Informally, it implies that 
\begin{align}
  \label{inf-IG}
   \eemu{ e^{-\kappa\,\diffd\Gamma_{t}} \giv \sV_{t-} } =
  e^{-\phi(\kappa)\,\diffd\Gamma^{p}_{t}} 
\end{align}
where $\diffd \Gamma$ and $\diffd \Gamma^p$ are `infinitesimal
increments'.
Given the immediate past, each increment of $\Gamma$ 
has a conditional inverse-Gaussian distribution
with mean equal to the corresponding increment of its compensator
$\Gamma^{p}$. In other words, $\Gamma$ moves like an inverse-Gaussian
subordinator $S$ run at the speed $\diffd\Gamma^{p}$, i.e., 
$\Gamma_{t} = S_{\Gamma^{p}_{t}}$. 
This identity
forces the values of $S$ on
the levels attained by $\Gamma^{p}$. 
The remaining levels, which form the gaps corresponding to
jumps of $\Gamma^{p}$ together with an unbounded gap beyond
its terminal value, are then filled by independent
inverse-Gaussian bridges. They fit exactly because, thanks to
\eqref{inf-IG} again, 
the jump of $\Gamma$ at a jump time of $\Gamma^{p}$ is conditionally
$\IG\prn{\Delta\Gamma^{p},p^{2}}$-distributed. 

\smallskip

We divide the proof into several steps and use the 
following notation:
\begin{align}
  \label{QR-not}
   Q_{t} := \Exp{ -\kappa\,\Gamma_{t} + \phi(\kappa)\,\Gamma^{p}_{t} },
  \quad
  \Theta_{x} := \inf\sets{t}{\Gamma^{p}_{t} \geq x},
  \quad
  R_{x} := \inf\sets{t}{\Gamma^{p}_{t} > x},
\end{align}
for $t, x \in \sp$ and $\kappa \geq 0$. Thus $\Theta$ and $R$ are the
left- and right-continuous inverses of $\Gamma^{p}$, and $\Theta_{x}
\leq R_{x}$. 
Together with the conventions $\Gamma_{0-} = \Gamma^{p}_{0-} = 0$ and
$\sV_{0-}$ trivial of subsection \ref{s:disc-noise}, we set $Q_{0-} := 1$, so that the
time $0$ is treated like a jump time for a jump of size $\Gamma_0$. 

\medskip

\subsubsection{Predictability of $\Gamma^p$}\label{s:pred-Gp} 
We first show that $\Gamma^{p}$ is predictable.
Splitting $\Pi$ in \eqref{Gp-def} into its diffuse part and its atoms gives
\begin{align}
  \label{Gp-split}
   \Gamma^{p}_t = p\,\mu[0,t] +
  p\int_0^t \Gamma_{t-s} \, \Pi^c(\ds) +
  p\sum_{s>0}
  \Pi(\set{s})\, \Gamma_{t-s}, 
\end{align}
where $\Pi^c$ is the diffuse part of $\Pi$ and the sum runs over
its atoms. The first summand is deterministic, the second continuous
and adapted, and $t \mapsto \Gamma_{t-s}$ is $\prfi{\sV_t}$-predictable for each
$s>0$. Hence $\Gamma^{p}$ is predictable. Moreover,
$\Gamma^{p}_{t} \leq p\,\mu[0,t] + p\,\Pi(0,\infty)\,\Gamma_{t} < \infty$
for all $t$, since $\Pi$ is finite and $\xi$ is locally finite.

\subsubsection{Parts \ref{i:comp} and \ref{i:qv} of 
Theorem \ref{t:ig-rep}}\label{s:part1}
Next we identify $\Gamma^{p}$ as the compensator of $\Gamma$ and compute $\ab{N}$.
 Proposition \ref{p:no-atom} and Remark \ref{r:pgid-ex} state that   $b=0$ and $\Pi$ is finite with
$1+\Pi(0,\infty) = 1/p$, so that $\sA\vp = -p^{-1}\vp +
\int_{(0,\infty)}\vp(\cdot+s)\,\Pi(\ds)$ and $\ab{\xi,
\int\vp(\cdot+s)\,\Pi(\ds)} = \ab{\Pi*\xi,\vp}$. Thus \eqref{weak-gen}
reads
\begin{align*}
   \ab{\xi,\vp} = p\,\ab{\mu + \Pi*\xi,\vp} + p\,N(\vp) \eas
  \eforeach \vp \in \Conec(\sp). 
\end{align*}
Approximating $\ind{[0,t]}$ by elements
of $\Conec(\sp)$ and using the Walsh isometry for the term
$N(\vp)$, 
we can turn this into \eqref{strong-disc},
i.e., 
$\Gamma_{t} = \Gamma^{p}_{t} + p\,N_{t}$ for all $t$, a.s.
By Proposition \ref{p:noise-mg}, $N = (\Gamma - \Gamma^{p})/p$ is then a
square-integrable martingale of finite variation.
Since $\Gamma^{p}$ is predictable (subsection \ref{s:pred-Gp}),
nondecreasing and integrable, and $\Gamma - \Gamma^{p}$ is a martingale,
$\Gamma^{p}$ is the compensator of $\Gamma$. This proves part \ref{i:comp}.

As a martingale of finite variation, $N$ is purely
discontinuous,  with the jump $\Delta N_{0} = N_{0}$ at $0$, by the
conventions of subsection \ref{s:notation}. 
The process $N^{2} - \Gamma$ is a martingale by Proposition
\ref{p:noise-mg}, and so is $\Gamma - \Gamma^{p}$. Hence so is
$N^{2} - \Gamma^{p}$, and $\ab{N} = \Gamma^{p}$ by predictability.
This proves part \ref{i:qv}.

\subsubsection{The martingale $Q$}\label{s:mart-Q}
Next, we show that the exponential process $Q$ is a martingale. 
It is straightforward to check that 
$\phi(\kappa) + \tfrac{p^{2}}{2}\phi(\kappa)^{2} = \kappa$,
and that $0 \leq p\phi(\kappa) \leq 1/4$ for $0 \leq \kappa \leq
\kappa_0$, where $\kappa_{0} := \oo{4p} + \tfrac{1}{32}$. We
assume from now on that $\kappa \in (0,\kappa_0]$. 
Together with the fact that
$p\,N = \Gamma - \Gamma^{p}$ and the general exponential-martingale
relation \eqref{expmg}, applied with $\ld = -p\,\phi(\kappa)$,
this implies that
\begin{align*}
Q_t = \Exp{-\kappa\Gamma_{t} +
\phi(\kappa)\Gamma^{p}_{t}} \text{ is a } \prfi{\sV_t}\text{-martingale.}
\end{align*}
We claim that, moreover, $\eemu{Q_0}=1$. To see that, 
note that $\Gamma^{p}_{0} = p\,\mu(\set{0})$, since
$(\Pi*\xi)(\set{0}) = 0$, and $\Gamma_{0} = \xi(\set{0})$, and apply
\eqref{laplace} with $f = \kappa\ind{\set{0}}$.
Using  the same function 
$f = \kappa\ind{\set{0}}$ in \eqref{riccati}, and noting that
$v(s)=0$ for $s>0$ in that case,  yields
$v(0) = p\prn{\kappa - v(0)^{2}/2}$,
whose nonnegative root is $v(0) = p\,\phi(\kappa)$. Therefore
$\eemu{e^{-\kappa\Gamma_{0}}} = e^{-p\,\phi(\kappa)\mu(\set{0})} =
e^{-\phi(\kappa)\Gamma^{p}_{0}}$, i.e., $\eemu{Q_{0}} = 1$.

Our next task is to show that $Q^{\Theta_y}$, where $\Theta_y$ is
defined in \eqref{QR-not}, is uniformly integrable for each
$y$. 
For $t<\Theta_{y}$ we have $\Gamma^{p}_{t} < y$, so
$Q^{\Theta_{y}}_{t} \leq e^{y\phi(\kappa)} + Q_{\Theta_{y}}
\inds{\Theta_{y}<\infty}$, where the last term is integrable by
Fatou's lemma, being the limit of $Q_{t\wedge\Theta_{y}}$, whose
mean is $1$. Thus, for each $y$,
$Q^{\Theta_{y}}$ is uniformly integrable, with
terminal value $Q_{\Theta_{y}}$. Here $Q_{\infty} :=
Q_{\infty-} := \lim_{t}Q_{t}$, which exists since $Q$ is a positive
martingale.

\subsubsection{Conditional distributions of predictable jumps}
  Since $Q$ is a
  martingale, for every predictable time
$\tau$ we have
\begin{align}\label{jump-law}
   \eemu{ e^{-\kappa\Delta\Gamma_{\tau}} \giv \sV_{\tau-} } =
  e^{-\phi(\kappa)\Delta\Gamma^{p}_{\tau}} \text{ on } \set{\tau<\infty},
  \eas, 
\end{align}
so that the conditional law of $\Delta\Gamma_{\tau}$ given
$\sV_{\tau-}$ is $\IG\prn{\Delta\Gamma^{p}_{\tau},p^{2}}$ there.
Indeed,
since $Q$ is a martingale and $\tau$ is predictable, we have
$\eemu{Q_{\tau} \giv \sV_{\tau-}} = Q_{\tau-}$ on $\set{\tau<\infty}$,
in the sense of generalized conditional expectations
\cite[Lemma I.2.27 and Corollary I.2.31]{JacShi03}. For $\tau = 0$,
where our convention on $\sV_{0-}$ differs from that of
\cite{JacShi03}, this is the identity $\eemu{Q_{0}} = 1$ established
above. The two cases can be treated separately because, $\sV_{0-}$
being trivial, the event $\set{\tau = 0}$ has probability $0$ or $1$
for every predictable time $\tau$. Dividing both sides by  $Q_{\tau-}>0$ and rearranging gives
\eqref{jump-law}.

The conditional law $\IG\prn{\Delta\Gamma^{p}_{\tau},p^{2}}$ obtained from
\eqref{jump-law} is the first assertion of part \ref{i:pred-jump} of Theorem \ref{t:ig-rep}.
Since $p\,\Delta N_{\tau} = \Delta\Gamma_{\tau} -
\Delta\Gamma^{p}_{\tau}$, $\Delta\Gamma^{p}_{\tau}$ is
$\sV_{\tau-}$-measurable, and $\IG(c,p^{2})$ does not charge $c$ for
$c>0$, while $\IG(0,p^{2}) = \delta_{0}$, we have $\Delta N_{\tau}
\neq 0$ a.s.\ on $\set{\Delta\Gamma^{p}_{\tau}>0}$ and $\Delta
N_{\tau} = 0$ a.s.\ on $\set{\Delta\Gamma^{p}_{\tau}=0}$. 
Since the jump times of $\Gamma^{p}$ are exhausted by predictable
times (see subsection \ref{s:constr-S} below), it follows that $N$ is
quasi-left-continuous if and only if $\Gamma^{p}$ is continuous, a.s.
This proves part \ref{i:pred-jump}.

\subsubsection{$\Gamma$ moves only when $\Gamma^{p}$ does}
\label{s:flat}
The following fact is folklore, but we could not find an exact
reference, so we give a short proof. We claim that
\begin{align}\label{flat-const}
   \Gamma^{p}_{s} = \Gamma^{p}_{t} \text{ implies } \Gamma_s = \Gamma_t
  \eforall s<t, \eas 
\end{align}
To see that, set 
$F := \bigcup_{q \in \mathbb{Q}_{+}}
\sets{(\omega,u)}{u>q,\ \Gamma^{p}_{u}(\omega) = \Gamma^{p}_{q}(\omega)}$
and observe that $F$ 
is a predictable set and  that 
$\int \ind{F}\,\diffd\Gamma^{p} = 0$. Since $\Gamma^{p}$ is the
compensator of $\Gamma$, we have 
$\eemu{\int\ind{F}\,\diffd\Gamma} =
\eemu{\int\ind{F}\,\diffd\Gamma^{p}} = 0$, so $\int\ind{F}\,\diffd\Gamma
= 0$ a.s. If $s<t$ and $\Gamma^{p}_{s} = \Gamma^{p}_{t}$, then 
each $u \in (s,t]$ lies in $F$ (via some rational $q \in (s,u)$),
and so \eqref{flat-const} holds. 
A similar argument yields
$\eemu{\Gamma_{0}} =
\Gamma^{p}_{0}$, so that $\Gamma_{0} = 0$ when $\Gamma^{p}_{0} = 0$.

\subsubsection{Construction of the process $S$}
\label{s:constr-S}
We build $S$ from $\Gamma$ on the levels attained by
$\Gamma^{p}$. The gaps left by the jumps of $\Gamma^{p}$ are
filled with inverse-Gaussian bridges, and an independent
inverse-Gaussian subordinator is appended above
$\Gamma^{p}_{\infty-}$.
A standard result on the jumps of predictable processes (see, e.g., \cite[Theorems 3.31 and
3.33]{HeWanYan92}) states that 
the jump times of a \cd{} predictable process 
are exhausted by a sequence of 
predictable times with disjoint graphs.
Let $\seq{\tau_n}$ denote such a sequence for 
$\Gamma^{p}$, and assume it includes $t=0$ when
$\Gamma^{p}_{0}>0$. 

Let $J$ be an inverse-Gaussian
subordinator, and, for $l \in (0,\infty]$, let $j \mapsto \Ld_{l,j}$
be a regular conditional law of $J$ given $J_{l} = j$, with
$\Ld_{\infty,\infty}$ denoting the law of $J$. Using the fact
that the inverse-Gaussian distribution $\IG(y,p^{2})$ admits a
density in $x$ which is jointly measurable in $x$ and in the
parameter $y$, we can choose $\Ld$ to be a probability kernel from
$(0,\infty]^{2}$ to $D_{\sp}$. For the remaining $(l,j)$, where no
conditional law is needed, we extend it arbitrarily, say by
$\Ld_{l,j} := \Ld_{\infty,\infty}$.
By the randomization lemma \cite[Lemma 4.22]{Kal21}, there
exists a measurable map $\Xi :
[0,\infty]^{2} \times [0,1] \to D_{\sp}$ such that $\Xi(l,j;\iota) \sim
\Ld_{l,j}$ whenever $\iota$ is uniform on $[0,1]$.
Enlarging the
probability space if necessary, we assume there exists an 
i.i.d.~sequence $\set{\iota_{n}}_{n \in \N\cup\set{\infty}}$ of random
variables uniform on $[0,1]$, independent of
$\sV_{\infty}$, and set
\begin{align*}
   G^{n} := \Xi\prn{ \Delta\Gamma^{p}_{\tau_{n}},
  \Delta\Gamma_{\tau_{n}}; \iota_{n} } \text{ on } \set{\tau_{n}<\infty}
  \eand
  G^{\infty} := \Xi\prn{\infty,\infty;\iota_{\infty}}. 
\end{align*}
Write $\sI := \sg\prn{\iota_{n} : n \in \N}$ and $\sI_{-n} :=
\sg\prn{\iota_{m} : m \in \N,\ m \neq n}$. By \eqref{jump-law} and the
choice of $\Ld$, conditionally on $\sV_{\tau_{n}-} \vee \sI_{-n}$, of which $\iota_{n}$ is
independent, the pair $\bprn{G^{n}|_{[0,l]},\Delta\Gamma_{\tau_{n}}}$,
with $l := \Delta\Gamma^{p}_{\tau_{n}}$, has the law of $\bprn{J|_{[0,l]},J_{l}}$,
while $G^{\infty}$ is an inverse-Gaussian subordinator independent of
$\sV_{\infty} \vee \sI$.

Let us call the level intervals $[\Gamma^{p}_{\tau_{n}-},
\Gamma^{p}_{\tau_{n}})$ the \emph{gaps} and $[\Gamma^{p}_{\infty-},
\infty)$ the \emph{tail}, where $\Gamma^{p}_{\infty-} := \lim_{t}
\Gamma^{p}_{t}$, and observe that these intervals are pairwise disjoint. Since
$\Gamma^{p}$ is the compensator of $\Gamma$, the limit
$\Gamma_{\infty-} := \lim_{t}\Gamma_{t}$ is finite a.s.\ on
$\set{\Gamma^{p}_{\infty-}<\infty}$ (see, e.g., \cite[Section I.3]{JacShi03}). For $x \in \sp$ we
define
\begin{align}\label{S-def}
   S_{x} :=
  \begin{cases}
    \Gamma_{\tau_{n}-} + G^{n}_{x-\Gamma^{p}_{\tau_{n}-}}, &
    x \in [\Gamma^{p}_{\tau_{n}-},\Gamma^{p}_{\tau_{n}}), \\
    \Gamma_{\infty-} + G^{\infty}_{x-\Gamma^{p}_{\infty-}}, &
    x \geq \Gamma^{p}_{\infty-}, \\
    \Gamma_{R_{x}}, & \text{otherwise.}
  \end{cases} 
\end{align}
where $R_x$ is defined in \eqref{QR-not}.

\subsubsection{Pathwise properties of $S$}
We check that $S$ is a nondecreasing \cd{} path and that $S_{\Gamma^{p}}$ recovers $\Gamma$.
We first prove that $S$ is nondecreasing and \cd{} with $S_0=0$, and that 
\begin{align}\label{tc-id}
   S_{\Gamma^{p}_{t}} = \Gamma_{t} \eforall t \in \sp. 
\end{align}
Let $x := \Gamma^{p}_{t}$. If $x$ lies in the $n$-th gap, then
$x = \Gamma^{p}_{\tau_{n}-}$ and $t<\tau_{n}$, since the levels in
$(\Gamma^{p}_{\tau_{n}-},\Gamma^{p}_{\tau_{n}})$ are not
attained.
It follows that 
$\Gamma^{p}$ is constant on $[t,\tau_{n})$ and that $S_{x} =
\Gamma_{\tau_{n}-} = \Gamma_{t}$ by \eqref{flat-const}. If $x$ lies in
the tail, then $x = \Gamma^{p}_{\infty-}$, $\Gamma^{p}$ is constant on
$[t,\infty)$, and, since $G^{\infty}_{0} = 0$, $S_{x} =
\Gamma_{\infty-} = \Gamma_{t}$ by \eqref{flat-const}. 
If $x$ lies
neither in a gap nor in the tail, we have
$\Gamma^{p}_{R_{x}} = x$, because otherwise $\Gamma^{p}$ would jump
across $x$ at $R_{x}$. Therefore, 
$t \leq R_{x}$, $\Gamma^{p}$ is constant on $[t,R_{x}]$, and
$S_{x} = \Gamma_{R_{x}} = \Gamma_{t}$ by \eqref{flat-const}.

Next, we establish that, almost surely, 
$S$ is nondecreasing within each piece of \eqref{S-def}.  $R$ is nondecreasing,
each bridge climbs from $\Gamma_{\tau_{n}-}$ to
$\Gamma_{\tau_{n}}$, and the tail increases from
$\Gamma_{\infty-}$. Let $x$ be a level in neither a gap nor the
tail. If
it is
below the $n$-th gap, then $R_{x}<\tau_{n}$ and, therefore,
$\Gamma_{R_{x}} \leq \Gamma_{\tau_{n}-}$. If it is 
above the $n$-th gap, we have $R_{x} \geq
\tau_{n}$ and so $\Gamma_{R_{x}} \geq \Gamma_{\tau_{n}}$.
Finally, $\Gamma_{R_{x}} \leq \Gamma_{\infty-}$, since $R_{x}<\infty$.
The bridge values in the $n$-th gap lie between
$\Gamma_{\tau_{n}-}$ and $\Gamma_{\tau_{n}}$, and those in the tail are
at least $\Gamma_{\infty-}$, so it follows that $S$ is nondecreasing.

The process $S$ is right-continuous inside gaps
and in the tail by construction. 
At any other
level $x$, we pick $t_{k} \downto R_{x}$ with $\Gamma^{p}_{t_{k}}
> x$.
Then $\Gamma^{p}_{t_{k}} \downto x$ and, by monotonicity and
\eqref{tc-id}, we have $S_{x+} = \lim_{k}\Gamma_{t_{k}} = \Gamma_{R_{x}} =
S_{x}$. Left limits exist since $S$ is nondecreasing. Finally, $S_{0} = 0$ because either 
$0$ lies in the gap opened at
$t=0$, or $S_{0} = \Gamma_{0} = 0$ by \eqref{tc-id} and 
subsection \ref{s:flat}.

\medskip

\subsubsection{The distribution of $S$}\label{s:dist-S}
It remains to show that $S$ is an inverse-Gaussian subordinator
and,  to this end, we prove that $e^{-\kappa S_{x}+\phi(\kappa)x}$ is a martingale in $x$.
For $x \in \sp$ define $
  \sG_{x} := \sV_{\Theta_{x}-} \vee \sg\prn{S_{w} : w \leq x}$
  and
\begin{align*}
   \hQ_{x} := e^{-\kappa S_{x} + \phi(\kappa)x}, 
   \eand
  W_{x} := Q_{\Theta_{x}} \inds{\Theta_{x}<\infty} +
  \hQ_{x}\inds{\Theta_{x} = \infty}. 
\end{align*}
Since $\Theta$ is nondecreasing, $\prfx{\sG_{x}}$ is a filtration.
Since $\Gamma^p$ is predictable, and hence so is $\Theta_{x}$,
the random variables $\Theta_{x}$, $\Gamma^{p}_{\Theta_{x}}$ and $\Gamma^{p}_{\Theta_{x}-}$ are
$\sV_{\Theta_{x}-}$-measurable, and so are all the events used below
to distinguish cases. 
\begin{lemma}
\label{l:W-id}
For $0 \leq x < y$ we have 
\begin{align}\label{W-id}
   \text{(i)}\ \ \eemu{ W_{x} \giv \sG_{x} } = \hQ_{x}
  \eand
  \text{(ii)}\ \ \eemu{ W_{y} \giv \sG_{x} } = \eemu{ W_{x} \giv
  \sG_{x} }. 
\end{align}
\end{lemma}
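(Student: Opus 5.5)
The plan is to prove both identities by conditioning on a larger $\sigma$-algebra, through which $\sG_{x}$ factors, and then projecting down. For $x\in\sp$ let
\begin{align*}
  \sH_{x}' := \sV_{\Theta_{x}} \vee \sI \vee \sg\prn{\iota_{\infty}}\cap\set{\text{trivial}} ,
\end{align*}
or more simply $\sV_{\Theta_{x}}\vee\sI$. Since $\sI$ is independent of $\sV_{\infty}$, $Q$ remains a martingale for the enlarged filtration $\set{\sV_{t}\vee\sI}_{t}$. I first check that $\sG_{x}\subseteq \sH'_{x}\vee\sg(G^{\infty}_{w}: w\le x)$. Every value $S_{w}$ with $w\le x$ is one of three things: a value $\Gamma_{R_{w}}$ with $R_{w}\le\Theta_{x}$; a bridge value $\Gamma_{\tau_{m}-}+G^{m}_{\cdot}$ with $\tau_{m}\le\Theta_{x}$, built from $\Delta\Gamma^{p}_{\tau_m}$, $\Delta\Gamma_{\tau_m}$ and $\iota_m$; or, on $\set{\Theta_{x}=\infty}$, a tail value. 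In each case $S_{w}$ is measurable with respect to that $\sigma$-algebra.

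\emph{Part (i).} I split according to where $x$ sits, using events that are $\sV_{\Theta_x-}$-measurable.
\begin{enumerate}
\item If $\Theta_{x}=\infty$, then $W_{x}=\hQ_{x}$ by definition, and $\hQ_{x}$ is $\sG_{x}$-measurable.
\item If $\Theta_{x}<\infty$ and $\Gamma^{p}_{\Theta_{x}}=x$, then \eqref{tc-id} gives $S_{x}=\Gamma_{\Theta_{x}}$. Hence $W_{x}=Q_{\Theta_{x}}=\hQ_{x}$, which is again $\sG_{x}$-measurable.
\item If $\Gamma^{p}_{\Theta_{x}}>x$, then $\Theta_{x}=\tau_{n}$ for the $n$ with $x$ in the interior of the $n$-th gap. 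Put $l=\Delta\Gamma^{p}_{\tau_n}$ and $w=x-\Gamma^{p}_{\tau_{n}-}$. Then
\begin{align*}
  W_{x}=Q_{\tau_{n}-}\,e^{-\kappa\Delta\Gamma_{\tau_n}+\phi(\kappa)l}, \qquad
  \hQ_{x}=Q_{\tau_{n}-}\,e^{-\kappa G^{n}_{w}+\phi(\kappa)w}.
\end{align*}
Conditionally on $\sV_{\tau_{n}-}\vee\sI_{-n}$, the pair $(G^{n}|_{[0,l]},\Delta\Gamma_{\tau_n})$ has the law of $(J|_{[0,l]},J_{l})$. The independent increments of $J$ give $\ee{e^{-\kappa(J_{l}-J_{w})}\giv J|_{[0,w]}}=e^{-\phi(\kappa)(l-w)}$. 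By the inclusion above, $\sG_{x}$ is contained in $\sV_{\tau_n-}\vee\sI_{-n}\vee\sg(G^{n}_{u}:u\le w)$, so the tower property yields $\eemu{W_{x}\giv\sG_{x}}=\hQ_{x}$.
\end{enumerate}

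\emph{Part (ii).} I will show $\eemu{W_{y}\giv \sH'_{x}\vee\sg(G^{\infty})}=W_{x}$ on each case event. Part (ii) then follows by the tower property, since $\sG_{x}$ is contained in the conditioning $\sigma$-algebra.
\begin{enumerate}
\item On $\set{\Theta_{y}=\infty}$ write $\hQ_{y}=Q_{\infty}\,e^{-\kappa G^{\infty}_{y-\Gamma^{p}_{\infty-}}+\phi(\kappa)(y-\Gamma^{p}_{\infty-})}$. Here $G^{\infty}$ is an inverse-Gaussian subordinator independent of $\sV_{\infty}\vee\sI$, so averaging over its increments beyond the already-revealed level replaces $\hQ_{y}$ by $Q_{\infty}$. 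The same computation relates $\hQ_{y}$ to $\hQ_{x}$ when $\Theta_{x}=\infty$ as well.
\item In all remaining cases I reduce to showing $\eemu{Q_{\Theta_{y}}\giv\sV_{\Theta_{x}}\vee\sI}=Q_{\Theta_{x}}$, with the convention $Q_{\infty}$ on $\set{\Theta=\infty}$. This is optional sampling for the uniformly integrable martingale $Q^{\Theta_{y}}$ of subsection \ref{s:mart-Q} at the stopping times $\Theta_{x}\le\Theta_{y}$.
\end{enumerate}

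I expect the main obstacle to be the $\sigma$-algebra bookkeeping. Specifically, I must verify that conditioning on the bridge values $G^{m}$ of earlier gaps, and on $\iota_{n}$ for the gap containing $x$, does not destroy the martingale property of $Q$ after $\Theta_{x}$. My first approach is to use that the $\iota$'s are independent of $\sV_{\infty}$, so that $Q$ remains a martingale for $\sV_{t}\vee\sI$. Two further points need care: the distinction between $\sV_{\Theta_{x}-}$ and $\sV_{\Theta_{x}}$ when $\Theta_{x}=\tau_{n}$, and the mixed event $\set{\Theta_{x}<\infty=\Theta_{y}}$. In that mixed event the tail average of the first case must be combined with the convergence $Q_{t\wedge\Theta_y}\to Q_{\infty}$.
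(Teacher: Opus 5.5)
Your route is the paper's: the same three cases in (i), with the bridge martingale $e^{-\kappa G^{n}_{v}+\phi(\kappa)v}$. In (ii) you likewise average out $G^{\infty}$ and then use optional sampling of the uniformly integrable martingale $Q^{\Theta_{y}}$ at $\Theta_{x}\le\Theta_{y}$ in the filtration $\sV_{t}\vee\sI$. The gap is in the $\sigma$-algebra bookkeeping, which is most of the content of this lemma, and in (ii) the step you actually write down fails.

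If $\sg(G^{\infty})$ is part of the conditioning $\sigma$-algebra, two things go wrong.
\begin{enumerate}
\item On $\{\Theta_{x}=\infty\}$, where $\sV_{\Theta_{x}}$ has the trace of $\sV_{\infty}$, the variable $W_{y}=\hQ_{y}$ is measurable with respect to it. Its conditional expectation is then $W_{y}\neq W_{x}$. With your $\sg(G^{\infty}_{w}: w\le x)$ the same happens whenever $y-\Gamma^{p}_{\infty-}\le x$.
\item On $\{\Theta_{x}<\infty=\Theta_{y}\}$ you can no longer replace $e^{-\kappa G^{\infty}_{v}+\phi(\kappa)v}$ by its mean $1$. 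That is exactly the averaging your case 1 invokes.
\end{enumerate}
Also, $\{\Theta_{y}=\infty\}$ does not belong to $\sV_{\Theta_{x}}\vee\sI$, so you cannot compute a conditional expectation given that $\sigma$-algebra ``on'' this event. The split must be by $\Theta_{x}$, which is $\sG_{x}$-measurable.

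This is what the paper does. On $\{\Theta_{x}=\infty\}$ it conditions on $\sV_{\infty}\vee\sI\vee\sg(G^{\infty}_{w}: w\le u)$ with the random level $u:=x-\Gamma^{p}_{\infty-}$; the trace of this $\sigma$-algebra on that event contains that of $\sG_{x}$. It then uses that $W_{y}/W_{x}=e^{-\kappa(G^{\infty}_{v}-G^{\infty}_{u})+\phi(\kappa)(v-u)}$, with $v:=y-\Gamma^{p}_{\infty-}$, has conditional mean $1$. On $\{\Theta_{x}<\infty\}$ no part of $G^{\infty}$ is revealed, since the trace of $\sG_{x}$ lies in that of $\sV_{\Theta_{x}}\vee\sI$. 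One first gets $\bE^{\mu}[W_{y}\mid\sV_{\infty}\vee\sI]=Q_{\Theta_{y}}$. Here splitting on $\Theta_{y}=\infty$ is legitimate, as that event is in $\sV_{\infty}$, and $Q_{\Theta_{y}}=Q_{\infty-}$ there is the terminal value of $Q^{\Theta_{y}}$; this disposes of your mixed event. Optional sampling then gives $\bE^{\mu}[Q_{\Theta_{y}}\mid\sV_{\Theta_{x}}\vee\sI]=Q_{\Theta_{x}}=W_{x}$.

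The same problem appears in (i). In the third case you need the trace of $\sG_{x}$ on $\{\Theta_{x}=\tau_{n}\}$ to lie in $\sV_{\tau_{n}-}\vee\sI_{-n}\vee\sg(G^{n}_{w'}: w'\le w)$. This does not follow from your inclusion into $\sV_{\Theta_{x}}\vee\sI$. On that event $\sV_{\Theta_{x}}$ has the trace of $\sV_{\tau_{n}}$, which contains $\Delta\Gamma_{\tau_{n}}=G^{n}_{l}$, the very variable being averaged. You have to check directly that $\sV_{\Theta_{x}-}$ and $\sV_{\tau_{n}-}$ have the same trace on that event. You also have to check that each $S_{w'}$ with $w'\le x$ is one of the following:
\begin{enumerate}
\item a value of $\Gamma$ strictly before $\tau_{n}$ (a non-gap level $w'\le x$ satisfies $w'<\Gamma^{p}_{\tau_{n}-}$, hence $R_{w'}<\tau_{n}$);
\item a bridge $G^{m}$ of an earlier gap, with $\tau_{m}<\tau_{n}$;
\item a value of $G^{n}$ on $[0,w]$.
\end{enumerate}
With these corrections your argument coincides with the paper's.
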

\begin{proof} $(i)$\ 
  If $\Theta_{x} = \infty$, then
$W_{x} = \hQ_{x}$. If $\Theta_{x}<\infty$ and
$\Gamma^{p}_{\Theta_{x}} = x$, then $S_{x} = \Gamma_{\Theta_{x}}$ by
\eqref{tc-id}, so again $W_{x} = \hQ_{x}$. In the remaining case
$\Gamma^{p}_{\Theta_{x}-} \leq x < \Gamma^{p}_{\Theta_{x}}$, so
$\Theta_{x} = \tau_{n}$ for some $n$ and $x$ lies in the $n$-th gap.
The event $\set{\Theta_{x} = \tau_{n}} =
\set{\Gamma^{p}_{\tau_{n}-} \leq x \leq \Gamma^{p}_{\tau_{n}}}
\setminus \set{\Theta_{x} < \tau_{n}}$ belongs to $\sV_{\tau_{n}-}$.
With $u := x - \Gamma^{p}_{\tau_{n}-}$, $l :=
\Delta\Gamma^{p}_{\tau_{n}}$ and $B_{v} := e^{-\kappa G^{n}_{v} +
\phi(\kappa) v}$, \eqref{S-def} gives
\begin{align*}
   \hQ_{x} = Q_{\tau_{n}-}\,B_{u} \eand W_{x} = Q_{\tau_{n}} =
  Q_{\tau_{n}-}\,B_{l}. 
\end{align*}
Conditionally on $\sV_{\tau_{n}-} \vee \sI_{-n}$, the process $B$
is a bounded martingale on $[0,l]$ with respect to the filtration $\sF^{n}_{v} :=
\sV_{\tau_{n}-} \vee \sI_{-n} \vee \sg\prn{G^{n}_{w} : w \leq
v}$. Moreover, the random time 
$u$ is $\sF^{n}_{0}$-measurable, so $\eemu{B_{l} \giv \sF^{n}_{u}} =
B_{u}$. 

It remains to note that $A \cap \set{\Theta_{x} = \tau_{n}}
\in \sF^{n}_{u}$ for every $A \in \sG_{x}$. Indeed, on that event,
$\sV_{\Theta_{x}-}$ and $\sV_{\tau_{n}-}$ have the same trace, and the  
values $S_{w}$, $w \leq x$, are values of $\Gamma$ at times before
$\tau_{n}$, bridges $G^{m}$ of earlier gaps (functions of
$\sV_{\tau_{m}}$-measurable jumps, $\tau_{m}<\tau_{n}$, and of
$\iota_{m}$, $m \neq n$), or values of $G^{n}$ on $[0,u]$.

\smallskip

$(ii)$\ On $\set{\Theta_{x} = \infty}$
both levels lie in the tail, and $W_{y}/W_{x} =
e^{-\kappa(G^{\infty}_{v}-G^{\infty}_{u}) + \phi(\kappa)(v-u)}$ with
$u := x - \Gamma^{p}_{\infty-}$ and $v := y - \Gamma^{p}_{\infty-}$.
Since $G^{\infty}$ is an inverse-Gaussian subordinator independent of
$\sV_{\infty} \vee \sI$, this ratio has conditional mean $1$ given
$\sV_{\infty} \vee \sI \vee \sg\prn{G^{\infty}_{w} : w \leq u}$, whose
trace on $\set{\Theta_{x} = \infty}$ contains that of $\sG_{x}$.

On $\set{\Theta_{x}<\infty}$, we first note that $\eemu{W_{y}
\giv \sV_{\infty} \vee \sI} = Q_{\Theta_{y}}$. Indeed, on
$\set{\Theta_{y} = \infty}$ we have $W_{y} = Q_{\infty-}\,
e^{-\kappa G^{\infty}_{v} + \phi(\kappa)v}$, whose second factor has
conditional mean $1$ given $\sV_{\infty} \vee \sI$, because
$G^{\infty}$ is independent of $\sV_{\infty} \vee \sI$ and $v$ is
$\sV_{\infty}$-measurable. The $\sg$-algebra
$\sI$, being independent of $\sV_{\infty}$, can be adjoined to
$\prfi{\sV_{t}}$ without affecting the martingale property of $Q$ or the uniform
integrability of $Q^{\Theta_{y}}$ of subsection \ref{s:mart-Q}. Optional sampling
of the uniformly integrable martingale $Q^{\Theta_{y}}$ at
$\Theta_{x} \leq \Theta_{y}$ gives $\eemu{Q_{\Theta_{y}} \giv
\sV_{\Theta_{x}} \vee \sI} = Q_{\Theta_{x}} = W_{x}$. 

The trace
of $\sG_{x}$ on $\set{\Theta_{x}<\infty}$ is contained in that of
$\sV_{\Theta_{x}} \vee \sI$ (the values $S_{w}$, $w \leq x$, involve, by \eqref{tc-id},
only $\Gamma$ on $[0,\Theta_{x}]$ and bridges $G^{m}$ with $\tau_{m}
\leq \Theta_{x}$), so $(ii)$ follows. 
\end{proof}

By Lemma \ref{l:W-id} and the tower property,
\begin{align*}
  \seemu{ \hQ_{y} \mid \sG_{x} } =
  \eemu{ \eemu{W_{y} \giv \sG_{y}} \giv \sG_{x} } =
  \eemu{ W_{y} \giv \sG_{x} } = \eemu{ W_{x} \giv \sG_{x} } =
  \hQ_{x}, 
\end{align*}
i.e., $\eemu{e^{-\kappa(S_{y}-S_{x})} \giv \sG_{x}} =
e^{-(y-x)\phi(\kappa)}$ for all $\kappa \in (0,\kappa_{0}]$. Hence
$S_{y}-S_{x}$ is independent of $\sG_{x}$ with law
$\IG\prn{y-x,p^{2}}$, and $S$, being nondecreasing, \cd{} and null
at $0$, is an inverse-Gaussian subordinator with respect to
$\prfx{\sG_{x}}$.

\subsubsection{Conclusion}
Finally, we combine the previous steps to prove parts \ref{i:ig-tc} and \ref{i:indep-sub} of Theorem \ref{t:ig-rep}.
By \eqref{tc-id} and \eqref{strong-disc}, we have 
$p\,N_{t} = \Gamma_{t}
- \Gamma^{p}_{t} = S_{\Gamma^{p}_{t}} - \Gamma^{p}_{t} =
p\,\tS_{\Gamma^{p}_{t}}$. Together with \eqref{tc-id} itself,
this establishes part \ref{i:ig-tc} of Theorem \ref{t:ig-rep}. 

It remains to prove part \ref{i:indep-sub}, which says that no
information accrues while the compensator $\Gamma^{p}$ is constant. By subsection \ref{s:dist-S}, for
$x<x'<y$ the increment $S_{y}-S_{x'}$ is independent of $\sG_{x'}
\supseteq \sG_{x+}$ with law $\IG\prn{y-x',p^{2}}$. Letting $x' \downto
x$ and using the right-continuity of $S$, we see that $S_{y}-S_{x}$ is
independent of $\sG_{x+}$ with law $\IG\prn{y-x,p^{2}}$. Thus $S$ is an
inverse-Gaussian subordinator with respect to the right-continuous
filtration $\prfx{\sG_{x+}}$, and it has the strong Markov property at
the stopping times of that filtration.

Fix $s \in \sp$. Since $\Gamma^{p}$ is finite-valued and nondecreasing,
we have $\set{\Gamma^{p}_{s}<x} = \set{s<\Theta_{x}}$ for every $x$, so that,
for $B \in \sV_{s}$, the set $B \cap \set{\Gamma^{p}_{s}<x}$ is one of
the generators of $\sV_{\Theta_{x}-}$ and therefore lies in $\sG_{x}$.
Intersecting over $x' \downto x$ gives $B \cap \set{\Gamma^{p}_{s} \leq
x} \in \sG_{x+}$. With $B = \Omega$, this shows that $\Gamma^{p}_{s}$
is a $\prfx{\sG_{x+}}$-stopping time. For general $B$, it shows that
$\sV_{s}$ is contained in the $\sg$-algebra $\sG_{\Gamma^{p}_{s}+}$ of
events prior to that stopping time. Let $(\sH_{x})$ be the usual
augmentation of $\prfx{\sG_{x+}}$. Then $S$ is still an
inverse-Gaussian subordinator with respect to $(\sH_{x})$, each
$\Gamma^{p}_{s}$ is still an $(\sH_{x})$-stopping time, and $\sV_{s}
\subseteq \sG_{\Gamma^{p}_{s}+} \subseteq \sH_{\Gamma^{p}_{s}}$. This
proves part \ref{i:indep-sub}. Its last statement follows from the
strong Markov property of $S$ at $\Gamma^{p}_{s}$, which makes
$\set{ S_{\Gamma^{p}_{s}+w}-S_{\Gamma^{p}_{s}} }_{w \geq 0}$ an
inverse-Gaussian subordinator independent of $\sH_{\Gamma^{p}_{s}}
\supseteq \sV_{s}$. This completes the proof of Theorem
\ref{t:ig-rep}.

\subsection{Proof of Corollary \ref{c:atomic}}
\label{s:prf-atomic}
The inverse-Gaussian subordinator $S$
has zero drift, so
that the associated Lebesgue--Stieltjes 
measure 
$\diffd S = \sum_{y>0}\Delta S_{y}\,\delta_{y}$ is purely atomic. 
Since $\Gamma^{p}$ is nondecreasing and right-continuous, we have 
\begin{align*}
  \sets{y>0}{s < \Theta_{y} \leq t} =
  (\Gamma^{p}_{s},\Gamma^{p}_{t}]
 \efor s<t, \eand
  \sets{y>0}{\Theta_{y} = 0} = (0,\Gamma^{p}_{0}].
\end{align*}
By Theorem
\ref{t:ig-rep}, part \ref{i:ig-tc}, we have
   $\xi(s,t] = S_{\Gamma^{p}_{t}} - S_{\Gamma^{p}_{s}}
  = \diffd S\prn{ \sets{y}{ \Theta_{y} \in (s,t] } }$, and
similarly, $\xi(\set{0}) = \diffd S\prn{\sets{y}{\Theta_{y} =
0}}$. Hence \eqref{atoms} follows. 
If $\Gamma^{p}$ is strictly increasing on an interval, then, since $S$ is a.s.\ strictly increasing (its L\'evy
measure is infinite), we have 
$\xi(s,t] = S_{\Gamma^{p}_{t}} - S_{\Gamma^{p}_{s}} > 0$ for all
$s<t$ in that interval, and 
$\xi$ has an atom in every
subinterval.

Since it admits the representation 
$\rho = p\sum_{k \geq 0}(p\,\Pi)^{*k}$, 
$\rho$ is purely atomic when $\Pi$ is.
Moreover, its atoms are finite sums of atoms of $\Pi$, so
$\rho*\mu$ is carried by $\Sigma$. Since $\eemu{ \xi(\Sigma^{c}) } =
\prn{\rho*\mu}(\Sigma^{c}) = 0$ by Proposition
\ref{p:basic}(\ref{i:moments}), $\xi$ is carried by $\Sigma$ almost
surely.

\subsection{Proof of Corollary \ref{c:dichotomy}}
\label{s:prf-dichotomy}
Part (\ref{i:dich-cont}) is Proposition \ref{p:basic}(\ref{i:diffuse})
together with Corollary \ref{c:dds}. In part (\ref{i:three-procs}), $\xi$
is purely atomic by Corollary \ref{c:atomic}. Fix $t$ with $\mu[0,t] >
0$, so that $\Gamma^{p}_{t} \geq p\,\mu[0,t] > 0$. Since $S$ is a.s.\ strictly
increasing, $\Gamma_{t} = S_{\Gamma^{p}_{t}} > 0$, and so $\xi \neq 0$.
The three processes have the common compensator $\Gamma^{p}$, since
$\Gamma - \Gamma^{p} = p\,N$ and $\bkt{N} - \ab{N}$ are martingales and
$\ab{N} = \Gamma^{p}$.

Let $\seq{\tau_{n}}$ be predictable times exhausting the jumps of
$\Gamma^{p}$, as in subsection \ref{s:constr-S}. On $\set{\tau_{n} \leq
t}$, with $\tau := \tau_{n}$, given 
$\sV_{\tau-}$, $x := \Delta\Gamma_{\tau}$ has the diffuse law
$\IG\prn{c,p^{2}}$, $c := \Delta\Gamma^{p}_{\tau} > 0$, by part
\ref{i:pred-jump} of Theorem \ref{t:ig-rep}. The jumps $x$,
$p^{-2}(x-c)^{2}$ and $c$ of $\Gamma$, $\bkt{N}$ and $\ab{N}$ at $\tau$
are therefore a.s.\ pairwise distinct. On the complement of
$\bigcup_{n}\set{\tau_{n} \leq t}$, the process $\Gamma^{p}$ is
continuous on $[0,t]$. Since $S$ has jumps in $(0,\Gamma^{p}_{t}]$, the
process $\Gamma_{s} = S_{\Gamma^{p}_{s}}$ has a jump $x > 0$ at some $s
\leq t$, at which $\Gamma^{p}$ does not jump. The corresponding jumps are
$x$, $p^{-2}x^{2}$ and $0$, and $x \neq p^{2}$ a.s., since the L\'evy
measure of $S$ is diffuse.

\subsection{Proof of Theorem \ref{t:disc-eqs}}
\label{s:prf-disc-eqs}
We first reduce the statement to \eqref{disc-tc}. Let $\Gamma'$ and
$N'$ be as in the statement, put $\xi' := \diffd\Gamma'$, and define
${\Gamma'}^{p}_{t} := p\,(\mu + \Pi*\xi')[0,t]$ as in \eqref{Gp-def}.
Since $\rho = p\sum_{n\geq0}(p\,\Pi)^{*n}$, we have $\rho*(\delta_{0} -
p\,\Pi) = p\,\delta_{0}$. Therefore \eqref{volterra-disc}, which reads
$\xi' = \rho*(\mu + \diffd N')$, is equivalent to $\xi' - p\,\Pi*\xi' =
p\,(\mu + \diffd N')$, i.e., $-\sA^{\ast}\xi' = p^{-1}\xi' - \Pi*\xi' =
\mu + \diffd N'$, since $\sA\vp = -p^{-1}\vp + \int\vp(\cdot+s)\,\Pi(\ds)$
(subsection \ref{s:part1}). In terms of distribution functions, this is
$\Gamma' - p\,N' = {\Gamma'}^{p}$. Given this identity,
\eqref{ig-noise} becomes $\Gamma'_{t} = {\Gamma'}^{p}_{t} +
p\,\tS_{{\Gamma'}^{p}_{t}} = S_{{\Gamma'}^{p}_{t}}$, which is
\eqref{disc-tc}. Moreover, $N' = (\Gamma' - {\Gamma'}^{p})/p$ is
determined by $\Gamma'$. For $\Gamma$ and $N$ themselves,
\eqref{volterra-disc} is \eqref{volterra}. It therefore suffices to prove the statements
for \eqref{disc-tc}.

By \eqref{Gp-def}, \eqref{disc-tc}
is the identity $\Gamma_{t} = S_{\Gamma^{p}_{t}}$ of Theorem
\ref{t:ig-rep}, part \ref{i:ig-tc}, and the stopping-time property is
part \ref{i:indep-sub}.

Optional sampling of the martingale $S_{x}-x$ at $\sigma \wedge k$,
followed by monotone convergence, gives $\ee{S_{\sigma}} = \ee{\sigma}$
for every finite $(\sH_{x})$-stopping time $\sigma$. Recall also that
$p\,\Pi(0,\infty) = 1-p$, by subsection \ref{s:part1}. Let $\Gamma'$ be as in
the statement, fix $t$, and put $\sigma := {\Gamma'}^{p}_{t}$ and $a
:= p\,\mu[0,t]$. Since $\Gamma'$ is nondecreasing, we have $\sigma \leq
a + (1-p)\,\Gamma'_{t} = a + (1-p)\,S_{\sigma}$. If $p = 1$, then
$\sigma \leq a$. Suppose now that $p<1$, and let $\psi(\theta) :=
\log\ee{e^{\theta S_{1}}} = p^{-2}\bprn{1 - \sqrt{1-2p^{2}\theta}}$.
Since $\psi(0) = 0$ and $\psi'(0) = 1 < 1/(1-p)$, there is $\theta \in
(0,1/(2p^{2}))$ with $c := \theta/(1-p) - \psi(\theta) > 0$. We apply
Fatou's lemma to the martingale $e^{\theta S_{x} - \psi(\theta)x}$,
stopped at $\sigma \wedge k$. Together with the bound on $\sigma$,
this gives $\ee{e^{c\sigma}} \leq e^{\theta a/(1-p)}$. In either case
$\ee{\sigma} < \infty$. Hence $\ee{\Gamma'_{t}} =
\ee{{\Gamma'}^{p}_{t}} \leq p\,\mu[0,t] + (1-p)\,\ee{\Gamma'_{t}}$,
i.e., $\ee{\Gamma'_{t}} \leq \mu[0,t]$. The same bound holds for
$\Gamma$.

Now let $\sigma := \Gamma^{p}_{t} \vee {\Gamma'}^{p}_{t}$ and
$\sigma' := \Gamma^{p}_{t} \wedge {\Gamma'}^{p}_{t}$. These are
stopping times, and $\abs{\Gamma_{t} - \Gamma'_{t}} = S_{\sigma} -
S_{\sigma'}$. Since the terms $p\,\mu[0,t]$ in $\Gamma^{p}_{t}$ and
${\Gamma'}^{p}_{t}$ cancel, we obtain
\begin{align*}
  e(t) := \ee{\abs{\Gamma_{t} - \Gamma'_{t}}} = \ee{\sigma - \sigma'}
  = \ee{\abs{\Gamma^{p}_{t} - {\Gamma'}^{p}_{t}}} \leq
  p\textstyle\int_{(0,t]} e(t-s)\,\Pi(\ds).
\end{align*}
By the integrability bound, $e \leq 2\mu[0,T]$ on $[0,T]$. Hence
$e^{*}(T) := \sup_{t \leq T} e(t)$ is finite, and the display gives
$e^{*}(T) \leq (1-p)\,e^{*}(T)$. Thus $e =  0$. By
right-continuity, $\Gamma' = \Gamma$ a.s., and then $N' = N$
a.s., by the first paragraph.

\fin